\newtheorem{theorem}{Theorem}[section]
\newtheorem{proposition}[theorem]{Proposition}
\newtheorem{lemma}[theorem]{Lemma}
\theoremstyle{definition}
\newtheorem{definition}[theorem]{Definition}
\newtheorem{remark}[theorem]{Remark}
\newtheorem{example}[theorem]{Example}
\newtheorem{notation}[theorem]{Notation}
\newtheorem*{RPL*}{Relation to previous literature}
\newtheorem*{OTP*}{Organization of this paper}
\newtheorem*{Ack*}{Acknowledgements}
\begin{document}


\title[]
{Ricci-mean curvature flows in gradient shrinking Ricci solitons}


\author{Hikaru Yamamoto}
\address{Graduate School of Mathematical Sciences, The University of Tokyo, 3-8-1 Komaba Meguro-ku Tokyo 153-8914, Japan}
\email{yamamoto@ms.u-tokyo.ac.jp}




\begin{abstract} 
Huisken \cite{Huisken} studied asymptotic behavior of a mean curvature flow in a Euclidean space when it develops a singularity of type I, 
and proved that its rescaled flow converges to a self-shrinker in the Euclidean space. 
In this paper, we generalize this result for a Ricci-mean curvature flow moving along a Ricci flow constructed from a gradient shrinking Ricci soliton. 
\end{abstract}


\keywords{mean curvature flow, Ricci flow, self-similar solution, gradient soliton}


\subjclass[2010]{53C42, 53C44}


\thanks{This work was supported by Grant-in-Aid for JSPS Fellows Grant Number 13J06407 and the Program for Leading Graduate Schools, MEXT, Japan. }




\maketitle

\section{Introduction}\label{Intro}
Let $M$ and $N$ be manifolds with dimension $m$ and $n$ respectively, satisfying $m\leq n$. 
Let $g=(\, g_{t}\,;\, t\in[0,T_{1})\,)$ be a smooth 1-parameter family of Riemannian metrics on $N$ and 
$F:M\times [0,T_{2}) \to N$ be a smooth 1-parameter family of immersions with $T_{2}\leq T_{1}$, that is, 
$F_{t}:M\to N$ defined by $F_{t}(\,\cdot\,):=F(\cdot,t)$ is an immersion map. 
We say that the pair of $g$ and $F$ is a solution of the {\it Ricci-mean curvature flow} if it satisfies 
the following coupled equation of the Ricci flow and the mean curvature flow: 
\begin{subequations}
\begin{align}
&\frac{\partial g_{t}}{\partial t}=-2\mathrm{Ric}(g_{t})\label{evoeq21}\\
&\frac{\partial F_{t}}{\partial t}=H(F_{t}), \label{evoeq22}
\end{align}
\end{subequations}
where $H(F_{t})$ denotes the mean curvature vector field of $F_{t}:M\to N$ computed by the ambient Riemannian metric $g_{t}$ at the time $t$. 
Note that this coupling is partial, that is, the Ricci flow equation (\ref{evoeq21}) does not depend on $F$. 
It is clear that a Ricci-mean curvature flow is a mean curvature flow 
when the ambient Riemannian manifold $(N,g_{0})$ is Ricci flat (especially $(\mathbb{R}^n,g_{\mathrm{st}})$). 

Huisken~\cite{Huisken} studied asymptotic behavior of a mean curvature flow in a Euclidean space when it develops a singularity of type I, 
and proved that its rescaled flow converges to a self-shrinker in the Euclidean space. 
In this paper, we generalize this result to a Ricci-mean curvature flow moving along a Ricci flow constructed from a gradient shrinking Ricci soliton. 
Before stating our main results, we review the definition of self-similar solutions in $\mathbb{R}^n$ and the results due to Huisken~\cite{Huisken}. 

On $\mathbb{R}^n$, we naturally identify a point $x=(x^{1},\dots,x^{n})\in\mathbb{R}^n$ with a tangent vector $\overrightarrow{x}\in T_{x}\mathbb{R}^n$ by 
\[\overrightarrow{x}:=x^{1}\frac{\partial}{\partial x^{1}}+\dots+x^{n}\frac{\partial}{\partial x^{n}}. \]
For an immersion map $F:M\to\mathbb{R}^n$, 
we have a section $\overrightarrow{F}\in\Gamma(M,F^{*}(T\mathbb{R}^n))$ defined by $\overrightarrow{F}(p):=\overrightarrow{F(p)}$ for all $p\in M$. 
Then $F:M\to\mathbb{R}^n$ is called a self-similar solution if it satisfies 
\begin{align}\label{selsolinR}
H(F)=\frac{\lambda}{2} {\overrightarrow{F}}^{\bot}
\end{align}
for some constant $\lambda\in\mathbb{R}$, 
where $\bot$ denotes the projection onto the normal bundle of $M$.
A self-similar solution is called a self-expander, steady or self-shrinker when $\lambda>0$, $\lambda=0$ or $\lambda<0$ respectively. 

Let $M$ be an $m$-dimensional compact manifold and 
$F:M\times[0,T)\to \mathbb{R}^n$ be a mean curvature flow with the maximal time $T<\infty$, that is, 
we can not extend the flow over the time $T$. 
Further assume that $F$ satisfies the following two conditions (A1) and (B1).  
\begin{enumerate}
\item[(A1)] The norm of the second fundamental form of $F_{t}$ (denoted by $A(F_{t})$) satisfies \[\limsup_{t\to T}\bigl(\sqrt{T-t}\max_{M}|A(F_{t})|\bigr)<\infty. \]
\item[(B1)] There exists a point $p_{0}$ in $M$ such that $F_{t}(p_{0})\to \mathrm{O}\in \mathbb{R}^n$ as $t\to T$. 
\end{enumerate}
If a mean curvature flow satisfies (A1), then we say that it develops a singularity of type I, 
and for the remaining case we say that it develops a singularity of type II. 
The condition (B1) guarantees that there exists at least one point in $M$ such that its image of the rescaled flow remains in a bounded region in $\mathbb{R}^n$, 
thus the limiting submanifold is nonempty. 
In \cite{Huisken}, it is also assumed that $|A(F_{t})|(p_{0})\to \infty$ as $t\to T$ for $p_{0}$ given in (B1). 
However, this assumption is not necessary to prove Theorem~\ref{Huisken} introduced below. 

For each $t\in(-\infty,T)$, let $\Phi_{t}:\mathbb{R}^{n}\to\mathbb{R}^n$ be a diffeomorphism of $\mathbb{R}^n$ defined by 
\[\Phi_{t}(x):=\frac{1}{\sqrt{T-t}}x. \]
Define the rescaled flow $\tilde{F}:M\times [-\log T, \infty)\to \mathbb{R}^n$ by 
\[\tilde{F}_{s}:=\Phi_{t}\circ F_{t}\quad\mathrm{with}\quad s=-\log(T-t). \]
Then it satisfies the {\it normalized mean curvature flow} equation: 
\[\frac{\partial \tilde{F}_{s}}{\partial s}=H(\tilde{F_{s}})+\frac{1}{2}\overrightarrow{\tilde{F_{s}}}. \]
Huisken proved the following (cf. Proposition 3.4 and Theorem 3.5 in \cite{Huisken}). 

\begin{theorem}\label{Huisken}
Under the assumptions (A1) and (B1), 
for each sequence $s_{j}\to \infty$, there exists a subsequence $s_{j_{k}}$ such that the sequence of 
immersed submanifolds $\tilde{M}_{s_{j_{k}}}:=\tilde{F}_{s_{j_{k}}}(M)$ 
converges smoothly to an immersed nonempty limiting submanifold $\tilde{M}_{\infty}\subset \mathbb{R}^n$, 
and $\tilde{M}_{\infty}$ is a self-shrinker with $\lambda=-1$ in (\ref{selsolinR}). 
\end{theorem}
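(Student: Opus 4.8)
The plan is to follow Huisken's monotonicity-formula strategy, adapted to the rescaled flow. The heart of the matter is to produce a monotone quantity along the normalized mean curvature flow whose time-derivative controls the deviation of $\tilde{M}_s$ from a self-shrinker, and then extract a smooth limit using the type I bound (A1).

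\textbf{Step 1: The monotonicity formula.} First I would introduce Huisken's backward heat kernel $\rho(x,t) = (4\pi(T-t))^{-m/2}\exp\bigl(-|x|^2/(4(T-t))\bigr)$ on $\mathbb{R}^n$ and recall that along the (unrescaled) mean curvature flow $F_t$,
\[
\frac{d}{dt}\int_M \rho(F_t,t)\,d\mu_t = -\int_M \rho(F_t,t)\,\Bigl|H(F_t) - \frac{\overrightarrow{F_t}^{\bot}}{2(T-t)}\Bigr|^2 d\mu_t \leq 0.
\]
Translating this into the rescaled variables $s = -\log(T-t)$ and $\tilde{F}_s = \Phi_t\circ F_t$, the Gaussian weight becomes the fixed $s$-independent density $(4\pi)^{-m/2}e^{-|\tilde{F}_s|^2/4}$, and the formula reads
\[
\frac{d}{ds}\int_M (4\pi)^{-m/2}e^{-|\tilde{F}_s|^2/4}\,d\tilde{\mu}_s = -\int_M (4\pi)^{-m/2}e^{-|\tilde{F}_s|^2/4}\,\bigl|H(\tilde{F}_s) + \tfrac{1}{2}\overrightarrow{\tilde{F}_s}^{\bot}\bigr|^2 d\tilde{\mu}_s.
\]
Assumption (B1) together with the type I bound (A1) ensures the weighted area $\int_M e^{-|\tilde{F}_s|^2/4}d\tilde{\mu}_s$ stays bounded below away from zero (the rescaled flow does not escape to infinity everywhere) and is monotone nonincreasing, hence converges as $s\to\infty$; consequently the integral of $\bigl|H(\tilde{F}_s)+\tfrac12\overrightarrow{\tilde{F}_s}^{\bot}\bigr|^2$ against the Gaussian, integrated over $s\in[s_0,\infty)$, is finite. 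Therefore along \emph{any} sequence $s_j\to\infty$ one can pass to a further sequence along which this weighted $L^2$ norm of $H(\tilde{F}_{s_j})+\tfrac12\overrightarrow{\tilde{F}_{s_j}}^{\bot}$ tends to zero.

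\textbf{Step 2: Compactness of the rescaled flow.} The type I hypothesis (A1) gives a uniform bound $\sqrt{T-t}\,|A(F_t)| \leq C$, which rescales exactly to a uniform bound $|A(\tilde{F}_s)|\leq C$ on the second fundamental form of the rescaled immersions, \emph{uniformly in} $s$. Combined with interior estimates for the normalized mean curvature flow (parabolic bootstrapping à la Ecker–Huisken, using that the flow equation is uniformly parabolic on regions of bounded geometry), one obtains uniform $C^k$ bounds on $\tilde{F}_s$ on any fixed bounded region, for all $s$ large. Restricting to a ball containing $\tilde{F}_s(p_0)$ (which stays bounded by (B1)), a standard compactness argument for immersions with bounded geometry — choosing local graphical parametrizations — yields a subsequence $s_{j_k}$ along which $\tilde{M}_{s_{j_k}}$ converges in $C^\infty_{\mathrm{loc}}$ to a smooth immersed limit $\tilde{M}_\infty\subset\mathbb{R}^n$, which is nonempty because it contains $\lim \tilde{F}_{s_{j_k}}(p_0)$.

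\textbf{Step 3: Identifying the limit.} On the limit, the smooth convergence lets us pass the weighted $L^2$ bound from Step 1 to the limit: the integrand $\bigl|H(\tilde{F}_{s_{j_k}})+\tfrac12\overrightarrow{\tilde{F}_{s_{j_k}}}^{\bot}\bigr|^2$ converges locally uniformly, and since its Gaussian-weighted integral tends to zero while the weight is strictly positive on any compact set, we conclude $H + \tfrac12\overrightarrow{x}^{\bot} = 0$ identically on $\tilde{M}_\infty$. Comparing with (\ref{selsolinR}), this is precisely the self-shrinker equation with $\lambda = -1$. \textbf{The main obstacle} I anticipate is Step 2 — specifically, upgrading the pointwise bound on $|A|$ to the higher-order estimates and the noncollapsing/injectivity-radius control needed to make the subsequential smooth convergence rigorous (one must rule out the submanifolds degenerating, e.g. sheets colliding, on the bounded region); in Huisken's Euclidean setting this is handled by his interior estimates and the explicit structure of the Gaussian weight, and it is the part that requires the most care when one later transplants the argument to a general gradient shrinking Ricci soliton background, where the ambient curvature terms enter the evolution of $|A|$ and the weight is no longer the flat Gaussian.
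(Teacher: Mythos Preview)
Your overall architecture (monotonicity formula, uniform $|A|$ bound $\Rightarrow$ higher-order estimates $\Rightarrow$ compactness, then identify the limit) matches the paper's. But there is a genuine logical gap in your Step~1/Step~3, and it is precisely the ingredient the paper singles out as essential.

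From the monotonicity formula you correctly deduce that
\[
\int_{-\log T}^{\infty} G(s)\,ds<\infty,\qquad
G(s):=\int_M e^{-|\tilde F_s|^2/4}\Bigl|H(\tilde F_s)+\tfrac12\overrightarrow{\tilde F_s}^{\bot}\Bigr|^2\,d\tilde\mu_s\ \ge 0.
\]
You then assert that ``along \emph{any} sequence $s_j\to\infty$ one can pass to a further sequence along which $G(s_{j_k})\to 0$.'' This is false in general: a nonnegative integrable function can equal $1$ at every integer (take narrow bumps of height $1$ and width $2^{-n}$ centred at $n$), so with $s_j=j$ no subsequence works. Since the theorem hands you an \emph{arbitrary} sequence $s_j$ and asks for a subsequence of \emph{that} sequence, the integrability of $G$ alone is not enough to force the self-shrinker equation on the limit.

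The paper closes this gap by proving a uniform bound $|G'(s)|\le C'$ on the second derivative of the weighted area (Lemma~\ref{2ndderiofmono}, built on a Stone-type estimate, Lemma~\ref{stonelem}). With $\int G<\infty$ \emph{and} $|G'|\le C'$, one gets $G(s)\to 0$ as $s\to\infty$ along the full half-line, so in particular $G(s_{j_k})\to 0$ for whatever subsequence was chosen for smooth convergence. The paper explicitly flags this point at the start of Section~\ref{pf}: knowing $\mathcal F'\le 0$ does not give $\mathcal F'(s_j)\to 0$ for every sequence, but an additional bound on $\mathcal F''$ does. You identified Step~2 (higher-order $|\nabla^k A|$ estimates) as the main obstacle; those are indeed needed, but they are also what feed into the $|G'|$ bound, and it is this latter step --- not the compactness itself --- that you are missing.
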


By this theorem, a self-shrinker can be considered as a local model of a singularity of type I for a mean curvature flow in $\mathbb{R}^n$. 

On the other hand, there is also the notion of type I singularity for a Ricci flow $g=(\, g_{t}\,;\, t\in[0,T)\,)$ on a manifold $N$. 
Assume that $T<\infty$ is the maximal time. 
We say that $g$ forms a singularity of type I if 
\[\limsup_{t\to T}\bigl((T-t)\sup_{N}|\mathrm{Rm}(g_{t})|\bigr)<\infty, \]
where $\mathrm{Rm}(g_{t})$ denotes the Riemannian curvature tensor of $g_{t}$. 
In the Ricci flow case, a gradient shrinking Ricci soliton can be considered as a local model of a singularity of type I (cf. \cite{EndersMullerTopping, Naber, Sesum}). 
Actually, from a gradient shrinking Ricci soliton, we can construct a Ricci flow which develops a singularity of type I by the action of diffeomorphisms and scaling. 
In this paper, we consider a Ricci-mean curvature flow along this Ricci flow, and 
assume that the mean curvature flow and the Ricci flow develop singularities at the same time. 
Then we prove the convergence of the rescaled flow to a self-shrinker in the gradient shrinking Ricci soliton under the type I assumption 
(more precisely, under the assumption (A2) when $N$ is compact, and (A2) and (B2) when $N$ is non-compact). 
The precise settings and main results are the following. 

Let $(N,\tilde{g},\tilde{f})$ be an $n$-dimensional complete gradient shrinking Ricci soliton with
\begin{align}
\mathrm{Ric}(\tilde{g})+\mathop{\mathrm{Hess}}\tilde{f}-\frac{1}{2} \tilde{g}=0. \label{shrisol}
\end{align}
As Hamilton's proof of Theorem 20.1 in \cite{Hamilton4}, one can easily see that $R(\tilde{g})+|\nabla\tilde{f}|^2-\tilde{f}$ is a constant, 
where $R(\tilde{g})$ denotes the scalar curvature of $\tilde{g}$. 
Hence by adding some constant to $\tilde{f}$ if necessary, we may assume that the potential function $\tilde{f}$ also satisfies 
\begin{align}
R(\tilde{g})+|\nabla\tilde{f}|^2-\tilde{f}=0. \label{addshrisol}
\end{align}

For an immersion $F:M\to N$, we get a section $(\nabla\tilde{f})\circ F \in \Gamma(M,F^{*}(TN))$, 
and we usually omit the symbol $\circ F$, for short. 
\begin{definition}\label{selsolingra}
If an immersion map $F:M\to N$ satisfies 
\begin{align}\label{selsolinshri}
H(F)=\lambda{\nabla\tilde{f}}^{\bot}
\end{align}
for some constant $\lambda\in\mathbb{R}$, 
we call it a self-similar solution. 
A self-similar solution is called a self-expander, steady or self-shrinker when $\lambda>0$, $\lambda=0$ or $\lambda<0$ respectively. 
\end{definition}

\begin{definition}\label{defofnormalizedmcf}
If a 1-parameter family of immersions $\tilde{F}:M\times[0,S)\to N$ satisfies 
\begin{align}\label{normalizedmcf}
\frac{\partial \tilde{F}_{s}}{\partial s}=H(\tilde{F}_{s})+\nabla\tilde{f}, 
\end{align} 
we call it a normalized mean curvature flow. 
\end{definition}

Fix a positive time $0<T<\infty$. 
Let $\{\Phi_{t}:N\to N\}_{t\in(-\infty,T)}$ be the 1-parameter family of diffeomorphisms with $\Phi_{0}=\mathrm{id}_{N}$ generated by 
the time-dependent vector field $V_{t}:=\frac{1}{T-t}\nabla \tilde{f}$. 
For $t\in(-\infty,T)$, define 
\[g_{t}:=(T-t)\Phi_{t}^{*}\tilde{g}\quad\mathrm{and}\quad f_{t}:=\Phi_{t}^{*}\tilde{f}. \]
Then $g_{t}$ satisfies the Ricci flow equation (\ref{evoeq21}). 
Assume that $F:M\times [0,T)\to N$ is a solution of Ricci-mean curvature flow (\ref{evoeq22}) along this Ricci flow $g=(\, g_{t}\,;\, t\in[0,T)\,)$. 
We consider the following two conditions (A2) and (B2). 
\begin{enumerate}
\item[(A2)] The norm of the second fundamental form of $F_{t}$ (denoted by $A(F_{t})$) satisfies \[\limsup_{t\to T}\bigl(\sqrt{T-t}\max_{M}|A(F_{t})|\bigr)<\infty. \]
\item[(B2)]  There exists a point $p_{0}\in M$ such that when $t\to T$ 
\[\ell_{F_{t}(p_{0}),t}\to f \quad\mathrm{pointwise~on~}N\times[0,T), \]
where $f:N\times[0,T)\to\mathbb{R}$ is a function on $N\times[0,T)$ defined above 
and $\ell_{\ast,\bullet}:N\times [0,\bullet)\to\mathbb{R}$ is the reduced distance based at $(\ast,\bullet)$. 
\end{enumerate}

\begin{remark}
The condition (A2) corresponds to (A1). 
In (A2), note that $A(F_{t})$ and its norm $|A(F_{t})|$ are computed by the ambient metric $g_{t}$ at each time $t$. 
In this paper, we do not assume that 
\begin{align}\label{diverofA}
\limsup_{t\to T}\sup_{M}|A(F_{t})|=\infty. 
\end{align}
If $F:M\times [0,T)\to N$ satisfies (\ref{diverofA}) and (A2), we say that $F$ develops a singularity of type I. 
Hence (A2) is slightly weaker than the condition that $F$ develops a singularity of type I. 
Especially, non-singular case (that is, $\limsup_{t\to T}\sup_{M}|A(F_{t})|<\infty$) is contained in (A2). 
\end{remark}

\begin{remark}
The condition (B2) corresponds to (B1). 
In (B2), $\ell_{F_{t}(p_{0}),t}$ is the reduced distance for the Ricci flow $g$ based at $(F_{t}(p_{0}),t)$ introduced by Perelman. 
Here we explain this briefly. 
Let $(N,g_{t})$ be a Ricci flow on $[0,T)$. 
For any curve $\gamma:[t_{1},t_{2}]\to N$ with $0\leq t_{1}<t_{2}<T$, we define the $\mathcal{L}$-length of $\gamma$ by 
\[\mathcal{L}(\gamma):=\int_{t_{1}}^{t_{2}}\sqrt{t_{2}-t}\bigl( R(g_{t})+|\dot{\gamma}|^2 \bigr)dt, \]
where $|\dot{\gamma}|$ is the norm of $\dot{\gamma}(t)$ measured by $g_{t}$. 
For a fixed point $(p_{2},t_{2})$ in the space-time $N\times (0,T)$, 
we get the reduced distance 
\[\ell_{p_{2},t_{2}}:N\times [0,t_{2})\to\mathbb{R}\]
based at $(p_{2},t_{2})$ defined by
\[\ell_{p_{2},t_{2}}(p_{1},t_{1}):=\frac{1}{2\sqrt{t_{2}-t_{1}}}\inf_{\gamma}\mathcal{L}(\gamma), \]
where the infimum is taken over all curves $\gamma:[t_{1},t_{2}]\to N$ with $\gamma(t_{1})=p_{1}$ and $\gamma(t_{2})=p_{2}$. 
In Remark~\ref{forHuisken}, we see that (B1) and (B2) are equivalent when $(N,\tilde{g},\tilde{f})$ is the Gaussian soliton $(\mathbb{R}^n,g_{\mathrm{st}},\frac{1}{4}|x|^2)$. 
\end{remark}

If $(N,\tilde{g},\tilde{f})$ is compact (resp.~non-compact), we assume that $F$ satisfies (A2) (resp. (A2) and (B2)). 
As in the Euclidean case, we consider the rescaled flow $\tilde{F}:M\times [-\log T, \infty)\to N$ defined by 
\begin{align}\label{scaledmcf2}
\tilde{F}_{s}:=\Phi_{t}\circ F_{t}\quad\mathrm{with}\quad s=-\log(T-t), 
\end{align}
and we can see that $\tilde{F}$ becomes a normalized mean curvature flow in $(N,\tilde{g},\tilde{f})$ (cf. Proposition~\ref{corresofselsol}). 
Then the main results in this paper are the following. 

\begin{theorem}\label{maincomp}
Assume that $(N,\tilde{g},\tilde{f})$ is compact. 
Let $F:M\times[0,T)\to N$ be a Ricci-mean curvature flow along the Ricci flow $(N,g_{t})$ defined by $g_{t}:=(T-t)\Phi_{t}^{*}\tilde{g}$. 
Assume that $M$ is compact and $F$ satisfies (A2). 
Let $\tilde{F}:M\times[-\log T,\infty)\to N$ be defined by (\ref{scaledmcf2}). 
Then, for any sequence $s_{1}<s_{2}<\cdots<s_{j}<\cdots \to \infty$ and points $\{x_{j}\}_{j=1}^{\infty}$ in $M$, 
there exist sub-sequences $s_{j_{k}}$ and $x_{j_{k}}$ 
such that the family of immersion maps $\tilde{F}_{s_{j_{k}}}:M\to N$ from pointed manifolds $(M,x_{j_{k}})$ converges to an immersion map 
$\tilde{F}_{\infty}:M_{\infty}\to N$ from some pointed manifold $(M_{\infty}, x_{\infty})$.  
Furthermore, $M_{\infty}$ is a complete Riemannian manifold with metric $\tilde{F}_{\infty}^{*}\tilde{g}$ 
and $\tilde{F}_{\infty}$ is a self-shrinker in $(N,\tilde{g},\tilde{f})$ with $\lambda=-1$, that is, 
$\tilde{F}_{\infty}$ satisfies 
\[H(\tilde{F}_{\infty})=-\nabla\tilde{f}^{\bot}. \]
\end{theorem}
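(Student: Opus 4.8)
The plan is to mimic Huisken's original strategy: establish a monotonicity formula for a suitable weighted area functional along the normalized mean curvature flow in $(N,\tilde g,\tilde f)$, use the type I assumption (A2) to get uniform bounds on the second fundamental form (and its derivatives, via Shi-type estimates) for the rescaled flow $\tilde F_s$, extract a smooth limit, and finally argue that the monotone quantity forces the limit to be self-similar. First I would record the evolution equation satisfied by $\tilde F_s$ — by Proposition~\ref{corresofselsol} it is the normalized mean curvature flow $\partial_s\tilde F_s = H(\tilde F_s)+\nabla\tilde f$ — and note that under the time rescaling $s=-\log(T-t)$, condition (A2) becomes a \emph{uniform} bound $\sup_{M\times[-\log T,\infty)}|A(\tilde F_s)|<\infty$ computed with respect to the fixed metric $\tilde g$. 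Since $N$ is compact, $\tilde g$ has bounded geometry, so by standard interpolation/Shi-type estimates for the mean curvature flow one also gets uniform bounds on all covariant derivatives $|\nabla^k A(\tilde F_s)|$ on time intervals $[s,s+1]$, $s\ge -\log T$.

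Second, I would set up the weighted monotonicity formula. The natural weight is $e^{-\tilde f}$: define, for the normalized flow, the functional obtained by integrating $e^{-\tilde f}$ over $\tilde M_s$ (against the induced volume form), and compute $\frac{d}{ds}$ of it. Using the soliton equation \eqref{shrisol} and the companion normalization \eqref{addshrisol}, the first variation should collapse to
\begin{equation}\label{monoton}
\frac{d}{ds}\int_{\tilde M_s} e^{-\tilde f}\,d\mu_s
= -\int_{\tilde M_s}\bigl|H(\tilde F_s)+\nabla\tilde f^{\bot}\bigr|^2\, e^{-\tilde f}\, d\mu_s .
\end{equation}
Here the key algebraic point is that the ``tangential part'' of $\nabla\tilde f$ together with the $\mathrm{Hess}\,\tilde f$ and $\mathrm{Ric}$ terms arising from differentiating the volume form recombine — exactly as in Huisken's Gaussian computation and its soliton analogue in the work of Magni--Mantegazza--Tsatis and others — into the perfect square on the right-hand side. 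Integrating \eqref{monoton} from $-\log T$ to $\infty$ and using that the left side is bounded below (the integrand $e^{-\tilde f}$ is bounded on compact $N$, and the area of $\tilde M_s$ is controlled — for this I need a uniform area bound, which follows from (A2) together with the fact that the flow exists for all $s$, via a Gauss-equation estimate bounding the volume ratio) shows $\int_{-\log T}^\infty\!\int_{\tilde M_s}|H+\nabla\tilde f^\bot|^2 e^{-\tilde f}\,d\mu_s\,ds<\infty$, hence there is a sequence $\bar s_j\to\infty$ along which $\int_{\tilde M_{\bar s_j}}|H+\nabla\tilde f^\bot|^2 e^{-\tilde f}\,d\mu\to 0$.

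Third, I would extract the limit. Given the sequences $s_j\to\infty$ and points $x_j\in M$ in the statement, together with the uniform $C^\infty$ bounds on $\tilde F_s$ and the bounded geometry of $(N,\tilde g)$, the compactness theorem for immersions (Cheeger--Gromov--type, or the version for pointed immersed submanifolds with uniformly bounded second fundamental form and injectivity radius — the latter bounded below using (A2) and compactness of $N$) yields a subsequence $s_{j_k}$, $x_{j_k}$ along which $\tilde F_{s_{j_k}}:(M,x_{j_k})\to N$ converges in $C^\infty_{loc}$ to a complete immersion $\tilde F_\infty:(M_\infty,x_\infty)\to N$. To identify $\tilde F_\infty$ as a self-shrinker, I combine this with the previous step: the integral inequality gives $\int_{\tilde M_{s}}|H+\nabla\tilde f^\bot|^2 e^{-\tilde f}\,d\mu_s\to 0$ along \emph{some} sequence, and a short argument — using that the functional in \eqref{monoton} is monotone (so the whole limit exists) and that the integrand's $s$-derivative is controlled by the uniform higher-order bounds — upgrades this to $\|H(\tilde F_s)+\nabla\tilde f^\bot\|_{L^2_{loc}(e^{-\tilde f})}\to 0$ along the \emph{full} sequence $s\to\infty$; passing to the $C^\infty_{loc}$ limit then forces $H(\tilde F_\infty)+\nabla\tilde f^\bot\equiv 0$ pointwise, i.e.\ $\tilde F_\infty$ is a self-shrinker with $\lambda=-1$.

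The main obstacle I anticipate is \emph{not} the monotonicity computation (routine once \eqref{addshrisol} is used) but the geometric control needed to run the compactness argument and to pass the vanishing of the monotone-formula defect to the limit: specifically, obtaining a uniform positive lower bound on the injectivity radius of the immersed submanifolds $\tilde M_s$ (or the ambient-intrinsic geometry near $\tilde F_{s_{j_k}}(x_{j_k})$) and a uniform local area bound, both purely from (A2) and compactness of $(N,\tilde g)$. Huisken's Euclidean argument uses the explicit dilation structure; here one must instead exploit that the rescaled metrics are all pullbacks $\Phi_t^*\tilde g$ of the \emph{fixed} soliton metric, so that the ambient geometry genuinely does not degenerate — this is where the soliton hypothesis does essential work beyond the algebra of \eqref{shrisol}–\eqref{addshrisol}. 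Once those bounds are in hand, the remaining steps are standard.
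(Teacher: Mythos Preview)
Your proposal is correct and follows essentially the same strategy as the paper: the monotonicity formula for $\int_M e^{-\tilde f}\,d\mu$ (Proposition~\ref{monoformingra2}), uniform $C^\infty$ bounds on the second fundamental form of $\tilde F_s$ (Proposition~\ref{bdofallderiA}), a compactness theorem for pointed immersions (Theorem~\ref{convofimm}), and a uniform bound on the $s$-derivative of the defect $\int_M|H+\nabla\tilde f^\bot|^2 e^{-\tilde f}\,d\mu$ (Lemma~\ref{2ndderiofmono}) to upgrade convergence along \emph{some} sequence to convergence along \emph{every} sequence. Two small corrections: the weighted volume is trivially bounded below by $0$, so no separate area estimate is needed there; and the injectivity-radius obstacle you flag is handled by the standard estimate of Theorem~\ref{injofsub2} once $|A(\tilde F_s)|$ is uniformly bounded and $(N,\tilde g)$ has bounded geometry --- the step you call ``a short argument'' (the second-derivative control) is in fact where the paper does most of the technical work, though in the compact case it simplifies because $\tilde f$ is bounded.
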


\begin{theorem}\label{mainnoncomp}
Assume that $(N,\tilde{g},\tilde{f})$ is non-compact and satisfies the assumption in Remark~\ref{immRiem2}. 
Under the same setting in Theorem~\ref{maincomp}, assume that $M$ is compact and $F$ satisfies (A2) and (B2). 
Then, for any sequence of times $s_{j}$, the same statement as Theorem~\ref{maincomp} holds, where we fix $x_{j}:=p_{0}$ for all $j$. 
\end{theorem}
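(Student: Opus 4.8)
The plan is to reduce Theorem~\ref{mainnoncomp} to Theorem~\ref{maincomp} by locating the limiting submanifold in a controlled region of the soliton, so that the non-compactness of $N$ causes no loss of compactness for the rescaled flow. First I would unwind the definition of the rescaled flow: since $\tilde F_s = \Phi_t\circ F_t$ with $s=-\log(T-t)$ and $g_t=(T-t)\Phi_t^*\tilde g$, the immersed submanifolds $\tilde M_s:=\tilde F_s(M)\subset(N,\tilde g)$ are isometric (up to the natural rescaling) to $M_t:=F_t(M)\subset(N,g_t)$, and the evolution equation for $\tilde F_s$ is the normalized mean curvature flow $\partial_s\tilde F_s=H(\tilde F_s)+\nabla\tilde f$ in the fixed background $(N,\tilde g,\tilde f)$ (Definition~\ref{defofnormalizedmcf}, cf.\ Proposition~\ref{corresofselsol}). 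The type I bound (A2) translates, under this rescaling, into a uniform bound $\sup_{\tilde M_s}|A(\tilde F_s)|\le C$ on the second fundamental form of the rescaled submanifolds, uniformly in $s\in[-\log T,\infty)$. This is the analogue of Huisken's step where type I curvature control becomes a genuine (time-independent) bound on the normalized flow, and it gives, via the Gauss equation and the fixed ambient geometry of $(N,\tilde g)$, uniform bounds on the intrinsic geometry of the pointed manifolds $(M_\infty$-candidates$)$ near the basepoint.

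The crux, and the only place the extra hypothesis (B2) enters, is to establish a uniform (in $s$) bound of the form $\tilde f(\tilde F_s(p_0))\le C_0$, i.e.\ the image of the basepoint $p_0$ under the rescaled flow stays in a fixed sublevel set $\{\tilde f\le C_0\}$ of the potential. Since $(N,\tilde g,\tilde f)$ is a shrinking soliton, $\tilde f$ is proper and grows quadratically in the distance from its minimum (by the Cao--Zhou estimates, which one may assume under "the assumption in Remark~\ref{immRiem2}"), so such a sublevel set is compact. To get this bound I would use the relation between the reduced distance of the Ricci flow $g_t$ and the function $f$: the hypothesis (B2) says exactly that $\ell_{F_t(p_0),t}\to f$ pointwise as $t\to T$. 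Evaluating the reduced distance along the curve traced by the basepoint, together with the soliton identity $R(\tilde g)+|\nabla\tilde f|^2-\tilde f=0$ (equation~(\ref{addshrisol})) and the explicit formula $f_t=\Phi_t^*\tilde f$, converts the pointwise convergence $\ell_{F_t(p_0),t}\to f$ into a uniform bound on $f_t(F_t(p_0))=\tilde f(\Phi_t(F_t(p_0)))=\tilde f(\tilde F_s(p_0))$ as $s\to\infty$. (In the Gaussian soliton case this is precisely the statement that (B1) and (B2) are equivalent, as noted in the remark after the definition of $\ell$; for general solitons one uses monotonicity/lower bounds for $\ell$ in place of the explicit Euclidean formula.) I expect this step --- extracting a $C^0$ bound on $\tilde f\circ\tilde F_s(p_0)$ from the reduced-distance hypothesis --- to be the main obstacle, since it requires comparing Perelman's $\mathcal L$-geometry of the time-dependent $g_t$ with the static potential $\tilde f$.

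Once the basepoint is confined to a compact set $K:=\{\tilde f\le C_0\}\subset N$, the remainder runs exactly as in the compact case. With $p_0$ fixed for all $j$, the images $\tilde F_{s_j}(p_0)\in K$ subconverge to some $x_\infty\in N$; the uniform second-fundamental-form bound from (A2), combined with interior estimates for the normalized mean curvature flow (Huisken-type estimates on all higher derivatives of $A$, which hold in the fixed smooth background $(N,\tilde g)$ on any compact region), gives uniform $C^k$ bounds on $\tilde F_{s_j}$ on metric balls around $p_0$ of any fixed radius. A pointed-immersion compactness theorem (Cheeger--Gromov-type for immersions, cf.\ the analogous statement already used for Theorem~\ref{maincomp}) then extracts a subsequence $s_{j_k}$ along which $(M,x_{j_k})$ with the immersions $\tilde F_{s_{j_k}}$ converges smoothly to a complete pointed immersed submanifold $\tilde F_\infty:(M_\infty,x_\infty)\to N$, complete with respect to $\tilde F_\infty^*\tilde g$ because the ambient $(N,\tilde g)$ is complete and the geometry is uniformly controlled. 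Finally, to identify $\tilde F_\infty$ as a self-shrinker with $\lambda=-1$, I would use Huisken's monotonicity formula adapted to the soliton (a weighted area functional with weight $e^{-\tilde f}$ is monotone along the normalized flow), whose monotone quantity, being bounded, forces its $s$-derivative --- an integral of $|H(\tilde F_s)+\nabla\tilde f^\bot|^2$ against the weight --- to tend to zero along a subsequence; passing to the limit yields $H(\tilde F_\infty)+\nabla\tilde f^\bot=0$, i.e.\ $H(\tilde F_\infty)=-\nabla\tilde f^\bot$. The only genuinely new ingredient beyond Theorem~\ref{maincomp} is thus the confinement of the basepoint, which is precisely what (B2) is designed to supply; everything else is the compact-case argument localized to the compact region $K$.
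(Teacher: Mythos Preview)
Your outline is correct and matches the paper's approach: the only new ingredient beyond Theorem~\ref{maincomp} is the confinement $\tilde f(\tilde F_s(p_0))\le C$, obtained from (B2) by estimating the reduced distance along the curve $t\mapsto F_t(p_0)$, after which Cao--Zhou's growth estimate for $\tilde f$ and Theorem~\ref{convofimm} reduce everything to the compact case.

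One clarification on the step you flag as the main obstacle: it is more direct than you anticipate, and uses neither the identity~(\ref{addshrisol}) nor any monotonicity or lower bound for $\ell$. For $0\le t_1<t_2<T$, the curve $\gamma(t)=F_t(p_0)$ is admissible for $\ell_{F_{t_2}(p_0),t_2}(F_{t_1}(p_0),t_1)$, and along it $|\dot\gamma|^2=|H(F_t)|^2$; by (A2) and the bounded geometry of $(N,\tilde g)$ one has $R(g_t)+|H(F_t)|^2\le C/(T-t)$. Integrating $\sqrt{t_2-t}\cdot C/(T-t)\le C/\sqrt{T-t}$ gives
\[
\ell_{F_{t_2}(p_0),t_2}(F_{t_1}(p_0),t_1)\le C\,\frac{\sqrt{T-t_1}}{\sqrt{t_2-t_1}},
\]
and sending $t_2\to T$ with (B2) yields $f(F_{t_1}(p_0),t_1)=\tilde f(\tilde F_{s_1}(p_0))\le C$ for all $t_1$. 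This is an \emph{upper} bound on $\ell$ from a test curve, passed to the limit; no comparison geometry for $\ell$ is needed.
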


\begin{remark}\label{immRiem2}
For a complete non-compact Riemannian manifold $(N,\tilde{g})$, we assume that 
there is an isometrically embedding $\Theta:N\to \mathbb{R}^{L}$ into some higher dimensional Euclidean space with 
\[|\nabla^{p}A(\Theta)|\leq \tilde{D}_{p}<\infty \]
for some constants $\tilde{D}_{p}>0$ for all $p\geq 0$. 
Under this assumption, one can see that $(N,\tilde{g})$ must have the bounded geometry by Theorem~\ref{injofsub2} and Gauss equation (\ref{Gaussequation}) (and its iterated derivatives). 
\end{remark}

\begin{remark}
The notion of the convergence of immersions from pointed manifolds is defined in Appendix~\ref{convofsubsec} (cf. Definition~\ref{convofimmmaps}). 
Roughly speaking, it is the immersion version of the Cheeger--Gromov convergence of pointed Riemannian manifolds. 
\end{remark}

\begin{remark}\label{forHuisken}
We see that Theorem~\ref{mainnoncomp} implies Theorem~\ref{Huisken} in $\mathbb{R}^n$. 
Consider $\mathbb{R}^n$ as the Gaussian soliton with potential function $\tilde{f}(x):=\frac{1}{4}|x|^2$. 
Since $\overrightarrow{x}=2\nabla\tilde{f}(x)$, Definition~\ref{selsolingra} coincides with (\ref{selsolinR}) in $\mathbb{R}^n$. 
It is trivial that $(\mathbb{R}^n,g_{\mathrm{st}})$ satisfies the assumption in Remark~\ref{immRiem2}. 
We take $T=1$ for simplicity. Then we have 
\[\Phi_{t}(x)=\frac{1}{\sqrt{T-t}}x, \quad g_{t}\equiv g_{\mathrm{st}}, \quad f(x,t)=\frac{|x|^2}{4(T-t)}. \]
Since $g_{t}$ is the trivial Ricci flow, the condition (A1) and (A2) coincides. 
Furthermore, one can easily see that in this trivial Ricci flow Perelman's reduced distance bases at $(\ast,\bullet)$ is given by 
\[\ell_{\ast,\bullet}(x,t):=\frac{|x-\ast|^2}{4(\bullet-t)}. \]
Hence it is clear 
\[\ell_{F_{t}(p_{0}),t}\to f \quad\mathrm{pointwise~on~}\mathbb{R}^n\times[0,T)\]
when $F_{t}(p_{0})\to \mathrm{O}$ as $t\to T$, that is, the condition (B1) implies (B2). 
Conversely, under the assumption (B2) we can see that $F_{t}(p_{0})\to \mathrm{O}$ as $t\to T$ since 
\[\frac{1}{4t}|F_{t}(p_{0})|^2=\ell_{F_{t}(p_{0}),t}(\mathrm{O},0)\to f(\mathrm{O},0)=0 \]
as $t\to T(<\infty)$. 
Hence (B1) and (B2) are equivalent in $\mathbb{R}^n$, and Theorem \ref{mainnoncomp} implies Theorem \ref{Huisken}. 
\end{remark}

\begin{example}
Here we consider compact examples of self-similar solutions embedded in compact gradient shrinking Ricci solitons. 
Let $(N,\tilde{g},\tilde{f})$ be a compact gradient shrinking Ricci soliton. 
Then $N$ itself and a critical point $P$ ($0$-dimensional submanifold) of $\tilde{f}$ are trivially compact self-similar solutions, since $H=0$ and $\nabla\tilde{f}^{\bot}=0$. 
The next examples are given in {\it K\"ahler-Ricci solitons}. 
Let $(N,\tilde{g},\tilde{f})$ be a compact gradient shrinking K\"ahler Ricci soliton. 
Let $M\subset N$ be a compact {\it complex} submanifold such that the gradient $\nabla\tilde{f}$ is tangent to $M$. 
Then $M$ is a compact self-similar solution, since $H=0$ (by a well-known fact that a complex submanifold in a K\"ahler manifold is minimal) and $\nabla\tilde{f}^{\bot}=0$ on $M$. 
Actually, Cao \cite{Cao} and Koiso \cite{Koiso} (for notations and assumptions, see \cite{KoisoSakane}) constructed examples of compact gradient shrinking K\"ahler Ricci solitons. 
By their construction, each soliton is the total space of some complex $\mathbb{P}^1$-fibration and the gradient of the potential function is tangent to every $\mathbb{P}^1$-fiber. 
Hence each $\mathbb{P}^1$-fiber is a compact self-similar solution with real dimension $2$. 
\end{example}

Finally, we give some comments for Lagrangian self-similar solutions. 
For a Lagrangian immersion $F:L\to N$ in a K\"ahler manifold $N$ with a K\"ahler form $\omega$, a 1-form $\omega_{H}$ on  $L$ defined by 
$\omega_{H}(X):=\omega(H(F),F_{*}X)$ is called the mean curvature form. 
In Theorem 2.3.5 in \cite{Smoczyk2}, Smoczyk proved that there exists no compact Lagrangian self-similar solution with exact mean curvature form in $\mathbb{C}^n$. 
In his proof, it is proved that a compact Lagrangian self-similar solution with exact mean curvature form is a minimal submanifold in $\mathbb{C}^n$. 
However there exists no compact minimal submanifold in $\mathbb{C}^n$. 
Hence the assertion holds. 
As an analog of this theorem, we have the following theorem and its proof is given at the end of Section~\ref{MCFGSRS}. 
\begin{theorem}\label{minLagthm}
Let $(N,g,f)$ be a gradient shrinking K\"ahler Ricci soliton and 
$F:L\to N$ be a compact Lagrangian self-similar solution with exact mean curvature form. 
Then $F:L\to N$ is a minimal Lagrangian immersion. 
\end{theorem}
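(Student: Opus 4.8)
The plan is to produce a potential function $h\in C^{\infty}(L)$ with $\omega_H=dh$ (possible by hypothesis), show that it satisfies a linear drift-Laplace equation with vanishing zeroth order term on the compact manifold $L$, and conclude by integration by parts that $h$ is constant, whence $\omega_H=0$ and $H\equiv 0$. Throughout I would use three facts. First, since $(N,g)$ is K\"ahler, $\mathrm{Ric}(g)$ and $g$ are $J$-invariant, so the soliton equation $\mathrm{Ric}(g)+\mathrm{Hess}\,f-\tfrac12 g=0$ forces $\mathrm{Hess}\,f$ to be $J$-invariant; in particular $\mathrm{Hess}\,f(v,Jv)=0$ for every $v$. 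Second, for a Lagrangian immersion $J$ interchanges the tangent and normal bundles of $L$, and the cubic form $C(X,Y,Z):=g(A(X,Y),JZ)$ built from the second fundamental form $A$ is totally symmetric. Third, writing $\nabla f=\nabla f^{\top}+\nabla f^{\bot}$ for the tangential and normal components of $\nabla f$ along $L$, the identity $\omega_H(X)=\omega(H,F_{*}X)=g(JH,F_{*}X)$ together with the fact that $JH$ is tangent to $L$ shows that $\omega_H=dh$ says precisely $\nabla_L h=JH$, where $\nabla_L$ is the gradient of the induced metric $F^{*}g$.

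Next I would substitute the self-similar equation $H=\lambda\,\nabla f^{\bot}$ from Definition~\ref{selsolingra} and compute $\Delta_L h=\mathrm{div}_L(\nabla_L h)=\mathrm{div}_L(JH)=\lambda\,\mathrm{div}_L\!\bigl(J\nabla f^{\bot}\bigr)$. The central computation is the pointwise identity
\[\mathrm{div}_L\!\bigl(J\nabla f^{\bot}\bigr)=g\!\bigl(H,\,J\nabla f^{\top}\bigr),\]
which I would obtain by decomposing $\nabla f=\nabla f^{\top}+\nabla f^{\bot}$, moving $J$ through the ambient covariant derivative by the K\"ahler condition, applying the Gauss and Weingarten formulas, using $\mathrm{Hess}\,f(v,Jv)=0$ to kill the $\nabla f$ term, and invoking the total symmetry of $C$ to turn $\sum_i g(A(e_i,\nabla f^{\top}),Je_i)$ into $\sum_i g(A(e_i,e_i),J\nabla f^{\top})=g(H,J\nabla f^{\top})$ for a local orthonormal frame $(e_i)$ of $TL$. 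Feeding this back, together with $\nabla_L h=JH=\lambda J\nabla f^{\bot}$, $\nabla_L(f\circ F)=\nabla f^{\top}$, and skew-adjointness of $J$, yields
\[\Delta_L h=\lambda^{2}\,g\!\bigl(\nabla f^{\bot},J\nabla f^{\top}\bigr)=-\lambda\,\bigl\langle\nabla_L(f\circ F),\nabla_L h\bigr\rangle ,\]
that is, $\Delta_L h+\lambda\,\langle\nabla_L(f\circ F),\nabla_L h\rangle=0$ on $L$.

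To finish, multiply by the smooth positive weight $e^{\lambda(f\circ F)}$ to rewrite the equation as $\mathrm{div}_L\!\bigl(e^{\lambda(f\circ F)}\nabla_L h\bigr)=0$; pairing with $h$ and integrating over the compact boundaryless manifold $L$ gives $\int_L e^{\lambda(f\circ F)}|\nabla_L h|^{2}\,d\mu=0$, so $h$ is constant on each component of $L$. Hence $\omega_H=dh=0$, so $JH=\nabla_L h=0$, i.e.\ $H\equiv0$, and $F:L\to N$ is a minimal Lagrangian immersion. (Equivalently one may invoke the strong maximum principle directly for the elliptic operator $\Delta_L+\lambda\,\nabla_L(f\circ F)\cdot\nabla_L$, which has no zeroth order term, on the compact connected $L$.)

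The main obstacle is the identity $\mathrm{div}_L(J\nabla f^{\bot})=g(H,J\nabla f^{\top})$: it is the one place where the K\"ahler structure, the Lagrangian condition (through the symmetry of $A$ against $J$), and the soliton equation (through the $J$-invariance of $\mathrm{Hess}\,f$) must all be combined and where the various normal and tangential cross terms have to cancel. Once it is available the rest is a one-line integration by parts, valid for every value of $\lambda$. I note that, unlike Smoczyk's argument in $\mathbb{C}^n$, this route does not use the relation $d\omega_H=F^{*}\rho$ between the mean curvature form and the Ricci form $\rho$ of $N$; that relation does, however, give the complementary remark that the exactness hypothesis also forces the pullback of $\rho$ to $L$ to vanish.
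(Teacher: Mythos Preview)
Your proof is correct and arrives at exactly the same drift-Laplace equation $\Delta_L h + \lambda\langle\nabla_L(f\circ F),\nabla_L h\rangle=0$ as the paper, followed by the same maximum-principle/integration-by-parts conclusion. The only difference is presentational: the paper works directly with the 1-form identity $\omega_H(X)=\lambda\,\omega(\nabla f,F_*X)$ (using the Lagrangian condition to replace $\nabla f^{\bot}$ by $\nabla f$) and differentiates it, whereas you dualize to the vector field $JH=\lambda J\nabla f^{\bot}$ and compute its divergence via the symmetric cubic form---but both routes encode the same cancellations.
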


\begin{RPL*}
Recently there has been some studies in Ricci-mean curvature flows. 
One of main streams of the study is to generalize results established for mean curvature flows in K\"ahler-Einstein manifolds to Ricci-mean curvature flows along K\"ahler-Ricci flows. 
For example, some results for Lagrangian mean curvature flows can be generalized (cf. \cite{HanLi, LotayPacini}). 
Another main stream of the study is to generalize Huisken's monotonicity formula in $\mathbb{R}^n$ to Ricci-mean curvature flows along Ricci flows. 
In this direction, Lott considered a mean curvature flow in a gradient Ricci soliton in Section~5 in \cite{Lott}, 
and a certain kind of monotonicity formula is obtained in gradient steady soliton case. 
He also gave a definition of a self-similar solution for hypersurfaces in a gradient Ricci soliton. 
Our definition of a self-similar solution (cf. Definition~\ref{selsolingra}) coincides with Lott's one for hypersurfaces. 
In Remark~5 in \cite{Lott}, he pointed out the existence of an analog of a monotonicity formula in gradient shrinking soliton case. 
Actually, a monotonicity formula for a mean curvature flow moving in a gradient shrinking Ricci soliton was also given 
by Magni, Mantegazza and Tsatis (cf. Proposition 3.1 in \cite{MagniMantegazzaTsatis}) more directly. 
In this paper, we reintroduce their monotonicity formula in Section~\ref{MCFGSRS}. 
There is also a generalization of Huisken's work to a mean curvature flow in a Riemannian cone manifold (cf. \cite{FutakiHattoriYamamoto}). 
\end{RPL*}

\begin{OTP*}
The rest of this paper is organized as follows. 
In Section~\ref{pf}, we prove Theorem \ref{maincomp} and \ref{mainnoncomp}, after reviewing the proof of Theorem~\ref{Huisken}. 
In this proof, we use lemmas and propositions proved in the following sections and appendices. 
In Section~\ref{MF}, we introduce some general formulas for the first variation of a certain kind of weighted volume functional. 
In Section~\ref{MCFGSRS}, we study some properties of Ricci-mean curvature flows along Ricci flows constructed from gradient shrinking Ricci solitons, 
and introduce the monotonicity formula. Furthermore, we prove the estimates for higher derivatives of the second fundamental forms of a rescaled flow and give an analog of Stone's estimate. 
In Appendix~\ref{EvoEq}, we give a general treatment of evolution equations for tensors along Ricci-mean curvature flows. 
In Appendix~\ref{EstiforLem}, we give an estimate which is used in the proof of Lemma~\ref{2ndderiofmono}. 
In Appendix~\ref{convofsubsec}, we give a definition of convergence of immersion maps into a Riemannian manifold and prove some propositions. 
\end{OTP*}

\begin{Ack*}
I would like to thank my supervisor, A. Futaki for many useful discussions and constant encouragement. 
\end{Ack*}
\section{Proofs of main theorems}\label{pf}
In this section, we give proofs of Theorem \ref{maincomp} and \ref{mainnoncomp}. 
First of all, we review the proof of Theorem~\ref{Huisken}. 
The key results to prove Theorem~\ref{Huisken} are the following (i), (ii) and (iii). 
\begin{enumerate}
\item[(i)] The monotonicity formula for the weighted volume functional (cf. Theorem 3.1 and Corollary 3.2 in \cite{Huisken}). 
\end{enumerate}
Here the weighted volume functional is defined by 
\[\int_{\tilde{M}}e^{-\frac{|x|^2}{4}}d\mu_{\tilde{M}}\]
for a submanifold $\tilde{M}$ in $\mathbb{R}^n$. 
This result corresponds to Proposition \ref{monoformingra} and \ref{monoformingra2}. 
For a submanifold $\tilde{M}$ (or immersion $\tilde{F}:M\to N$) in a gradient shrinking Ricci soliton $(N,\tilde{g},\tilde{f})$, 
we consider the weighted volume functional $\int_{M}e^{-\tilde{f}}d\mu(\tilde{F}^{*}\tilde{g})$. 
The monotonicity formula decides the profile of the limiting submanifold $\tilde{M}_{\infty}$ if it exists. 
\begin{enumerate}
\item[(ii)] Uniform estimates for all derivatives of second fundamental forms of $\tilde{M}_{s_{j}}$ (cf. Proposition 2.3 in \cite{Huisken}). 
\end{enumerate}
This result corresponds to Proposition~\ref{bdofallderiA}. 
It is proved by the parabolic maximum principle for the evolution equation of $|\tilde{\nabla}^{k}\tilde{A}_{s}|^2$ and the argument of degree (it is explained in the proof of Proposition~\ref{bdofallderiA}). 
This result implies the sub-convergence of $\tilde{M}_{s_{j}}$ to some limiting submanifold $\tilde{M}_{\infty}$. 
\begin{enumerate}
\item[(iii)] A uniform estimate for the second derivative of the weighted volume functional. It is proved by Stone's estimate (cf. Lemma 2.9 in \cite{Stone}) and the result (ii). 
\end{enumerate}
In this paper we prepare an analog of Stone's estimate in Lemma~\ref{stonelem}, and by combining Lemma~\ref{stonelem} and Proposition~\ref{bdofallderiA} 
we prove Proposition~\ref{2ndderiofmono} which is an analog of the result (iii). 
This result is necessary in the following sense. 
In general, if we know $\frac{d}{ds}\mathcal{F}(s)\leq 0$ for some smooth non-negative function $\mathcal{F}:[0,\infty)\to [0,\infty)$, 
we can say that $\mathcal{F}$ is monotone decreasing and converges to some value as $s\to\infty$. 
However we can not say that $\frac{d}{ds}\mathcal{F}(s_{j})\to 0$ for any sequence $s_{1}<s_{2}<\cdots\to\infty$. 
If we further know that $|\frac{d^2}{d^2s}\mathcal{F}(s)|\leq C$ uniformly, then we can say that. 
In our situation, $\mathcal{F}(s)$ is the weighted volume of $\tilde{M}_{s}$. 
This argument is pointed out right before Lemma 3.2.7 in \cite{Mantegazza}. 

\begin{proof}[Proof of Theorem~\ref{maincomp}]
First, we prove the existence of a smooth manifold $M_{\infty}$ and a smooth map $\tilde{F}_{\infty}:M_{\infty}\to N$. 
Next, we show that this $\tilde{F}_{\infty}$ is a self-shrinker by using the monotonicity formula (\ref{monoformingraform2}) in Proposition~\ref{monoformingra2}. 

By Proposition~\ref{bdofallderiA}, for all $k=0,1,2,\dots$, there exist constants $C_{k}>0$ such that 
\[|\tilde{\nabla}^{k}A(\tilde{F_{s}})|\leq C_{k} \quad\mathrm{on}\quad M\times[-\log T,\infty). \]
Since $N$ is compact, by Theorem~\ref{convofimm}, we get a sub-sequence $j_{k}$, a pointed manifold $(M_{\infty}, x_{\infty})$ and an immersion map $\tilde{F}_{\infty}:M_{\infty}\to N$ 
with a complete Riemannian metric $F_{\infty}^{*}\tilde{g}$ on $M_{\infty}$ 
such that $\tilde{F}_{s_{j_{k}}}:(M,x_{j_{k}})\to N$ converges to $\tilde{F}_{\infty}:(M_{\infty},x_{\infty})\to N$ in the sense of Definition~\ref{convofimmmaps} as $k\to \infty$. 
We denote $\tilde{F}_{s_{j_{k}}}$ by $\tilde{F}_{k}$ for short. 
Then, there exist 
an exhaustion $\{U_{k}\}_{k=1}^{\infty}$ of $M_{\infty}$ with $x_{\infty}\in U_{k}$ and 
a sequence of diffeomorphisms $\Psi_{k}:U_{k}\to V_{k}:=\Psi_{k}(U_{k})\subset M$ with $\Psi_{k}(x_{\infty})=x_{j_{k}}$ 
such that $\Psi_{k}^{*}(\tilde{F}_{k}^{*}\tilde{g})$ converges in $C^{\infty}$ to $\tilde{F}_{\infty}^{*}\tilde{g}$ uniformly on compact sets in $M_{\infty}$, 
and furthermore the sequence of maps $\tilde{F}_{k}\circ \Psi_{k}:U_{k}\to N$ converges in $C^{\infty}$ to $F_{\infty}:M_{\infty}\to N$ uniformly on compact sets in $M_{\infty}$. 

Let $K\subset M_{\infty}$ be any compact set. 
Then we will prove that 
\[ \int_{K} \Bigl|H(\tilde{F}_{\infty})+\nabla \tilde{f}^{\bot_{\tilde{F}_{\infty}}}\Bigr|_{\tilde{g}}^2 e^{-\tilde{f}\circ\tilde{F}_{\infty}} d\mu(\tilde{F}_{\infty}^{*}\tilde{g})=0. \]
It is clear that this implies that $\tilde{F}_{\infty}:M_{\infty}\to N$ satisfies 
\[H(\tilde{F}_{\infty})=-\nabla\tilde{f}^{\bot_{\tilde{F}_{\infty}}}\]
on $M_{\infty}$, where $\bot_{\tilde{F}_{\infty}}$ denotes the normal projection with respect to $\tilde{F}_{\infty}$. 
Its proof is the following. 
For $K$, there exists $k_{0}$ such that $K\subset U_{k}$ for all $k\geq k_{0}$. 
Since $\tilde{F}_{k}\circ \Psi_{k}:U_{k}\to N$ converges to $F_{\infty}:M_{\infty}\to N$ in $C^{\infty}$ uniformly on $K$ for $k\geq k_{0}$, we have 
\begin{align}\label{lastconv1}
\begin{aligned}
&\int_{K} \Bigl|H(\tilde{F}_{k}\circ \Psi_{k})+\nabla \tilde{f}^{\bot_{\tilde{F}_{k}\circ \Psi_{k}}}\Bigr|_{\tilde{g}}^2 
e^{-\tilde{f}\circ(\tilde{F}_{k}\circ \Phi_{k})} d\mu((\tilde{F}_{k}\circ \Phi_{k})^{*}\tilde{g})\\
\rightarrow&\int_{K} \Bigl|H(\tilde{F}_{\infty})+\nabla \tilde{f}^{\bot_{\tilde{F}_{\infty}}}\Bigr|_{\tilde{g}}^2 e^{-\tilde{f}\circ\tilde{F}_{\infty}} d\mu(\tilde{F}_{\infty}^{*}\tilde{g})
\end{aligned}
\end{align}
as $k\to \infty$. 
Since $\Psi_{k}:U_{k}\to V_{k}\subset M$ is a diffeomorphism, it is clear that 
\begin{align}\label{lastconv2}
\begin{aligned}
&\int_{K} \Bigl|H(\tilde{F}_{k}\circ \Psi_{k})+\nabla \tilde{f}^{\bot_{\tilde{F}_{k}\circ \Psi_{k}}}\Bigr|_{\tilde{g}}^2 
e^{-\tilde{f}\circ(\tilde{F}_{k}\circ \Psi_{k})} d\mu((\tilde{F}_{k}\circ \Psi_{k})^{*}\tilde{g})\\
=&\int_{\Psi_{k}(K)} \Bigl|H(\tilde{F}_{k})+\nabla \tilde{f}^{\bot_{\tilde{F}_{k}}}\Bigr|_{\tilde{g}}^2 e^{-\tilde{f}\circ\tilde{F}_{k}} d\mu(\tilde{F}_{k}^{*}\tilde{g})\\
\leq& \int_{M} \Bigl|H(\tilde{F}_{k})+\nabla \tilde{f}^{\bot_{\tilde{F}_{k}}}\Bigr|_{\tilde{g}}^2 e^{-\tilde{f}\circ\tilde{F}_{k}} d\mu(\tilde{F}_{k}^{*}\tilde{g}). 
\end{aligned}
\end{align}
By using the monotonicity formula (\ref{monoformingraform2}) and Lemma~\ref{2ndderiofmono}, one can prove that 
\begin{align}\label{lastconv3}
\int_{M} \Bigl|H(\tilde{F}_{k})+\nabla \tilde{f}^{\bot_{\tilde{F}_{k}}}\Bigr|_{\tilde{g}}^2 e^{-\tilde{f}\circ\tilde{F}_{k}} d\mu(\tilde{F}_{k}^{*}\tilde{g})\to 0
\end{align}
as $k\to \infty$ by the argument of contradiction. 
Actually, assume that there exist a constant $\delta>0$ and a subsequence $\{\ell\}\subset \{k\}$ with $\ell\to\infty$ such that 
\[\int_{M} \Bigl|H(\tilde{F}_{\ell})+\nabla \tilde{f}^{\bot_{\tilde{F}_{\ell}}}\Bigr|_{\tilde{g}}^2 e^{-\tilde{f}\circ\tilde{F}_{\ell}} d\mu(\tilde{F}_{\ell}^{*}\tilde{g})\geq\delta. \]
Then one can easily see that 
\[\int_{M} \Bigl|H(\tilde{F}_{s})+\nabla \tilde{f}^{\bot_{\tilde{F}_{s}}}\Bigr|_{\tilde{g}}^2 e^{-\tilde{f}\circ\tilde{F}_{s}} d\mu(\tilde{F}_{s}^{*}\tilde{g}) \geq \frac{\delta}{2}, \]
for $s\in[s_{\ell},s_{\ell}+\frac{\delta}{2C'}]$, 
where we used Lemma~\ref{2ndderiofmono} and $C'$ is the constant appeared in that lemma. 
Hence we have that 
\[\int_{-\log T}^{\infty} \int_{M} \Bigl|H(\tilde{F}_{s})+\nabla \tilde{f}^{\bot_{\tilde{F}_{s}}}\Bigr|_{\tilde{g}}^2 e^{-\tilde{f}\circ\tilde{F}_{s}} d\mu(\tilde{F}_{s}^{*}\tilde{g}) ds =\infty. \]
On the other hand, by the monotonicity formula (\ref{monoformingraform2}): 
\[\frac{d}{ds} \int_{M}e^{-\tilde{f}\circ\tilde{F}} \mathop{d\mu(\tilde{F}^{*}\tilde{g})}=-\int_{M} \Bigl|H(\tilde{F})+\nabla \tilde{f}^{\bot_{\tilde{F}}}\Bigr|_{\tilde{g}}^2 e^{-\tilde{f}\circ\tilde{F}} d\mu(\tilde{F}^{*}\tilde{g})\leq 0, \]
the weighted volume 
\[\int_{M} e^{-\tilde{f}\circ\tilde{F}_{s}} d\mu(\tilde{F}_{s}^{*}\tilde{g}) \]
is monotone decreasing and non-negative, thus it converges to some value 
\[\alpha:=\lim_{s\to\infty}\int_{M} e^{-\tilde{f}\circ\tilde{F}_{s}} d\mu(\tilde{F}_{s}^{*}\tilde{g})<\infty. \]
Hence we have 
\begin{align*}
\int_{-\log T}^{\infty} \int_{M} \Bigl|H(\tilde{F}_{s})+\nabla \tilde{f}^{\bot_{\tilde{F}_{s}}}\Bigr|_{\tilde{g}}^2 e^{-\tilde{f}\circ\tilde{F}_{s}} d\mu(\tilde{F}_{s}^{*}\tilde{g})
=-\alpha +\int_{M} e^{-\tilde{f}\circ\tilde{F}_{\bullet}} d\mu(\tilde{F}_{\bullet}^{*}\tilde{g})<\infty, 
\end{align*}
where $\bullet=-\log T$. 
This is a contradiction. 
Thus, by combining (\ref{lastconv1})-(\ref{lastconv3}), it follows that 
\[ \int_{K} \Bigl|H(\tilde{F}_{\infty})+\nabla \tilde{f}^{\bot_{\tilde{F}_{\infty}}}\Bigr|_{\tilde{g}}^2 e^{-\tilde{f}\circ\tilde{F}_{\infty}} d\mu(\tilde{F}_{\infty}^{*}\tilde{g})=0. \]
Here we completed the proof. 
\end{proof}

Next, we give the proof of the non-compact version of the above theorem. 
\begin{proof}[Proof of Theorem~\ref{mainnoncomp}]
We will prove that $\tilde{F}_{s_{j}}(p_{0})$ is a bounded sequence in $(N,\tilde{g})$. 
For any $t_{1}$, $t_{2}$ with $0\leq t_{1}<t_{2}<T$, we can take $\{F_{t}(p_{0})\}_{t\in[t_{1}, t_{2}]}$ as a curve joining $F_{t_{1}}(p_{0})$ and $F_{t_{2}}(p_{0})$. 
Hence we have 
\begin{align*}
\ell_{F_{t_{2}}(p_{0}),t_{2}}(F_{t_{1}}(p_{0}),t_{1})\leq& \frac{1}{2\sqrt{t_{2}-t_{1}}}\int_{t_{1}}^{t_{2}}\sqrt{t_{2}-t}\left( R(g_{t}) + \left|\frac{\partial F_{t}}{\partial t}\right|^2 \right)dt\\
=& \frac{1}{2\sqrt{t_{2}-t_{1}}}\int_{t_{1}}^{t_{2}}\sqrt{t_{2}-t}\bigl( R(g_{t}) + |H(F_{t})|^2 \bigr)dt
\end{align*}
By the assumption (A2), $(T-t)|H(F_{t})|^2$ is bounded, and it is clear that $(T-t)R(g_{t})=R(g_{0})$ and it is also bounded by the assumption in Remark~\ref{immRiem2}. 
Hence we have $R(g_{t}) + |H(F_{t})|^2\leq \frac{C}{T-t}$ for some $C>0$ and 
\begin{align*}
\ell_{F_{t_{2}}(p_{0}),t_{2}}(F_{t_{1}}(p_{0}),t_{1})\leq&  \frac{C}{2\sqrt{t_{2}-t_{1}}}\int_{t_{1}}^{t_{2}}\frac{\sqrt{t_{2}-t}}{T-t}dt\\
\leq &  \frac{C}{2\sqrt{t_{2}-t_{1}}}\int_{t_{1}}^{t_{2}}\frac{1}{\sqrt{T-t}}dt\\
\leq &  C\frac{\sqrt{T-t_{1}}}{\sqrt{t_{2}-t_{1}}}. 
\end{align*}
By the assumption (B2), by taking the limit as $t_{2}\to T$, we have 
\[f(F_{t_{1}}(p_{0}),t_{1})\leq C. \]
Since $f(F_{t}(p_{0}),t)=f_{t}(F_{t}(p_{0}))=\tilde{f}(\tilde{F}_{s}(p_{0}))$, the above bound means that 
\[\tilde{f}(\tilde{F}_{s}(p_{0}))\leq C\]
on $s\in[-\log T,\infty)$. 
In \cite{CaoZhou} (cf, Theorem 1.1), Cao and Zhou proved that there exist positive constants $C_{1}$ and $C_{2}$ such that 
\[\frac{1}{4}(r-C_{1})^2\leq \tilde{f}\leq \frac{1}{4}(r+C_{2})^2\]
on $N$, where $r(q)=d_{\tilde{g}}(q_{0}, q)$ is the distance function from some fixed point $q_{0}$ in $N$. 
Hence we have 
\[d_{\tilde{g}}(q_{0}, \tilde{F}_{s}(p_{0}))\leq 2\sqrt{C}+C_{1}, \]
that is, $\tilde{F}_{s}(p_{0})$ moves in a bounded region in $N$. 
Hence we can use Theorem~\ref{convofimm} with a bounded sequence  $\tilde{F}_{s_{j}}(p_{0})$. 
Then the remainder part of the proof is completely same as the proof of the case that $N$ is compact. 
\end{proof}
\section{Monotonicity formulas}\label{MF}
In this section, we introduce some general formulas which are useful in the following sections and appendices. 
Let $M$ and $N$ be manifolds with dimension $m$ and $n$ respectively, and assume that $m\leq n$ and $M$ is compact. 
We denote the space of all immersion maps from $M$ to $N$ by $\mathfrak{Imm}(M,N)$ and 
the space of all Riemannian metrics on $N$ by $\mathfrak{Met}(N)$. 
Consider the following functional: 
\begin{align}
\begin{gathered}
\mathcal{F}:C^{\infty}(M)\times \mathfrak{Imm}(M,N) \times C^{\infty}(N)_{>0} \times \mathfrak{Met}(N) \to \mathbb{R}\\
\mathcal{F}(u,F,\rho,g):=\int_{M}u\mathop{F^{*}\hspace{-1.2mm}\rho} \mathop{d\mu(F^{*}g)}. 
\end{gathered}
\end{align}
Here $u$ is a smooth function on $M$ and $\rho$ is a positive smooth function on $N$. 
First of all, we remark some elementary symmetric properties associated with $\mathcal{F}$. 
Here we denote diffeomorphism groups of $M$ and $N$ by $\mathrm{Diff}(M)$ and $\mathrm{Diff}(N)$ respectively. 
\begin{remark}
For $\varphi\in\mathrm{Diff}(M)$ and $\psi\in\mathrm{Diff}(N)$, we have 
\[\mathcal{F}(\varphi^{*}u,\psi^{-1}\circ F\circ \varphi,\psi^{*}\rho,\psi^{*}g)=\mathcal{F}(u,F,\rho,g), \]
and for a positive constant $\lambda>0$ we have
\[\mathcal{F}(\lambda^{n-m}u,F,\lambda^{-n}\rho,\lambda^2g)=\mathcal{F}(u,F,\rho,g). \]
\end{remark}
Let $p:=(u,F,\rho,g)$ be a point in $C^{\infty}(M)\times \mathfrak{Imm}(M,N) \times C^{\infty}(N)_{>0} \times \mathfrak{Met}(N)$ and 
$v:=(w,V,k,h)$ be a tangent vector of $C^{\infty}(M)\times \mathfrak{Imm}(M,N) \times C^{\infty}(N)_{>0} \times \mathfrak{Met}(N)$ at $p$. 
Namely, $w\in C^{\infty}(M)$, $V\in\Gamma(M,F^{*}(TN))$, $k\in C^{\infty}(N)$ and $h\in \mathrm{Sym}^2(N)$. 
Then we consider the first variation of $\mathcal{F}$ at $p$ in the direction $v$, denoted by $\delta_{v}\mathcal{F}(p)$. 

\begin{proposition}\label{vari}
We have 
\begin{align}\label{vari1}
\begin{aligned}
\delta_{v}\mathcal{F}(p)
=&-\int_{M}\mathop{u}g(V+{\nabla f}^{\bot_{F}},H(F)+{\nabla f}^{\bot_{F}})\mathop{F^{*}\hspace{-1.2mm}\rho} d\mu(F^{*}g)\\
&+\int_{M}\mathop{u}F^{*}\biggl(\Delta_{g}\rho+k+\frac{1}{2}\rho\mathop{\mathrm{tr}}h\biggr)d\mu(F^{*}g)\\
&+\int_{M}\biggl(w-\Delta_{F^{*}g}u-g(V,F_{*}\nabla u)\\
&\hspace{29mm} +u \mathop{\mathrm{tr}^{\bot_{F}}}\bigl(\mathop{\mathrm{Hess}}f-\frac{1}{2}h\bigr)\biggr)\mathop{F^{*}\hspace{-1.2mm}\rho} d\mu(F^{*}g),
\end{aligned}
\end{align}
where we define $f$ by $\rho=(4\pi\tau)^{-\frac{n}{2}}e^{-f}$ for a positive function $\tau=\tau(t)$ {\rm(}which depends only on $t${\rm)} 
and $H(F)$ is the mean curvature vector field of immersion $F$ from $M$ to a Riemannian manifold $(N,g)$. 
\end{proposition}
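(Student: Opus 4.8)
The plan is to differentiate $\mathcal{F}$ in each of its four arguments separately and add the contributions, the first variation being additive over the factors by the Leibniz rule. Throughout write $\bar{g}:=F^{*}g$ for the induced metric on $M$, decompose $V=V^{\top}+V^{\bot_{F}}$ into its tangential and normal components, let $W\in\Gamma(M,TM)$ be the vector field with $F_{*}W=V^{\top}$, and recall that $\nabla u$ and $\Delta_{\bar{g}}u$ are taken with respect to $\bar{g}$; since $M$ is compact, every integration by parts below produces no boundary term. It is convenient to write $\rho=(4\pi\tau)^{-n/2}e^{-f}$ from the outset, so that $\nabla\rho=-\rho\nabla f$ on $N$.

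First I would dispose of the three easy variations. Moving $u$ in the direction $w$ gives $\int_{M}w\,F^{*}\rho\,d\mu(\bar{g})$, which is the $w$-term of (\ref{vari1}); moving $\rho$ in the direction $k$ gives $\int_{M}u\,F^{*}k\,d\mu(\bar{g})$, the $k$-term; and moving $g$ in the direction $h$ changes only the volume form. For the last one, the standard first-variation formula $\delta\bigl(d\mu(\gamma)\bigr)=\tfrac12\mathrm{tr}_{\gamma}(\delta\gamma)\,d\mu(\gamma)$ applied with $\gamma=\bar{g}$ and $\delta\gamma=F^{*}h$, together with the pointwise decomposition $\mathrm{tr}_{\bar{g}}(F^{*}h)=\mathop{\mathrm{tr}}h-\mathop{\mathrm{tr}^{\bot_{F}}}h$ of the ambient trace into its tangential and normal parts, gives the contribution $\tfrac12\int_{M}u\,F^{*}(\rho\mathop{\mathrm{tr}}h)\,d\mu(\bar{g})-\tfrac12\int_{M}u\mathop{\mathrm{tr}^{\bot_{F}}}h\;F^{*}\rho\,d\mu(\bar{g})$, i.e. the $\tfrac12\rho\mathop{\mathrm{tr}}h$-term of the second integral and the $-\tfrac12h$ part of the normal-trace term of the third integral.

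The substantial step is the variation in $F$, where both $F^{*}\rho=\rho\circ F$ and $d\mu(\bar{g})$ move. One has $\delta_{V}(\rho\circ F)=g(\nabla\rho,V)=-(\rho\circ F)g(\nabla f,V)$, and the first variation of the area form is $\delta_{V}d\mu(\bar{g})=\bigl(\mathop{\mathrm{div}}_{\bar{g}}W-g(H(F),V^{\bot_{F}})\bigr)d\mu(\bar{g})$, the first term coming from $V^{\top}=F_{*}W$ (an infinitesimal reparametrization, $\delta_{V^{\top}}\bar{g}=\mathcal{L}_{W}\bar{g}$) and the second from $V^{\bot_{F}}$ through the second fundamental form ($\delta_{V^{\bot_{F}}}\bar{g}_{ij}=-2g(A(F)_{ij},V^{\bot_{F}})$). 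Substituting, rewriting $(\rho\circ F)\mathop{\mathrm{div}}_{\bar{g}}W=\mathop{\mathrm{div}}_{\bar{g}}\!\bigl((\rho\circ F)W\bigr)+(\rho\circ F)g(\nabla f,V^{\top})$, integrating the divergence over the compact $M$, and integrating by parts once more to move the remaining derivative onto $u$ (using $g(V,F_{*}\nabla u)=\langle\nabla u,W\rangle_{\bar{g}}$, which holds because $F_{*}\nabla u$ is tangent), the $\nabla f$ terms collapse to $-(\rho\circ F)g(\nabla f^{\bot_{F}},V^{\bot_{F}})$ and the $F$-variation becomes
\begin{align*}
&-\int_{M}u\,F^{*}\rho\;g\bigl(H(F)+\nabla f^{\bot_{F}},V^{\bot_{F}}\bigr)d\mu(\bar{g})\\
&\qquad-\int_{M}g(V,F_{*}\nabla u)\,F^{*}\rho\,d\mu(\bar{g}).
\end{align*}

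It remains to reassemble the four contributions into the stated form (\ref{vari1}), and this bookkeeping is the only delicate point. The natural form of $\delta_{v}\mathcal{F}(p)$ obtained by adding the four contributions differs from (\ref{vari1}) only in grouping: one introduces the pair of terms $\int_{M}u\,F^{*}(\Delta_{g}\rho)\,d\mu(\bar{g})$ and $-\int_{M}(\Delta_{\bar{g}}u)\,F^{*}\rho\,d\mu(\bar{g})=-\int_{M}u\,\Delta_{\bar{g}}(\rho\circ F)\,d\mu(\bar{g})$, whose difference is evaluated, via the submanifold identity $\Delta_{\bar{g}}(f\circ F)=(\Delta_{g}f)\circ F-\mathop{\mathrm{tr}^{\bot_{F}}}\mathop{\mathrm{Hess}}f+g(\nabla f,H(F))$ and the Bochner-type identity $\Delta_{g}\rho=\rho(|\nabla f|^{2}-\Delta_{g}f)$, to be $F^{*}(\Delta_{g}\rho)-\Delta_{\bar{g}}(\rho\circ F)=(\rho\circ F)\bigl(|\nabla f^{\bot_{F}}|^{2}+g(\nabla f^{\bot_{F}},H(F))-\mathop{\mathrm{tr}^{\bot_{F}}}\mathop{\mathrm{Hess}}f\bigr)$. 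Inserting this, the $-\mathop{\mathrm{tr}^{\bot_{F}}}\mathop{\mathrm{Hess}}f$ it produces cancels the $\mathop{\mathrm{tr}^{\bot_{F}}}\mathop{\mathrm{Hess}}f$ that one places in the third integral, while the surviving $|\nabla f^{\bot_{F}}|^{2}+g(\nabla f^{\bot_{F}},H(F))$ is exactly what is needed to complete $-g(H(F)+\nabla f^{\bot_{F}},V^{\bot_{F}})$ into $-g\bigl(V+\nabla f^{\bot_{F}},H(F)+\nabla f^{\bot_{F}}\bigr)$, using that $H(F)$ and $\nabla f^{\bot_{F}}$ are normal so $V$ may replace $V^{\bot_{F}}$ in the first slot. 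Collecting terms yields (\ref{vari1}). The main obstacle is thus not any single computation but keeping consistent track of the tangential/normal splittings, of the two distinct Laplacians ($\Delta_{g}$ on $N$ versus $\Delta_{\bar{g}}$ on $M$), and of all the signs through this reorganization.
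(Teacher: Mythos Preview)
Your proposal is correct and follows essentially the same route as the paper: both arguments use the standard first-variation formulas for $d\mu(F^{*}g)$ under change of $F$ and of $g$, integrate the tangential divergence by parts, and then introduce the pair $\int_{M}u\,F^{*}(\Delta_{g}\rho)\,d\mu$ and $-\int_{M}(\Delta_{\bar g}u)\,F^{*}\rho\,d\mu$ via the submanifold Laplacian identity to regroup into the stated form. The only cosmetic difference is that the paper applies the identity directly to $\rho$ (writing $F^{*}(\Delta_{g}\rho)=\Delta_{\bar g}(F^{*}\rho)-g(H,\nabla\rho)+\mathrm{tr}^{\bot_{F}}\mathrm{Hess}\,\rho$), whereas you apply it to $f$ and combine with $\Delta_{g}\rho=\rho(|\nabla f|^{2}-\Delta_{g}f)$; the bookkeeping is otherwise identical.
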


\begin{remark}
Here, note that there is an ambiguity of a choice of a function $\tau$, 
but the gradient and Hessian of $f$ do not depend on the choice of $\tau$. 
\end{remark}

\begin{notation}
By $\bot_{F}$, we denote the normal projection with respect to the orthogonal decomposition 
\[F^{*}(TN)=F_{*}(TM)\oplus T^{\bot_{F}}M\]
defined by the immersion $F$, 
and by $\mathop{\mathrm{tr}^{\bot_{F}}}$ we denote the normal trace, that is, 
for a 2-tensor $\eta$ on $N$ and a point $p\in M$, $(\mathop{\mathrm{tr}^{\bot_{F}}}\eta)(p)$ is defined by 
\[(\mathop{\mathrm{tr}^{\bot_{F}}}\eta)(p):=\sum_{j=1}^{n-m}\eta(F(p))(\nu_{j},\nu_{j}), \]
where $\{\nu_{j}\}_{j=1}^{n-m}$ is an orthonormal basis of  $T^{\bot_{F}}_{p}M$. 
\end{notation}

\begin{proof}
Let $\{F_{s}:M\to N\}_{s\in(-\epsilon, \epsilon)}$ be a smooth 1-parameter family of immersions with 
\[F_{0}= F\quad \mathrm{and}\quad \frac{\partial F_{s}}{\partial s}\bigg|_{s=0}=V. \]
Let $u_{s}:=u+sw$, $\rho_{s}:=\rho+sk$ and $g_{s}:=g+sh$. 
Then $p_{s}:=(u_{s},F_{s},\rho_{s},g_{s})$ is a curve in $C^{\infty}(M)\times \mathfrak{Imm}(M,N) \times C^{\infty}(N)_{>0} \times \mathfrak{Met}(N)$ with 
$p_{0}=p$ and $\dot{p}_{0}=v$. 
Then the first variation of $\mathcal{F}$ at $p$ in the direction $v$ is calculated as 
\[\delta_{v}\mathcal{F}(p)=\frac{d}{ds}\bigg|_{s=0}\mathcal{F}(p_{s})=\frac{d}{ds}\bigg|_{s=0}\int_{M}u_{s}\mathop{F_{s}^{*}\hspace{-0.5mm}\rho_{s}}d\mu(F_{s}^{*}g_{s}), \]
and we have
\begin{align}\label{eq1inderi}
\begin{aligned}
&\frac{d}{ds}\bigg|_{s=0}\int_{M}u_{s}\mathop{F_{s}^{*}\hspace{-0.5mm}\rho_{s}}d\mu(F_{s}^{*}g_{s})\\
=&\int_{M}w\mathop{F^{*}\hspace{-1.2mm}\rho}d\mu(F^{*}g)+\int_{M}u\mathop{g(V,\nabla\rho)}d\mu(F^{*}g)+\int_{M}u\mathop{F^{*}\hspace{-0.5mm}k}d\mu(F^{*}g)\\
&+\int_{M}u\mathop{F^{*}\hspace{-1.2mm}\rho}\biggl(\frac{d}{ds}\bigg|_{s=0} d\mu(F_{s}^{*}g)\biggr)+\int_{M}u\mathop{F^{*}\hspace{-1.2mm}\rho} \biggl(\frac{d}{ds}\bigg|_{s=0} d\mu(F^{*}g_{s})\biggr). 
\end{aligned}
\end{align}
It is well-known that the first variation of the induced measure $d\mu(F_{s}^{*}g)$ is given by 
\[\frac{d}{ds}\bigg|_{s=0} d\mu(F_{s}^{*}g)=\{\mathop{\mathrm{div}_{F^{*}g}}F_{*}^{-1}(V^{\top_{F}})-g(H(F),V)\}d\mu(F^{*}g). \]
On the right hand side of the above equation, we decompose $V$ as $V=V^{\top_{F}}+V^{\bot_{F}}\in F_{*}(TM)\oplus T^{\bot_{F}}M$, 
and we take the divergence of $F_{*}^{-1}(V^{\top_{F}})$ on a Riemannian manifold $(M,F^{*}g)$. 

On the other hand, $F^{*}g_{s}$ is a time-dependent metric on $M$. 
Since $g_{s}=g+sh$, we have $F^{*}g_{s}=F^{*}g+sF^{*}h$. 
Thus, the derivation of $F^{*}g_{s}$ is $F^{*}h$ at $s=0$. 
In such a situation, it is also well-known that the first variation of the induced measure $d\mu(F^{*}g_{s})$ of a time-dependent metric on $M$ is given by 
\[\frac{d}{ds}\bigg|_{s=0} d\mu(F^{*}g_{s})=\frac{1}{2}\mathop{\mathrm{tr}}(F^{*}h)d\mu(F^{*}g), \]
where the trace is taken with respect to a metric $F^{*}g$ on $M$. 
By the divergence formula on $(M,F^{*}g)$, we have
\begin{align*}
&\int_{M}u\mathop{F^{*}\hspace{-1.2mm}\rho}\mathop{\mathrm{div}_{F^{*}g}}F_{*}^{-1}(V^{\top_{F}}) d\mu(F^{*}g)\\
=&-\int_{M}(F^{*}g)(F_{*}^{-1}(V^{\top_{F}}), \nabla(u\mathop{F^{*}\hspace{-1.2mm}\rho}))d\mu(F^{*}g)\\
=&-\int_{M}g(V,F_{*}\nabla u)\mathop{F^{*}\hspace{-1.2mm}\rho}d\mu(F^{*}g)-\int_{M}u\mathop{g(V,\nabla \rho^{\top_{F}})}d\mu(F^{*}g).
\end{align*}
Since $\nabla \rho=-\rho \nabla f$, we have
\begin{align}\label{eq2inderi}
\begin{aligned}
&\int_{M}u\mathop{g(V,\nabla\rho)}d\mu(F^{*}g)+\int_{M}u\mathop{F^{*}\hspace{-1.2mm}\rho}\biggl(\frac{d}{ds}\bigg|_{s=0} d\mu(F_{s}^{*}g)\biggr)\\
=&-\int_{M}g(V,F_{*}\nabla u)\mathop{F^{*}\hspace{-1.2mm}\rho}d\mu(F^{*}g)-\int_{M}u \mathop{g(V,H(F)+{\nabla f}^{\bot_{F}})}\mathop{F^{*}\hspace{-1.2mm}\rho} d\mu(F^{*}g). 
\end{aligned}
\end{align}
It is clear that 
\begin{align*}
\mathop{\mathrm{tr}}(F^{*}h)=F^{*}(\mathop{\mathrm{tr}}h)-\mathop{\mathrm{tr}^{\bot_{F}}}h. 
\end{align*}
Hence we have
\begin{align*}
&\int_{M}u\mathop{F^{*}\hspace{-0.5mm}k}d\mu(F^{*}g)+\int_{M}u\mathop{F^{*}\hspace{-1.2mm}\rho} \biggl(\frac{d}{ds}\bigg|_{s=0} d\mu(F^{*}g_{s})\biggr)\\
=&\int_{M}uF^{*}\biggl(k+\frac{1}{2}\rho\mathop{\mathrm{tr}}h\biggr) d\mu(F^{*}g)-\int_{M}\frac{1}{2}u\mathop{F^{*}\hspace{-0.8mm}\rho}(\mathop{\mathrm{tr}^{\bot_{F}}}h) d\mu(F^{*}g). 
\end{align*}
Furthermore, one can easily see that 
\begin{align*}
F^{*}(\Delta_{g}\rho)&=\Delta_{F^{*}g}(F^{*}\hspace{-1.2mm}\rho)-g(H(F),\nabla \rho)+\mathop{\mathrm{tr}^{\bot_{F}}}(\mathop{\mathrm{Hess}}\rho)\\
&=\Delta_{F^{*}g}(F^{*}\hspace{-1.2mm}\rho)+\mathop{F^{*}\hspace{-1.2mm}\rho} g({\nabla f}^{\bot_{F}},H(F)+{\nabla f}^{\bot_{F}})-\mathop{F^{*}\hspace{-1.2mm}\rho}\mathop{\mathrm{tr}^{\bot_{F}}}(\mathop{\mathrm{Hess}}f). 
\end{align*}
Hence we have
\begin{align}\label{eq3inderi}
\begin{aligned}
&\int_{M}u\mathop{F^{*}\hspace{-0.5mm}k}d\mu(F^{*}g)+\int_{M}u\mathop{F^{*}\hspace{-1.2mm}\rho} \biggl(\frac{d}{ds}\bigg|_{s=0} d\mu(F^{*}g_{s})\biggr)\\
=&\int_{M}uF^{*}\biggl(\Delta_{g}\rho+k+\frac{1}{2}\rho\mathop{\mathrm{tr}}h\biggr) d\mu(F^{*}g)-\int_{M}\frac{1}{2}u\mathop{F^{*}\hspace{-0.8mm}\rho}(\mathop{\mathrm{tr}^{\bot_{F}}}h) d\mu(F^{*}g)\\
&-\int_{M}uF^{*}(\Delta_{g}\rho) d\mu(F^{*}g)\\
=&\int_{M}\mathop{u}F^{*}\biggl(\Delta_{g}\rho+k+\frac{1}{2}\rho\mathop{\mathrm{tr}}h\biggr)d\mu(F^{*}g)\\
&-\int_{M}\mathop{u}g({\nabla f}^{\bot_{F}},H(F)+{\nabla f}^{\bot_{F}})\mathop{F^{*}\hspace{-1.2mm}\rho} d\mu(F^{*}g)\\
&+\int_{M}\biggl(-\Delta_{F^{*}g}u+u\mathop{\mathrm{tr}^{\bot_{F}}}(\mathop{\mathrm{Hess}}f-\frac{1}{2}h)\biggr)\mathop{F^{*}\hspace{-1.2mm}\rho}d\mu(F^{*}g), 
\end{aligned}
\end{align}
where we used 
\[\int_{M}u\Delta_{F^{*}g}(F^{*}\hspace{-1.2mm}\rho) d\mu(F^{*}g)=\int_{M}(\Delta_{F^{*}g}u) \mathop{F^{*}\hspace{-1.2mm}\rho}d\mu(F^{*}g).\]
Finally, by combining equations (\ref{eq1inderi})-(\ref{eq3inderi}), we get the formula (\ref{vari1}).
\end{proof}

By using this general formula (\ref{vari1}), we get the following monotonicity formula for Ricci-mean curvature flows. 
\begin{proposition}\label{genmonoRM}
Assume that the pair $g=(\, g_{t}\,;\, t\in[0,T_{1})\,)$ and $F:M\times [0,T_{2}) \to N$ is a solution of Ricci-mean curvature flow with $T_{2}\leq T_{1}$, that is, these satisfy (\ref{evoeq21}) and (\ref{evoeq22}). 
Further assume that a smooth positive function $\rho:N\times[0,T_{1})\to\mathbb{R}^{+}$ on $N$ 
and a smooth non-negative function $u:M\times[0,T_{2})\to\mathbb{R}$ on $M$ satisfy the following coupled equations: 
\begin{subequations}
\begin{align}
&\frac{\partial \rho_{t}}{\partial t}=-\Delta_{g_{t}} \rho_{t} +R(g_{t})\rho_{t}\label{evoeq23}\\
&\frac{\partial u_{t}}{\partial t}=\Delta_{F_{t}^{*}g_{t}}u_{t}-u_{t}\mathop{\mathrm{tr}^{\bot_{F_{t}}}}\bigl(\mathrm{Ric}(g_{t})+\mathop{\mathrm{Hess}}f_{t}) \label{evoeq24}, 
\end{align}
\end{subequations}
where we define $f$ by $\rho=(4\pi\tau)^{-\frac{n}{2}}e^{-f}$ for a positive function $\tau=\tau(t)$. 
Then we have, for all $t\in(0,T_{2})$, 
\begin{align}\label{monoformforRM}
\frac{d}{dt}\mathcal{F}(u_{t},F_{t},\rho_{t},g_{t})=-\int_{M} u_{t}\Bigl|H(F_{t})+\nabla f_{t}^{\bot_{F_{t}}}\Bigr|_{g_{t}}^2F_{t}^{*}\rho_{t}d\mu(F_{t}^{*}g_{t})\leq 0. 
\end{align}
\end{proposition}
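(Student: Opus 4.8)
The plan is to regard $t\mapsto p_{t}:=(u_{t},F_{t},\rho_{t},g_{t})$ as a smooth curve in $C^{\infty}(M)\times\mathfrak{Imm}(M,N)\times C^{\infty}(N)_{>0}\times\mathfrak{Met}(N)$ and to differentiate $\mathcal{F}$ along it via the first variation formula of Proposition~\ref{vari}. Since $\mathcal{F}$ carries no explicit time dependence, the chain rule gives $\frac{d}{dt}\mathcal{F}(p_{t})=\delta_{v_{t}}\mathcal{F}(p_{t})$, where the velocity $v_{t}=(w_{t},V_{t},k_{t},h_{t})$ is read off from the four evolution equations in the hypotheses: $V_{t}=\partial_{t}F_{t}=H(F_{t})$ by (\ref{evoeq22}); $h_{t}=\partial_{t}g_{t}=-2\mathrm{Ric}(g_{t})$ by (\ref{evoeq21}); and $w_{t}=\partial_{t}u_{t}$, $k_{t}=\partial_{t}\rho_{t}$ given respectively by (\ref{evoeq24}) and (\ref{evoeq23}). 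Here $f_{t}$ is the function appearing in (\ref{vari1}), determined by $\rho_{t}=(4\pi\tau(t))^{-n/2}e^{-f_{t}}$; although $f_{t}$ itself depends on the auxiliary choice of $\tau$, only $\nabla f_{t}$ and $\mathop{\mathrm{Hess}}f_{t}$ enter (\ref{vari1}) and (\ref{evoeq24}), and these are independent of that choice.

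Substituting $v_{t}$ into (\ref{vari1}), the main task is to check that two of its three lines vanish identically. For the middle line, $\mathop{\mathrm{tr}}h_{t}=-2R(g_{t})$, so the integrand's bracket is $\Delta_{g_{t}}\rho_{t}+k_{t}+\frac{1}{2}\rho_{t}\mathop{\mathrm{tr}}h_{t}=\Delta_{g_{t}}\rho_{t}+\bigl(-\Delta_{g_{t}}\rho_{t}+R(g_{t})\rho_{t}\bigr)-R(g_{t})\rho_{t}=0$ by (\ref{evoeq23}). For the last line, insert $w_{t}$ from (\ref{evoeq24}) and use $-\frac{1}{2}h_{t}=\mathrm{Ric}(g_{t})$: the two copies of $\Delta_{F_{t}^{*}g_{t}}u_{t}$ cancel, the terms $u_{t}\mathop{\mathrm{tr}^{\bot_{F_{t}}}}(\mathop{\mathrm{Hess}}f_{t}+\mathrm{Ric}(g_{t}))$ cancel, and the leftover $g(V_{t},(F_{t})_{*}\nabla u_{t})=g(H(F_{t}),(F_{t})_{*}\nabla u_{t})$ vanishes because $H(F_{t})$ is normal while $(F_{t})_{*}\nabla u_{t}$ is tangent. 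Hence only the first line of (\ref{vari1}) survives; since $V_{t}=H(F_{t})$ is itself normal, $g(V_{t}+\nabla f_{t}^{\bot_{F_{t}}},H(F_{t})+\nabla f_{t}^{\bot_{F_{t}}})=|H(F_{t})+\nabla f_{t}^{\bot_{F_{t}}}|_{g_{t}}^{2}$, which yields exactly the right-hand side of (\ref{monoformforRM}).

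Finally, $u_{t}\geq 0$ by hypothesis and $F_{t}^{*}\rho_{t}>0$, so the resulting integral is non-negative and $\frac{d}{dt}\mathcal{F}(u_{t},F_{t},\rho_{t},g_{t})\leq 0$. The computation is essentially bookkeeping: the integrations by parts are all hidden inside Proposition~\ref{vari}, whose proof already uses compactness of $M$ (a standing assumption of this section), so no new analytic input is needed. I do not expect any genuine obstacle beyond keeping the normal/tangential decompositions and the sign conventions straight.
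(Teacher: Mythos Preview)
Your proposal is correct and follows exactly the same route as the paper: apply the first variation formula of Proposition~\ref{vari} with $V=H(F_{t})$, $h=-2\mathrm{Ric}(g_{t})$, $k=\partial_{t}\rho_{t}$, $w=\partial_{t}u_{t}$, and observe that the second and third lines of (\ref{vari1}) vanish by (\ref{evoeq23}), (\ref{evoeq24}) and the orthogonality of $H(F_{t})$ to $F_{t*}\nabla u_{t}$. The paper's own proof is a compressed two-sentence version of precisely this computation; you have simply written out the cancellations that the paper leaves to the reader.
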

\begin{proof}
Since $g_{t}$ is a solution of the Ricci flow (\ref{evoeq21}), $h=-2\mathrm{Ric}(g_{t})$ in the equation (\ref{vari1}) of Proposition~\ref{vari}. 
Furthermore, $V=H(F_{t})$ in this case, and $g(V,F_{t*}\nabla u_{t})=0$ since $V(=H(F_{t}))$ is a normal vector and $F_{t*}\nabla u_{t}$ is a tangent vector. 
Then, the equality (\ref{monoformforRM}) is clear by Proposition~\ref{vari}. 
\end{proof}

\begin{remark}
The equation (\ref{evoeq23}) is called the conjugate heat equation for the Ricci flow, 
and the equation (\ref{evoeq24}) is a linear heat equation with time-dependent potential $\mathop{\mathrm{tr}^{\bot_{F_{t}}}}\bigl(\mathrm{Ric}(g_{t})+\mathop{\mathrm{Hess}}f_{t})$ 
on $M$. 
\end{remark}

\begin{proposition}
Assume that $\tau(t)=T-t$. 
Let $u:M\times[0,T)\to\mathbb{R}$ be a solution for (\ref{evoeq24}). 
Define $v:M\times[0,T)\to\mathbb{R}$ by $u=(4\pi\tau)^{\frac{n-m}{2}}v$. 
Then $v$ satisfies
\begin{align*}
\frac{\partial v}{\partial t}=\Delta_{F^{*}g}v-v\mathop{\mathrm{tr}^{\bot_{F}}}(\mathrm{Ric}+\mathop{\mathrm{Hess}}f-\frac{g}{2\tau})\tag{\ref{evoeq24}$'$}\label{evoeq24'},
\end{align*}
and the converse is also true. 
\end{proposition}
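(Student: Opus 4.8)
The plan is to substitute the ansatz $u=(4\pi\tau)^{\frac{n-m}{2}}v$ directly into (\ref{evoeq24}), differentiate in $t$, and compare the two sides after factoring out the purely time-dependent scalar $(4\pi\tau)^{\frac{n-m}{2}}$. The only genuinely geometric input I would record first is the elementary identity $\mathop{\mathrm{tr}^{\bot_{F}}}g=n-m$: at any $p\in M$, for an orthonormal normal frame $\{\nu_{j}\}_{j=1}^{n-m}$ of $T^{\bot_{F}}_{p}M$ one has $\sum_{j}g(\nu_{j},\nu_{j})=n-m$, whence $\mathop{\mathrm{tr}^{\bot_{F}}}\bigl(\tfrac{g}{2\tau}\bigr)=\tfrac{n-m}{2\tau}$. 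This is what will convert the dimensional constant produced by the rescaling into the extra $-\tfrac{g}{2\tau}$ term in the potential.

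Next I would compute the time derivative. Since $(4\pi\tau)^{\frac{n-m}{2}}$ depends only on $t$ and $\tau=T-t$ gives $\dot\tau=-1$, we get $\frac{d}{dt}(4\pi\tau)^{\frac{n-m}{2}}=-\frac{n-m}{2\tau}(4\pi\tau)^{\frac{n-m}{2}}$, so
\[\frac{\partial u}{\partial t}=(4\pi\tau)^{\frac{n-m}{2}}\Bigl(\frac{\partial v}{\partial t}-\frac{n-m}{2\tau}v\Bigr).\]
On the right-hand side of (\ref{evoeq24}), the operator $\Delta_{F^{*}g}$ at each fixed time is linear and commutes with multiplication by the $M$-independent scalar $(4\pi\tau)^{\frac{n-m}{2}}$, and the same scalar factors out of the zeroth-order term, so
\[\Delta_{F^{*}g}u-u\mathop{\mathrm{tr}^{\bot_{F}}}(\mathrm{Ric}+\mathop{\mathrm{Hess}}f)=(4\pi\tau)^{\frac{n-m}{2}}\Bigl(\Delta_{F^{*}g}v-v\mathop{\mathrm{tr}^{\bot_{F}}}(\mathrm{Ric}+\mathop{\mathrm{Hess}}f)\Bigr).\]
Equating the two displays, cancelling the nowhere-vanishing factor $(4\pi\tau)^{\frac{n-m}{2}}$, and then rewriting $\frac{n-m}{2\tau}v=v\mathop{\mathrm{tr}^{\bot_{F}}}\bigl(\tfrac{g}{2\tau}\bigr)$ and absorbing it into the trace term yields precisely (\ref{evoeq24'}).

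For the converse I would simply note that $u\mapsto v$ is a bijection of $C^{\infty}(M\times[0,T))$, being multiplication by the strictly positive, $M$-independent function $(4\pi\tau)^{\frac{n-m}{2}}$; hence every step above is reversible, and running the computation backwards shows that a solution $v$ of (\ref{evoeq24'}) produces a solution $u=(4\pi\tau)^{\frac{n-m}{2}}v$ of (\ref{evoeq24}). I do not expect any real obstacle here: the proof is a routine substitution, and the only points needing a moment's care are the sign from $\dot\tau=-1$ and the identity $\mathop{\mathrm{tr}^{\bot_{F}}}g=n-m$, which are exactly what accounts for the $-\tfrac{g}{2\tau}$ correction.
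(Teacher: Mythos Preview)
Your proposal is correct and follows essentially the same route as the paper: substitute $u=(4\pi\tau)^{\frac{n-m}{2}}v$, use $\dot\tau=-1$ to compute $\partial_t u$, factor out the positive scalar $(4\pi\tau)^{\frac{n-m}{2}}$, and absorb the resulting $-\frac{n-m}{2\tau}v$ into the normal trace via $\mathop{\mathrm{tr}^{\bot_{F}}}g=n-m$. The paper compresses this into a single chain of equalities and simply says ``the equivalence is clear,'' whereas you spell out the converse via the bijectivity of multiplication by $(4\pi\tau)^{\frac{n-m}{2}}$; the content is identical.
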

\begin{proof}
We have 
\begin{align*}
&\frac{\partial u}{\partial t}-\Delta_{F^{*}g}u+u\mathop{\mathrm{tr}^{\bot_{F}}}(\mathrm{Ric}+\mathop{\mathrm{Hess}}f)\\
=&-\frac{n-m}{2\tau}u+(4\pi\tau)^{\frac{n-m}{2}}\frac{\partial v}{\partial t}-\Delta_{F^{*}g}u+u\mathop{\mathrm{tr}^{\bot_{F}}}(\mathrm{Ric}+\mathop{\mathrm{Hess}}f)\\
=&(4\pi\tau)^{\frac{n-m}{2}}\biggl(\frac{\partial v}{\partial t}-\Delta_{F^{*}g}v+v\mathop{\mathrm{tr}^{\bot_{F}}}(\mathrm{Ric}+\mathop{\mathrm{Hess}}f-\frac{g}{2\tau})\biggr). 
\end{align*}
Thus, the equivalence is clear. 
\end{proof}

\begin{example}
If the ambient space is a Euclidean space, that is, $(N,g)=(\mathbb{R}^{n},g_{\mathrm{st}})$, 
we can reduce Huisken's monotonicity formula from (\ref{monoformforRM}). 
Let $M$ be an $m$ dimensional compact manifold and $F:M\times[0,T)\to \mathbb{R}^{n}$ be a mean curvature flow. 
Fix a point $y_{0}\in\mathbb{R}^{n}$. Let $\rho:\mathbb{R}^{n}\times[0,T)\to \mathbb{R}^{+}$ be the standard backward heat kernel on $\mathbb{R}^{n}$ at $(y_{0},T)$, that is, 
$\rho$ is defined by
\[\rho(y,t):=\frac{1}{{(4\pi(T-t))}^{\frac{n}{2}}}e^{-\frac{|y-y_{0}|^2}{4(T-t)}}. \]
Of course, $\rho$ satisfies the backward heat equation (\ref{evoeq23}) with $R=0$. 
In this case, since $f$ is $|y-y_{0}|^2/(4(T-t))$, we have 
\[\mathop{\mathrm{Hess}}f=\frac{g_{\mathrm{st}}}{2(T-t)}\quad\mathrm{and}\quad {\mathrm{tr}}^{\bot}(\mathop{\mathrm{Hess}}f)=\frac{n-m}{2(T-t)}. \]
Thus one can easily see that $u:M\times[0,T)\to \mathbb{R}$ defined by 
\[u(p,t):=(4\pi(T-t))^{\frac{n-m}{2}}\]
is the non-negative solution of (\ref{evoeq24}) with initial condition $u(\cdot,0)=(4\pi T)^{\frac{n-m}{2}}$. 
Hence by Theorem~\ref{genmonoRM} we have 
\[\frac{d}{dt}\mathcal{F}(u_{t},F_{t},\rho_{t},g_{\mathrm{st}})=-\int_{M} u_{t}\Bigl|H(F_{t})+\nabla f_{t}^{\bot_{F_{t}}}\Bigr|^2F_{t}^{*}\rho_{t}d\mu(F_{t}^{*}g_{\mathrm{st}}). \]
By definitions, we have
\begin{align*}
\mathcal{F}(u_{t},F_{t},\rho_{t},g_{\mathrm{st}})&=\int_{M}u_{t}\mathop{F_{t}^{*}\hspace{-0.5mm}\rho_{t}}d\mu(F_{t}^{*}g_{\mathrm{st}})\\
&=\int_{M}\frac{1}{{(4\pi(T-t))}^{\frac{m}{2}}}e^{-\frac{|F_{t}-y_{0}|^2}{4(T-t)}}d\mu(F_{t}^{*}g_{\mathrm{st}})
\end{align*}
and 
\[\nabla f_{t}(F_{t}(p))=\frac{\overrightarrow{{F}_{t}}(p)-\overrightarrow{y_{0}}}{2(T-t)}\]
at $p\in M$. Then we get Huisken's monotonicity formula: 
\begin{align*}
&\frac{d}{dt}\int_{M}\frac{1}{{(4\pi(T-t))}^{\frac{m}{2}}}e^{-\frac{|F_{t}-y_{0}|^2}{4(T-t)}}d\mu(F_{t}^{*}g_{\mathrm{st}})\\
=&-\int_{M} \frac{1}{{(4\pi(T-t))}^{\frac{m}{2}}}e^{-\frac{|F_{t}-y_{0}|^2}{4(T-t)}}\biggl|H(F_{t})+\frac{(\overrightarrow{{F}_{t}}(p)-\overrightarrow{y_{0}})^{\bot_{F_{t}}}}{2(T-t)}\biggr|^2d\mu(F_{t}^{*}g_{\mathrm{st}}) \leq 0. 
\end{align*}
\end{example}
\section{mean curvature flows in gradient shrinking Ricci solitons}\label{MCFGSRS}
In this section, we recall some definitions and properties of 
gradient shrinking Ricci solitons and self-similar solutions (cf. Definition~\ref{selsolingra}), 
and prove the monotonicity formula for a Ricci-mean curvature flow along a Ricci flow constructed from a gradient shrinking Ricci soliton and also prove an analog of Stone's estimate. 

Recall that if an $n$-dimensional Riemannian manifold $(N,\tilde{g})$ and a function $\tilde{f}$ on $N$ satisfies the equation (\ref{shrisol}): 
\[\mathrm{Ric}(\tilde{g})+\mathop{\mathrm{Hess}}\tilde{f}-\frac{1}{2} \tilde{g}=0, \]
it is called a gradient shrinking Ricci soliton. 
In this paper we assume that $(N,\tilde{g})$ is a complete Riemannian manifold. 
Then by the result due to Zhang \cite{Zhang}, it follows that $\nabla \tilde{f}$ is a complete vector field on $N$. 
As Theorem~20.1 in Hamilton's paper \cite{Hamilton4}, one can easily see that $R(\tilde{g})+|\nabla\tilde{f}|^2-\tilde{f}$ is a constant. 
Hence by adding some constant to $\tilde{f}$ if necessary, we can assume that the potential function $\tilde{f}$ satisfying (\ref{shrisol}) also satisfy the equation (\ref{addshrisol}): 
\[ R(\tilde{g})+|\nabla\tilde{f}|^2-\tilde{f}=0. \]
As a special case of a more general result for complete ancient solutions by Chen \cite{Chen} (cf.  Corollary 2.5), 
we can see that $(N,\tilde{g},\tilde{f})$ must have the nonnegative scalar curvature $R(\tilde{g}) \geq 0$. 
Hence we have 
\[0\leq |\nabla\tilde{f}|^2 \leq \tilde{f}\quad\mathrm{and}\quad 0\leq R(\tilde{g}) \leq \tilde{f}. \]

Fix a positive time $T>0$ arbitrary. 
Let $\{\Phi_{t}:N\to N\}_{t\in(-\infty,T)}$ be the 1-parameter family of diffeomorphisms with $\Phi_{0}=\mathrm{id}_{N}$ generated by 
the time-dependent vector field $V(t):=\frac{1}{T-t}\nabla \tilde{f}$. 
For $t\in(-\infty,T)$, define 
\[g_{t}:=(T-t)\Phi_{t}^{*}\tilde{g}, \quad f_{t}:=\Phi_{t}^{*}\tilde{f}, \quad \rho_{t}:=(4\pi(T-t))^{-\frac{n}{2}}e^{-f_{t}}. \]
Then by the standard calculation, one can prove the following (cf.  \cite{Muller}).

\begin{proposition}\label{basicricciflow}
$g$ is the solution of the Ricci flow, $\frac{\partial g}{\partial t}=-2\mathrm{Ric}$, on the time interval $(-\infty,T)$ with $g_{0}=T\tilde{g}$, 
and $\rho$ and $f$ satisfy the following equations: 
\begin{align}
&\frac{\partial \rho}{\partial t}=-\Delta_{g} \rho +R(g)\rho \label{grasoleq1}\\
&\mathrm{Ric}(g)+\mathop{\mathrm{Hess}}f-\frac{g}{2(T-t)}=0. \label{grasoleq2}\\
&R(g)+|\nabla f|^2-\frac{f}{T-t}=0. \label{grasoleq3}
\end{align}
\end{proposition}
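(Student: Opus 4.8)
The plan is to derive all four assertions from the two soliton identities (\ref{shrisol}) and (\ref{addshrisol}) by pulling them back along the time-dependent diffeomorphisms $\Phi_{t}$ and tracking the parabolic rescaling $\tau:=T-t$. (That the flow $\Phi_{t}$ exists on the whole interval $(-\infty,T)$ is exactly what the completeness of $\nabla\tilde f$ guarantees: reparametrizing time by $\int\frac{dt}{T-t}$ turns the flow of $V(t)$ into the complete flow of $\nabla\tilde f$.) The only structural facts I will use are that $\mathrm{Ric}$ and the Hessian $\mathrm{Hess}=\nabla d$, viewed as $(0,2)$-tensors, are invariant under multiplying the metric by a positive constant, whereas the scalar curvature and the squared gradient norm of a function scale like $\tau^{-1}$; all four objects are natural with respect to diffeomorphisms.

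First I would establish the Ricci flow equation. Differentiating $g_{t}=\tau\,\Phi_{t}^{*}\tilde g$ and using $\partial_{t}(\Phi_{t}^{*}\tilde g)=\Phi_{t}^{*}(\mathcal{L}_{V(t)}\tilde g)=\tfrac{1}{\tau}\Phi_{t}^{*}(\mathcal{L}_{\nabla\tilde f}\tilde g)=\tfrac{2}{\tau}\Phi_{t}^{*}(\mathrm{Hess}\,\tilde f)$ gives $\partial_{t}g_{t}=-\Phi_{t}^{*}\tilde g+2\Phi_{t}^{*}(\mathrm{Hess}\,\tilde f)$; substituting $\mathrm{Hess}\,\tilde f=\tfrac12\tilde g-\mathrm{Ric}(\tilde g)$ from (\ref{shrisol}) collapses this to $\partial_{t}g_{t}=-2\Phi_{t}^{*}\mathrm{Ric}(\tilde g)=-2\mathrm{Ric}(\Phi_{t}^{*}\tilde g)=-2\mathrm{Ric}(g_{t})$, while $g_{0}=T\tilde g$ is immediate from $\Phi_{0}=\mathrm{id}_{N}$. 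For the soliton identities (\ref{grasoleq2}) and (\ref{grasoleq3}) on $(N,g_{t})$ I would pull back (\ref{shrisol}) and (\ref{addshrisol}) by $\Phi_{t}$ directly: using $\Phi_{t}^{*}\mathrm{Ric}(\tilde g)=\mathrm{Ric}(g_{t})$, $\Phi_{t}^{*}(\mathrm{Hess}_{\tilde g}\tilde f)=\mathrm{Hess}_{g_{t}}(f_{t})$ and $\Phi_{t}^{*}\tilde g=\tfrac1\tau g_{t}$, equation (\ref{shrisol}) becomes (\ref{grasoleq2}); using additionally $\Phi_{t}^{*}R(\tilde g)=R(\Phi_{t}^{*}\tilde g)=\tau R(g_{t})$ and $\Phi_{t}^{*}|\nabla\tilde f|^{2}_{\tilde g}=\tau|\nabla f_{t}|^{2}_{g_{t}}$, equation (\ref{addshrisol}) becomes $\tau R(g_{t})+\tau|\nabla f_{t}|^{2}-f_{t}=0$, i.e.\ (\ref{grasoleq3}) after dividing by $\tau$.

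The conjugate heat equation (\ref{grasoleq1}) I would verify by a short parabolic computation. From $\partial_{t}f_{t}=\Phi_{t}^{*}(\mathcal{L}_{V(t)}\tilde f)=\tfrac1\tau\Phi_{t}^{*}(|\nabla\tilde f|^{2}_{\tilde g})=|\nabla f_{t}|^{2}_{g_{t}}$ and $\dot\tau=-1$ one obtains $\partial_{t}\rho_{t}=\bigl(\tfrac{n}{2\tau}-|\nabla f_{t}|^{2}\bigr)\rho_{t}$ directly from $\rho_{t}=(4\pi\tau)^{-n/2}e^{-f_{t}}$. On the other hand, $\nabla\rho=-\rho\nabla f$ gives $\Delta_{g}\rho=\rho|\nabla f|^{2}-\rho\Delta_{g}f$, hence $-\Delta_{g}\rho+R(g)\rho=\rho\bigl(\Delta_{g}f-|\nabla f|^{2}+R\bigr)$; tracing (\ref{grasoleq2}) over $g_{t}$ gives $\Delta_{g}f+R=\tfrac{n}{2\tau}$, so the two expressions coincide.

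Every computation here is elementary — the statement is advertised as standard (cf.\ \cite{Muller}) — so the real content is organizational rather than a single hard step. The place most prone to error, and the one I would write out most carefully, is the interplay between the $t$-dependent diffeomorphisms $\Phi_{t}$ and the conformal factor $\tau=T-t$: one must keep in mind that $\nabla d$ does not see a constant rescaling of the metric while $R$ and $|\nabla(\cdot)|^{2}$ pick up a factor $\tau^{-1}$, and that differentiating $\Phi_{t}^{*}\tilde g$ in $t$ produces precisely $\Phi_{t}^{*}(\mathcal{L}_{V(t)}\tilde g)$ with the sign as written.
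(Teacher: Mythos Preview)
Your argument is correct and complete. The paper does not actually supply a proof of this proposition --- it simply says ``by the standard calculation, one can prove the following (cf.\ \cite{Muller})'' --- so what you have written is precisely the computation being invoked, carried out in full. The only comment I would add is that your remark about the existence of $\Phi_{t}$ on all of $(-\infty,T)$ via completeness of $\nabla\tilde f$ is a nice touch the paper leaves implicit.
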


Recall that an immersion map $F:M\to N$ is called a self-similar solution if it satisfies the equation (\ref{selsolinshri}): 
\[H(F)=\lambda\nabla\tilde{f}^{\bot}, \]
and it is called shrinking when $\lambda<0$, 
steady when $\lambda=0$ and expanding when $\lambda>0$. 
A self-similar solution corresponds to a minimal submanifold in a conformal rescaled ambient space. 
The precise statement is the following. 

\begin{proposition}
Let $F:M\to N$ be an immersion map in a gradient shrinking Ricci soliton $(N,\tilde{g},\tilde{f})$. 
Then the following two conditions are equivalent. 
\begin{enumerate}
\item $F$ is a self-similar solution with coefficient $\lambda$. 
\item $F$ is a minimal immersion with respect to a metric $e^{2\lambda\tilde{f}/m}\tilde{g}$ on $N$. 
\end{enumerate}
Here $m$ is the dimension of $M$. 
\end{proposition}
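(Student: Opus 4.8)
The plan is to exploit the standard conformal-change formula for the mean curvature vector under a pointwise rescaling of the ambient metric. Write $\hat g := e^{2\lambda\tilde f/m}\tilde g = e^{2\phi}\tilde g$ with $\phi := \lambda\tilde f/m$. For an immersion $F:M\to N$ of an $m$-dimensional manifold, the mean curvature vectors $H(F)$ and $\hat H(F)$ computed with respect to $\tilde g$ and $\hat g$ respectively are related by the classical formula
\[
\hat H(F) = e^{-2\phi}\Bigl( H(F) - m\,(\nabla\phi)^{\bot}\Bigr),
\]
where $\nabla\phi$ is the $\tilde g$-gradient and $\bot$ is the normal projection for the $\tilde g$-orthogonal (equivalently $\hat g$-orthogonal, since conformal changes preserve the splitting $F^*(TN)=F_*(TM)\oplus T^{\bot}M$) decomposition. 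First I would recall or rederive this formula: the tangential part of $\nabla\phi$ contributes nothing to the normal-valued second fundamental form, and tracing the conformal-change formula for the second fundamental form over an orthonormal frame of $TM$ produces exactly the $-m(\nabla\phi)^{\bot}$ correction after accounting for the $e^{-2\phi}$ factor coming from raising the index with $\hat g$.

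The main computation is then just substitution. Since $\nabla\phi = (\lambda/m)\nabla\tilde f$, we get $m(\nabla\phi)^{\bot} = \lambda\,\nabla\tilde f^{\bot}$, so
\[
\hat H(F) = e^{-2\lambda\tilde f/m}\bigl( H(F) - \lambda\,\nabla\tilde f^{\bot}\bigr).
\]
Therefore $\hat H(F) \equiv 0$ on $M$ if and only if $H(F) = \lambda\,\nabla\tilde f^{\bot}$ on $M$, which is precisely equation (\ref{selsolinshri}), i.e.\ the statement that $F$ is a self-similar solution with coefficient $\lambda$. This proves the equivalence of (1) and (2). I would also note explicitly that the normal bundle and the notion of normal projection are unchanged when passing from $\tilde g$ to the conformal metric $\hat g$, so there is no ambiguity in what $\nabla\tilde f^{\bot}$ means; this is the one point worth a sentence of justification.

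The only real obstacle is bookkeeping: getting the conformal-change formula for $H$ stated with the correct sign and the correct power of $e^{2\phi}$, and being careful that "minimal with respect to $\hat g$" means $\hat H \equiv 0$ as a section of the normal bundle, not merely that the $\tilde g$-trace of something vanishes. Once the formula
\[
\hat H = e^{-2\phi}\bigl(H - m(\nabla\phi)^{\bot}\bigr)
\]
is in hand, the proposition is immediate. If one prefers a variational derivation instead of quoting the Riemannian-geometry identity, one can alternatively observe that $d\mu(F^*\hat g) = e^{m\phi}\,d\mu(F^*\tilde g) = e^{\lambda\tilde f}\,d\mu(F^*\tilde g)$ and apply the first-variation formula (\ref{vari1}) from Proposition~\ref{vari} with $\rho = e^{\lambda\tilde f}$ (up to the irrelevant constant $(4\pi\tau)^{-n/2}$, so that $f$ there equals $-\lambda\tilde f$, hence $\nabla f^{\bot} = -\lambda\nabla\tilde f^{\bot}$), $h=0$, $u\equiv 1$, $k=0$: the critical points of $F\mapsto\int_M e^{\lambda\tilde f}\,d\mu(F^*\tilde g) = \mathrm{vol}_{\hat g}(M)$ are exactly the immersions with $H(F) + \nabla f^{\bot} = H(F) - \lambda\nabla\tilde f^{\bot} = 0$, giving the same conclusion. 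Either route closes the argument in a few lines.
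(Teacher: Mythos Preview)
Your proposal is correct and follows essentially the same approach as the paper: both rely on the conformal-change formula $\hat H = e^{-2\varphi}(H - m\nabla\varphi^{\bot})$ and then substitute $\varphi = \lambda\tilde f/m$. Your write-up is in fact more detailed than the paper's, which simply quotes this identity as well known; your remark that the normal bundle and normal projection are unchanged under a conformal rescaling, and your alternative variational derivation via Proposition~\ref{vari}, are welcome additions but not needed for the argument.
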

\begin{proof}
One can easily see that in general if we denote the mean curvature vector field of $F$ in $(N,\tilde{g})$ by $H(F)$ 
then the mean curvature vector field in the conformal rescaling $(N,e^{2\varphi}\tilde{g})$ is 
given by 
\[e^{-2\varphi}(H(F)-m\nabla\varphi^{\bot}). \]
Hence, by putting $\varphi:=\lambda\tilde{f}/m$, the equivalence is clear. 
\end{proof}

From a self-shrinker, we can construct a solution of Ricci-mean curvature flow canonically. 
\begin{proposition}
Let $\tilde{F}:M\to N$ be a self-shrinker with $\lambda=-1$. 
For a fixed time $T>0$, let $\Phi_{t}$ and $g_{t}$ be defined as above, and define $\Psi_{t}:=\Phi_{t}^{-1}$. 
Then $F:M\times[0,T)\to N$ defined by $F(p,t):=\Psi_{t}(\tilde{F}(p))$ satisfies 
\[\biggl(\frac{\partial F}{\partial t}\biggr)^{\bot}=H(F_{t}), \]
in the Ricci flow $(N,g_{t})$ defined on $t\in[0,T)$, 
that is, $F$ becomes a solution of the Ricci-mean curvature flow in $(N,g_{t})$ up to a time-dependent re-parametrization of $M$. 
\end{proposition}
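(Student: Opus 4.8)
The plan is to compute $\partial F/\partial t$ by differentiating $F(p,t)=\Psi_{t}(\tilde{F}(p))$ directly, to compute $H(F_{t})$ using that $\Psi_{t}$ is an isometry between suitably scaled copies of $(N,\tilde{g})$, and then to check that the two vector fields differ only by a vector tangent to $F_{t}(M)$, so that their normal parts coincide.

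\emph{Time derivative of the inverse flow.} Since $\Phi_{t}$ is the flow of the time-dependent vector field $V_{t}=\frac{1}{T-t}\nabla\tilde{f}$ and $\Psi_{t}=\Phi_{t}^{-1}$, differentiating $\Phi_{t}\circ\Psi_{t}=\mathrm{id}_{N}$ in $t$ gives $\frac{d}{dt}\Psi_{t}(x)=-(d\Psi_{t})_{x}(V_{t}(x))$. Hence, along $F$,
\[\frac{\partial F}{\partial t}(p,t)=-\frac{1}{T-t}(d\Psi_{t})_{\tilde{F}(p)}\bigl(\nabla\tilde{f}(\tilde{F}(p))\bigr).\]

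\emph{Mean curvature of $F_{t}$, and comparison.} By construction $\Phi_{t}^{*}\bigl((T-t)\tilde{g}\bigr)=g_{t}$, so $\Psi_{t}\colon (N,(T-t)\tilde{g})\to(N,g_{t})$ is an isometry and therefore carries the mean curvature vector of $\tilde{F}\colon M\to(N,(T-t)\tilde{g})$ to that of $F_{t}=\Psi_{t}\circ\tilde{F}\colon M\to(N,g_{t})$ via $d\Psi_{t}$. A constant rescaling of the ambient metric changes neither the Levi-Civita connection nor orthogonality, so it leaves the second fundamental form of $\tilde{F}$ unchanged and divides its mean curvature vector by $T-t$; combined with the self-shrinker equation $H(\tilde{F})=-\nabla\tilde{f}^{\bot_{\tilde{F}}}$ (Definition~\ref{selsolingra} with $\lambda=-1$, and noting that the $\tilde{g}$-normal projection agrees with the $(T-t)\tilde{g}$-normal projection), this yields
\[H(F_{t})(p)=-\frac{1}{T-t}(d\Psi_{t})_{\tilde{F}(p)}\bigl(\nabla\tilde{f}^{\bot_{\tilde{F}}}(p)\bigr).\]
Subtracting, $\frac{\partial F}{\partial t}-H(F_{t})=-\frac{1}{T-t}(d\Psi_{t})\bigl(\nabla\tilde{f}^{\top_{\tilde{F}}}\bigr)$, which is the $d\Psi_{t}$-image of a vector tangent to $\tilde{F}(M)$, hence tangent to $F_{t}(M)=\Psi_{t}(\tilde{F}(M))$ since $d(F_{t})=d\Psi_{t}\circ d\tilde{F}$. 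As $H(F_{t})$ is already normal for $F_{t}$ in $g_{t}$, taking the normal projection with respect to $F_{t}$ and $g_{t}$ gives $(\partial F/\partial t)^{\bot}=H(F_{t})$. Finally, $g_{t}$ is a Ricci flow on $[0,T)$ by Proposition~\ref{basicricciflow}, and the identity just proved says that $\partial F/\partial t$ equals $H(F_{t})$ plus a tangential vector field on the images; the usual reparametrization argument — pulling $F$ back along the one-parameter family of diffeomorphisms of $M$ integrating that tangential field — produces an honest solution of the Ricci-mean curvature flow with the same images, which is the final assertion.

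I do not expect a genuine obstacle. The only points requiring care are the sign and the $d\Psi_{t}$-factor in the time derivative of the inverse of the non-autonomous flow $\Phi_{t}$, and keeping strict track of which immersion and which metric each normal or tangential projection is taken with respect to; once these are fixed, the identity falls out of the isometry property of $\Psi_{t}$ and the elementary behaviour of mean curvature under a constant conformal rescaling.
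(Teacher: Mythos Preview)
Your proof is correct and follows essentially the same route as the paper's: both differentiate $\Phi_{t}\circ\Psi_{t}=\mathrm{id}_{N}$ to get $\frac{\partial\Psi_{t}}{\partial t}=-(d\Psi_{t})(V_{t})$, then use that $\Psi_{t}$ intertwines the immersions $\tilde{F}$ and $F_{t}$ and the metrics $(T-t)\tilde{g}$ and $g_{t}$ isometrically, so that $H(F_{t})=\frac{1}{T-t}(d\Psi_{t})(H(\tilde{F}))$. The only cosmetic difference is that the paper takes the normal projection of $\partial F/\partial t$ first and observes that $d\Psi_{t}$ carries $\bot_{\tilde{F},\tilde{g}}$ to $\bot_{F_{t},g_{t}}$, whereas you compute $H(F_{t})$ first and then note that the difference $\partial F/\partial t-H(F_{t})$ is tangential; the content is identical.
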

\begin{proof}
By differentiating the identity $\Phi_{t}\circ\Psi_{t}=\mathrm{id}_{N}$, we have
\[\frac{1}{T-t}\nabla \tilde{f}+\Phi_{t*}\biggl(\frac{\partial\Psi_{t}}{\partial t}\biggr)=0. \]
Hence we can see that 
\[\frac{\partial\Psi_{t}}{\partial t}=-\Psi_{t*}\biggl(\frac{1}{T-t}\nabla \tilde{f} \biggr). \]
Since $H(\tilde{F})=-\nabla\tilde{f}^{\bot}$, more precisely $H(\tilde{F})=-\nabla\tilde{f}^{\bot_{\tilde{F},\tilde{g}}}$ 
(note that the notion of the normal projection depends on an immersion map and an ambient metric), we have
\begin{align*}
\biggl(\frac{\partial F}{\partial t}\biggr)^{\bot_{F_{t},g_{t}}}=&\biggl(-\Psi_{t*}\biggl(\frac{1}{T-t}\nabla \tilde{f} \biggr)\biggr)^{\bot_{F_{t},g_{t}}}\\
=&-\Psi_{t*}\biggl(\frac{1}{T-t}\nabla \tilde{f}^{\bot_{\tilde{F},\tilde{g}}} \biggr)\\
=&\frac{1}{T-t}\Psi_{t*}(H(\tilde{F}))\\
=&H(F_{t}), 
\end{align*}
where $H(\tilde{F})$ is the mean curvature vector field with respect to the metric $\tilde{g}$ and 
$H(F_{t})$ is the one with respect to the metric $g_{t}$. 
\end{proof}

There exists a one to one correspondence between Ricci-mean curvature flows in $(N,g_{t})$ 
and normalized mean curvature flows (cf. Definition~\ref{defofnormalizedmcf}) in $(N,\tilde{g})$. 
\begin{proposition}\label{corresofselsol}
For a fixed time $T>0$, let $\Phi_{t}$ and $g_{t}$ be defined as above. 
If $F:M\times[0,T)\to N$ is a Ricci-mean curvature flow along the Ricci flow $(N,g_{t})$, 
then the rescaled flow $\tilde{F}:M\times[-\log T,\infty)\to N$ defined by the equation (\ref{scaledmcf2}): 
\[\tilde{F}_{s}:=\Phi_{t}\circ F_{t}\quad\mathrm{with}\quad s=-\log(T-t)\]
for $s\in [-\log T,\infty)$ 
becomes a normalized mean curvature flow in $(N,\tilde{g})$, that is, it satisfies 
\[\frac{\partial \tilde{F}}{\partial s}=H(\tilde{F})+\nabla\tilde{f}. \]
Conversely, if $\tilde{F}:M\times[-\log T,\infty)\to N$ is a normalized mean curvature flow in $(N,\tilde{g})$, 
then the flow $F:M\times[0,T)\to N$ defined by (\ref{scaledmcf2}) becomes a Ricci-mean curvature flow along the Ricci flow $(N,g_{t})$. 
\end{proposition}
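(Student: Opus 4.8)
The plan is to prove the correspondence by a direct computation, reducing everything to two elementary transformation rules for the mean curvature vector: (i) if $\phi\colon(N_{1},g_{1})\to(N_{2},g_{2})$ is an isometry and $F\colon M\to N_{1}$ an immersion, then $H(\phi\circ F)=\phi_{*}H(F)$; and (ii) if $\mu>0$ is a constant and $g_{2}=\mu g_{1}$ on a fixed manifold, then $H_{g_{2}}(F)=\mu^{-1}H_{g_{1}}(F)$, because the Levi-Civita connection, and hence the vector-valued second fundamental form and the tangent/normal splitting, are unchanged under a homothety, while the inverse of the induced metric acquires a factor $\mu^{-1}$. I would first record that, by the very definition $g_{t}=(T-t)\Phi_{t}^{*}\tilde{g}$, one has $\Phi_{t}^{*}\tilde{g}=\frac{1}{T-t}g_{t}$, so $\Phi_{t}$ is an isometry from $(N,\frac{1}{T-t}g_{t})$ onto $(N,\tilde{g})$. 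Combining (i) and (ii) with $\mu=\frac{1}{T-t}$ then gives
\[
(\Phi_{t})_{*}\bigl(H(F_{t})\bigr)=\frac{1}{T-t}\,H(\tilde{F}_{s}),
\]
where $H(F_{t})$ is computed in the ambient metric $g_{t}$ and $H(\tilde{F}_{s})$ in $(N,\tilde{g})$.

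For the forward direction, set $s=-\log(T-t)$, so that $T-t=e^{-s}$ and $\frac{dt}{ds}=T-t$. Differentiating $\tilde{F}_{s}=\Phi_{t}\circ F_{t}$ with respect to $t$ at a point $p\in M$, using that $\{\Phi_{t}\}$ is the flow of $V_{t}=\frac{1}{T-t}\nabla\tilde{f}$ (so that $\frac{\partial}{\partial t}\Phi_{t}(x)=V_{t}(\Phi_{t}(x))$, exactly as in the differentiation of $\Phi_{t}\circ\Psi_{t}=\mathrm{id}_{N}$ carried out above), together with the flow equation $\frac{\partial F_{t}}{\partial t}=H(F_{t})$, one obtains
\[
\frac{\partial\tilde{F}_{s}}{\partial t}
=V_{t}(\tilde{F}_{s})+(\Phi_{t})_{*}\Bigl(\frac{\partial F_{t}}{\partial t}\Bigr)
=\frac{1}{T-t}\,\nabla\tilde{f}(\tilde{F}_{s})+(\Phi_{t})_{*}\bigl(H(F_{t})\bigr).
\]
Inserting the transformation rule from the previous paragraph and multiplying by $\frac{dt}{ds}=T-t$ yields
\[
\frac{\partial\tilde{F}_{s}}{\partial s}
=(T-t)\Bigl(\frac{1}{T-t}\,\nabla\tilde{f}(\tilde{F}_{s})+\frac{1}{T-t}\,H(\tilde{F}_{s})\Bigr)
=H(\tilde{F}_{s})+\nabla\tilde{f},
\]
which is precisely the normalized mean curvature flow equation of Definition~\ref{defofnormalizedmcf}. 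Note that $(\Phi_{t})_{*}$ carries the normal bundle of $F_{t}$ onto that of $\tilde{F}_{s}$ since $\Phi_{t}$ is an isometry for the relevant metrics, so the tangential and normal parts of $V_{t}$ are handled uniformly and no separate tangential correction appears.

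The converse is the same computation read backwards. Given a normalized mean curvature flow $\tilde{F}\colon M\times[-\log T,\infty)\to N$, define $F_{t}:=\Psi_{t}\circ\tilde{F}_{s}$ with $\Psi_{t}=\Phi_{t}^{-1}$ and $t=T-e^{-s}$; reversing each step above, using the identity $\frac{\partial\Psi_{t}}{\partial t}=-\Psi_{t*}\bigl(\frac{1}{T-t}\nabla\tilde{f}\bigr)$ established earlier, shows that $\frac{\partial F_{t}}{\partial t}=H(F_{t})$ in $(N,g_{t})$, and by Proposition~\ref{basicricciflow} this $(N,g_{t})$ is indeed a Ricci flow, so $F$ is a Ricci-mean curvature flow along it. Since $F\mapsto\tilde{F}$ and $\tilde{F}\mapsto F$ are mutually inverse, the correspondence is one to one. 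The proposition is essentially formal; the only point requiring care is bookkeeping — keeping straight the direction convention for the flow $\Phi_{t}$, remembering that $\nabla\tilde{f}$ always denotes the $\tilde{g}$-gradient, and applying the homothety weight $H_{\mu g}=\mu^{-1}H_{g}$ with $\mu=\frac{1}{T-t}$ — and I expect no genuine obstacle beyond this.
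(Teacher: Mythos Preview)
Your proof is correct and follows essentially the same approach as the paper's: differentiate $\tilde{F}_{s}=\Phi_{t}\circ F_{t}$ using the chain rule to get $\frac{\partial\tilde{F}}{\partial s}=\nabla\tilde{f}+(T-t)\Phi_{t*}\bigl(\frac{\partial F}{\partial t}\bigr)$, and use the identity $(T-t)\Phi_{t*}H(F_{t})=H(\tilde{F}_{s})$. The paper states the latter identity without justification, whereas you supply it via the isometry-plus-homothety rules (i) and (ii); your version is simply a more detailed rendering of the same argument.
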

\begin{proof}
By differentiating $\tilde{F}$, we have
\[\frac{\partial \tilde{F}}{\partial s}=\nabla\tilde{f}+(T-t)\Phi_{t*}\biggl(\frac{\partial F}{\partial t}\biggr). \]
Furthermore, it is clear that 
\[(T-t)\Phi_{t*}(H(F_{t}))=H(\tilde{F}_{s}). \]
Hence, the correspondence between Ricci-mean curvature flows along $(N,g_{t})$ and normalized mean curvature flows in $(N,\tilde{g})$ is clear. 
\end{proof}

Here the monotonicity formula for a Ricci-mean curvature flow moving along the Ricci flow $(N,g_{t})$ is almost clear by Proposition~\ref{genmonoRM}. 
\begin{proposition}\label{monoformingra}
For a fixed time $T>0$, let $g_{t}$, $f_{t}$, and $\rho_{t}$ be defined as above, and 
define $u_{t}:=(4\pi(T-t))^{\frac{n-m}{2}}$. 
If $F:M\times[0,T)\to N$ is a Ricci-mean curvature flow along the Ricci flow $(N,g_{t})$ and $M$ is compact, 
then we have the monotonicity formula: 
\begin{align}\label{monoformingraform}
\frac{d}{dt} \int_{M}u\mathop{F^{*}\hspace{-1.2mm}\rho} \mathop{d\mu(F^{*}g)}=-\int_{M} u\Bigl|H(F)+\nabla f^{\bot_{F}}\Bigr|_{g}^2 \mathop{F^{*}\hspace{-1.2mm}\rho} d\mu(F^{*}g)\leq 0. 
\end{align}
\end{proposition}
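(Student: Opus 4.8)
The plan is to derive (\ref{monoformingraform}) as a direct consequence of the general monotonicity formula of Proposition~\ref{genmonoRM}, by checking that the quadruple $(u,F,\rho,g)$ introduced in the statement satisfies all of its hypotheses. We take $\tau(t):=T-t$, so that $\rho_{t}=(4\pi\tau)^{-n/2}e^{-f_{t}}$ agrees with the normalization used to define $f$ in Proposition~\ref{genmonoRM}; by hypothesis $F$ is a Ricci-mean curvature flow along the Ricci flow $(N,g_{t})$ and $M$ is compact. Hence it remains only to verify the two coupled evolution equations (\ref{evoeq23}) and (\ref{evoeq24}).

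The equation for $\rho$ is immediate: Proposition~\ref{basicricciflow} gives $\partial_{t}\rho=-\Delta_{g}\rho+R(g)\rho$ in (\ref{grasoleq1}), which is precisely (\ref{evoeq23}). For $u$, the key observation is that $u_{t}=(4\pi(T-t))^{(n-m)/2}$ depends on $t$ alone, so $\Delta_{F^{*}g}u\equiv 0$ and (\ref{evoeq24}) reduces to the pointwise scalar identity $\partial_{t}u=-u\,\mathop{\mathrm{tr}^{\bot_{F}}}\!\bigl(\mathrm{Ric}(g)+\mathop{\mathrm{Hess}}f\bigr)$. The left-hand side equals $-\tfrac{n-m}{2(T-t)}u$. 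For the right-hand side, the soliton equation (\ref{grasoleq2}) gives $\mathrm{Ric}(g)+\mathop{\mathrm{Hess}}f=\tfrac{1}{2(T-t)}g$, and since the normal bundle $T^{\bot_{F}}M$ has rank $n-m$ we get $\mathop{\mathrm{tr}^{\bot_{F}}}\!\bigl(\tfrac{1}{2(T-t)}g\bigr)=\tfrac{n-m}{2(T-t)}$; thus the right-hand side is also $-\tfrac{n-m}{2(T-t)}u$, and (\ref{evoeq24}) holds.

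With both equations verified, Proposition~\ref{genmonoRM} applies and yields exactly (\ref{monoformingraform}), noting that $\mathcal{F}(u_{t},F_{t},\rho_{t},g_{t})=\int_{M}u\,F^{*}\rho\,d\mu(F^{*}g)$. The inequality is clear because $u_{t}=(4\pi(T-t))^{(n-m)/2}>0$ on $[0,T)$ and the integrand $u\,|H(F)+\nabla f^{\bot_{F}}|_{g}^{2}\,F^{*}\rho$ is pointwise non-negative.

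There is essentially no serious obstacle here; the argument is a verification. The one point worth highlighting is the precise choice of exponent $(n-m)/2$ in the definition of $u$: it is exactly this choice that makes $u$ a function of time only, so that the Laplacian term in (\ref{evoeq24}) drops out and the heat equation for $u$ collapses to the normal trace of the gradient shrinking soliton equation (\ref{grasoleq2}) along $F$.
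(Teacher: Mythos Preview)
Your proof is correct and follows essentially the same approach as the paper: verify (\ref{evoeq23}) via Proposition~\ref{basicricciflow} and verify (\ref{evoeq24}) for $u_{t}=(4\pi(T-t))^{(n-m)/2}$ using the soliton equation (\ref{grasoleq2}), then invoke Proposition~\ref{genmonoRM}. The only cosmetic difference is that the paper checks (\ref{evoeq24}) by passing to the equivalent equation (\ref{evoeq24'}) and observing that $v\equiv 1$ solves it, whereas you verify (\ref{evoeq24}) directly; these are the same computation.
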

\begin{proof}
By Proposition~\ref{basicricciflow}, we see that $\rho$ satisfies the conjugate heat equation (\ref{evoeq23}).  
To see that $u$ satisfies the equation (\ref{evoeq24}), we use the equivalent equation (\ref{evoeq24'}). 
In this case, by Proposition~\ref{basicricciflow}, the equation (\ref{evoeq24'}) becomes 
\[\frac{\partial v}{\partial t}=\Delta_{F^{*}g}v, \]
the standard heat equation on $M$, where $u$ and $v$ are related by $u=(4\pi(T-t))^{\frac{n-m}{2}}v$. 
Then $v\equiv 1$ is a trivial solution of (\ref{evoeq24'}). 
Hence $u_{t}=(4\pi(T-t))^{\frac{n-m}{2}}$ becomes a solution of  (\ref{evoeq24}). 
Thus, by Proposition~\ref{genmonoRM}, we have the above monotonicity formula (\ref{monoformingraform}).  
\end{proof}

By Proposition~\ref{monoformingra}, we can deduce the following monotonicity formula of the weighted volume functional for a normalized mean curvature flow, immediately. 
\begin{proposition}\label{monoformingra2}
If $\tilde{F}:M\times[-\log T,\infty)\to N$ is a normalized mean curvature flow in $(N,\tilde{g},\tilde{f})$ and $M$ is compact, 
then we have the monotonicity formula: 
\begin{align}\label{monoformingraform2}
\frac{d}{ds} \int_{M}e^{-\tilde{f}\circ\tilde{F}} \mathop{d\mu(\tilde{F}^{*}\tilde{g})}=-\int_{M} \Bigl|H(\tilde{F})+\nabla \tilde{f}^{\bot_{\tilde{F}}}\Bigr|_{\tilde{g}}^2 e^{-\tilde{f}\circ\tilde{F}} d\mu(\tilde{F}^{*}\tilde{g})\leq 0. 
\end{align}
\end{proposition}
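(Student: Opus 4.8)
The plan is to deduce Proposition~\ref{monoformingra2} directly from Proposition~\ref{monoformingra} via the change of time variable $s=-\log(T-t)$ and the correspondence between Ricci-mean curvature flows and normalized mean curvature flows recorded in Proposition~\ref{corresofselsol}. Concretely, given a normalized mean curvature flow $\tilde{F}:M\times[-\log T,\infty)\to N$ in $(N,\tilde{g},\tilde{f})$, I would first invoke Proposition~\ref{corresofselsol} to produce the associated Ricci-mean curvature flow $F:M\times[0,T)\to N$ along $(N,g_{t})$, related by $\tilde{F}_{s}=\Phi_{t}\circ F_{t}$ with $s=-\log(T-t)$. Then I would apply Proposition~\ref{monoformingra} with $u_{t}=(4\pi(T-t))^{\frac{n-m}{2}}$ to this $F$.

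The second step is to rewrite both sides of \eqref{monoformingraform} in terms of $\tilde{F}$ and $\tilde{g}$. Since $g_{t}=(T-t)\Phi_{t}^{*}\tilde{g}$ and $\rho_{t}=(4\pi(T-t))^{-\frac{n}{2}}e^{-f_{t}}$ with $f_{t}=\Phi_{t}^{*}\tilde{f}$, the diffeomorphism invariance of the integral (the first displayed identity in the Remark following the definition of $\mathcal{F}$, applied with $\psi=\Phi_{t}$) together with the scaling behavior of $d\mu$ under $g_{t}\mapsto (T-t)\Phi_{t}^{*}\tilde{g}$ gives
\[
\int_{M}u_{t}\,F_{t}^{*}\rho_{t}\,d\mu(F_{t}^{*}g_{t})=\int_{M}e^{-\tilde{f}\circ\tilde{F}_{s}}\,d\mu(\tilde{F}_{s}^{*}\tilde{g}),
\]
the factors of $(4\pi(T-t))$ and $(T-t)^{m/2}$ cancelling exactly by the choice of $u_{t}$. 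The same bookkeeping applied to the right-hand side, using that $(T-t)\Phi_{t*}H(F_{t})=H(\tilde{F}_{s})$ and $(T-t)\Phi_{t*}(\nabla f_{t}^{\bot_{F_{t}}})=\nabla\tilde{f}^{\bot_{\tilde{F}_{s}}}$ (so the norms squared pick up a factor $(T-t)^{-2}$, compensated against the $(T-t)$ from the metric rescaling inside $|\cdot|_{g_{t}}^{2}$ and the $d\mu$ scaling), identifies the integrand with $|H(\tilde{F}_{s})+\nabla\tilde{f}^{\bot_{\tilde{F}_{s}}}|_{\tilde{g}}^{2}e^{-\tilde{f}\circ\tilde{F}_{s}}$.

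Finally I would convert the $t$-derivative to an $s$-derivative: since $ds/dt=1/(T-t)$, the extra factor $1/(T-t)$ produced must be absorbed, and a clean way to organize the whole computation is to note that $\mathcal{F}(u_{t},F_{t},\rho_{t},g_{t})$ is, as a function, literally equal to $\int_{M}e^{-\tilde{f}\circ\tilde{F}_{s}}d\mu(\tilde{F}_{s}^{*}\tilde{g})$ evaluated at $s=-\log(T-t)$, so $\frac{d}{ds}$ of the latter equals $(T-t)\frac{d}{dt}$ of the former; checking that the factor $(T-t)$ is exactly what is needed to turn the right-hand side of \eqref{monoformingraform} into the right-hand side of \eqref{monoformingraform2} completes the argument. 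The main obstacle is purely bookkeeping: getting every power of $(4\pi(T-t))$ and of $(T-t)$ to cancel correctly across the measure, the weight, the norm, and the chain rule for $ds/dt$. There is no analytic difficulty here — compactness of $M$ guarantees all integrals are finite and differentiation under the integral is justified exactly as in Proposition~\ref{genmonoRM} — so the proof is genuinely short; the only care needed is consistency of the rescaling conventions.
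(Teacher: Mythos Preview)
Your approach is essentially identical to the paper's own proof: both convert the normalized flow to the Ricci-mean curvature flow via Proposition~\ref{corresofselsol}, invoke Proposition~\ref{monoformingra}, and translate the identity back through the rescaling $g_{t}=(T-t)\Phi_{t}^{*}\tilde{g}$, $f_{t}=\Phi_{t}^{*}\tilde{f}$, and $ds/dt=1/(T-t)$. One small bookkeeping slip: the two integrals you display as ``literally equal'' actually differ by the constant $(4\pi)^{-m/2}$ (the $(T-t)$ powers cancel but the $4\pi$ powers do not), though this is harmless since the same constant multiplies both sides of \eqref{monoformingraform} and drops out.
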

\begin{proof}
In this proof, we follow the notations in Proposition~\ref{corresofselsol}. 
It is clear that $f_{t}\circ F_{t}=\tilde{f}\circ \tilde{F}_{s}$ and $F_{t}^{*}g_{t}=(T-t)\tilde{F}^{*}_{s}\tilde{g}$. 
Hence we have 
\begin{align*}
u_{t}\mathop{F_{t}^{*}\hspace{-0.8mm}\rho_{t}}  \mathop{d\mu(F_{t}^{*}g_{t})}=&(4\pi(T-t))^{-\frac{m}{2}}e^{-f_{t}\circ F_{t}}\mathop{d\mu(F_{t}^{*}g_{t})}\\
=&(4\pi)^{-\frac{m}{2}}e^{-\tilde{f}\circ \tilde{F}_{s}}\mathop{d\mu(\tilde{F}_{s}^{*}\tilde{g})}. 
\end{align*}
Since $H(\tilde{F}_{s})=(T-t)\Phi_{t*}H(F_{t})$ and $\nabla\tilde{f}=(T-t)\Phi_{t*}\nabla f_{t}$, we have 
\[(T-t)\Bigl|H(F_{t})+\nabla f_{t}^{\bot_{F_{t}}}\Bigr|_{g_{t}}^2=\Bigl|H(\tilde{F}_{s})+\nabla \tilde{f}^{\bot_{\tilde{F}_{s}}}\Bigr|_{\tilde{g}}^2. \]
Thus, by the equality (\ref{monoformingraform}), one can easily see that the equality (\ref{monoformingraform2}) holds. 
\end{proof}

To prove the main theorems, we need the following key lemma. 
Its proof is an analog of the proof of Stone's estimate (cf. Lemma~2.9 in \cite{Stone}). 
Stone considered the weight $e^{-\sqrt{\tilde{f}}}$ in the Euclidean case, where $\tilde{f}:=|x|^2/4$. 
However we consider the weight $e^{-\frac{\tilde{f}}{2}}$ here, since $-\frac{\tilde{f}}{2}$ is a smooth function and we can apply Proposition~\ref{vari}. 

\begin{lemma}\label{stonelem}
Assume that $(N,\tilde{g})$ has bounded geometry. 
If $\tilde{F}:M\times[-\log T,\infty)\to N$ is a normalized mean curvature flow in $(N,\tilde{g},\tilde{f})$ and $M$ is compact, 
then there exists a constant $C>0$ such that 
\begin{align}\label{stonebound}
\int_{M}e^{-\frac{\tilde{f}}{2}\circ\tilde{F}} \mathop{d\mu(\tilde{F}^{*}\tilde{g})}\le C. 
\end{align}
uniformly on $[-\log T,\infty)$. 
\end{lemma}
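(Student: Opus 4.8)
The plan is to mimic Stone's original argument (Lemma~2.9 in \cite{Stone}), but with the weight $e^{-\tilde f/2}$ in place of $e^{-\sqrt{\tilde f}}$, so that the weight is a smooth function on $N$ and the first-variation formula of Proposition~\ref{vari} applies directly. The idea is to show that the quantity $\mathcal{W}(s):=\int_{M}e^{-\frac{\tilde f}{2}\circ\tilde F_{s}}\,d\mu(\tilde F_{s}^{*}\tilde g)$ satisfies a differential inequality of the form $\frac{d}{ds}\mathcal{W}(s)\le A-B\,\mathcal{W}(s)$ for suitable constants $A,B>0$ with $B>0$; this forces $\mathcal{W}(s)\le\max\{\mathcal{W}(-\log T),\,A/B\}$ for all $s$, giving the claimed uniform bound $C$. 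The whole point is therefore to compute $\frac{d}{ds}\mathcal{W}(s)$ and to extract a good negative term.

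First I would apply Proposition~\ref{vari} with $u\equiv 1$, $F=\tilde F_{s}$, $g=\tilde g$, and the density $\rho$ chosen so that the weight is $e^{-\tilde f/2}$: that is, write $\rho=(4\pi\tau)^{-n/2}e^{-f}$ with $f=\tilde f/2$ (absorbing the irrelevant constant $(4\pi\tau)^{-n/2}$, which contributes nothing since $\tau$ depends only on $t$), and take the variation direction $V=H(\tilde F_{s})+\nabla\tilde f$ coming from the normalized mean curvature flow equation~(\ref{normalizedmcf}), with $w=0$, $k=0$, $h=0$. The resulting formula gives $\frac{d}{ds}\mathcal{W}(s)$ as a sum of three integrals against $e^{-\tilde f/2}\,d\mu$: one containing $-g(V+\nabla f^{\bot},H+\nabla f^{\bot})$, one containing $\Delta_{\tilde g}\rho/\rho$ (equivalently $\frac12\Delta_{\tilde g}\tilde f - \frac14|\nabla\tilde f|^2$ up to the constant, i.e.\ a bounded-below-but-not-above term that must be controlled), and the term $\mathrm{tr}^{\bot_{\tilde F}}\mathrm{Hess}\,f=\frac12\mathrm{tr}^{\bot}\mathrm{Hess}\,\tilde f$. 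Using the soliton equation~(\ref{shrisol}), $\mathrm{Hess}\,\tilde f=\tfrac12\tilde g-\mathrm{Ric}(\tilde g)$, so $\mathrm{tr}^{\bot}\mathrm{Hess}\,\tilde f=\tfrac{m'}{2}-\mathrm{tr}^{\bot}\mathrm{Ric}(\tilde g)$ where $m'=n-m$ is the codimension; similarly $\Delta_{\tilde g}\tilde f=\tfrac n2-R(\tilde g)$. Since $(N,\tilde g)$ has bounded geometry (in particular $|\mathrm{Rm}|$, and hence $R$ and $\mathrm{Ric}$, are bounded, as is $|\nabla\tilde f|$ is \emph{not} bounded — this is the subtlety), the genuinely dangerous terms are those involving $|\nabla\tilde f|^{2}$ and $\nabla\tilde f^{\bot}$.

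The key algebraic manoeuvre, exactly as in Stone, is to complete the square in the first integrand and to use the remaining $-\tfrac14|\nabla\tilde f^{\bot}|^{2}$ (or rather $-\tfrac14|\nabla\tilde f|^2$ after accounting for the tangential part) to dominate the bad $+\tfrac14|\nabla\tilde f|^{2}$ coming from the $\Delta\rho$ term, with something left over. Concretely, after collecting terms one expects
\begin{align*}
\frac{d}{ds}\mathcal{W}(s)\le\int_{M}\Bigl(-\tfrac14|H(\tilde F_{s})|^{2}-\tfrac{1}{8}|\nabla\tilde f^{\bot}|^{2}+\tfrac{m'}{4}+C_{0}\Bigr)e^{-\frac{\tilde f}{2}\circ\tilde F_{s}}\,d\mu(\tilde F_{s}^{*}\tilde g),
\end{align*}
where $C_{0}$ collects the bounded curvature contributions. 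Since $R(\tilde g)\ge 0$ forces $|\nabla\tilde f|^{2}\le\tilde f$ by~(\ref{addshrisol}), one has $\tilde f\ge|\nabla\tilde f|^{2}$, and on the region of $M$ where $\tilde f\circ\tilde F_{s}$ is large the pointwise coefficient $-\tfrac1{8}|\nabla\tilde f^{\bot}|^2+\tfrac{m'}{4}+C_0$ is \emph{not} automatically negative because $\nabla\tilde f^{\bot}$ can be small even when $\tilde f$ is large; this is precisely where one must instead bring in a term proportional to $-\tilde f$ itself. The cleanest route is to test with a slightly different density or to integrate by parts once more so that a multiple of $-\int_{M}\tilde f\, e^{-\tilde f/2}\,d\mu$ appears; then split $M$ into $\{\tilde f\circ\tilde F_{s}\le K\}$, on which the integrand is bounded and the measure is controlled by standard volume-comparison in bounded geometry together with the Cao--Zhou quadratic growth of $\tilde f$, and $\{\tilde f\circ\tilde F_{s}>K\}$, on which the $-\tilde f$ term beats everything else for $K$ large. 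This yields $\frac{d}{ds}\mathcal{W}\le A-B\mathcal{W}$ and hence the bound.

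I expect the main obstacle to be exactly the handling of the $|\nabla\tilde f|^{2}$ and $\nabla\tilde f^{\bot}$ terms: unlike in $\mathbb{R}^n$, where $\nabla\tilde f=\tfrac12\vec x$ and $|\nabla\tilde f|^2=\tilde f$ identically, here one only has the one-sided inequalities $0\le|\nabla\tilde f|^2\le\tilde f$ and $0\le R(\tilde g)\le\tilde f$, so completing the square no longer produces an exact cancellation and one must argue more carefully, using the bounded geometry assumption (via Remark~\ref{immRiem2}) to bound the curvature error terms and the Cao--Zhou estimate $\tfrac14(r-C_1)^2\le\tilde f\le\tfrac14(r+C_2)^2$ to control both the potential and the ambient volume growth. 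A secondary technical point is to make sure the integration by parts on the \emph{compact} manifold $M$ (no boundary terms) is applied to the pulled-back density rather than to $\tilde f$ on $N$, so that Proposition~\ref{vari} is invoked legitimately with a genuine variation of immersions; this is why the remark preceding the lemma insists on using the smooth weight $e^{-\tilde f/2}$ rather than $e^{-\sqrt{\tilde f}}$.
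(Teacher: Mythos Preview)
Your overall strategy is the right one, and you correctly recognise that the weight $e^{-\tilde f/2}$ is used so that Proposition~\ref{vari} applies cleanly. The differential inequality one actually obtains (after using the soliton identities $\mathrm{Hess}\,\tilde f=\tfrac12\tilde g-\mathrm{Ric}(\tilde g)$, $\Delta_{\tilde g}\tilde f=\tfrac n2-R(\tilde g)$ and $|\nabla\tilde f|^2=\tilde f-R(\tilde g)$) is of the form
\[
\frac{d}{ds}\mathcal{W}(s)\;<\;\frac{1}{4}\int_M\bigl(C_0-\tilde f\circ\tilde F_s\bigr)\,e^{-\frac{\tilde f}{2}\circ\tilde F_s}\,d\mu(\tilde F_s^*\tilde g),
\]
so the good negative term is genuinely $-\tilde f$ and not merely $-|\nabla\tilde f^{\bot}|^2$; you identified this issue correctly. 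Where your proposal has a real gap is the next step.

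You assert that on the inner region $\{\tilde f\circ\tilde F_s\le K\}$ the measure is ``controlled by standard volume-comparison in bounded geometry together with the Cao--Zhou quadratic growth of $\tilde f$''. This does not work: volume comparison on $(N,\tilde g)$ bounds the \emph{ambient} volume of $\{\tilde f\le K\}\subset N$, but gives no control on $\int_{\tilde F_s^{-1}(\{\tilde f\le K\})}d\mu(\tilde F_s^*\tilde g)$, since the immersion $\tilde F_s$ can cover a bounded region of $N$ with arbitrarily large submanifold area. For the same reason the ODE inequality $\frac{d}{ds}\mathcal{W}\le A-B\,\mathcal{W}$ does \emph{not} follow from the displayed estimate: rewriting $(C_0+B-\tilde f)e^{-\tilde f/2}$ as bounded on $\{\tilde f\le K\}$ and nonpositive on $\{\tilde f>K\}$ still leaves you with $\mathrm{Area}\bigl(\tilde F_s^{-1}(\{\tilde f\le K\})\bigr)$ on the right-hand side, which is exactly what you are trying to bound.

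The missing ingredient is the monotonicity formula for the \emph{full} weight $e^{-\tilde f}$ (Proposition~\ref{monoformingra2}): it gives a uniform bound $\int_M e^{-\tilde f\circ\tilde F_s}\,d\mu(\tilde F_s^*\tilde g)\le C'$, and then on $\{\tilde f\circ\tilde F_s\le K\}$ one writes $e^{-\tilde f/2}=e^{\tilde f/2}e^{-\tilde f}\le e^{K/2}e^{-\tilde f}$, whence $\int_{\{\tilde f\le K\}}e^{-\tilde f/2}\,d\mu\le e^{K/2}C'$ uniformly in $s$. With this bound on the inner region the paper does not derive an ODE of the form $A-B\,\mathcal{W}$ either; instead it argues by the Stone-type alternative: for each $s$ one has either $\frac{d}{ds}\mathcal{W}(s)<0$ or $\int_{\{\tilde f>2C_0\}}e^{-\tilde f/2}\,d\mu\le C_1$, and combining the second alternative with the inner bound $\int_{\{\tilde f\le 2C_0\}}e^{-\tilde f/2}\,d\mu\le C_2$ gives $\mathcal{W}(s)\le C_1+C_2$. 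Either alternative then forces $\mathcal{W}(s)\le\max\{C_1+C_2,\mathcal{W}(-\log T)\}$. Cao--Zhou is not used here at all (it appears only later, in the proof of Theorem~\ref{mainnoncomp}).
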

\begin{proof}
In this proof, we follow the notations in Proposition~\ref{corresofselsol}. 
As the proof of Proposition~\ref{monoformingra2}, we have 
\[\int_{M}e^{-\frac{\tilde{f}}{2}\circ\tilde{F}_{s}}\mathop{d\mu(\tilde{F}^{*}_{s}\tilde{g})}
=(4\pi)^{\frac{m}{2}}\int_{M}u_{t}\mathop{F_{t}^{*}\hspace{-0.5mm}\bar{\rho}_{t}} \mathop{d\mu(F_{t}^{*}g_{t})}, \]
where 
\[\bar{\rho}_{t}:=\frac{1}{(4\pi(T-t))^{\frac{n}{2}}}e^{-\frac{f_{t}}{2}}\quad\mathrm{and}\quad u_{t}:=(4\pi(T-t))^{\frac{n-m}{2}}. \]
By Proposition~\ref{vari}, we have 
\begin{align*}
&\frac{d}{dt} \int_{M}u_{t}\mathop{F_{t}^{*}\hspace{-0.5mm}\bar{\rho}_{t}} \mathop{d\mu(F_{t}^{*}g_{t})}\\
=&-\int_{M}u_{t}\Bigl|H(F_{t})+\frac{1}{2}\nabla f_{t}^{\bot_{F_{t}}}\Bigr|_{g_{t}}^2\mathop{F_{t}^{*}\hspace{-0.5mm}\bar{\rho}_{t}} \mathop{d\mu(F_{t}^{*}g_{t})}\\
&+\int_{M}u_{t} F_{t}^{*}\biggl(\frac{\partial \bar{\rho}_{t}}{\partial t}+\Delta_{g_{t}}\bar{\rho}_{t}-R(g_{t})\bar{\rho}_{t}\biggr)  \mathop{d\mu(F_{t}^{*}g_{t})}\\
& +\int_{M}\left( \frac{\partial u_{t}}{\partial t} -\Delta_{F_{t}^{*}g_{t}}u_{t}+
u_{t}\mathop{\mathrm{tr}^{\bot_{F_{t}}}}\left(\frac{1}{2}\mathop{\mathrm{Hess}}f_{t}+\mathrm{Ric}(g_{t})\right)\right)\mathop{F_{t}^{*}\hspace{-0.5mm}\bar{\rho}_{t}} \mathop{d\mu(F_{t}^{*}g_{t})}. 
\end{align*}
By using $\frac{\partial f}{\partial t}=|\nabla f|^{2}$ and $|\nabla f|^{2}=\frac{f}{T-t}-R(g)$, we have 
\[\frac{\partial \bar{\rho}}{\partial t}=\bar{\rho}\biggl(\frac{n}{2(T-t)}-\frac{f}{2(T-t)}+\frac{1}{2}R(g) \biggr). \]
By using $\Delta_{g} f=-R(g)+\frac{n}{2(T-t)}$ and $|\nabla f|^{2}=\frac{f}{T-t}-R(g)$, we have 
\[\Delta_{g}\bar{\rho}=\bar{\rho}\biggl(\frac{f}{4(T-t)}+\frac{1}{4}R(g)-\frac{n}{4(T-t)} \biggr). \]
Hence we have 
\[\frac{\partial \bar{\rho}}{\partial t}+\Delta_{g}\bar{\rho}-R(g)\bar{\rho}=\bar{\rho}\biggl(\frac{n}{4(T-t)}-\frac{f}{4(T-t)}-\frac{1}{4}R(g)\biggr) \leq \frac{\bar{\rho}}{4(T-t)}(n-f). \]
Furthermore, since $u$ satisfies 
\[\frac{\partial u_{t}}{\partial t} -\Delta_{F_{t}^{*}g_{t}}u_{t}+u_{t}\mathop{\mathrm{tr}^{\bot_{F_{t}}}}\left(\mathop{\mathrm{Hess}}f_{t}+\mathrm{Ric}(g_{t})\right)=0, \]
we have
\[\frac{\partial u_{t}}{\partial t} -\Delta_{F_{t}^{*}g_{t}}u_{t}+u_{t}\mathop{\mathrm{tr}^{\bot_{F_{t}}}}\left(\frac{1}{2}\mathop{\mathrm{Hess}}f_{t}+\mathrm{Ric}(g_{t})\right)
=-\frac{1}{2}u_{t}\mathop{\mathrm{tr}^{\bot_{F_{t}}}}\mathop{\mathrm{Hess}}f_{t}.\]
By using $\mathop{\mathrm{Hess}}f_{t}=\frac{1}{2(T-t)}g_{t}-\mathrm{Ric}(g_{t})$, we have 
\[-\frac{1}{2}u_{t}\mathop{\mathrm{tr}^{\bot_{F_{t}}}}\mathop{\mathrm{Hess}}f_{t}=u_{t}\left(-\frac{n-m}{4(T-t)}+\frac{1}{2}\mathop{\mathrm{tr}^{\bot_{F_{t}}}}\mathrm{Ric}(g_{t})\right). \]
It is clear that 
\[\mathop{\mathrm{tr}^{\bot_{F_{t}}}}\mathrm{Ric}(g_{t})\leq (n-m)|\mathrm{Ric}(g_{t})|_{g_{t}}=(n-m)\frac{|\mathrm{Ric}(\tilde{g})|_{\tilde{g}}}{T-t}\leq \frac{C''}{T-t}, \]
where $C'':=(n-m)\max_{N}|\mathrm{Ric}(\tilde{g})|_{\tilde{g}}$ is a bounded constant since $(N,\tilde{g})$ has bounded geometry. 
Hence we have
\begin{align*}
\frac{d}{dt} \int_{M}u_{t}\mathop{F_{t}^{*}\hspace{-0.5mm}\bar{\rho}_{t}} \mathop{d\mu(F_{t}^{*}g_{t})}
< \frac{1}{4(T-t)}\int_{M}\Bigl(C_{0}-f_{t}\circ F_{t}\Bigr)u_{t} \mathop{F_{t}^{*}\hspace{-0.5mm}\bar{\rho}_{t}} \mathop{d\mu(F_{t}^{*}g_{t})}, 
\end{align*}
where $C_{0}:=m+4C''+1$. Since $s=-\log(T-t)$, we have 
\begin{align*}
\frac{d}{ds}\int_{M}e^{-\frac{\tilde{f}}{2}\circ\tilde{F}_{s}}\mathop{d\mu(\tilde{F}^{*}_{s}\tilde{g})}
=(4\pi)^{\frac{m}{2}}(T-t)\frac{d}{dt}\int_{M}u_{t}\mathop{F_{t}^{*}\hspace{-0.5mm}\bar{\rho}_{t}} \mathop{d\mu(F_{t}^{*}g_{t})}. 
\end{align*}
Hence we have
\begin{align*}
\frac{d}{ds}\int_{M}e^{-\frac{\tilde{f}}{2}\circ\tilde{F}_{s}}\mathop{d\mu(\tilde{F}^{*}_{s}\tilde{g})}
<\frac{1}{4}\int_{M}\Bigl(C_{0}-\tilde{f}\circ \tilde{F}_{s}\Bigr)e^{-\frac{\tilde{f}}{2}\circ\tilde{F}_{s}}\mathop{d\mu(\tilde{F}^{*}_{s}\tilde{g})}. 
\end{align*}
Here we divide $M$ into time-dependent three pieces as follows: 
\begin{align*}
&M_{1,s}:=\tilde{F}_{s}^{-1}(\{\tilde{f}\leq C_{0}\}), \\
&M_{2,s}:=\tilde{F}_{s}^{-1}(\{C_{0}< \tilde{f} \leq 2C_{0}\}), \\
&M_{3,s}:=\tilde{F}_{s}^{-1}(\{2C_{0}< \tilde{f}\}). 
\end{align*}
On each component, we have 
\begin{align*}
&\int_{M_{1,s}}\Bigl(C_{0}-\tilde{f}\circ \tilde{F}_{s}\Bigr)e^{-\frac{\tilde{f}}{2}\circ\tilde{F}_{s}}\mathop{d\mu(\tilde{F}^{*}_{s}\tilde{g})}
\leq C_{0}\int_{M_{1,s}}e^{-\frac{\tilde{f}}{2}\circ\tilde{F}_{s}}\mathop{d\mu(\tilde{F}^{*}_{s}\tilde{g})}, \\
&\int_{M_{2,s}}\Bigl(C_{0}-\tilde{f}\circ \tilde{F}_{s}\Bigr)e^{-\frac{\tilde{f}}{2}\circ\tilde{F}_{s}}\mathop{d\mu(\tilde{F}^{*}_{s}\tilde{g})}\leq 0\\
&\int_{M_{3,s}}\Bigl(C_{0}-\tilde{f}\circ \tilde{F}_{s}\Bigr)e^{-\frac{\tilde{f}}{2}\circ\tilde{F}_{s}}\mathop{d\mu(\tilde{F}^{*}_{s}\tilde{g})}
\leq -C_{0}\int_{M_{3,s}}e^{-\frac{\tilde{f}}{2}\circ\tilde{F}_{s}}\mathop{d\mu(\tilde{F}^{*}_{s}\tilde{g})}. 
\end{align*}
Thus we have 
\begin{align}\label{altineq}
\begin{aligned}
&\frac{d}{ds}\int_{M}e^{-\frac{\tilde{f}}{2}\circ\tilde{F}_{s}}\mathop{d\mu(\tilde{F}^{*}_{s}\tilde{g})}\\
<&\frac{C_{0}}{4}\biggl( \int_{M_{1,s}}e^{-\frac{\tilde{f}}{2}\circ\tilde{F}_{s}}\mathop{d\mu(\tilde{F}^{*}_{s}\tilde{g})} - \int_{M_{3,s}}e^{-\frac{\tilde{f}}{2}\circ\tilde{F}_{s}}\mathop{d\mu(\tilde{F}^{*}_{s}\tilde{g})} \biggr). 
\end{aligned}
\end{align}
On the other hand, by the monotonicity formula (cf. Proposition~\ref{monoformingra2}), we have 
\[\int_{M}e^{-\tilde{f}\circ\tilde{F}_{s}} \mathop{d\mu(\tilde{F}_{s}^{*}\tilde{g})}\leq  C', \]
where $C'$ is the value of the left hand side at the initial time $s=-\log T$. 
We further define a region in $M$ by 
\[M_{4,s}:=\tilde{F}_{s}^{-1}(\{\tilde{f}\leq 2C_{0}\})=M_{1,s}\cup M_{2,s}. \]
Since $e^{-\frac{\tilde{f}}{2}}=e^{\frac{\tilde{f}}{2}}e^{-\tilde{f}}\leq e^{\frac{C_{0}}{2}}e^{-\tilde{f}}$ on $M_{1,s}$, we have 
\[\int_{M_{1,s}}e^{-\frac{\tilde{f}}{2}\circ\tilde{F}_{s}}\mathop{d\mu(\tilde{F}^{*}_{s}\tilde{g})}\leq e^{\frac{C_{0}}{2}}\int_{M}e^{-\tilde{f}\circ\tilde{F}_{s}}\mathop{d\mu(\tilde{F}^{*}_{s}\tilde{g})}
\leq e^{\frac{C_{0}}{2}}C'=:C_{1}. \]
As on $M_{1,s}$, we have
\begin{align}\label{M4s}
\int_{M_{4,s}}e^{-\frac{\tilde{f}}{2}\circ\tilde{F}_{s}}\mathop{d\mu(\tilde{F}^{*}_{s}\tilde{g})}\leq e^{C_{0}}C'=:C_{2}. 
\end{align}
Hence, by the inequality (\ref{altineq}), we see that for each $s\in[-\log T,\infty)$ we must have either 
\[\frac{d}{ds}\int_{M}e^{-\frac{\tilde{f}}{2}\circ\tilde{F}_{s}}\mathop{d\mu(\tilde{F}^{*}_{s}\tilde{g})}<0\quad\mathrm{or}\quad 
\int_{M_{3,s}}e^{-\frac{\tilde{f}}{2}\circ\tilde{F}_{s}}\mathop{d\mu(\tilde{F}^{*}_{s}\tilde{g})}\leq C_{1}. \]
Since $M=M_{3,s}\cup M_{4,s}$ and we have the bound (\ref{M4s}), 
we see that for each $s\in[-\log T,\infty)$ we must have either 
\[\frac{d}{ds}\int_{M}e^{-\frac{\tilde{f}}{2}\circ\tilde{F}_{s}}\mathop{d\mu(\tilde{F}^{*}_{s}\tilde{g})}<0\quad\mathrm{or}\quad 
\int_{M}e^{-\frac{\tilde{f}}{2}\circ\tilde{F}_{s}}\mathop{d\mu(\tilde{F}^{*}_{s}\tilde{g})}\leq C_{1}+C_{2}. \]
This condition implies that 
\[\int_{M}e^{-\frac{\tilde{f}}{2}\circ\tilde{F}_{s}}\mathop{d\mu(\tilde{F}^{*}_{s}\tilde{g})}\leq \max\{C_{1}+C_{2},C_{3}\}=:C, \]
where $C_{3}$ is the value of the left hand side at the initial time $s=-\log T$. 
\end{proof}

\begin{remark}\label{A2andC0}
In Section~\ref{Intro}, we consider the condition (A2): 
\[\limsup_{t\to T}(\sqrt{T-t}\sup_{M}|A(F_{t})|_{g_{t}})<\infty, \]
for a Ricci-mean curvature flow $F:M\times[0,T)\to N$ along the Ricci flow $g_{t}$. 
Note that if $M$ is compact then this condition is equivalent to that there exists a constant $C_{0}>0$ such that 
\[\max_{M}|A(F_{t})|_{g_{t}}\leq \frac{C_{0}}{\sqrt{T-t}} \quad\mathrm{on}\quad[0,T). \]
\end{remark}

\begin{proposition}\label{bdofallderiA}
Let $(N,\tilde{g},\tilde{f})$ be a gradient shrinking Ricci soliton with bounded geometry. 
For a fixed time $T>0$, let $\Phi_{t}$ and $g_{t}$ be defined as above, 
and let $F:M\times[0,T)\to N$ be a Ricci-mean curvature flow along the Ricci flow $(N,g_{t})$. 
Assume that $M$ is compact and $F$ satisfies the condition (A2). 
Let $\tilde{F}$ be the normalized mean curvature flow defined by (\ref{scaledmcf2}). 
Then, for all $k=0,1,2,\dots$, there exist constants $C_{k}>0$ such that 
\[|\tilde{\nabla}^{k}A(\tilde{F_{s}})|_{\tilde{g}}\leq C_{k} \quad\mathrm{on}\quad M\times[-\log T,\infty), \]
where $\tilde{\nabla}$ is the connection defined by the Levi--Civita connection on $(N,\tilde{g})$ and the one on $(M,\tilde{F}_{s}^{*}\tilde{g})$. 
\end{proposition}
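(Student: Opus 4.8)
The plan is to prove the estimates by induction on $k$, combining the evolution equations for $|\tilde{\nabla}^{k}A(\tilde{F}_{s})|^{2}$ along the normalized mean curvature flow with the parabolic maximum principle on the compact manifold $M$, in the spirit of Huisken's Proposition~2.3 in \cite{Huisken}. The base case $k=0$ is immediate: by Remark~\ref{A2andC0}, condition (A2) for compact $M$ means $\max_{M}|A(F_{t})|_{g_{t}}\le C_{0}(T-t)^{-1/2}$, and since $g_{t}=(T-t)\Phi_{t}^{*}\tilde{g}$ the rescaling is arranged precisely so that $|A(\tilde{F}_{s})|_{\tilde{g}}=\sqrt{T-t}\,|A(F_{t})|_{g_{t}}\le C_{0}$ on all of $M\times[-\log T,\infty)$ (the diffeomorphism $\Phi_{t}$ is an isometry from $(N,g_{t})$ onto $(N,(T-t)\tilde{g})$, and scaling the ambient metric by the constant $T-t$ multiplies $|A|^{2}$ by $(T-t)^{-1}$).

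Next I would derive the evolution equations. Applying the general machinery of Appendix~\ref{EvoEq} to the normalized mean curvature flow $\partial_{s}\tilde{F}=H(\tilde{F})+\nabla\tilde{f}$, and using the Gauss, Codazzi and Ricci equations to commute covariant derivatives, one obtains for each $k$ an inequality of the schematic form
\[\Bigl(\frac{\partial}{\partial s}-\tilde{\Delta}\Bigr)|\tilde{\nabla}^{k}A(\tilde{F}_{s})|^{2}\le-2\,|\tilde{\nabla}^{k+1}A(\tilde{F}_{s})|^{2}+\bigl\langle X_{s},\tilde{\nabla}|\tilde{\nabla}^{k}A(\tilde{F}_{s})|^{2}\bigr\rangle+P_{k},\]
where $\tilde{\Delta}$ is the Laplacian of $\tilde{F}_{s}^{*}\tilde{g}$, the vector field $X_{s}$ on $M$ is induced by the tangential part of $\nabla\tilde{f}$ (it may be unbounded, but it enters only through this first-order transport term, which vanishes at a spatial maximum), and $P_{k}$ is a universal polynomial in the tensors $\tilde{\nabla}^{j}A(\tilde{F}_{s})$ for $0\le j\le k$ whose remaining coefficients are built from $\mathrm{Rm}(\tilde{g})$, from $\mathop{\mathrm{Hess}}\tilde{f}$, and from their covariant derivatives up to order $k$. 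The key point is that all of these coefficients are \emph{bounded}: $(N,\tilde{g})$ has bounded geometry, and the soliton identity (\ref{shrisol}) identifies $\mathop{\mathrm{Hess}}\tilde{f}$ with $\tfrac12\tilde{g}-\mathrm{Ric}(\tilde{g})$, so every covariant derivative of $\mathop{\mathrm{Hess}}\tilde{f}$ equals the corresponding derivative of $-\mathrm{Ric}(\tilde{g})$ and is controlled. Thus every coefficient of $P_{k}$ that is not a product of factors $|\tilde{\nabla}^{j}A(\tilde{F}_{s})|$ is a constant depending only on $k$, $m$, $n$, and the bounded-geometry data.

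For the inductive step, assume $|\tilde{\nabla}^{j}A(\tilde{F}_{s})|_{\tilde{g}}\le C_{j}$ on $M\times[-\log T,\infty)$ for all $j<k$. Following Huisken's degree argument, I would test a weighted quantity such as $\varphi:=(\beta_{k}+|A(\tilde{F}_{s})|^{2})\,|\tilde{\nabla}^{k}A(\tilde{F}_{s})|^{2}$ with $\beta_{k}$ chosen large (for $k=1$ one may instead use $|\tilde{\nabla}A(\tilde{F}_{s})|^{2}+\beta_{1}|A(\tilde{F}_{s})|^{2}$). Expanding $(\partial_{s}-\tilde{\Delta})\varphi$, the factor $\beta_{k}+|A|^{2}$ produces a negative multiple of $|\tilde{\nabla}^{k+1}A(\tilde{F}_{s})|^{2}$ which, together with Young's inequality and the inductive bounds on the lower-order derivatives, absorbs all intermediate terms coming from $P_{k}$; the transport term survives in the same harmless form. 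One is left with a differential inequality from which, since $M$ is compact, the maximum principle yields at a spatial maximum $\frac{d}{ds}\max_{M}\varphi\le c_{1}-c_{2}(\max_{M}\varphi)^{p}$ for uniform constants $c_{1},c_{2}>0$ and some $p>1$. Because $\tilde{F}$ is smooth up to $s=-\log T$ (it is a rescaling of the smooth flow $F$) and $M$ is compact, $\max_{M}\varphi(-\log T)<\infty$, so ODE comparison gives $\max_{M}\varphi\le\max\{\max_{M}\varphi(-\log T),(c_{1}/c_{2})^{1/p}\}$ for all $s$, hence a uniform bound on $|\tilde{\nabla}^{k}A(\tilde{F}_{s})|_{\tilde{g}}$. (Equivalently one could run a Shi-type interior-in-time estimate on each unit interval $[s_{0},s_{0}+1]$ and let $s_{0}$ vary.)

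The main obstacle is the inductive step: choosing the weights $\beta_{k}$ and carrying out the Young-inequality absorption so that the evolution inequality genuinely reduces to the form $c_{1}-c_{2}\varphi^{p}$ in the presence of the (nonzero, though bounded) ambient-curvature derivatives that are absent in Huisken's flat setting. A secondary but essential point is the careful derivation of the evolution equations themselves, and in particular the verification that the drift field $\nabla\tilde{f}$ — which grows, since only $|\nabla\tilde{f}|^{2}\le\tilde{f}$ is available by (\ref{addshrisol}) — contributes to these equations solely through the transport term $\bigl\langle X_{s},\tilde{\nabla}\,\cdot\,\bigr\rangle$ and through the bounded tensor $\mathop{\mathrm{Hess}}\tilde{f}$ and its derivatives; this is exactly where the soliton equation (\ref{shrisol}) and the bounded-geometry hypothesis (equivalently Remark~\ref{immRiem2}) are used.
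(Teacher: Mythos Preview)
Your overall strategy---induction on $k$ and the parabolic maximum principle---is the right one, but the route you take and the paper's route differ, and your inductive step has a real gap.

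The paper does \emph{not} work directly with the normalized flow $\tilde{F}_{s}$. Instead it computes the evolution of $|\nabla^{k}A(F_{t})|^{2}$ along the Ricci--mean curvature flow $F_{t}$ in the evolving metric $g_{t}$ (Proposition~\ref{evofhiA}), where there is no drift term at all, and then converts to the rescaled quantities via the scaling relations
\[
|\tilde{\nabla}^{k}\tilde{A}|_{\tilde{g}}=(T-t)^{\frac{1}{2}+\frac{1}{2}k}|\nabla^{k}A|_{g_{t}},\qquad
|\tilde{\nabla}^{k}\widetilde{\mathrm{Rm}}|_{\tilde{g}}=(T-t)^{1+\frac{1}{2}k}|\nabla^{k}\mathrm{Rm}|_{g_{t}}.
\]
A careful degree count in the classes $\mathcal{V}_{a,b}$ then yields
\[
\frac{\partial}{\partial s}|\tilde{\nabla}^{k}\tilde{A}|^{2}\le\tilde{\Delta}|\tilde{\nabla}^{k}\tilde{A}|^{2}-|\tilde{\nabla}^{k+1}\tilde{A}|^{2}+\bar{C}_{k}\bigl(1+|\tilde{\nabla}^{k}\tilde{A}|^{2}\bigr).
\]
This sidesteps entirely the issue you flag about the unbounded drift $\nabla\tilde{f}$. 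Your direct approach can be made to work (the curvature--drift cross term $\mathrm{Rm}(\tilde{g})\cdot\nabla\tilde{f}$ rewrites, via the Ricci identity and the soliton equation, as $\nabla\mathrm{Ric}(\tilde{g})$, hence is bounded), but you should be explicit about this; it is not only a transport term plus $\mathrm{Hess}\tilde{f}$.

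The actual gap is in the absorption step. Your proposed test quantity $\varphi=(\beta_{k}+|\tilde{A}|^{2})\,|\tilde{\nabla}^{k}\tilde{A}|^{2}$ does not close the estimate: since $|\tilde{A}|$ is already bounded, the factor $\beta_{k}+|\tilde{A}|^{2}$ is just a bounded constant, and the ``good'' term $-2|\tilde{\nabla}\tilde{A}|^{2}\,|\tilde{\nabla}^{k}\tilde{A}|^{2}$ it produces is useless because $|\tilde{\nabla}\tilde{A}|$ is bounded above, not below. The reaction term on the right-hand side is \emph{linear} in $|\tilde{\nabla}^{k}\tilde{A}|^{2}$, so no inequality of the form $\frac{d}{ds}\max\varphi\le c_{1}-c_{2}(\max\varphi)^{p}$ with $p>1$ is available; your ODE comparison as stated cannot be justified. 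The combination that works---and what the paper uses---is the \emph{additive} one you already wrote down for $k=1$, namely
\[
|\tilde{\nabla}^{k}\tilde{A}|^{2}+2\bar{C}_{k}\,|\tilde{\nabla}^{k-1}\tilde{A}|^{2},
\]
because the good term $-|\tilde{\nabla}^{k}\tilde{A}|^{2}$ coming from the $(k-1)$-level inequality absorbs the bad $+\bar{C}_{k}|\tilde{\nabla}^{k}\tilde{A}|^{2}$ at level $k$. This yields a \emph{linear} differential inequality $\partial_{s}\mu\le\tilde{\Delta}\mu$ for a shifted, exponentially weighted $\mu$, to which the maximum principle applies directly. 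Your parenthetical Shi-type alternative on unit $s$-intervals would also work, but the main argument as written does not.
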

\begin{proof}
First of all, by the definitions of $g_{t}=(T-t)\Phi_{t}^{*}\tilde{g}$ and $\tilde{F}_{s}=\Phi_{t}\circ F_{t}$, one can easily see that 
\begin{align}
&\tilde{\nabla}^{k}A(\tilde{F}_{s})=\Phi_{t*}\nabla ^{k}A(F_{t}), \ |\tilde{\nabla}^{k}A(\tilde{F}_{s})|_{\tilde{g}}=(T-t)^{\frac{1}{2}+\frac{1}{2}k}|\nabla ^{k}A(F_{t})|_{g_{t}}, \label{orderchange1}\\ 
&|\tilde{\nabla}^{k}\mathrm{Rm}(\tilde{g})|_{\tilde{g}}=(T-t)^{1+\frac{1}{2}k}|\nabla^{k}\mathrm{Rm}(g_{t})|_{g_{t}}  \label{orderchange2}
\end{align}
for all $k=0,1,2,\dots$, 
where $\tilde{\nabla}$ is the connection defined by the Levi--Civita connection on $(N,\tilde{g})$ and the one on $(M,\tilde{F}_{s}^{*}\tilde{g})$, 
and $\nabla$ is the connection defined by the Levi--Civita connection on $(N,g_{t})$ and the one on $(M,F_{t}^{*}g_{t})$. 
In this sense, as Huisken done in \cite{Huisken}, 
we can consider the degree of $\nabla ^{k}A(F_{t})$ is $\frac{1}{2}+\frac{1}{2}k$ and the degree of $\nabla^{k}\mathrm{Rm}(g_{t})$ is $1+\frac{1}{2}k$. 
We will write $A(F_{t})$ and $A(\tilde{F}_{s})$ by $A$ and $\tilde{A}$ respectively, 
and also write $\mathrm{Rm}(g_{t})$ and $\mathrm{Rm}(\tilde{g})$ by $\mathrm{Rm}$ and $\widetilde{\mathrm{Rm}}$ respectively, for short. 
To use the argument of degree more rigorously, we define a set $V_{a,b}$ and a vector space $\mathcal{V}_{a,b}$ as follows. 
First, we recall the notion of $\ast$-product here. 
For tensors $T_{1}$ and $T_{2}$, we write $T_{1}\ast T_{2}$ to mean a tensor formed by a sum of terms each one of them obtained by contracting some indices of the pair $T_{1}$ and $T_{2}$ 
by using $g$, $F^{*}g$ and these inverses, 
and there is a property that 
\begin{align}\label{starprod}
|T_{1}\ast T_{2}|\leq C|T_{1}||T_{2}|, 
\end{align}
where $C>0$ is a constant which depends only on the algebraic structure of $T_{1}\ast T_{2}$. 
Then, for $a,b\in\mathbb{N}$, we define a set  $V_{a,b}$ as the set of all (time-dependent) tensors $T$ on $M$ which can be expressed as 
\[T=(\nabla^{k_{1}}\mathrm{Rm}\ast\dots\ast\nabla^{k_{I}}\mathrm{Rm})\ast(\nabla^{\ell_{1}}A\ast\dots\ast\nabla^{\ell_{J}}A)\ast (\mathop{\ast}^{p}DF)\]
with $I,J,p,k_{1},\dots,k_{I},\ell_{1},\dots,\ell_{J}\in\mathbb{N}$ satisfying 
\begin{align*}
\begin{aligned}
\sum_{i=1}^{I}\biggl(1+\frac{1}{2}k_{i}\biggr)+\sum_{j=1}^{J}\biggl(\frac{1}{2}+\frac{1}{2}\ell_{j}\biggr)=a\quad\mathrm{and}\quad\sum_{j=1}^{J}\ell_{j}\leq b, 
\end{aligned}
\end{align*}
and we define a vector space $\mathcal{V}_{a,b}$ as the set of all tensors $T$ on $M$ which can be expressed as 
\[T=a_{1}T_{1}+\dots+a_{r}T_{r}\]
for some $r\in\mathbb{N}$, $a_{1}\dots a_{r}\in\mathbb{R}$ and $T_{1},\dots,T_{r}\in V_{a,b}$. 

For the case $k=0$, as noted in Remark~\ref{A2andC0}, there exists a constant $C_{0}>0$ such that 
\[|A|\leq \frac{C_{0}}{\sqrt{T-t}} \quad\mathrm{on}\quad M\times[0,T), \]
since $F$ satisfies the condition (A2). 
Hence we have 
\[|\tilde{A}|=\sqrt{T-t}|A|\leq C_{0}. \]

For the case $k\geq 0$, we work by induction on $k\in\mathbb{N}$. 
The case $k=0$ has already proved above. 
For a fixed $k\geq 1$, assume that there exist positive constants $C_{0},C_{1}, \dots, C_{k-1}$ such that 
\[|\tilde{\nabla}^{i}\tilde{A}|\leq C_{i} \quad\mathrm{on}\quad M\times[-\log T,\infty)\]
for $i=0,1,\dots,k-1$. 
We consider the evolution equation of $|\tilde{\nabla}^{k}\tilde{A}|^{2}$, and finally we will prove the bound of $|\tilde{\nabla}^{k}\tilde{A}|^{2}$ by the parabolic maximum principle. 
Since $|\tilde{\nabla}^{k}\tilde{A}|^2=(T-t)^{k+1}|\nabla ^{k}A|^2$ and $\frac{\partial}{\partial s}=(T-t)\frac{\partial}{\partial t}$, 
we have that 
\begin{align*}
\frac{\partial}{\partial s}|\tilde{\nabla}^{k}\tilde{A}|^{2}&=-(k+1)|\tilde{\nabla}^{k}\tilde{A}|^{2}+(T-t)^{k+2}\frac{\partial}{\partial t}|\nabla ^{k}A|^2\\
&\leq (T-t)^{k+2}\frac{\partial}{\partial t}|\nabla ^{k}A|^2. 
\end{align*}
By Proposition~\ref{evofhiA}, there exist tensors $\mathcal{E}[k]\in\mathcal{V}_{\frac{3}{2}+\frac{1}{2}k,k}$, $\mathcal{C}[k]\in\mathcal{V}_{\frac{3}{2}+\frac{1}{2}k,k+1}$ and 
$\mathcal{G}[k]\in\mathcal{V}_{\frac{1}{2}+\frac{1}{2}k,k-1}$ such that 
\[\frac{\partial}{\partial t}|\nabla^{k}A|^2=\Delta |\nabla^{k}A|^2-2|\nabla^{k+1}A|^2+\mathcal{E}[k]\ast \nabla^{k}A+\mathcal{C}[k]\ast \mathcal{G}[k], \]
where  $\Delta$ is the Laplacian on $(M,F_{t}^{*}g_{t})$. 
Let $\tilde{\Delta}$ be the Laplacian on $(M,\tilde{F}_{s}^{*}\tilde{g})$, then we have $(T-t)\Delta=\tilde{\Delta}$. 
Hence we have
\[(T-t)^{k+2}(\Delta |\nabla^{k}A|^2-2|\nabla^{k+1}A|^2)=\tilde{\Delta} |\tilde{\nabla}^{k}\tilde{A}|^2-2|\tilde{\nabla}^{k+1}\tilde{A}|^2. \]

Since $\mathcal{G}[k]\in\mathcal{V}_{\frac{1}{2}+\frac{1}{2}k,k-1}$, 
there exist $r\in\mathbb{N}$, $a_{1}\dots a_{r}\in\mathbb{R}$ and 
\[\mathcal{G}[k]_{1},\dots,\mathcal{G}[k]_{r}\in V_{\frac{1}{2}+\frac{1}{2}k,k-1}\] 
such that 
\[\mathcal{G}[k]=a_{1}\mathcal{G}[k]_{1}+\dots+a_{r}\mathcal{G}[k]_{r}. \]
Hence we have 
\[|\mathcal{G}[k]|\leq |a_{1}||\mathcal{G}[k]_{1}|+\dots+|a_{r}||\mathcal{G}[k]_{r}|. \]
By the definition of $V_{\frac{1}{2}+\frac{1}{2}k,k-1}$, each $\mathcal{G}[k]_{\bullet}$ can be expressed as 
\[(\nabla^{k_{1}}\mathrm{Rm}\ast\dots\ast\nabla^{k_{I}}\mathrm{Rm})\ast(\nabla^{\ell_{1}}A\ast\dots\ast\nabla^{\ell_{J}}A)\ast (\mathop{\ast}^{p}DF)\]
with some $I,J,p,k_{1},\dots,k_{I},\ell_{1},\dots,\ell_{J}\in\mathbb{N}$ satisfying 
\begin{align*}
\begin{aligned}
\sum_{i=1}^{I}\biggl(1+\frac{1}{2}k_{i}\biggr)+\sum_{j=1}^{J}\biggl(\frac{1}{2}+\frac{1}{2}\ell_{j}\biggr)=\frac{1}{2}+\frac{1}{2}k\quad\mathrm{and}\quad\sum_{j=1}^{J}\ell_{j}\leq k-1. 
\end{aligned}
\end{align*}
Hence, by using (\ref{orderchange1}), $(\ref{orderchange2})$, and (\ref{starprod}), we have 
\begin{align*}
&(T-t)^{\frac{1}{2}+\frac{1}{2}k}|\mathcal{G}[k]_{\bullet}|\\
\leq&C(T-t)^{\frac{1}{2}+\frac{1}{2}k}|\nabla^{k_{1}}\mathrm{Rm}|\cdots|\nabla^{k_{I}}\mathrm{Rm}||\nabla^{\ell_{1}}A|\cdots|\nabla^{\ell_{J}}A||DF|^p\\
=&C(\sqrt{m})^{p} |\tilde{\nabla}^{k_{1}}\widetilde{\mathrm{Rm}}|\cdots|\tilde{\nabla}^{k_{I}}\widetilde{\mathrm{Rm}}||\tilde{\nabla}^{\ell_{1}}\tilde{A}|\cdots|\tilde{\nabla}^{\ell_{J}}\tilde{A}| 
\end{align*}
for some constant $C>0$. 
Here note that $|DF|=\sqrt{m}$. 
Since $(N,\tilde{g})$ has bounded geometry, each $|\tilde{\nabla}^{k_{i}}\widetilde{\mathrm{Rm}}|$ is bounded. 
Furthermore, since $\ell_{j}\leq k-1$, each $|\tilde{\nabla}^{\ell_{j}}\tilde{A}|$ is bounded by the assumption of induction. 
Hence there exists a constant $C'>0$ such that 
\[(T-t)^{\frac{1}{2}+\frac{1}{2}k}|\mathcal{G}[k]|\leq C'. \]

Since $\mathcal{E}[k]\in\mathcal{V}_{\frac{3}{2}+\frac{1}{2}k,k}$, 
there exist $r'\in\mathbb{N}$, $b_{1}\dots b_{r'}\in\mathbb{R}$ and 
\[\mathcal{E}[k]_{1},\dots,\mathcal{E}[k]_{r'}\in V_{\frac{3}{2}+\frac{1}{2}k,k}\] 
such that 
\[\mathcal{E}[k]=b_{1}\mathcal{E}[k]_{1}+\dots+b_{r'}\mathcal{E}[k]_{r'}. \]
Hence we have 
\[|\mathcal{E}[k]|\leq |b_{1}||\mathcal{E}[k]_{1}|+\dots+|b_{r'}||\mathcal{E}[k]_{r'}|. \]
By the definition of $V_{\frac{3}{2}+\frac{1}{2}k,k}$, each $\mathcal{E}[k]_{\bullet}$ can be expressed as 
\[(\nabla^{k_{1}}\mathrm{Rm}\ast\dots\ast\nabla^{k_{I}}\mathrm{Rm})\ast(\nabla^{\ell_{1}}A\ast\dots\ast\nabla^{\ell_{J}}A)\ast (\mathop{\ast}^{p}DF)\]
with some $I,J,p,k_{1},\dots,k_{I},\ell_{1},\dots,\ell_{J}\in\mathbb{N}$ satisfying 
\begin{align*}
\begin{aligned}
\sum_{i=1}^{I}\biggl(1+\frac{1}{2}k_{i}\biggr)+\sum_{j=1}^{J}\biggl(\frac{1}{2}+\frac{1}{2}\ell_{j}\biggr)=\frac{3}{2}+\frac{1}{2}k\quad\mathrm{and}\quad\sum_{j=1}^{J}\ell_{j}\leq k. 
\end{aligned}
\end{align*}
If $\max\{\ell_{1},\dots,\ell_{J}\}\leq k-1$, we can prove that $(T-t)^{\frac{3}{2}+\frac{1}{2}k}|\mathcal{E}[k]_{\bullet}|$ is bounded by the same argument as the case of $\mathcal{G}[k]_{\bullet}$. 
If $\max\{\ell_{1},\dots,\ell_{J}\}=k$, one can easily see that the possible forms of $\mathcal{E}[k]_{\bullet}$ are 
\[A\ast A\ast \nabla^{k}A\ast (\mathop{\ast}^{p}DF)\quad\mathrm{and}\quad \mathrm{Rm}\ast \nabla^{k}A\ast (\mathop{\ast}^{p}DF). \]
In both cases, we can see by the same argument as the case of $\mathcal{G}[k]_{\bullet}$ that there exists a constant $\tilde{C}>0$ such that 
$(T-t)^{\frac{3}{2}+\frac{1}{2}k}|\mathcal{E}[k]_{\bullet}|\leq \tilde{C} |\tilde{\nabla}^{k}\tilde{A}|$. 
Hence we can see that there exists a constant $C''>0$ such that 
\[(T-t)^{\frac{3}{2}+\frac{1}{2}k}|\mathcal{E}[k]|\leq C''(1+ |\tilde{\nabla}^{k}\tilde{A}|). \]

Since $\mathcal{C}[k]\in\mathcal{V}_{\frac{3}{2}+\frac{1}{2}k,k+1}$, 
there exist $r''\in\mathbb{N}$, $c_{1}\dots c_{r''}\in\mathbb{R}$ and 
\[\mathcal{C}[k]_{1},\dots,\mathcal{C}[k]_{r''}\in V_{\frac{3}{2}+\frac{1}{2}k,k+1}\] 
such that 
\[\mathcal{C}[k]=c_{1}\mathcal{C}[k]_{1}+\dots+c_{r''}\mathcal{C}[k]_{r''}. \]
Hence we have 
\[|\mathcal{C}[k]|\leq |c_{1}||\mathcal{C}[k]_{1}|+\dots+|c_{r''}||\mathcal{C}[k]_{r''}|. \]
By the definition of $V_{\frac{3}{2}+\frac{1}{2}k,k+1}$, each $\mathcal{C}[k]_{\bullet}$ can be expressed as 
\[(\nabla^{k_{1}}\mathrm{Rm}\ast\dots\ast\nabla^{k_{I}}\mathrm{Rm})\ast(\nabla^{\ell_{1}}A\ast\dots\ast\nabla^{\ell_{J}}A)\ast (\mathop{\ast}^{p}DF)\]
with some $I,J,p,k_{1},\dots,k_{I},\ell_{1},\dots,\ell_{J}\in\mathbb{N}$ satisfying 
\begin{align*}
\begin{aligned}
\sum_{i=1}^{I}\biggl(1+\frac{1}{2}k_{i}\biggr)+\sum_{j=1}^{J}\biggl(\frac{1}{2}+\frac{1}{2}\ell_{j}\biggr)=\frac{3}{2}+\frac{1}{2}k\quad\mathrm{and}\quad\sum_{j=1}^{J}\ell_{j}\leq k+1. 
\end{aligned}
\end{align*}
If $\max\{\ell_{1},\dots,\ell_{J}\}\leq k-1$, we can prove that $(T-t)^{\frac{3}{2}+\frac{1}{2}k}|\mathcal{C}[k]_{\bullet}|$ is bounded by the same argument as the case of $\mathcal{G}[k]_{\bullet}$. 
If $\max\{\ell_{1},\dots,\ell_{J}\}=k$, one can easily see that the possible forms of $\mathcal{C}[k]_{\bullet}$ are 
\[A\ast A\ast \nabla^{k}A\ast (\mathop{\ast}^{p}DF)\quad\mathrm{and}\quad \mathrm{Rm}\ast \nabla^{k}A\ast (\mathop{\ast}^{p}DF), \]
and we have $(T-t)^{\frac{3}{2}+\frac{1}{2}k}|\mathcal{C}[k]_{\bullet}|\leq \tilde{C} |\tilde{\nabla}^{k}\tilde{A}|$ as the case of $\mathcal{E}[k]_{\bullet}$. 
If $\max\{\ell_{1},\dots,\ell_{J}\}=k+1$, one can easily see that the possible form of $\mathcal{C}[k]_{\bullet}$ is 
\[\nabla^{k+1}A\ast (\mathop{\ast}^{p}DF), \]
and we have $(T-t)^{\frac{3}{2}+\frac{1}{2}k}|\mathcal{C}[k]_{\bullet}|\leq \tilde{C}' |\tilde{\nabla}^{k+1}\tilde{A}|$ for some constant $\tilde{C}' >0$. 
Hence we can see that there exists a constant $C'''>0$ such that 
\[(T-t)^{\frac{3}{2}+\frac{1}{2}k}|\mathcal{C}[k]|\leq C'''(1+ |\tilde{\nabla}^{k}A|+|\tilde{\nabla}^{k+1}\tilde{A}|). \]

Hence we have 
\begin{align*}
\frac{\partial}{\partial s}|\tilde{\nabla}^{k}\tilde{A}|^{2}\leq& (T-t)^{k+2}\frac{\partial}{\partial t}|\nabla ^{k}A|^2\\
\leq& \tilde{\Delta} |\tilde{\nabla}^{k}\tilde{A}|^2-2|\tilde{\nabla}^{k+1}\tilde{A}|^2+C''(1+ |\tilde{\nabla}^{k}\tilde{A}|)|\tilde{\nabla}^{k}\tilde{A}|\\
&+C'C'''(1+ |\tilde{\nabla}^{k}\tilde{A}|+|\tilde{\nabla}^{k+1}\tilde{A}|). 
\end{align*}
Since $-|\tilde{\nabla}^{k+1}\tilde{A}|^2+C'C'''|\tilde{\nabla}^{k+1}\tilde{A}|\leq \frac{(C'C''')^2}{4}$, 
we have 
\begin{align*}
\frac{\partial}{\partial s}|\tilde{\nabla}^{k}\tilde{A}|^{2}\leq &\tilde{\Delta} |\tilde{\nabla}^{k}\tilde{A}|^2 -|\tilde{\nabla}^{k+1}\tilde{A}|^2\\
&+C''|\tilde{\nabla}^{k}\tilde{A}|^2+(C''+C'C''')|\tilde{\nabla}^{k}\tilde{A}|+C'C'''+\frac{(C'C''')^2}{4}. 
\end{align*}
By putting $\bar{C}_{k}:=C''+(C''+C'C''')+C'C'''+\frac{(C'C''')^2}{4}, $
we have 
\begin{align}\label{indevofA}
\frac{\partial}{\partial s}|\tilde{\nabla}^{k}\tilde{A}|^{2}\leq \tilde{\Delta} |\tilde{\nabla}^{k}\tilde{A}|^2 -|\tilde{\nabla}^{k+1}\tilde{A}|^2+\bar{C}_{k}(1+|\tilde{\nabla}^{k}\tilde{A}|^2). 
\end{align}
Hence immediately we have 
\begin{align}\label{indevofA2}
\frac{\partial}{\partial s}|\tilde{\nabla}^{k}\tilde{A}|^{2}\leq \tilde{\Delta} |\tilde{\nabla}^{k}\tilde{A}|^2 +\bar{C}_{k}(1+|\tilde{\nabla}^{k}\tilde{A}|^2). 
\end{align}
Note that the inequality (\ref{indevofA}) also holds for $k-1$, that is, we have 
\begin{align}\label{indevofA3}
\frac{\partial}{\partial s}|\tilde{\nabla}^{k-1}\tilde{A}|^{2}\leq \tilde{\Delta} |\tilde{\nabla}^{k-1}\tilde{A}|^2 -|\tilde{\nabla}^{k}\tilde{A}|^2+\bar{C}_{k-1}(1+|\tilde{\nabla}^{k-1}\tilde{A}|^2), 
\end{align}
for some constant $\bar{C}_{k-1}>0$. 
Hence by combining the inequality (\ref{indevofA2}) and (\ref{indevofA3}), we have
\begin{align}\label{indevofA4}
\begin{aligned}
\frac{\partial}{\partial s}(|\tilde{\nabla}^{k}\tilde{A}|^{2}+2\bar{C}_{k}|\tilde{\nabla}^{k-1}\tilde{A}|^{2})\leq& \tilde{\Delta} (|\tilde{\nabla}^{k}\tilde{A}|^{2}+2\bar{C}_{k}|\tilde{\nabla}^{k-1}\tilde{A}|^{2}) \\
&+\bar{C}_{k}-\bar{C}_{k}|\tilde{\nabla}^{k}\tilde{A}|^2\\
&+2\bar{C}_{k}\bar{C}_{k-1}(1+|\tilde{\nabla}^{k-1}\tilde{A}|^2). 
\end{aligned}
\end{align}
Since we have 
\begin{align*}
&\bar{C}_{k}-\bar{C}_{k}|\tilde{\nabla}^{k}\tilde{A}|^2+2\bar{C}_{k}\bar{C}_{k-1}(1+|\tilde{\nabla}^{k-1}\tilde{A}|^2)\\
=&-\bar{C}_{k}(|\tilde{\nabla}^{k}\tilde{A}|^2+2\bar{C}_{k}|\tilde{\nabla}^{k-1}\tilde{A}|^{2})\\
&+\bar{C}_{k}(1+2\bar{C}_{k-1}+2(\bar{C}_{k}+\bar{C}_{k-1})|\tilde{\nabla}^{k-1}\tilde{A}|^2 )
\end{align*}
and $|\tilde{\nabla}^{k-1}\tilde{A}|^2$ is bounded by the assumption of induction, 
one can easily see that there exists a constant $\bar{\bar{C}}_{k}>0$ such that 
\begin{align*}
\frac{\partial}{\partial s}(|\tilde{\nabla}^{k}\tilde{A}|^{2}+2\bar{C}_{k}|\tilde{\nabla}^{k-1}\tilde{A}|^{2}-\bar{\bar{C}}_{k})
&\leq \tilde{\Delta} (|\tilde{\nabla}^{k}\tilde{A}|^{2}+2\bar{C}_{k}|\tilde{\nabla}^{k-1}\tilde{A}|^{2}-\bar{\bar{C}}_{k})\\
&-\bar{C}_{k}(|\tilde{\nabla}^{k}\tilde{A}|^2+2\bar{C}_{k}|\tilde{\nabla}^{k-1}\tilde{A}|^{2}-\bar{\bar{C}}_{k}). 
\end{align*}
Thus, by putting $\mu:=e^{\bar{C}_{k}s}(|\tilde{\nabla}^{k}\tilde{A}|^{2}+2\bar{C}_{k}|\tilde{\nabla}^{k-1}\tilde{A}|^{2}-\bar{\bar{C}}_{k})$, we have 
\[\frac{\partial}{\partial s}\mu\leq\tilde{\Delta}\mu. \]
Since $M$ is compact, $\mu$ is bounded at initial time $s=-\log T$. 
Then, by the parabolic maximum principle, it follows that $\mu$ is also bounded on $M\times[-\log T,\infty)$, that is, 
there exists a constant $\tilde{C}_{k}>0$ such that $\mu\leq \tilde{C}_{k}$ on $M\times[-\log T,\infty)$. 
Hence we have 
\[|\tilde{\nabla}^{k}\tilde{A}|^{2}\leq e^{-\bar{C}_{k}s}\tilde{C}_{k}-2\bar{C}_{k}|\tilde{\nabla}^{k-1}\tilde{A}|^{2}+\bar{\bar{C}}_{k}\leq T^{\bar{C}_{k}}\tilde{C}_{k}+\bar{\bar{C}}_{k}. \]
Thus, by putting $C_{k}:=T^{\bar{C}_{k}}\tilde{C}_{k}+\bar{\bar{C}}_{k}$, we have 
\[|\tilde{\nabla}^{k}\tilde{A}|\leq C_{k}. \]
Hence the induction argument can be proceeded, and we completed the proof. 
\end{proof}

Combining Lemma~\ref{stonelem} and Proposition~\ref{bdofallderiA}, 
we can deduce the following uniform bound of the second derivative of the weighted volume. 
\begin{lemma}\label{2ndderiofmono}
Let $(N,\tilde{g},\tilde{f})$ be a gradient shrinking Ricci soliton with bounded geometry. 
For a fixed time $T>0$, let $\Phi_{t}$ and $g_{t}$ be defined as above, 
and let $F:M\times[0,T)\to N$ be a Ricci-mean curvature flow along the Ricci flow $(N,g_{t})$. 
Assume that $M$ is compact and $F$ satisfies the condition (A2). 
Let $\tilde{F}$ be the normalized mean curvature flow defined by (\ref{scaledmcf2}). 
Then there exists a constant $C'>0$ such that 
\[\Biggl| \frac{d^2}{ds^2} \int_{M}e^{-\tilde{f}\circ\tilde{F}_{s}} \mathop{d\mu(\tilde{F}_{s}^{*}\tilde{g})}\Biggr|
=\Biggl| \frac{d}{ds} \int_{M} \Bigl|H(\tilde{F}_{s})+\nabla \tilde{f}^{\bot_{\tilde{F}_{s}}}\Bigr|_{\tilde{g}}^2 e^{-\tilde{f}\circ\tilde{F}_{s}} d\mu(\tilde{F}_{s}^{*}\tilde{g}) \Biggr| \leq C'\]
uniformly on $[-\log T,\infty)$. 
\end{lemma}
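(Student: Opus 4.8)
The plan is to reduce the statement to the two key tools already available, namely Proposition~\ref{bdofallderiA} (uniform bounds on all covariant derivatives of $\tilde{A}=A(\tilde{F}_{s})$) and Lemma~\ref{stonelem} (the analogue of Stone's estimate). Write $W(s):=\int_{M}e^{-\tilde{f}\circ\tilde{F}_{s}}\,d\mu(\tilde{F}_{s}^{*}\tilde{g})$. By the monotonicity formula (Proposition~\ref{monoformingra2}) we have $W'(s)=-\int_{M}\bigl|H(\tilde{F}_{s})+\nabla\tilde{f}^{\bot_{\tilde{F}_{s}}}\bigr|_{\tilde{g}}^{2}e^{-\tilde{f}\circ\tilde{F}_{s}}\,d\mu(\tilde{F}_{s}^{*}\tilde{g})$, so $W''(s)=-\frac{d}{ds}\int_{M}\bigl|H+\nabla\tilde{f}^{\bot}\bigr|^{2}e^{-\tilde{f}}\,d\mu$ and the displayed equality in the statement is immediate; only the uniform bound on $W''(s)$ must be proven. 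Since $M$ is compact and the integrand is smooth in $(p,s)$, I would differentiate under the integral sign and express $W''(s)=\int_{M}\mathcal{P}\cdot e^{-\tilde{f}\circ\tilde{F}_{s}}\,d\mu(\tilde{F}_{s}^{*}\tilde{g})$, where $\mathcal{P}$ is a universal contraction built from $\tilde{A}$, $\tilde{\nabla}\tilde{A}$, $\tilde{\nabla}^{2}\tilde{A}$, the ambient curvature $\widetilde{\mathrm{Rm}}$ and $\tilde{\nabla}\widetilde{\mathrm{Rm}}$, and the functions $\tilde{f},\nabla\tilde{f},\mathop{\mathrm{Hess}}\tilde{f},\tilde{\nabla}\mathop{\mathrm{Hess}}\tilde{f}$, all evaluated along $\tilde{F}_{s}$.

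Concretely, using $\partial_{s}\tilde{F}_{s}=H+\nabla\tilde{f}$ one gets $\partial_{s}\bigl(e^{-\tilde{f}\circ\tilde{F}_{s}}\bigr)=-\tilde{g}(\nabla\tilde{f},H+\nabla\tilde{f})\,e^{-\tilde{f}\circ\tilde{F}_{s}}$ and, from the first variation of the induced measure, $\partial_{s}\,d\mu(\tilde{F}_{s}^{*}\tilde{g})=\bigl(\mathop{\mathrm{div}_{\tilde{F}_{s}^{*}\tilde{g}}}(\nabla\tilde{f}^{\top})-\tilde{g}(H,H+\nabla\tilde{f}^{\bot})\bigr)\,d\mu(\tilde{F}_{s}^{*}\tilde{g})$, while the evolution of $\bigl|H+\nabla\tilde{f}^{\bot}\bigr|^{2}$ comes from the Simons-type evolution equations for the second fundamental form along a Ricci--mean curvature flow (Appendix~\ref{EvoEq}, cf.\ Proposition~\ref{evofhiA}), rewritten for the rescaled flow; it produces terms involving $\tilde{\nabla}^{2}\tilde{A}$, products $\tilde{A}\ast\tilde{A}\ast\tilde{A}$, $\widetilde{\mathrm{Rm}}\ast\tilde{A}$, $\tilde{\nabla}\widetilde{\mathrm{Rm}}$, and contractions of $\nabla\tilde{f}$ and $\mathop{\mathrm{Hess}}\tilde{f}$ against $\tilde{A}$. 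A point worth stressing is that no integration by parts is needed: each term is estimated crudely in absolute value. For the curvature and Hessian factors, bounded geometry of $(N,\tilde{g})$ bounds $|\widetilde{\mathrm{Rm}}|$ and $|\tilde{\nabla}\widetilde{\mathrm{Rm}}|$; the soliton identity (\ref{shrisol}) gives $\mathop{\mathrm{Hess}}\tilde{f}=\frac{1}{2}\tilde{g}-\mathrm{Ric}(\tilde{g})$, so $|\mathop{\mathrm{Hess}}\tilde{f}|$ and, after differentiating (\ref{shrisol}), $|\tilde{\nabla}\mathop{\mathrm{Hess}}\tilde{f}|$ are bounded too; and (\ref{addshrisol}) with $R(\tilde{g})\ge0$ yields $|\nabla\tilde{f}|^{2}=\tilde{f}-R(\tilde{g})\le\tilde{f}$. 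For the factors involving $\tilde{A}$ I would invoke Proposition~\ref{bdofallderiA}, giving constants with $|\tilde{A}|\le C_{0}$, $|\tilde{\nabla}\tilde{A}|\le C_{1}$, $|\tilde{\nabla}^{2}\tilde{A}|\le C_{2}$ on $M\times[-\log T,\infty)$; the divergence term is controlled the same way, since $\mathop{\mathrm{div}_{\tilde{F}_{s}^{*}\tilde{g}}}(\nabla\tilde{f}^{\top})=\Delta_{\tilde{F}_{s}^{*}\tilde{g}}(\tilde{f}\circ\tilde{F}_{s})$ equals a trace of $\mathop{\mathrm{Hess}}\tilde{f}$ plus $\tilde{g}(H,\nabla\tilde{f})$. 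Collecting these bounds, $\mathcal{P}$ is dominated pointwise by $C\bigl(1+(\tilde{f}\circ\tilde{F}_{s})^{j}\bigr)$ for some fixed $j\in\mathbb{N}$ and $C>0$ depending only on the $C_{k}$ and the bounded-geometry constants.

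Finally I would absorb the polynomial factor into the exponential: since $\sup_{x\ge0}(1+x^{j})e^{-x/2}<\infty$, we get $C\bigl(1+(\tilde{f}\circ\tilde{F}_{s})^{j}\bigr)e^{-\tilde{f}\circ\tilde{F}_{s}}\le C''\,e^{-\frac{\tilde{f}}{2}\circ\tilde{F}_{s}}$, hence
\[\Biggl|\frac{d}{ds}\int_{M}\Bigl|H(\tilde{F}_{s})+\nabla\tilde{f}^{\bot_{\tilde{F}_{s}}}\Bigr|_{\tilde{g}}^{2}e^{-\tilde{f}\circ\tilde{F}_{s}}\,d\mu(\tilde{F}_{s}^{*}\tilde{g})\Biggr|\le C''\int_{M}e^{-\frac{\tilde{f}}{2}\circ\tilde{F}_{s}}\,d\mu(\tilde{F}_{s}^{*}\tilde{g})\le C',\]
where the last inequality is Lemma~\ref{stonelem}. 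I expect the main obstacle to be the first step: carefully deriving the evolution of $\bigl|H+\nabla\tilde{f}^{\bot}\bigr|^{2}$ along the normalized flow, keeping track of all the terms coming from the ambient curvature and from $\nabla\tilde{f}$ being non-parallel, and then checking that every term that appears is one of the three harmless types (a curvature/Hessian quantity bounded by bounded geometry, a covariant derivative of $\tilde{A}$ of order at most two bounded by Proposition~\ref{bdofallderiA}, or a power of $\tilde{f}$ arising only through $\nabla\tilde{f}$, which is harmless against the weight). Once this bookkeeping is in place, the estimate follows from Stone's inequality as above.
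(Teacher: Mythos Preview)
Your proposal is correct and follows the same overall strategy as the paper: reduce to a pointwise bound of the form $|\mathcal{P}|\le C(1+\tilde{f}^{j})$ using Proposition~\ref{bdofallderiA}, the soliton identities (\ref{shrisol})--(\ref{addshrisol}) and bounded geometry, then absorb the polynomial via $(1+\tilde{f}^{j})e^{-\tilde{f}}\le C''e^{-\tilde{f}/2}$ and finish with Lemma~\ref{stonelem}.

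The organizational difference is worth noting. The paper does not differentiate the integrand directly along the normalized flow; instead it passes back to the unscaled flow $F_{t}$ and applies the variational formula of Proposition~\ref{vari} with weight $\bar{u}_{t}=u_{t}\,|H(F_{t})+\nabla f_{t}^{\bot}|^{2}$. This produces a $|H+\nabla f^{\bot}|^{4}$ term plus $\int u_{t}\bigl(\partial_{t}-\Delta_{F_{t}^{*}g_{t}}\bigr)|H(F_{t})+\nabla f_{t}^{\bot}|^{2}\,F_{t}^{*}\rho_{t}\,d\mu$, and the heat operator $(\partial_{t}-\Delta)$ is then controlled in Lemma~\ref{longcompu} by a systematic degree count (the spaces $\mathcal{W}_{c,d}$), yielding the bound $(T-t)^{2}\,|(\partial_{t}-\Delta)|H+\nabla f^{\bot}|^{2}|\le C''''(1+\tilde{f})$ and hence $j=2$ overall. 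Your route avoids Proposition~\ref{vari} and the implicit integration by parts, at the cost of keeping the $\Delta H$ term in $\partial_{s}H$ and bounding it crudely by $|\tilde{\nabla}^{2}\tilde{A}|\le C_{2}$; you also need to track the extra drift contributions coming from $\partial_{s}\tilde{F}=H+\nabla\tilde{f}$ rather than pure mean curvature flow (these are exactly the $\mathop{\mathrm{Hess}}\tilde{f}$ and $\tilde{\nabla}\mathop{\mathrm{Hess}}\tilde{f}$ terms you list, all bounded via (\ref{shrisol})). Either bookkeeping works; the paper's is more structured, yours is more direct.
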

\begin{proof}
As the proof of Proposition~\ref{monoformingra2}, we have 
\begin{align*}
&\int_{M} \Bigl|H(\tilde{F}_{s})+\nabla \tilde{f}^{\bot_{\tilde{F}_{s}}}\Bigr|_{\tilde{g}}^2 e^{-\tilde{f}\circ\tilde{F}_{s}} d\mu(\tilde{F}_{s}^{*}\tilde{g})\\
=&(4\pi)^{\frac{m}{2}}(T-t)\int_{M} u_{t}\Bigl|H(F_{t})+\nabla f_{t}^{\bot_{F_{t}}}\Bigr|_{g_{t}}^2\mathop{F_{t}^{*}\hspace{-1mm}\rho_{t}} d\mu(F_{t}^{*}g_{t}), 
\end{align*}
where $u:=(4\pi(T-t))^{\frac{n-m}{2}}$. 
Since $\frac{d}{ds}=(T-t)\frac{d}{dt}$, we have 
\begin{align}\label{derideri1}
\begin{aligned}
&\frac{d}{ds}\int_{M} \Bigl|H(\tilde{F}_{s})+\nabla \tilde{f}^{\bot_{\tilde{F}_{s}}}\Bigr|_{\tilde{g}}^2 e^{-\tilde{f}\circ\tilde{F}_{s}} d\mu(\tilde{F}_{s}^{*}\tilde{g})\\
=&-(4\pi)^{\frac{m}{2}}(T-t)\int_{M} u_{t}\Bigl|H(F_{t})+\nabla f_{t}^{\bot_{F_{t}}}\Bigr|_{g_{t}}^2\mathop{F_{t}^{*}\hspace{-1mm}\rho_{t}} d\mu(F_{t}^{*}g_{t})\\
&+(4\pi)^{\frac{m}{2}}(T-t)^2\frac{d}{dt}\int_{M} u_{t}\Bigl|H(F_{t})+\nabla f_{t}^{\bot_{F_{t}}}\Bigr|_{g_{t}}^2\mathop{F_{t}^{*}\hspace{-1mm}\rho_{t}} d\mu(F_{t}^{*}g_{t})\\
=&-\int_{M} \Bigl|H(\tilde{F}_{s})+\nabla \tilde{f}^{\bot_{\tilde{F}_{s}}}\Bigr|_{\tilde{g}}^2 e^{-\tilde{f}\circ\tilde{F}_{s}} d\mu(\tilde{F}_{s}^{*}\tilde{g})\\
&+(4\pi)^{\frac{m}{2}}(T-t)^2\frac{d}{dt}\int_{M} u_{t}\Bigl|H(F_{t})+\nabla f_{t}^{\bot_{F_{t}}}\Bigr|_{g_{t}}^2\mathop{F_{t}^{*}\hspace{-1mm}\rho_{t}} d\mu(F_{t}^{*}g_{t}). 
\end{aligned}
\end{align}
First, we consider the term 
\[-\int_{M} \Bigl|H(\tilde{F}_{s})+\nabla \tilde{f}^{\bot_{\tilde{F}_{s}}}\Bigr|_{\tilde{g}}^2 e^{-\tilde{f}\circ\tilde{F}_{s}} d\mu(\tilde{F}_{s}^{*}\tilde{g}). \]
Since $|H(\tilde{F}_{s})|\leq \sqrt{m}|A(\tilde{F}_{s})|$ and we know that $|A(\tilde{F}_{s})|\leq C_{0}$ by Proposition~\ref{bdofallderiA}, 
we can see that 
\begin{align*}
 \Bigl|H(\tilde{F}_{s})+\nabla \tilde{f}^{\bot_{\tilde{F}_{s}}}\Bigr|^2 \leq &|H(\tilde{F}_{s})|^2+2 |H(\tilde{F}_{s})| |\nabla \tilde{f}|+ |\nabla \tilde{f}|^2\\
 \leq & C''(1+|\nabla \tilde{f}|^2)\\
 \leq & C''(1+ \tilde{f}\circ \tilde{F}_{s})
 \end{align*}
 for some constant $C''>0$, where we used $0\leq|\nabla \tilde{f}|^2\leq \tilde{f}$. 
 Hence we have 
 \begin{align}\label{derideri2}
 \begin{aligned}
 &\Biggl| -\int_{M} \Bigl|H(\tilde{F}_{s})+\nabla \tilde{f}^{\bot_{\tilde{F}_{s}}}\Bigr|_{\tilde{g}}^2 e^{-\tilde{f}\circ\tilde{F}_{s}} d\mu(\tilde{F}_{s}^{*}\tilde{g}) \Biggr| \\
 \leq & C'' \int_{M}  (1+ \tilde{f}\circ \tilde{F}_{s}) e^{-\tilde{f}\circ\tilde{F}_{s}} d\mu(\tilde{F}_{s}^{*}\tilde{g}). 
 \end{aligned}
 \end{align}
 Next we consider the term 
 \[(4\pi)^{\frac{m}{2}}(T-t)^2\frac{d}{dt}\int_{M} u_{t}\Bigl|H(F_{t})+\nabla f_{t}^{\bot_{F_{t}}}\Bigr|_{g_{t}}^2\mathop{F_{t}^{*}\hspace{-1mm}\rho_{t}} d\mu(F_{t}^{*}g_{t}). \]
 By Proposition~\ref{vari}, we have 
 \begin{align}\label{derideri3}
 \begin{aligned}
& \frac{d}{dt}\int_{M} u_{t}\Bigl|H(F_{t})+\nabla f_{t}^{\bot_{F_{t}}}\Bigr|_{g_{t}}^2\mathop{F_{t}^{*}\hspace{-1mm}\rho_{t}} d\mu(F_{t}^{*}g_{t})\\
 =&-\int_{M} u_{t}\Bigl|H(F_{t})+\nabla f_{t}^{\bot_{F_{t}}}\Bigr|_{g_{t}}^4\mathop{F_{t}^{*}\hspace{-1mm}\rho_{t}} d\mu(F_{t}^{*}g)+\int_{M} L\bar{u}_{t} \mathop{F_{t}^{*}\hspace{-1mm}\rho_{t}} d\mu(F_{t}^{*}g_{t}), 
 \end{aligned}
 \end{align}
 where we put 
 \begin{align*}
 \begin{aligned}
 \bar{u}_{t}:=&u_{t}\Bigl|H(F_{t})+\nabla f_{t}^{\bot_{F_{t}}}\Bigr|_{g_{t}}^2, \\
 L\bar{u}_{t}:=&\frac{\partial}{\partial t}\bar{u}_{t} - \Delta_{F_{t}^{*}g_{t}}\bar{u}_{t}+\bar{u}_{t}{\mathrm{tr}}^{\bot}(\mathrm{Ric}(g_{t})+\mathop{\mathrm{Hess}}f_{t})\\
 =&u_{t}\biggl(\frac{\partial}{\partial t}-\Delta_{F_{t}^{*}g_{t}} \biggr)\Bigl|H(F_{t})+\nabla f_{t}^{\bot_{F_{t}}}\Bigr|_{g_{t}}^2. 
 \end{aligned}
\end{align*}
First, as the above argument, we can see that 
 \begin{align}\label{derideri4}
 \begin{aligned}
 &\Biggl| -(4\pi)^{\frac{m}{2}}(T-t)^2 \int_{M} u_{t}\Bigl|H(F_{t})+\nabla f_{t}^{\bot_{F_{t}}}\Bigr|_{g_{t}}^4\mathop{F_{t}^{*}\hspace{-1mm}\rho_{t}} d\mu(F_{t}^{*}g_{t}) \Biggr| \\
 \leq & C''' \int_{M}  (1+ \tilde{f}^2\circ \tilde{F}_{s}) e^{-\tilde{f}\circ\tilde{F}_{s}} d\mu(\tilde{F}_{s}^{*}\tilde{g}) 
 \end{aligned}
 \end{align}
 for some constant $C'''>0$. 
Next we consider 
\[\biggl(\frac{\partial}{\partial t}-\Delta_{F_{t}^{*}g_{t}} \biggr)\Bigl|H(F_{t})+\nabla f_{t}^{\bot_{F_{t}}}\Bigr|_{g_{t}}^2. \]
In fact, by the long computation (cf. Lemma~\ref{longcompu}), it follows that there exists a constant $C''''>0$ such that 
\begin{align}\label{derideri5}
(T-t)^2 \Biggl| \biggl(\frac{\partial}{\partial t}-\Delta_{F_{t}^{*}g_{t}} \biggr)\Bigl|H(F_{t})+{\nabla f_{t}}^{\bot_{F_{t}}}\Bigr|_{g_{t}}^2 \Biggr|\leq C''''(1+\tilde{f}\circ\tilde{F}_{s}). 
\end{align}
By combining (\ref{derideri1})-(\ref{derideri5}), it follows that there exists a constant $\bar{C}>0$ such that 
\begin{align*}
&\Biggl| \frac{d}{ds} \int_{M} \Bigl|H(\tilde{F}_{s})+\nabla \tilde{f}^{\bot_{\tilde{F}_{s}}}\Bigr|_{\tilde{g}}^2 e^{-\tilde{f}\circ\tilde{F}_{s}} d\mu(\tilde{F}_{s}^{*}\tilde{g}) \Biggr| \\
\leq & \bar{C} \int_{M}  (1+ \tilde{f}^2\circ \tilde{F}_{s}) e^{-\tilde{f}\circ\tilde{F}_{s}} d\mu(\tilde{F}_{s}^{*}\tilde{g}) . 
\end{align*}
Note that $(1+ \tilde{f}^2)e^{-\tilde{f}}=(1+ \tilde{f}^2)e^{-\frac{\tilde{f}}{2}}e^{-\frac{\tilde{f}}{2}}$ and $(1+ \tilde{f}^2)e^{-\frac{\tilde{f}}{2}}$ is a bounded function on $N$, that is, 
$(1+ \tilde{f}^2)e^{-\frac{\tilde{f}}{2}}\leq \bar{C}'$ for some constant $\bar{C}'$. 
Thus we have 
\begin{align*}
&\Biggl| \frac{d}{ds} \int_{M} \Bigl|H(\tilde{F}_{s})+\nabla \tilde{f}^{\bot_{\tilde{F}_{s}}}\Bigr|_{\tilde{g}}^2 e^{-\tilde{f}\circ\tilde{F}_{s}} d\mu(\tilde{F}_{s}^{*}\tilde{g}) \Biggr| \\
\leq & \bar{C} \int_{M}  (1+ \tilde{f}^2\circ \tilde{F}_{s}) e^{-\tilde{f}\circ\tilde{F}_{s}} d\mu(\tilde{F}_{s}^{*}\tilde{g})\\
\leq & \bar{C} \bar{C}'\int_{M} e^{-\frac{\tilde{f}}{2}\circ\tilde{F}_{s}} d\mu(\tilde{F}_{s}^{*}\tilde{g})\leq \bar{C} \bar{C}'C=:C',  
\end{align*}
where $C$ is the constant appeared in (\ref{stonebound}) of Lemma~\ref{stonelem}. 
\end{proof}

Finally, here we give the proof of Theorem \ref{minLagthm}. 
\begin{proof}[Proof of Theorem \ref{minLagthm}]
We denote the K\"ahler form and the complex structure on $(N,g,f)$ by $\omega$ and $J$ respectively. 
Since $F:L\to N$ is a self-similar solution, $F$ satisfies 
\[H(F)=\lambda {\nabla f}^{\bot}\]
for some constant $\lambda\in\mathbb{R}$. 
Then, by the definition of the mean curvature form $\omega_{H}$, for a tangent vector $X$ on $L$, we have 
\[\omega_{H}(X)=\omega(H(F),F_{*}X)=\lambda\omega({\nabla f}^{\bot},F_{*}X)=\lambda\omega(\nabla f,F_{*}X), \]
where we used the Lagrangian condition in the last equality. 
Since the mean curvature form is exact, there exists a smooth function $\theta$ on $L$ such that $\omega_{H}=d\theta$. 
Let $\{e_{i}\}_{i=1}^{n}$ be an orthonormal local frame on $L$ with respect to the metric $F^{*}g$. 
Since $\omega$ and $J$ are parallel, we have 
\begin{align*}
\Delta\theta=&\nabla_{e_{i}}\omega_{H}(e_{i})-\omega_{H}(\nabla_{e_{i}}e_{i})\\
=&\lambda\nabla_{e_{i}}\omega(\nabla f,F_{*}e_{i})-\omega_{H}(\nabla_{e_{i}}e_{i})\\
=&-\lambda\mathop{\mathrm{Hess}}f(F_{*}e_{i},JF_{*}e_{i})+\lambda\omega(\nabla f,\nabla_{F_{*}e_{i}}F_{*}e_{i})-\lambda\omega(\nabla f,F_{*}(\nabla_{e_{i}}e_{i}))\\
=&-\lambda\mathop{\mathrm{Hess}}f(F_{*}e_{i},JF_{*}e_{i})+\lambda\omega(\nabla f, H(F)). 
\end{align*}
Since the ambient is a gradient shrinking K\"ahler Ricci soliton, we have 
\[\mathop{\mathrm{Hess}}f(F_{*}e_{i},JF_{*}e_{i})=-\mathrm{Ric}(F_{*}e_{i},JF_{*}e_{i})+\frac{1}{2}g(F_{*}e_{i},JF_{*}e_{i})=0. \]
Furthermore, we have 
\[\omega(\nabla f, H(F))=\omega({\nabla f}^{\top}, H(F))=\omega(F_{*}\nabla(F^{*}f),H(F))=-(F^{*}g)(\nabla(F^{*}f),\nabla\theta). \]
Hence $\theta$ satisfies the following linear elliptic equation: 
\[\Delta\theta+\lambda(F^{*}g)(\nabla(F^{*}f),\nabla\theta)=0. \]
Since $L$ is compact, by the maximum principle, we obtain that $\theta$ is a constant, and this implies that $H(F)=0$. 
\end{proof}

\appendix

\section{Evolution equations}\label{EvoEq}
In this appendix, we give a general treatment of evolution equations for tensors with Ricci-mean curvature flows along Ricci flows. 
Note that, in this appendix, we do not assume that $g_{t}$ is the Ricci flow constructed by a gradient shrinking Ricci soliton. 

Let $M$ and $N$ be manifolds with dimension $m$ and $n$ respectively, and assume that $m\leq n$. 
Let $g=(\, g_{t}\,;\, t\in[0,T_{1})\,)$ be a solution of Ricci flow (\ref{evoeq21}) and 
$F:M\times [0,T_{2}) \to N$ be a solution of Ricci-mean curvature flow (\ref{evoeq22}) with $T_{2}\leq T_{1}$. 
Here we introduce the notion of the covariant time derivative $\nabla_{t}$ as in \cite{Smoczyk}. 
Assume that, for each $t\in[0,T_{2})$, $T(t)$ is a smooth section of 
\[E_{t}:=(\mathop{\otimes}^{A}F_{t}^{*}(TN))\otimes(\mathop{\otimes}^{B}F_{t}^{*}(T^{*}N))\otimes(\mathop{\otimes}^{C}TM)\otimes(\mathop{\otimes}^{D}T^{*}M)\]
over $M$, and its correspondence $t\mapsto T(t)$ is smooth. 
Then for each $t\in[0,T_{2})$ we define $(\nabla_{t}T)(t)$ as follows, and it is also a smooth section of $E_{t}$. 
Denote $T$ by local coordinates $(y^{\alpha})_{\alpha=1}^{n}$ on $N$ and $(x^{i})_{i=1}^{m}$ on $M$ as
\[T^{\alpha_{1}\dots\alpha_{A}\hspace{9.5mm}i_{1}\dots i_{C}}_{\hspace{10.5mm}\beta_{1}\dots\beta_{B}\hspace{8.5mm}j_{1}\dots j_{D}}. \]
This is the coefficient of 
\[\frac{\partial}{\partial y^{\alpha_{1}}}\otimes\dots\otimes\frac{\partial}{\partial y^{\alpha_{A}}}\otimes dy^{\beta_{1}}\otimes\dots\otimes dy^{\beta_{B}}
\otimes \frac{\partial}{\partial x^{i_{1}}}\otimes\dots\otimes\frac{\partial}{\partial x^{i_{C}}}\otimes dx^{j_{1}}\otimes\dots\otimes dx^{j_{D}} \]
of $T$. Then the coefficients of $(\nabla_{t}T)(t)$ is defined by
\begin{align*}
(\nabla_{t}T)^{\alpha_{1}\dots\alpha_{A}\hspace{9.5mm}i_{1}\dots i_{C}}_{\hspace{10.5mm}\beta_{1}\dots\beta_{B}\hspace{8.5mm}j_{1}\dots j_{D}}
:=&\frac{\partial}{\partial t}T^{\alpha_{1}\dots\alpha_{A}\hspace{9.5mm}i_{1}\dots i_{C}}_{\hspace{10.5mm}\beta_{1}\dots\beta_{B}\hspace{8.5mm}j_{1}\dots j_{D}}\\
&+\sum_{p=1}^{A}\Gamma_{\gamma\delta}^{\alpha_{p}}H^{\gamma}
T^{\alpha_{1}\dots \delta \dots\alpha_{A}\hspace{9.5mm}i_{1}\dots i_{C}}_{\hspace{14mm}\beta_{1}\dots\beta_{B}\hspace{8.5mm}j_{1}\dots j_{D}}\\
&-\sum_{p=1}^{B}\Gamma_{\gamma\beta_{p}}^{\delta}H^{\gamma}
T^{\alpha_{1}\dots \alpha_{A}\hspace{13.5mm}i_{1}\dots i_{C}}_{\hspace{10.5mm}\beta_{1}\dots \delta\dots \beta_{B}\hspace{8.5mm}j_{1}\dots j_{D}}, 
\end{align*}
where $\Gamma^{\alpha}_{\beta\gamma}$ is the Christoffel symbol of the Levi--Civita connection of $g_{t}$ on $N$ for each time $t$. 
Then one can easily check that this definition does not depend on the choice of local coordinates and 
defines a global smooth section of $E_{t}$ over $M$. 

\begin{remark}
One can easily check that $\nabla_{t}$ satisfies Leibniz rule for tensor contractions. 
For example, for tensors $S^{\alpha}_{ij}$, $T^{\beta}_{k\ell}$, $U_{\alpha\beta}$, $V^{ik}$, $W^{j\ell}$, we have
\begin{align*}
\nabla_{t}(S^{\alpha}_{ij}T^{\beta}_{k\ell}U_{\alpha\beta}V^{ik}W^{j\ell})&=\nabla_{t}S^{\alpha}_{ij}T^{\beta}_{k\ell}U_{\alpha\beta}V^{ik}W^{j\ell}+S^{\alpha}_{ij}\nabla_{t}T^{\beta}_{k\ell}U_{\alpha\beta}V^{ik}W^{j\ell}\\
&+S^{\alpha}_{ij}T^{\beta}_{k\ell}\nabla_{t}U_{\alpha\beta}V^{ik}W^{j\ell}\\
&+S^{\alpha}_{ij}T^{\beta}_{k\ell}U_{\alpha\beta}\nabla_{t}V^{ik}W^{j\ell}+S^{\alpha}_{ij}T^{\beta}_{k\ell}U_{\alpha\beta}V^{ik}\nabla_{t}W^{j\ell}. 
\end{align*}
\end{remark}

Note that in this paper we define 
\begin{align*}
&\mathrm{Rm}(X,Y)Z:=(\nabla_{X}\nabla_{Y}-\nabla_{Y}\nabla_{X}-\nabla_{[X,Y]})Z\\
&R_{\alpha\beta\gamma\delta}:=g\left(\frac{\partial}{\partial y^{\alpha}},\mathrm{Rm}\left(\frac{\partial}{\partial y^{\gamma}},\frac{\partial}{\partial y^{\delta}}\right)\frac{\partial}{\partial y^{\beta}}\right), \\
&R_{\alpha\gamma}:=\mathrm{Ric}_{\alpha\gamma}:=g^{\beta\delta}R_{\alpha\beta\gamma\delta}, 
\end{align*}
and we define 
\[F^{\alpha}_{i}=F^{\alpha}_{i}(t):=\frac{\partial F_{t}^{\alpha}}{\partial x^{i}}, \]
that is a coefficient of the tensor $DF_{t}(=F_{t*})\in\Gamma(M,F_{t}^{*}(TN)\otimes T^{*}M)$. 
By the straightforward computation with the definition of $\nabla_{t}$, 
we get the following formulas, Lemma \ref{changetj}, \ref{evofgt}, and \ref{evofDF}. 

\begin{lemma}\label{changetj}
We have
\begin{align*}
&\nabla_{t}\nabla_{j}T^{\alpha}_{i_{1}\dots i_{k}}-\nabla_{j}\nabla_{t}T^{\alpha}_{i_{1}\dots i_{k}}\\
=&R^{\alpha}_{\ \gamma\delta\beta}H^{\delta}F^{\beta}_{j}T^{\gamma}_{i_{1}\dots i_{k}}
+\frac{\partial}{\partial t}\Gamma^{\alpha}_{\beta\gamma}F^{\beta}_{j}T^{\gamma}_{i_{1}\dots i_{k}}-\sum_{p=1}^{k}\frac{\partial}{\partial t}\Gamma^{\ell}_{ji_{p}}T^{\alpha}_{i_{1}\dots \ell \dots i_{k}}, 
\end{align*}
where $\Gamma^{i}_{jk}$ is the Christoffel symbol of the Levi--Civita connection of $F_{t}^{*}g_{t}$ on $M$ for each time $t$. 
\end{lemma}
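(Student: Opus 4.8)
The plan is to prove the identity by a direct computation in local coordinates $(y^{\alpha})$ on $N$ and $(x^{i})$ on $M$, unpacking both $\nabla_{t}\nabla_{j}T^{\alpha}_{i_{1}\dots i_{k}}$ and $\nabla_{j}\nabla_{t}T^{\alpha}_{i_{1}\dots i_{k}}$ by the coordinate formulas for the two operations and then subtracting. Since $T$ carries a single ambient index $\alpha$ and $k$ lower $M$-indices, each order of differentiation expands into only a handful of terms, and the statement reduces to matching them.

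First I would record
\[(\nabla_{j}T)^{\alpha}_{i_{1}\dots i_{k}}=\partial_{j}T^{\alpha}_{i_{1}\dots i_{k}}+\Gamma^{\alpha}_{\beta\gamma}F^{\beta}_{j}T^{\gamma}_{i_{1}\dots i_{k}}-\sum_{p=1}^{k}\Gamma^{\ell}_{ji_{p}}T^{\alpha}_{i_{1}\dots\ell\dots i_{k}}\]
and apply $\nabla_{t}$, which corrects only the ambient index, adding $\Gamma^{\alpha}_{\gamma\delta}H^{\gamma}(\nabla_{j}T)^{\delta}_{i_{1}\dots i_{k}}$. The point to track carefully is the nature of the Christoffel symbols here: the $\Gamma^{\alpha}_{\beta\gamma}$ are the ambient Christoffel symbols of $g_{t}$ evaluated along $F_{t}$, so $\partial_{t}$ of such a term produces, by the chain rule, the intrinsic time derivative $\frac{\partial}{\partial t}\Gamma^{\alpha}_{\beta\gamma}$ (point on $N$ held fixed — the term that appears in the statement) together with an extra piece $(\partial_{\mu}\Gamma^{\alpha}_{\beta\gamma})H^{\mu}$, since $\partial_{t}F^{\mu}=H^{\mu}$ by the mean curvature flow equation (\ref{evoeq22}); likewise $\partial_{t}F^{\beta}_{j}=\partial_{j}(\partial_{t}F^{\beta})=\partial_{j}H^{\beta}$. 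By contrast, the $\Gamma^{\ell}_{ji_{p}}$ are the Christoffel symbols of $F_{t}^{*}g_{t}$ on $M$, genuine functions of $(x,t)$, so their time derivative is just $\frac{\partial}{\partial t}\Gamma^{\ell}_{ji_{p}}$, yielding the last term of the claim directly. Then I would expand $(\nabla_{j}\nabla_{t}T)^{\alpha}_{i_{1}\dots i_{k}}$ by applying $\nabla_{j}$ to $(\nabla_{t}T)^{\alpha}_{i_{1}\dots i_{k}}=\partial_{t}T^{\alpha}_{i_{1}\dots i_{k}}+\Gamma^{\alpha}_{\gamma\delta}H^{\gamma}T^{\delta}_{i_{1}\dots i_{k}}$, again recalling that $\partial_{j}$ hitting a pulled-back ambient quantity brings down $F^{\mu}_{j}\partial_{\mu}$.

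Subtracting the two expansions, I expect the mixed second derivatives $\partial_{t}\partial_{j}T^{\alpha}$ to cancel, together with the terms $\Gamma^{\alpha}_{\beta\gamma}F^{\beta}_{j}\partial_{t}T^{\gamma}$, $\Gamma^{\ell}_{ji_{p}}\partial_{t}T^{\alpha}_{\dots\ell\dots}$, $\Gamma^{\alpha}_{\gamma\delta}H^{\gamma}\partial_{j}T^{\delta}$, $\Gamma^{\alpha}_{\gamma\delta}H^{\gamma}\Gamma^{\ell}_{ji_{p}}T^{\delta}_{\dots\ell\dots}$, and the two identical copies of $\Gamma^{\alpha}_{\beta\gamma}(\partial_{j}H^{\beta})T^{\gamma}$. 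What survives is $\frac{\partial}{\partial t}\Gamma^{\alpha}_{\beta\gamma}F^{\beta}_{j}T^{\gamma}_{i_{1}\dots i_{k}}$, the term $-\sum_{p}\frac{\partial}{\partial t}\Gamma^{\ell}_{ji_{p}}T^{\alpha}_{i_{1}\dots\ell\dots i_{k}}$, and a combination of four terms — two linear in the first derivatives $\partial_{\mu}\Gamma^{\alpha}$ (the chain-rule pieces above, namely $(\partial_{\mu}\Gamma^{\alpha}_{\beta\gamma})H^{\mu}F^{\beta}_{j}T^{\gamma}$ from $\nabla_{t}\nabla_{j}T$ and $-(\partial_{\mu}\Gamma^{\alpha}_{\gamma\delta})F^{\mu}_{j}H^{\gamma}T^{\delta}$ from $\nabla_{j}\nabla_{t}T$) and two quadratic in the $\Gamma^{\alpha}$. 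After relabeling the dummy indices so that all four are written against the common pattern $H^{\delta}F^{\beta}_{j}T^{\gamma}$, this combination becomes $\bigl(\partial_{\delta}\Gamma^{\alpha}_{\beta\gamma}-\partial_{\beta}\Gamma^{\alpha}_{\delta\gamma}+\Gamma^{\nu}_{\beta\gamma}\Gamma^{\alpha}_{\delta\nu}-\Gamma^{\nu}_{\delta\gamma}\Gamma^{\alpha}_{\beta\nu}\bigr)H^{\delta}F^{\beta}_{j}T^{\gamma}$, which is exactly $R^{\alpha}_{\ \gamma\delta\beta}H^{\delta}F^{\beta}_{j}T^{\gamma}$ in the curvature sign convention $R_{\alpha\beta\gamma\delta}=g(\partial_{\alpha},\mathrm{Rm}(\partial_{\gamma},\partial_{\delta})\partial_{\beta})$ fixed in this section. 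This produces the stated curvature term and completes the identity.

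The main obstacle is purely bookkeeping rather than conceptual: one must stay disciplined about which Christoffel symbols are ambient (functions on $N$, pulled back along $F_{t}$, so that both $\partial_{j}$ and $\partial_{t}$ acting on them trigger the chain rule through $F$) and which are intrinsic to $(M,F_{t}^{*}g_{t})$, and then identify the leftover derivative-plus-quadratic combination with the Riemann tensor in the paper's sign convention. I would carry out the argument for the single index type $T^{\alpha}_{i_{1}\dots i_{k}}$ as stated, and remark that the same computation applied slot by slot extends it to tensors with further $F^{*}(TN)$, $F^{*}(T^{*}N)$ or $TM$ indices; the version recorded here is all that is needed in the sequel.
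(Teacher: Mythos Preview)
Your proposal is correct and is precisely the direct local-coordinate computation that the paper has in mind; the paper itself does not spell out a proof of this lemma but simply states that it follows ``by the straightforward computation with the definition of $\nabla_{t}$''. Your careful distinction between the pulled-back ambient Christoffel symbols (where both $\partial_{t}$ and $\partial_{j}$ trigger the chain rule through $F_{t}$) and the intrinsic ones on $(M,F_{t}^{*}g_{t})$, together with the identification of the leftover four-term combination as $R^{\alpha}_{\ \gamma\delta\beta}$ in the paper's curvature convention, is exactly right.
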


\begin{lemma}\label{evofgt}
By the restriction, we consider $g_{t}$, more precisely $g_{t}\circ F_{t}$, as a section of $F_{t}^{*}(T^{*}N)\otimes F_{t}^{*}(T^{*}N)$ over $M$. 
Then we have
\[\nabla_{t}g_{\alpha\beta}=-2R_{\alpha\beta}. \]
\end{lemma}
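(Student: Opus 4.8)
The plan is to unwind the definition of the covariant time derivative $\nabla_{t}$ for a section carrying two lower $N$-indices and no $M$-indices, and then to feed in the two flow equations together with the metric-compatibility of the Levi--Civita connection of $g_{t}$ at each fixed time. Everything reduces to a short index computation once the definition is expanded carefully.

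First I would write, directly from the displayed definition of $\nabla_{t}$ applied to the section $g_{t}\circ F_{t}$,
\[
\nabla_{t}g_{\alpha\beta}=\frac{\partial}{\partial t}\bigl(g_{\alpha\beta}\circ F_{t}\bigr)-\Gamma^{\delta}_{\gamma\alpha}H^{\gamma}g_{\delta\beta}-\Gamma^{\delta}_{\gamma\beta}H^{\gamma}g_{\alpha\delta},
\]
where all Christoffel symbols and curvature quantities are those of $g_{t}$ at the fixed time $t$ under consideration. Since the $\partial/\partial t$ here differentiates the components regarded as functions on $M$ via pullback by $F_{t}$, the chain rule gives
\[
\frac{\partial}{\partial t}\bigl(g_{\alpha\beta}\circ F_{t}\bigr)=\frac{\partial g_{\alpha\beta}}{\partial t}+\frac{\partial g_{\alpha\beta}}{\partial y^{\gamma}}\,\frac{\partial F_{t}^{\gamma}}{\partial t}=-2R_{\alpha\beta}+\frac{\partial g_{\alpha\beta}}{\partial y^{\gamma}}H^{\gamma},
\]
using the Ricci flow equation (\ref{evoeq21}) for the first term and the mean curvature flow equation (\ref{evoeq22}), i.e. $\partial F_{t}^{\gamma}/\partial t=H^{\gamma}$, for the second.

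Combining the two displays, every term other than $-2R_{\alpha\beta}$ assembles into
\[
H^{\gamma}\Bigl(\frac{\partial g_{\alpha\beta}}{\partial y^{\gamma}}-\Gamma^{\delta}_{\gamma\alpha}g_{\delta\beta}-\Gamma^{\delta}_{\gamma\beta}g_{\alpha\delta}\Bigr)=H^{\gamma}\nabla_{\gamma}g_{\alpha\beta}=0,
\]
since $\nabla$ is the Levi--Civita connection of $g_{t}$ and is therefore metric-compatible at each fixed $t$. This yields $\nabla_{t}g_{\alpha\beta}=-2R_{\alpha\beta}$. The computation is entirely routine; the only point deserving care is the bookkeeping of the two distinct sources of $t$-dependence in $g_{\alpha\beta}\circ F_{t}$ — the evolution of the ambient metric itself and the motion of the point $F_{t}(x)$ along the flow — and checking that the extra chain-rule term produced by the latter is exactly the one cancelled by the Christoffel corrections built into the definition of $\nabla_{t}$. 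I do not expect any genuine obstacle beyond this cancellation.
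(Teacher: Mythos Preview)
Your proof is correct and is exactly the straightforward computation the paper alludes to (the paper does not write out a proof, stating only that Lemmas~\ref{changetj}, \ref{evofgt}, and \ref{evofDF} follow by direct computation from the definition of $\nabla_{t}$). The bookkeeping you highlight---that the chain-rule term from the motion of $F_{t}$ is precisely cancelled by the Christoffel corrections, leaving only the Ricci flow contribution $-2R_{\alpha\beta}$---is the whole content of the lemma.
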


\begin{lemma}\label{evofDF}
We have 
\[\nabla_{t}F^{\alpha}_{i}=\nabla_{i}H^{\alpha}. \]
\end{lemma}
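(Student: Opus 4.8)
The plan is a direct computation in local coordinates, using the definition of the covariant time derivative $\nabla_{t}$ given above together with the mean curvature flow equation $\partial F^{\alpha}/\partial t = H^{\alpha}$ from (\ref{evoeq22}). First I would fix local coordinates $(y^{\alpha})$ on $N$ and $(x^{i})$ on $M$ and recall that $DF_{t}$, viewed as a section of $F_{t}^{*}(TN)\otimes T^{*}M$, has components $F^{\alpha}_{i}=\partial F^{\alpha}/\partial x^{i}$. This tensor carries a single upper $N$-index and a single lower $M$-index, so in the defining formula for $\nabla_{t}$ only the term attached to the $N$-index survives, giving
\[(\nabla_{t}F)^{\alpha}_{i}=\frac{\partial}{\partial t}F^{\alpha}_{i}+\Gamma^{\alpha}_{\gamma\delta}H^{\gamma}F^{\delta}_{i},\]
where $\Gamma$ are the Christoffel symbols of $g_{t}$ at the time in question.

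I would then compute the first term by commuting partial derivatives: since $F$ is smooth on $M\times[0,T_{2})$,
\[\frac{\partial}{\partial t}F^{\alpha}_{i}=\frac{\partial}{\partial t}\frac{\partial F^{\alpha}}{\partial x^{i}}=\frac{\partial}{\partial x^{i}}\frac{\partial F^{\alpha}}{\partial t}=\frac{\partial H^{\alpha}}{\partial x^{i}}.\]
On the other hand, writing out the pullback connection on $F_{t}^{*}(TN)$ applied to the section $H$,
\[\nabla_{i}H^{\alpha}=\frac{\partial H^{\alpha}}{\partial x^{i}}+\Gamma^{\alpha}_{\beta\gamma}F^{\beta}_{i}H^{\gamma}.\]
Comparing, the two expressions for $(\nabla_{t}F)^{\alpha}_{i}$ and $\nabla_{i}H^{\alpha}$ have identical first terms, and their Christoffel terms coincide after relabeling the summation indices and invoking the symmetry $\Gamma^{\alpha}_{\gamma\delta}=\Gamma^{\alpha}_{\delta\gamma}$ of the Levi--Civita connection; hence $\nabla_{t}F^{\alpha}_{i}=\nabla_{i}H^{\alpha}$.

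There is essentially no obstacle here: the content of the lemma is the compatibility of the two conventions, so the only point demanding attention is the bookkeeping --- namely that the $\partial/\partial t$ appearing in the definition of $\nabla_{t}$ acts only on the tensor components and not on the time-dependent Christoffel symbols, and that all Christoffel symbols used throughout are those of $g_{t}$ at the fixed time. Once these conventions are pinned down, the identity follows immediately from the commutation of $\partial_{t}$ with $\partial_{i}$ and the symmetry of $\Gamma$.
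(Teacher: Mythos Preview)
Your proof is correct and is precisely the ``straightforward computation with the definition of $\nabla_{t}$'' that the paper alludes to without writing out. The only content is indeed the bookkeeping you identify: applying the definition of $\nabla_{t}$ to a tensor with one upper $N$-index and one lower $M$-index, commuting $\partial_{t}$ with $\partial_{i}$, and using the symmetry of the Levi--Civita Christoffel symbols.
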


Combining above lemmas, we have the following. 
\begin{lemma}\label{evofindgRM}
Put $g_{ij}=(F_{t}^{*}g_{t})_{ij}=g_{\alpha\beta}F^{\alpha}_{i}F^{\beta}_{j}$. 
Then we have
\[\frac{\partial}{\partial t}g_{ij}=\nabla_{t}g_{ij}=-2((F^{*}\mathrm{Ric})_{ij}+g(H,A_{ij})). \]
\end{lemma}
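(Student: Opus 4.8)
The plan is to expand $\nabla_{t}g_{ij}$ by the Leibniz rule for the covariant time derivative $\nabla_{t}$ (noted in the Remark following its definition) and then feed in Lemmas~\ref{evofgt} and~\ref{evofDF}. First I would dispose of the equality $\frac{\partial}{\partial t}g_{ij}=\nabla_{t}g_{ij}$: the object $g_{ij}=(F_{t}^{*}g_{t})_{ij}$ carries indices only on $M$, so in the defining formula for $\nabla_{t}$ the Christoffel-correction terms (which pair $H^{\gamma}$ with an index on $N$) are absent, and $\nabla_{t}$ reduces to $\partial/\partial t$ on such tensors. This gives the first equality immediately.

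For the second equality, write $g_{ij}=g_{\alpha\beta}F^{\alpha}_{i}F^{\beta}_{j}$ and apply Leibniz:
\[
\nabla_{t}g_{ij}=(\nabla_{t}g_{\alpha\beta})F^{\alpha}_{i}F^{\beta}_{j}+g_{\alpha\beta}(\nabla_{t}F^{\alpha}_{i})F^{\beta}_{j}+g_{\alpha\beta}F^{\alpha}_{i}(\nabla_{t}F^{\beta}_{j}).
\]
By Lemma~\ref{evofgt} the first term is $-2R_{\alpha\beta}F^{\alpha}_{i}F^{\beta}_{j}=-2(F^{*}\mathrm{Ric})_{ij}$. By Lemma~\ref{evofDF} we have $\nabla_{t}F^{\alpha}_{i}=\nabla_{i}H^{\alpha}$, so the remaining two terms become $g_{\alpha\beta}(\nabla_{i}H^{\alpha})F^{\beta}_{j}+g_{\alpha\beta}(\nabla_{j}H^{\beta})F^{\alpha}_{i}$.

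The only nonroutine point is to identify $g_{\alpha\beta}(\nabla_{i}H^{\alpha})F^{\beta}_{j}$ with $-g(H,A_{ij})$. Since $H$ is everywhere normal to $F_{t}$, we have $g_{\alpha\beta}H^{\alpha}F^{\beta}_{j}=0$; differentiating this in the $\partial_{i}$-direction and using metric compatibility $\nabla_{i}g_{\alpha\beta}=0$ yields $g_{\alpha\beta}(\nabla_{i}H^{\alpha})F^{\beta}_{j}=-g_{\alpha\beta}H^{\alpha}(\nabla_{i}F^{\beta}_{j})$. The tangential part of $\nabla_{i}F^{\beta}_{j}$ pairs to zero against the normal vector $H$, so only its normal component — which is precisely the second fundamental form $A_{ij}$ — contributes, giving $-g(H,A_{ij})$. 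The same computation with $i$ and $j$ interchanged handles the last term, and since $A_{ij}$ is symmetric both contribute $-g(H,A_{ij})$. Summing the three pieces gives $\nabla_{t}g_{ij}=-2(F^{*}\mathrm{Ric})_{ij}-2g(H,A_{ij})$, as claimed. I expect no real obstacle here; the only care needed is bookkeeping which bundle each index lives on, so that the Leibniz rule and the definition of $\nabla_{t}$ are applied to the correct factors.
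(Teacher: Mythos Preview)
Your proof is correct and follows essentially the same approach as the paper: both apply the Leibniz rule for $\nabla_{t}$ to the product $g_{\alpha\beta}F^{\alpha}_{i}F^{\beta}_{j}$, invoke Lemmas~\ref{evofgt} and~\ref{evofDF}, and then differentiate the normality relation $g_{\alpha\beta}H^{\alpha}F^{\beta}_{j}=0$ to convert $g_{\alpha\beta}(\nabla_{i}H^{\alpha})F^{\beta}_{j}$ into $-g(H,A_{ij})$. The only cosmetic difference is that the paper uses directly $\nabla_{i}F^{\beta}_{j}=A^{\beta}_{ij}$, whereas you phrase this as ``the tangential part pairs to zero against $H$''; both reach the same conclusion.
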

\begin{proof}
By the definition of $\nabla_{t}$, the first equality $\frac{\partial}{\partial t}g_{ij}=\nabla_{t}g_{ij}$ is clear. 
By the remark that $\nabla_{t}$ satisfies Leibniz rule for tensor contractions and by Lemma \ref{evofgt} and \ref{evofDF}, we have 
\begin{align*}
\nabla_{t}g_{ij}&=\nabla_{t}(g_{\alpha\beta}F^{\alpha}_{i}F^{\beta}_{j})\\
&=\nabla_{t}g_{\alpha\beta}F^{\alpha}_{i}F^{\beta}_{j}+g_{\alpha\beta}\nabla_{t}F^{\alpha}_{i}F^{\beta}_{j}+g_{\alpha\beta}F^{\alpha}_{i}\nabla_{t}F^{\beta}_{j}\\
&=-2R_{\alpha\beta}F^{\alpha}_{i}F^{\beta}_{j}+g_{\alpha\beta}\nabla_{i}H^{\alpha}F^{\beta}_{j}+g_{\alpha\beta}F^{\alpha}_{i}\nabla_{j}H^{\beta}. 
\end{align*} 
Since $H$ is a normal vector field, we have $g_{\alpha\beta}H^{\alpha}F^{\beta}_{i}=0$. 
By differentiating both sides by $\nabla_{j}$, we have 
\[0=g_{\alpha\beta}\nabla_{j}H^{\alpha}F^{\beta}_{i}+g_{\alpha\beta}H^{\alpha}A^{\beta}_{ji}. \]
Here we used $A^{\beta}_{ji}=\nabla_{j}F^{\beta}_{i}$. 
Since $A_{ij}$ is symmetric, we have 
\[g_{\alpha\beta}\nabla_{i}H^{\alpha}F^{\beta}_{j}+g_{\alpha\beta}F^{\alpha}_{i}\nabla_{j}H^{\beta}=-2g_{\alpha\beta}H^{\alpha}A^{\beta}_{ij}. \]
Here we completed the proof. 
\end{proof}

By using $\frac{\partial}{\partial t}g_{\alpha\beta}=-2R_{\alpha\beta}$ and the Koszul formula, one can deduce the following formula immediately. 
\begin{lemma}\label{evofGamma}
We have
\[\frac{\partial}{\partial t}\Gamma^{\gamma}_{\alpha\beta}=-g^{\gamma\delta}(\nabla_{\alpha}R_{\delta\beta}+\nabla_{\beta}R_{\alpha\delta}-\nabla_{\delta}R_{\alpha\beta}). \] 
\end{lemma}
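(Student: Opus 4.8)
The plan is to reduce the claim to a pointwise identity computed in $g_{t}$-normal coordinates, exploiting the fact that the $t$-derivative of the Christoffel symbols is a genuine tensor.

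First I would observe that for any two times $t_{0},t_{1}$ the difference $\Gamma(g_{t_{1}})^{\gamma}_{\alpha\beta}-\Gamma(g_{t_{0}})^{\gamma}_{\alpha\beta}$ is the component array of a $(1,2)$-tensor on $N$, being the difference of two linear connections; dividing by $t_{1}-t_{0}$ and letting $t_{1}\to t_{0}$ shows that $\frac{\partial}{\partial t}\Gamma^{\gamma}_{\alpha\beta}$ is also tensorial. Since both sides of the asserted formula are tensors, it suffices to verify the identity at one arbitrary point $p\in N$ and one arbitrary time $t_{0}$, and for this purpose I may fix a coordinate chart around $p$ that is $g_{t_{0}}$-normal at $p$, so that at $(p,t_{0})$ one has $g_{\alpha\beta}=\delta_{\alpha\beta}$, $\partial_{\mu}g_{\alpha\beta}=0$, $\Gamma^{\gamma}_{\alpha\beta}=0$, and covariant derivatives of tensors coincide with coordinate partial derivatives.

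Then I would start from the Koszul expression $\Gamma^{\gamma}_{\alpha\beta}=\frac{1}{2}g^{\gamma\delta}\bigl(\partial_{\alpha}g_{\delta\beta}+\partial_{\beta}g_{\alpha\delta}-\partial_{\delta}g_{\alpha\beta}\bigr)$ and apply $\partial_{t}$. Two kinds of terms arise: the term in which $\partial_{t}$ falls on $g^{\gamma\delta}$ carries an undifferentiated-in-$t$ factor $\partial g$ and therefore vanishes at $(p,t_{0})$; the term in which $\partial_{t}$ falls on the $\partial g$ factors equals, at $(p,t_{0})$, $\frac{1}{2}g^{\gamma\delta}\bigl(\partial_{\alpha}(\partial_{t}g_{\delta\beta})+\partial_{\beta}(\partial_{t}g_{\alpha\delta})-\partial_{\delta}(\partial_{t}g_{\alpha\beta})\bigr)$. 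By the Ricci flow equation (\ref{evoeq21}) one has $\partial_{t}g_{\alpha\beta}=-2R_{\alpha\beta}$ in coordinates, and since $\partial$ and $\nabla$ agree at $(p,t_{0})$ this expression is exactly $-g^{\gamma\delta}\bigl(\nabla_{\alpha}R_{\delta\beta}+\nabla_{\beta}R_{\alpha\delta}-\nabla_{\delta}R_{\alpha\beta}\bigr)$ at that point. As both sides are tensors and $p,t_{0}$ were arbitrary, the identity holds everywhere.

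I do not anticipate a genuine obstacle: the only delicate point is the preliminary remark that $\frac{\partial}{\partial t}\Gamma$ is tensorial, which is what makes the normal-coordinate shortcut legitimate. Should one wish to avoid that remark, the alternative is to differentiate the Koszul formula in an arbitrary chart and verify directly that the non-tensorial contributions — those involving $\partial_{t}g^{\gamma\delta}=2g^{\gamma\mu}g^{\delta\nu}R_{\mu\nu}$ multiplied by first coordinate derivatives of $g$, together with the Christoffel correction terms that turn $\partial R_{\alpha\beta}$ into $\nabla R_{\alpha\beta}$ — cancel; this is a short bookkeeping exercise rather than a real difficulty.
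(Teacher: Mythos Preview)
Your argument is correct and matches the paper's approach: the paper simply remarks that the formula follows ``immediately'' from the Ricci flow equation $\frac{\partial}{\partial t}g_{\alpha\beta}=-2R_{\alpha\beta}$ and the Koszul formula, which is precisely what you carry out (with the standard normal-coordinate shortcut justified by the tensoriality of $\frac{\partial}{\partial t}\Gamma$). There is nothing to add.
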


As an analog of Lemma~\ref{evofGamma}, we can prove the following. 
\begin{lemma}\label{evofindGamma}
We have
\[\frac{\partial}{\partial t}\Gamma^{k}_{ij}=-g^{k\ell}(\nabla_{i}T_{\ell j}+\nabla_{j}T_{i \ell}-\nabla_{\ell}T_{ij}), \]
where we put $T_{ij}:=(F^{*}\mathrm{Ric})_{ij}+g(H,A_{ij})$. 
\end{lemma}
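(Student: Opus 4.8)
The plan is to combine the variation formula for the induced metric on $M$ given in Lemma~\ref{evofindgRM} with the standard formula expressing the time derivative of Christoffel symbols in terms of the time derivative of the metric, exactly in the spirit of the proof of Lemma~\ref{evofGamma}.

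First I would record the general fact: if $(g_{ij}(t))$ is a smooth $1$-parameter family of Riemannian metrics on $M$ and we write $h_{ij}:=\frac{\partial}{\partial t}g_{ij}$, then the Christoffel symbols of $g(t)$ satisfy
\[\frac{\partial}{\partial t}\Gamma^{k}_{ij}=\frac{1}{2}g^{k\ell}\bigl(\nabla_{i}h_{j\ell}+\nabla_{j}h_{i\ell}-\nabla_{\ell}h_{ij}\bigr),\]
where $\nabla$ denotes the Levi--Civita connection of $g(t)$. This is proved pointwise: fixing a time $t$ and a point $p\in M$, choose normal coordinates for $g(t)$ centered at $p$, so that $\Gamma^{k}_{ij}(p)=0$ and all first derivatives $\partial_{m}g_{ij}$ vanish at $p$. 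Differentiating the coordinate expression $\Gamma^{k}_{ij}=\frac{1}{2}g^{k\ell}(\partial_{i}g_{j\ell}+\partial_{j}g_{i\ell}-\partial_{\ell}g_{ij})$ in $t$, the terms in which $\partial_{t}$ hits $g^{k\ell}$ drop out at $p$ because they are multiplied by first derivatives of $g$; moreover at $p$ each $\partial_{m}$ agrees with $\nabla_{m}$. Since both sides of the displayed identity are tensors, it holds on all of $M$. (Alternatively one may derive this directly from the Koszul formula, as was done for $N$ in Lemma~\ref{evofGamma}.)

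Next I would apply this with $g_{ij}=(F_{t}^{*}g_{t})_{ij}$. By Lemma~\ref{evofindgRM} we have $\frac{\partial}{\partial t}g_{ij}=\nabla_{t}g_{ij}=-2T_{ij}$ with $T_{ij}=(F^{*}\mathrm{Ric})_{ij}+g(H,A_{ij})$, so $h_{ij}=-2T_{ij}$. Substituting into the formula above and using that $T_{ij}$ is symmetric gives
\[\frac{\partial}{\partial t}\Gamma^{k}_{ij}=-g^{k\ell}\bigl(\nabla_{i}T_{\ell j}+\nabla_{j}T_{i\ell}-\nabla_{\ell}T_{ij}\bigr),\]
which is the assertion of the lemma.

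There is essentially no serious obstacle here; the only points requiring a little care are the pointwise (normal-coordinate) justification of the variation-of-Christoffel formula and the identification $\frac{\partial}{\partial t}g_{ij}=\nabla_{t}g_{ij}$, the latter having already been observed in the proof of Lemma~\ref{evofindgRM}.
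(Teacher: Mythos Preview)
Your proof is correct and follows essentially the same approach as the paper, which simply states that the result is proved ``as an analog of Lemma~\ref{evofGamma}'' by combining the evolution $\frac{\partial}{\partial t}g_{ij}=-2T_{ij}$ from Lemma~\ref{evofindgRM} with the Koszul formula. Your normal-coordinate derivation of the general variation-of-Christoffel identity is a perfectly valid (and in fact more detailed) realization of that same idea.
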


Here we introduce the notion of $\ast$-product following Hamilton \cite{Hamilton2}. 
\begin{notation}
For tensors $S$ and $T$, we write $S\ast T$ to mean a tensor formed by a sum of terms each one of them obtained by contracting some indices of the pair $S$ and $T$ 
by using $g$ and $F^{*}g$ and these inverse. 
There is a property of $\ast$-product that 
\[|S\ast T|\leq C|S||T|, \]
where $C>0$ is a constant which depends only on the algebraic structure of $S\ast T$. 
\end{notation}

\begin{definition}
For $a,b\in\mathbb{N}$, we define a set  $V_{a,b}$ as the set of all (time-dependent) tensors $T$ on $M$ which can be expressed as 
\[T=(\nabla^{k_{1}}\mathrm{Rm}\ast\dots\ast\nabla^{k_{I}}\mathrm{Rm})\ast(\nabla^{\ell_{1}}A\ast\dots\ast\nabla^{\ell_{J}}A)\ast (\mathop{\ast}^{p}DF)\]
with $I,J,p,k_{1},\dots,k_{I},\ell_{1},\dots,\ell_{J}\in\mathbb{N}$ satisfying 
\begin{align*}
\begin{aligned}
\sum_{i=1}^{I}\biggl(1+\frac{1}{2}k_{i}\biggr)+\sum_{j=1}^{J}\biggl(\frac{1}{2}+\frac{1}{2}\ell_{j}\biggr)=a\quad\mathrm{and}\quad\sum_{j=1}^{J}\ell_{j}\leq b, 
\end{aligned}
\end{align*}
and we define a vector space $\mathcal{V}_{a,b}$ as the set of all tensors $T$ on $M$ which can be expressed as 
\[T=a_{1}T_{1}+\dots+a_{r}T_{r}\]
for some $r\in\mathbb{N}$, $a_{1}\dots a_{r}\in\mathbb{R}$ and $T_{1},\dots,T_{r}\in V_{a,b}$. 
\end{definition}

Since $\nabla DF=A$, the following is clear. 
\begin{proposition}\label{propofast}
Assume that $T_{1}\in \mathcal{V}_{a_{1},b_{1}}$, $T_{2}\in \mathcal{V}_{a_{2},b_{2}}$ and $T_{3}\in \mathcal{V}_{a_{3},b_{3}}$. 
Then we have 
\[T_{1}\ast T_{2}\in \mathcal{V}_{a_{1}+a_{2},b_{1}+b_{2}}\quad\mathrm{and}\quad \nabla T_{3}\in\mathcal{V}_{a_{3}+\frac{1}{2},b_{3}+1},\] 
whenever $T_{1}\ast T_{2}$ makes sense. 
\end{proposition}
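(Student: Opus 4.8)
The plan is to reduce everything to the case of \emph{monomials} (single terms in $V_{a,b}$) using that $\mathcal{V}_{a,b}$ is by definition the $\mathbb{R}$-span of $V_{a,b}$ and that both $\ast$ and $\nabla$ are $\mathbb{R}$-linear, and then to simply read off the defining numerical data. For the $\ast$-product, suppose $T_{1}\in V_{a_{1},b_{1}}$ is written as $(\nabla^{k_{1}}\mathrm{Rm}\ast\cdots\ast\nabla^{k_{I}}\mathrm{Rm})\ast(\nabla^{\ell_{1}}A\ast\cdots\ast\nabla^{\ell_{J}}A)\ast(\mathop{\ast}^{p}DF)$, and $T_{2}\in V_{a_{2},b_{2}}$ similarly, with data $I',J',p',k'_{\bullet},\ell'_{\bullet}$. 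By the very definition of the $\ast$-product, $T_{1}\ast T_{2}$ is a sum of terms each obtained by concatenating the two lists of curvature factors, the two lists of second-fundamental-form factors, and the two blocks of $DF$-factors, and performing further contractions with $g$, $F^{*}g$ and their inverses; hence it is again of the admissible shape, with $I+I'$ curvature factors, $J+J'$ second-fundamental-form factors, $p+p'$ copies of $DF$, and the concatenated exponent lists. Since
\[\sum_{i}\Bigl(1+\tfrac12 k_{i}\Bigr)+\sum_{i}\Bigl(1+\tfrac12 k'_{i}\Bigr)+\sum_{j}\Bigl(\tfrac12+\tfrac12\ell_{j}\Bigr)+\sum_{j}\Bigl(\tfrac12+\tfrac12\ell'_{j}\Bigr)=a_{1}+a_{2}\]
and $\sum_{j}\ell_{j}+\sum_{j}\ell'_{j}\le b_{1}+b_{2}$, we conclude $T_{1}\ast T_{2}\in V_{a_{1}+a_{2},b_{1}+b_{2}}$, and then the statement for $\mathcal{V}_{a_{1}+a_{2},b_{1}+b_{2}}$ follows by bilinearity.

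For the derivative, again reduce to $T_{3}\in V_{a_{3},b_{3}}$ written as above. The key point is that $\nabla$ is the connection induced by the Levi--Civita connections of $g_{t}$ on $N$ and of $F_{t}^{*}g_{t}$ on $M$, so the metrics $g$ and $F^{*}g$ used to carry out the contractions in the $\ast$-product are parallel; therefore $\nabla$ obeys the Leibniz rule across the $\ast$-product and annihilates all of those contractions. Thus $\nabla T_{3}$ is a sum of terms, each obtained from $T_{3}$ by replacing exactly one factor $Z$ with $\nabla Z$ and leaving the remaining factors and the contraction pattern untouched. I would then run the three cases: if $Z=\nabla^{k_{i}}\mathrm{Rm}$, then $\nabla Z=\nabla^{k_{i}+1}\mathrm{Rm}$ and $1+\tfrac12(k_{i}+1)=(1+\tfrac12 k_{i})+\tfrac12$, so $a$ rises by $\tfrac12$ and $\sum_{j}\ell_{j}$ is unchanged; if $Z=\nabla^{\ell_{j}}A$, then $\nabla Z=\nabla^{\ell_{j}+1}A$, $a$ again rises by $\tfrac12$, and $\sum_{j}\ell_{j}$ rises by exactly $1$; if $Z$ is one of the $DF$ factors, then $\nabla Z=\nabla DF=A=\nabla^{0}A$, which carries degree $\tfrac12$ while $DF$ carried degree $0$, so $a$ rises by $\tfrac12$ and $\sum_{j}\ell_{j}$ rises by $0$. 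In every case the resulting monomial lies in $V_{a_{3}+\frac12,\,b_{3}+1}$, so $\nabla T_{3}\in\mathcal{V}_{a_{3}+\frac12,b_{3}+1}$.

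I do not expect a genuine obstacle here: the proposition is pure bookkeeping for the graded $\ast$-algebra, which is exactly why the excerpt calls it clear. The one point deserving a sentence of care is the third case of the derivative computation, where one must recall that $DF$ is deliberately assigned weight $0$ in the definition of $V_{a,b}$ (it never appears in the weighted sum), so that converting a $DF$ into an $A$ via $\nabla DF=A$ raises the degree by exactly $\tfrac12$, in agreement with the other two cases; and it is the inequality $\sum_{j}\ell_{j}\le b$ (rather than an equality) that furnishes the slack absorbing both the $b\mapsto b+1$ shift and this $DF\mapsto A$ step.
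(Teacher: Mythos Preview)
Your proof is correct and is precisely the bookkeeping the paper has in mind when it declares the proposition clear from $\nabla DF=A$. One tiny refinement: when $Z=\nabla^{k_{i}}\mathrm{Rm}$ is regarded as a section over $M$ via pullback by $F$, its covariant derivative is $\nabla^{k_{i}+1}\mathrm{Rm}\ast DF$ rather than $\nabla^{k_{i}+1}\mathrm{Rm}$ alone, but since $DF$ carries weight $0$ your degree count is unaffected.
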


Combining Lemma \ref{changetj}, \ref{evofGamma}, \ref{evofindGamma}, and Proposition~\ref{propofast}, the following is clear. 
\begin{lemma}\label{degord1}
For a time dependent tensor $T=(T^{\alpha}_{i_{1}\dots i_{k}})\in \mathcal{V}_{a,b}$, we have
\begin{align*}
\nabla_{t}\nabla_{j}T^{\alpha}_{i_{1}\dots i_{k}}-\nabla_{j}\nabla_{t}T^{\alpha}_{i_{1}\dots i_{k}}\in \mathcal{V}_{a+\frac{3}{2}, b+1}. 
\end{align*}
\end{lemma}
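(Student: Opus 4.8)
The plan is to unwind the two mixed derivatives $\nabla_t\nabla_j$ and $\nabla_j\nabla_t$ using Lemma~\ref{changetj}, which already expresses their difference, for a single upper $N$-index and $k$ lower $M$-indices, as
\[
R^{\alpha}_{\ \gamma\delta\beta}H^{\delta}F^{\beta}_{j}T^{\gamma}_{i_{1}\dots i_{k}}
+\frac{\partial}{\partial t}\Gamma^{\alpha}_{\beta\gamma}\,F^{\beta}_{j}T^{\gamma}_{i_{1}\dots i_{k}}
-\sum_{p=1}^{k}\frac{\partial}{\partial t}\Gamma^{\ell}_{ji_{p}}\,T^{\alpha}_{i_{1}\dots \ell \dots i_{k}}.
\]
So the task reduces to checking that each of these three terms lies in $\mathcal{V}_{a+\frac32,b+1}$ when $T\in\mathcal{V}_{a,b}$. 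First I would record the elementary degree bookkeeping: $\mathrm{Rm}\in\mathcal{V}_{1,0}$, $A\in\mathcal{V}_{\frac12,0}$, $DF\in\mathcal{V}_{0,0}$, $H=g\ast A\in\mathcal{V}_{\frac12,0}$, and by Proposition~\ref{propofast} the $\ast$-product adds degrees $(a,b)$ while $\nabla$ raises them by $(\frac12,1)$.

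Next I would treat the three terms in turn. The curvature term is $\mathrm{Rm}\ast H\ast DF\ast T$; since $H\in\mathcal{V}_{\frac12,0}$ and $\mathrm{Rm}\in\mathcal{V}_{1,0}$ and $DF\in\mathcal{V}_{0,0}$, the product with $T\in\mathcal{V}_{a,b}$ lies in $\mathcal{V}_{a+\frac32,b}\subset\mathcal{V}_{a+\frac32,b+1}$. For the two Christoffel terms, the key input is Lemma~\ref{evofGamma} and Lemma~\ref{evofindGamma}, which give $\frac{\partial}{\partial t}\Gamma^{\gamma}_{\alpha\beta}=g^{-1}\ast\nabla\mathrm{Ric}$ and $\frac{\partial}{\partial t}\Gamma^{k}_{ij}=g^{-1}\ast\nabla T_{ij}$ with $T_{ij}=(F^{*}\mathrm{Ric})_{ij}+g(H,A_{ij})$. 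Now $\mathrm{Ric}=g\ast\mathrm{Rm}$ (a contraction of curvature), so $\nabla\mathrm{Ric}\in\mathcal{V}_{\frac32,1}$; similarly $F^{*}\mathrm{Ric}=\mathrm{Rm}\ast DF\ast DF\in\mathcal{V}_{1,0}$ and $g(H,A)=A\ast A\ast g\in\mathcal{V}_{1,0}$, so $T_{ij}\in\mathcal{V}_{1,0}$ and $\nabla T_{ij}\in\mathcal{V}_{\frac32,1}$. In both Christoffel terms we then contract this $\mathcal{V}_{\frac32,1}$ tensor against one copy of $DF$ (degree $(0,0)$) and against $T\in\mathcal{V}_{a,b}$, landing in $\mathcal{V}_{a+\frac32,b+1}$, exactly as claimed. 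The general case of arbitrary upper/lower $N$- and $M$-indices follows by the same argument applied index-by-index, since each extra slot contributes another $\frac{\partial}{\partial t}\Gamma\ast DF$ or $\frac{\partial}{\partial t}\Gamma$ factor of the same degree type, plus possibly more curvature terms of the form $\mathrm{Rm}\ast H\ast DF$, all of degree $(\frac32,\le 1)$ relative to $T$.

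The only genuinely delicate point is making sure the $b$-index (which tracks $\sum_j \ell_j$, the total number of derivatives landing on second fundamental forms) does not jump by more than one: the single $\nabla$ in $\frac{\partial}{\partial t}\Gamma$ contributes at most one derivative, and it falls on a curvature factor or on the $A\ast A$ piece of $T_{ij}$, so it raises $b$ by at most $1$; the remaining factors $DF$, $H$, $g$ carry no $\nabla^{\ell}A$ with $\ell\ge 1$. Hence the worst case is $b\mapsto b+1$, and I would flag this bookkeeping — rather than any analytic estimate — as the one place where care is needed, everything else being a direct application of Lemmas~\ref{changetj}, \ref{evofGamma}, \ref{evofindGamma} and Proposition~\ref{propofast}.
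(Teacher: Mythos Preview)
Your proposal is correct and follows exactly the approach the paper indicates: the paper's proof is the single sentence ``Combining Lemma~\ref{changetj}, \ref{evofGamma}, \ref{evofindGamma}, and Proposition~\ref{propofast}, the following is clear,'' and you have correctly unpacked this by applying Lemma~\ref{changetj} and then checking that each of the three resulting terms lies in $\mathcal{V}_{a+\frac32,b+1}$ via the degree bookkeeping from Proposition~\ref{propofast}. One harmless imprecision: the ambient $\nabla\mathrm{Ric}$ (from Lemma~\ref{evofGamma}) actually lies in $\mathcal{V}_{3/2,0}$ rather than $\mathcal{V}_{3/2,1}$, since no derivative falls on $A$ there --- but since $\mathcal{V}_{3/2,0}\subset\mathcal{V}_{3/2,1}$ this does not affect the conclusion.
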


\begin{lemma}\label{degord2}
For a tensor $T=(T^{\alpha}_{i_{1}\dots i_{k}})\in\mathcal{V}_{a,b}$, we have
\begin{align*}
\nabla_{j}\Delta T^{\alpha}_{i_{1}\dots i_{k}}-\Delta \nabla_{j}T^{\alpha}_{i_{1}\dots i_{k}}\in\mathcal{V}_{a+\frac{3}{2},b+1}.
\end{align*}
\end{lemma}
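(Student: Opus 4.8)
The plan is to reduce the commutator $\nabla_{j}\Delta T - \Delta\nabla_{j}T$ to a sum of terms each built from curvature tensors of $(M,F_{t}^{*}g_{t})$ (and their covariant derivatives) contracted against $T$ and $\nabla T$, and then to identify the degree of each such building block. Here $\Delta = g^{k\ell}\nabla_{k}\nabla_{\ell}$ on $M$, so writing $\Delta\nabla_{j}T = g^{k\ell}\nabla_{k}\nabla_{\ell}\nabla_{j}T$ and $\nabla_{j}\Delta T = g^{k\ell}\nabla_{j}\nabla_{k}\nabla_{\ell}T$, the difference is a sum of two Ricci-identity commutators: one from swapping $\nabla_{j}$ past $\nabla_{k}$, and one from swapping it past $\nabla_{\ell}$. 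Each such swap produces terms of the schematic form $\mathrm{Rm}_{M}\ast(\text{something involving }T\text{ or }\nabla T)$, where $\mathrm{Rm}_{M}$ is the intrinsic Riemann tensor of $(M,F_{t}^{*}g_{t})$, plus — after moving the remaining $\nabla$ across — a term $\nabla\mathrm{Rm}_{M}\ast T$.

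The key step is to control the degrees of $\mathrm{Rm}_{M}$ and $\nabla\mathrm{Rm}_{M}$ in the sense of Huisken's degree bookkeeping used throughout Section~\ref{MCFGSRS} and this appendix. By the Gauss equation, $\mathrm{Rm}_{M} = (F^{*}\mathrm{Rm})\ast(\mathop{\ast}^{4}DF) + A\ast A$, so in the language of $V_{a,b}$ one has $\mathrm{Rm}_{M}\in\mathcal{V}_{1,0}$: the $F^{*}\mathrm{Rm}$ term contributes $1+\tfrac12\cdot 0 = 1$ and the $A\ast A$ term contributes $(\tfrac12+0)+(\tfrac12+0)=1$, while $\sum\ell_{j}=0$ in both cases. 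Differentiating and using Proposition~\ref{propofast}, $\nabla^{p}\mathrm{Rm}_{M}\in\mathcal{V}_{1+\frac{p}{2},p}$. Thus in the commutator, the term $\mathrm{Rm}_{M}\ast T$ lies in $\mathcal{V}_{1+a,\,b}$, and since one needs to move a $\nabla_{j}$ past the curvature term and across one remaining derivative one picks up at most $\mathrm{Rm}_{M}\ast\nabla T \in \mathcal{V}_{1+(a+\frac12),\,b+1}=\mathcal{V}_{a+\frac32,b+1}$ and $\nabla\mathrm{Rm}_{M}\ast T\in\mathcal{V}_{(1+\frac12)+a,\,1+b}=\mathcal{V}_{a+\frac32,b+1}$. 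Every term therefore lands in $\mathcal{V}_{a+\frac32,b+1}$, which is what is claimed.

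I would organize the write-up as: (1) expand $\nabla_{j}\Delta T-\Delta\nabla_{j}T$ using $\Delta=g^{k\ell}\nabla_{k}\nabla_{\ell}$ and the Ricci identity, keeping track only of the schematic type of each term; (2) record that by Gauss's equation $\mathrm{Rm}_{M}\in\mathcal{V}_{1,0}$ and hence $\nabla\mathrm{Rm}_{M}\in\mathcal{V}_{\frac32,1}$ via Proposition~\ref{propofast}; (3) invoke Proposition~\ref{propofast} once more to place each commutator term in $\mathcal{V}_{a+\frac32,b+1}$, using that $T\in\mathcal{V}_{a,b}$ forces $\nabla T\in\mathcal{V}_{a+\frac12,b+1}$. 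The main obstacle — really the only subtlety — is being careful that the $\ast$-contractions against $g^{k\ell}$ do not change degrees (they do not, since the metric and its inverse are degree $0$ in this calculus) and that the $b$-index, which tracks the total number of derivatives falling on the $A$'s, increases by exactly one and never more; this is where one must check that no hidden $\nabla A = \nabla^{2}DF$ appears with a derivative count exceeding the bound. Since all curvature here is intrinsic to $M$ and expressed through $F^{*}\mathrm{Rm}$, $A$, and $DF$ only, this bookkeeping goes through routinely.
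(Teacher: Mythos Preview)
Your approach is essentially the same as the paper's: expand the commutator via the Ricci identity, invoke the Gauss equation to place the intrinsic curvature $\mathrm{Rm}_{M}$ in $\mathcal{V}_{1,0}$, and then use Proposition~\ref{propofast} to count degrees. One small point to tighten: when you commute covariant derivatives on $T^{\alpha}_{i_{1}\dots i_{k}}$, the upper index $\alpha$ lives in $F^{*}(TN)$, so the Ricci identity also produces the pullback-bundle curvature term $R^{\alpha}_{\ \beta\gamma\delta}F^{\gamma}_{j}F^{\delta}_{p}\nabla^{p}T^{\beta}$ in addition to the $\mathrm{Rm}_{M}$ terms acting on the lower indices---this is exactly what the paper writes out. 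You allude to this at the end (``expressed through $F^{*}\mathrm{Rm}$, $A$, and $DF$''), but the body of your plan speaks only of $\mathrm{Rm}_{M}$. Since $\mathrm{Rm}\ast DF\ast DF\in\mathcal{V}_{1,0}$ as well, the degree bookkeeping is unaffected and your conclusion stands.
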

\begin{proof}
First of all, we have
\begin{align*}
\nabla_{j}\Delta T^{\alpha}_{i_{1}\dots i_{k}}=&\nabla_{j}\nabla_{p}\nabla^{p} T^{\alpha}_{i_{1}\dots i_{k}}\\
=&\nabla_{p}\nabla_{j}(\nabla^{p} T^{\alpha}_{i_{1}\dots i_{k}})+R^{\alpha}_{\ \beta\gamma\delta}F^{\gamma}_{j}F^{\delta}_{p} \nabla^{p}T^{\beta}_{i_{1}\dots i_{k}}\\
&+R^{p}_{\ \ell jp} \nabla^{\ell}T^{\alpha}_{i_{1} \dots i_{k}}-\sum_{s=1}^{k}R^{\ell}_{\ i_{s} jp} \nabla^{p}T^{\alpha}_{i_{1}\dots \ell \dots i_{k}}, 
\end{align*}
where $R^{i}_{\, jk\ell}$ is the Riemannian curvature tensor of $F_{t}^{*}g_{t}$ on $M$. 
Then, by the Gauss equation: 
\begin{align}\label{Gaussequation}
R_{ki\ell j}=R_{\epsilon\beta\gamma\delta}F^{\epsilon}_{k}F^{\beta}_{i}F^{\gamma}_{\ell}F^{\delta}_{j}-A^{\beta}_{kj}A_{\beta i\ell}+A^{\beta}_{k\ell}A_{\beta i j}\in\mathcal{V}_{1,0}, 
\end{align}
we can see that 
\begin{align*}
\nabla_{j}\Delta T^{\alpha}_{i_{1}\dots i_{k}}-\nabla_{p}\nabla_{j}\nabla^{p} T^{\alpha}_{i_{1}\dots i_{k}}\in\mathcal{V}_{a+\frac{3}{2},b+1}. 
\end{align*}
As above computations, we have
\begin{align*}
\nabla_{j}\nabla^{p} T^{\alpha}_{i_{1}\dots i_{k}}-\nabla^{p}\nabla_{j} T^{\alpha}_{i_{1}\dots i_{k}}\in\mathcal{V}_{a+1,b}. 
\end{align*}
Hence, by differentiating by $\nabla_{p}$, we have
\begin{align*}
\nabla_{p}\nabla_{j}\nabla^{p} T^{\alpha}_{i_{1}\dots i_{k}}-\nabla_{p}\nabla^{p}\nabla_{j} T^{\alpha}_{i_{1}\dots i_{k}}\in \mathcal{V}_{a+\frac{3}{2},b+1}. 
\end{align*}
Note that $\nabla_{p}\nabla^{p}\nabla_{j} T^{\alpha}_{i_{1}\dots i_{k}}=\Delta \nabla_{j} T^{\alpha}_{i_{1}\dots i_{k}}$. 
Here we completed the proof. 
\end{proof}

\begin{lemma}\label{degord3}
For a time dependent tensor $T=(T^{\alpha}_{i_{1}\dots i_{k}})\in\mathcal{V}_{a,b}$ there exists a tensor $\mathcal{D}=\mathcal{D}(T)\in\mathcal{V}_{a+1,b}$ such that 
\begin{align*}
\nabla_{t}|T|^2=2\langle \nabla_{t}T,T\rangle+\mathcal{D}\ast T. 
\end{align*}
\end{lemma}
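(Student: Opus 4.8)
The plan is to write $|T|^2$ as an explicit iterated contraction of $T$ with itself against the metrics, apply the Leibniz rule for $\nabla_t$ (valid for tensor contractions, as recorded after the definition of $\nabla_t$), and then isolate the two terms in which $\nabla_t$ falls on a copy of $T$ from the finitely many terms in which it falls on a metric factor. Concretely, for $T=(T^{\alpha}_{i_1\dots i_k})$ I would write
\[
|T|^2=g_{\alpha\beta}\,g^{i_1j_1}\cdots g^{i_kj_k}\,T^{\alpha}_{i_1\dots i_k}\,T^{\beta}_{j_1\dots j_k},
\]
where $g_{\alpha\beta}$ is the ambient metric restricted along $F_t$ and $g^{ij}$ is the inverse of the induced metric $F_t^{*}g_t$ on $M$. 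Applying $\nabla_t$ gives $2\langle\nabla_t T,T\rangle$ (the two terms where $\nabla_t$ hits a factor $T$, which coincide by symmetry of the metrics) plus one term where $\nabla_t$ hits $g_{\alpha\beta}$ and $k$ terms where it hits some $g^{i_pj_p}$.

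The next step is to bound the bidegree of these metric derivatives. By Lemma~\ref{evofgt}, $\nabla_t g_{\alpha\beta}=-2R_{\alpha\beta}\in\mathcal{V}_{1,0}$, since $R_{\alpha\beta}$ is a trace of $\mathrm{Rm}$. By Lemma~\ref{evofindgRM}, $\nabla_t g_{ij}=-2\bigl((F^{*}\mathrm{Ric})_{ij}+g(H,A_{ij})\bigr)$; here $(F^{*}\mathrm{Ric})_{ij}=R_{\alpha\beta}F^{\alpha}_iF^{\beta}_j\in\mathcal{V}_{1,0}$ and, since $H=g^{ij}A_{ij}$, the term $g(H,A_{ij})$ is of the form $A\ast A\in\mathcal{V}_{1,0}$, so $\nabla_t g_{ij}\in\mathcal{V}_{1,0}$. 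Differentiating the identities $g^{ik}g_{kj}=\delta^i_j$ and $g^{\alpha\gamma}g_{\gamma\beta}=\delta^\alpha_\beta$ then gives $\nabla_t g^{ij}=-g^{ik}g^{j\ell}\nabla_t g_{k\ell}\in\mathcal{V}_{1,0}$ and likewise $\nabla_t g^{\alpha\beta}\in\mathcal{V}_{1,0}$.

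Finally I would reassemble: each of the $k+1$ leftover terms is (a contraction of) $T\ast T\ast(\nabla_t m)$ for a metric factor $m$, and grouping one copy of $T$ with $\nabla_t m$ and invoking Proposition~\ref{propofast} shows $T\ast(\nabla_t m)\in\mathcal{V}_{a+1,b}$, because $T\in\mathcal{V}_{a,b}$, $\nabla_t m\in\mathcal{V}_{1,0}$, and the extra copies of $DF$ and of the metrics introduced by the contractions do not change the bidegree. Taking $\mathcal{D}$ to be the sum of these terms $T\ast(\nabla_t m)$ yields $\mathcal{D}\in\mathcal{V}_{a+1,b}$ and $\nabla_t|T|^2=2\langle\nabla_t T,T\rangle+\mathcal{D}\ast T$; the same computation works verbatim for an arbitrary index structure of $T$. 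I do not expect a genuine obstacle here: the only delicate point is the degree bookkeeping, namely checking that $\nabla_t$ of every metric factor lies in $\mathcal{V}_{1,0}$ (which is exactly Lemmas~\ref{evofgt} and~\ref{evofindgRM} together with $H=\mathrm{tr}_{F^{*}g}A$) and that the $\ast$-product with additional $DF$'s and metrics preserves the class $\mathcal{V}_{a+1,b}$ via Proposition~\ref{propofast}.
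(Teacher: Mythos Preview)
Your argument is correct and is essentially identical to the paper's proof: both expand $|T|^2=g_{\alpha\beta}g^{i_1j_1}\cdots g^{i_kj_k}T^{\alpha}_{i_1\dots i_k}T^{\beta}_{j_1\dots j_k}$, apply the Leibniz rule for $\nabla_t$, and use Lemmas~\ref{evofgt} and~\ref{evofindgRM} to place $\nabla_t g$ and $\nabla_t((F^{*}g)^{-1})$ in $\mathcal{V}_{1,0}$, then set $\mathcal{D}=\nabla_t g\ast T+\nabla_t((F^{*}g)^{-1})\ast T\in\mathcal{V}_{a+1,b}$ via Proposition~\ref{propofast}. Your write-up is slightly more explicit (you spell out why $\nabla_t g^{ij}\in\mathcal{V}_{1,0}$ by differentiating $g^{ik}g_{kj}=\delta^i_j$), but there is no difference in approach.
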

\begin{proof}
We have
\begin{align*}
\nabla_{t}|T|^2=&\nabla_{t}(g_{\alpha\beta}g^{i_{1}j_{1}}\dots g^{i_{k}j_{k}}T^{\alpha}_{i_{1}\dots i_{k}}T^{\beta}_{j_{1}\dots j_{k}})\\
=&2\langle \nabla_{t}T,T\rangle+\nabla_{t}g \ast T\ast T+\nabla_{t}((F^{*}g)^{-1})\ast T\ast T\\
=&2\langle \nabla_{t}T,T\rangle+(\nabla_{t}g \ast T+\nabla_{t}((F^{*}g)^{-1})\ast T)\ast T. 
\end{align*}
By Lemma \ref{evofgt} and \ref{evofindgRM}, we have 
\begin{align*}
\nabla_{t}g, \nabla_{t}((F^{*}g)^{-1})\in\mathcal{V}_{1,0}. 
\end{align*}
Thus the statement is clear. 
\end{proof}

\begin{lemma}\label{deglow}
For $k\geq 1$, by definitions, it is clear that 
\[F^{\alpha}_{p}\nabla_{i_{1}}\dots\nabla_{i_{k}}A_{\alpha}\in\mathcal{V}_{\frac{1}{2}+\frac{1}{2}k,k}. \]
Actually, it is true that 
\[F^{\alpha}_{p}\nabla_{i_{1}}\dots\nabla_{i_{k}}A_{\alpha}\in\mathcal{V}_{\frac{1}{2}+\frac{1}{2}k,k-1}. \]
For $k=0$, it is clear that 
\[F^{\alpha}_{p}A_{\alpha}=0\]
since $A$ is a normal bundle valued 2-tensor. 
\end{lemma}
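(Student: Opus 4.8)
The containment $F^{\alpha}_{p}\nabla_{i_{1}}\cdots\nabla_{i_{k}}A_{\alpha}\in\mathcal{V}_{\frac{1}{2}+\frac{1}{2}k,\,k}$ is immediate from the definitions, since $\nabla^{k}A$ has degree $\frac{1}{2}+\frac{1}{2}k$ and carries $k$ derivatives on $A$, and contracting with $F^{\alpha}_{p}=DF$ changes neither count; so the real content is the improvement $k\mapsto k-1$. The plan is to extract this from the pointwise identity $F^{\alpha}_{p}A_{\alpha}=0$ by repeated differentiation, organized as an induction on $k$. For $k=0$ the identity $F^{\alpha}_{p}A_{\alpha}=g_{\alpha\beta}F^{\alpha}_{p}A^{\beta}_{ij}=0$ is just the statement that $A_{ij}$ is a normal vector field. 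For $k=1$, applying $\nabla_{i_{1}}$ to $F^{\alpha}_{p}A_{\alpha}\equiv 0$ and using $\nabla_{i_{1}}F^{\alpha}_{p}=A^{\alpha}_{i_{1}p}$ (the identity $\nabla DF=A$) yields
\[F^{\alpha}_{p}\nabla_{i_{1}}A_{\alpha}=-A^{\alpha}_{i_{1}p}A_{\alpha}=-A\ast A\in\mathcal{V}_{1,0},\]
which is the $k=1$ case of the claim.

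For the inductive step, assume the assertion for $k-1$, i.e. $F^{\alpha}_{p}\nabla_{i_{1}}\cdots\nabla_{i_{k-1}}A_{\alpha}\in\mathcal{V}_{k/2,\,k-2}$ for some $k\geq 2$ (note $\frac{1}{2}+\frac{1}{2}(k-1)=k/2$). Differentiating this tensor by $\nabla_{i_{k}}$ and using the Leibniz rule together with $\nabla DF=A$ gives
\[\nabla_{i_{k}}\bigl(F^{\alpha}_{p}\,\nabla_{i_{1}}\cdots\nabla_{i_{k-1}}A_{\alpha}\bigr)=A^{\alpha}_{i_{k}p}\,\nabla_{i_{1}}\cdots\nabla_{i_{k-1}}A_{\alpha}+F^{\alpha}_{p}\,\nabla_{i_{k}}\nabla_{i_{1}}\cdots\nabla_{i_{k-1}}A_{\alpha}.\]
By the induction hypothesis and Proposition~\ref{propofast} the left-hand side lies in $\mathcal{V}_{(k+1)/2,\,k-1}$, and the first term on the right is $A\ast\nabla^{k-1}A\in\mathcal{V}_{(k+1)/2,\,k-1}$; hence $F^{\alpha}_{p}\,\nabla_{i_{k}}\nabla_{i_{1}}\cdots\nabla_{i_{k-1}}A_{\alpha}\in\mathcal{V}_{(k+1)/2,\,k-1}$, with $(k+1)/2=\frac{1}{2}+\frac{1}{2}k$. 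It then remains to commute $\nabla_{i_{k}}$ past the remaining $k-1$ derivatives into last position; by the commutation formulas of Appendix~\ref{EvoEq} and the Gauss equation (\ref{Gaussequation}), each of the resulting curvature terms has the shape $\nabla^{a}\mathrm{Rm}\ast\nabla^{b}A$ or $A\ast A\ast\nabla^{b}A$ with $A$-derivative order at most $k-2$ and degree $(k+1)/2$, hence lies in $\mathcal{V}_{(k+1)/2,\,k-2}\subseteq\mathcal{V}_{(k+1)/2,\,k-1}$ even after contraction with $F^{\alpha}_{p}$. Therefore $F^{\alpha}_{p}\,\nabla_{i_{1}}\cdots\nabla_{i_{k}}A_{\alpha}\in\mathcal{V}_{\frac{1}{2}+\frac{1}{2}k,\,k-1}$, closing the induction.

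The only delicate point is the bookkeeping of the degree through these manipulations, and this works precisely because the accounting is set up to make the replacement of a pair of covariant derivatives by a curvature factor degree-neutral: an ambient curvature factor $\nabla^{c}\mathrm{Rm}$ carries degree $1+\frac{1}{2}c$, which matches the degree of $\nabla^{c+2}A$ minus that of $\nabla^{c}A$, and the Gauss-equation correction $A\ast A$ also has degree $1$. Thus the main (and essentially routine) obstacle is to verify that every term produced by the Leibniz expansion and by the curvature commutators has $A$-derivative order $\leq k-1$ and the advertised degree; no analytic input beyond Proposition~\ref{propofast}, the identity $\nabla DF=A$, and the Gauss equation (\ref{Gaussequation}) is needed.
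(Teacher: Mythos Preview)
Your proof is correct and follows the same inductive strategy as the paper, namely the Leibniz rule together with $\nabla DF=A$. The paper, however, avoids your commutator discussion entirely: rather than applying a \emph{new} outermost derivative $\nabla_{i_{k}}$, it simply peels off the existing outermost $\nabla_{i_{1}}$ via
\[
F^{\alpha}_{p}\nabla_{i_{1}}\cdots\nabla_{i_{k}}A_{\alpha}
=\nabla_{i_{1}}\bigl(F^{\alpha}_{p}\nabla_{i_{2}}\cdots\nabla_{i_{k}}A_{\alpha}\bigr)
-A^{\alpha}_{i_{1}p}\,\nabla_{i_{2}}\cdots\nabla_{i_{k}}A_{\alpha},
\]
after which the induction hypothesis and Proposition~\ref{propofast} finish the argument in one line. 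In fact your commutator step is not merely avoidable but redundant: the tensor you obtain, with components $F^{\alpha}_{p}\nabla_{i_{k}}\nabla_{i_{1}}\cdots\nabla_{i_{k-1}}A_{\alpha}$, is already $DF\ast\nabla^{k}A$ up to a relabeling of its free indices, so its membership in $\mathcal{V}_{\frac{1}{2}+\frac{1}{2}k,\,k-1}$ is exactly the assertion of the lemma.
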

\begin{proof}
By differentiating the equation $F^{\alpha}_{p}A_{\alpha}=0$, we have 
\[F^{\alpha}_{p}\nabla_{i_{1}}A_{\alpha}=-A^{\alpha}_{i_{1}p}A_{\alpha}\in\mathcal{V}_{1,0}. \]
Hence the statement is true for $k=1$. 
Assume that for $k-1$ the statement is true. 
Then, for $k$, the statement is also true since we have 
\[F^{\alpha}_{p}\nabla_{i_{1}}\dots\nabla_{i_{k}}A_{\alpha}=\nabla_{i_{1}}(F^{\alpha}_{p}\nabla_{i_{2}}\dots\nabla_{i_{k}}A_{\alpha})-A^{\alpha}_{i_{1}p}\nabla_{i_{2}}\dots\nabla_{i_{k}}A_{\alpha}. \]
We completed the proof. 
\end{proof}

\begin{lemma}\label{evofAij}
There exist tensors 
$\mathcal{B}=(\mathcal{B}^{\alpha}_{ij})\in\mathcal{V}_{\frac{3}{2},0}$ and $\mathcal{C}=(\mathcal{C}^{p}_{ij})\in\mathcal{V}_{\frac{3}{2},1}$ such that 
\[\nabla_{t}A^{\alpha}_{ij}=\Delta A^{\alpha}_{ij}+\mathcal{B}^{\alpha}_{ij}+\mathcal{C}^{p}_{ij}F^{\alpha}_{p}. \]
\end{lemma}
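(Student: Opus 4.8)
The plan is to run Huisken's Simons-type computation for the evolution of the second fundamental form, carrying along the $\ast$-product degree bookkeeping of this appendix. I would begin from the defining identity $A^{\alpha}_{ij}=\nabla_{i}F^{\alpha}_{j}$ and apply $\nabla_{t}$. Commuting $\nabla_{t}$ past $\nabla_{i}$ by Lemma~\ref{changetj} (with $T^{\alpha}_{i_{1}}=F^{\alpha}_{i_{1}}$ and derivative index $i$) and using Lemma~\ref{evofDF} so that $\nabla_{t}F^{\alpha}_{j}=\nabla_{j}H^{\alpha}$, one obtains
\begin{align*}
\nabla_{t}A^{\alpha}_{ij}={}&\nabla_{i}\nabla_{j}H^{\alpha}
+R^{\alpha}_{\ \gamma\delta\beta}H^{\delta}F^{\beta}_{i}F^{\gamma}_{j}\\
&+\partial_{t}\Gamma^{\alpha}_{\beta\gamma}\,F^{\beta}_{i}F^{\gamma}_{j}
-\partial_{t}\Gamma^{\ell}_{ij}\,F^{\alpha}_{\ell}.
\end{align*}
Of the last three terms: the curvature term is $\mathrm{Rm}\ast DF\ast DF\ast H$, and since $H^{\alpha}=g^{k\ell}A^{\alpha}_{k\ell}$ is a $\ast$-contraction of $A$ carrying no derivatives it lies in $\mathcal{V}_{3/2,0}$; by Lemma~\ref{evofGamma}, $\partial_{t}\Gamma^{\alpha}_{\beta\gamma}$ is of type $\nabla\mathrm{Rm}$, so $\partial_{t}\Gamma^{\alpha}_{\beta\gamma}F^{\beta}_{i}F^{\gamma}_{j}\in\mathcal{V}_{3/2,0}$ as well, and both go into $\mathcal{B}^{\alpha}_{ij}$; by Lemma~\ref{evofindGamma}, $\partial_{t}\Gamma^{\ell}_{ij}=-g^{\ell k}(\nabla_{i}T_{kj}+\nabla_{j}T_{ik}-\nabla_{k}T_{ij})$ with $T_{ij}=(F^{*}\mathrm{Ric})_{ij}+g(H,A_{ij})\in\mathcal{V}_{1,0}$, whence $\partial_{t}\Gamma^{\ell}_{ij}\in\mathcal{V}_{3/2,1}$ (the single derivative of $A$ entering only through $\nabla(g(H,A))$), so $-\partial_{t}\Gamma^{\ell}_{ij}F^{\alpha}_{\ell}$ is already of the form $\mathcal{C}^{p}_{ij}F^{\alpha}_{p}$ with $\mathcal{C}\in\mathcal{V}_{3/2,1}$.

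It then remains to reduce $\nabla_{i}\nabla_{j}H^{\alpha}$ to $\Delta A^{\alpha}_{ij}$ modulo acceptable terms, which is the Simons identity. Writing $H^{\alpha}=g^{k\ell}A^{\alpha}_{k\ell}$ and using $\nabla_{m}g^{k\ell}=0$ gives $\nabla_{i}\nabla_{j}H^{\alpha}=g^{k\ell}\nabla_{i}\nabla_{j}A^{\alpha}_{k\ell}$, and I would transport the free derivative indices onto the traced slots using that $\nabla_{a}A^{\alpha}_{bc}$ is totally symmetric in $a,b,c$ up to a defect in $\mathcal{V}_{1,0}$ (a form of the Codazzi equation, obtained by commuting derivatives of $F^{\alpha}_{c}$, with the intrinsic curvature of $(M,F_{t}^{*}g_{t})$ identified via the Gauss equation (\ref{Gaussequation})), together with one application of the Ricci identity to commute $\nabla_{i}$ past $\nabla_{k}$. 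Each Ricci-identity commutator contributes $\mathrm{Rm}\ast DF\ast DF\ast A$ and $(\text{intrinsic }\mathrm{Rm})\ast A$, both in $\mathcal{V}_{3/2,0}$; each Codazzi defect lies in $\mathcal{V}_{1,0}$, and differentiating it once produces $\nabla\mathrm{Rm}\ast DF\ast DF\ast DF$ and $\mathrm{Rm}\ast A\ast DF\ast DF$ in $\mathcal{V}_{3/2,0}$, plus terms in which an honest $\nabla A$ reappears only through $\nabla$ of the Gauss-equation expression for the intrinsic curvature and hence always accompanied by an explicit $F^{\alpha}_{p}$, i.e.\ of the form $F^{\alpha}_{p}\ast\mathcal{V}_{3/2,1}$. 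After the rearrangement $g^{k\ell}\nabla_{k}\nabla_{\ell}A^{\alpha}_{ij}=\Delta A^{\alpha}_{ij}$, and every leftover term falls into $\mathcal{B}\in\mathcal{V}_{3/2,0}$ or into $\mathcal{C}^{p}_{ij}F^{\alpha}_{p}$ with $\mathcal{C}\in\mathcal{V}_{3/2,1}$, which is the claim. Proposition~\ref{propofast} governs every degree shift under $\ast$-products and $\nabla$, and Lemma~\ref{deglow} is what ensures that tangential contractions $F^{\alpha}_{p}\nabla^{k}A_{\alpha}$ never secretly raise the $A$-derivative count.

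All the geometric ingredients (Gauss, Codazzi, the Ricci identity, and Lemmas~\ref{changetj}, \ref{evofDF}, \ref{evofGamma}, \ref{evofindGamma}) are standard or already established, so the only delicate point is the bookkeeping: one must verify that no commutator spills into $\mathcal{V}_{2,\cdot}$, that every genuine $\nabla A$ contribution — the one inside $\partial_{t}\Gamma^{\ell}_{ij}$ and the ones coming from $\nabla$ of the intrinsic curvature — is indeed tangential, carrying an explicit $F^{\alpha}_{p}$ so as to land in $\mathcal{C}^{p}_{ij}F^{\alpha}_{p}$ rather than in $\mathcal{B}^{\alpha}_{ij}$, and that the $\Delta$ appearing is precisely the rough Laplacian $g^{k\ell}\nabla_{k}\nabla_{\ell}$ on tensors over $M$. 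Once the index manipulation is organized, the computation is mechanical.
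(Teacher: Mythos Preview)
Your approach is correct and essentially the same as the paper's: start from $A^{\alpha}_{ij}=\nabla_{i}F^{\alpha}_{j}$, commute $\nabla_{t}$ past $\nabla_{i}$ via Lemma~\ref{changetj} using $\nabla_{t}F^{\alpha}_{j}=\nabla_{j}H^{\alpha}$, then invoke the Simons identity to convert $\nabla_{i}\nabla_{j}H^{\alpha}$ into $\Delta A^{\alpha}_{ij}$ plus lower-order terms, and finally sort everything into $\mathcal{B}\in\mathcal{V}_{3/2,0}$ and $\mathcal{C}^{p}_{ij}F^{\alpha}_{p}$ with $\mathcal{C}\in\mathcal{V}_{3/2,1}$ using Lemmas~\ref{evofGamma}, \ref{evofindGamma} and the Gauss equation. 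The only cosmetic difference is that the paper quotes a specific closed form of the Simons identity (which happens to include an explicit tangential term $-(\nabla_{i}R^{p}_{\ \,j}+\nabla_{j}R^{p}_{\ \,i}-\nabla^{p}R_{ij})F^{\alpha}_{p}$ in the intrinsic Ricci of $M$, landing in $\mathcal{C}$), whereas you outline its derivation via Codazzi and the Ricci identity; also, Lemma~\ref{deglow} is not actually needed at this stage---it enters later in Proposition~\ref{evofhiA}.
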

\begin{proof}
By identies $A^{\alpha}_{ij}=\nabla_{i}F^{\alpha}_{j}$ and $\nabla_{t}F^{\alpha}_{j}=\nabla_{j}H^{\alpha}$ and Lemma~\ref{changetj}, we have
\begin{align*}
\nabla_{t}A^{\alpha}_{ij}=&\nabla_{t}\nabla_{i}F^{\alpha}_{j}\\
=&\nabla_{i}\nabla_{t}F^{\alpha}_{j}+R^{\alpha}_{\ \gamma\delta\beta}H^{\delta}F^{\beta}_{i}F^{\gamma}_{j}
+\frac{\partial}{\partial t}\Gamma^{\alpha}_{\beta\gamma}F^{\beta}_{i}F^{\gamma}_{j}-\frac{\partial}{\partial t}\Gamma^{\ell}_{ij}F^{\alpha}_{\ell}\\
=&\nabla_{i}\nabla_{j}H^{\alpha}+R^{\alpha}_{\ \gamma\delta\beta}H^{\delta}F^{\beta}_{i}F^{\gamma}_{j}
+\frac{\partial}{\partial t}\Gamma^{\alpha}_{\beta\gamma}F^{\beta}_{i}F^{\gamma}_{j}-\frac{\partial}{\partial t}\Gamma^{\ell}_{ij}F^{\alpha}_{\ell}. 
\end{align*}
Furthermore, by using Simons' identity: 
\begin{align*}
\nabla_{k}\nabla_{\ell}H^{\alpha}=&\Delta A^{\alpha}_{k\ell}+(\nabla_{\epsilon}R^{\alpha}_{\ \beta\gamma\delta}+\nabla_{\gamma}R^{\alpha}_{\ \delta\beta\epsilon})
F^{\epsilon}_{i}F^{\beta}_{\ell}F^{\gamma}_{k}F^{\delta i}\\
&+R^{\alpha}_{\ \beta\gamma\delta}(2A^{\beta}_{ik}F^{\gamma}_{\ell}F^{\delta i}+2A^{\beta}_{i\ell}F^{\gamma}_{k}F^{\delta i}+H^{\delta}F^{\beta}_{\ell}F^{\gamma}_{k}+A^{\gamma}_{\ell k}F^{\beta}_{i}F^{\delta i})\\
&-(\nabla_{k}R^{p}_{\ \ell}+\nabla_{\ell}R^{p}_{\ k}-\nabla^{p}R_{k\ell})F^{\alpha}_{p}\\
&+2R^{\,\,\, i\,\,\, j}_{k\,\, \ell}A^{\alpha}_{ij}-R^{p}_{\ k}A^{\alpha}_{p\ell}-R^{p}_{\ \ell}A^{\alpha}_{pk}, 
\end{align*}
we have
\begin{align*}
\nabla_{t}A^{\alpha}_{ij}=&\Delta A^{\alpha}_{ij}\\
&+(\nabla_{\epsilon}R^{\alpha}_{\ \beta\gamma\delta}+\nabla_{\gamma}R^{\alpha}_{\ \delta\beta\epsilon})
F^{\epsilon}_{k}F^{\beta}_{j}F^{\gamma}_{i}F^{\delta k}\\
&+R^{\alpha}_{\ \beta\gamma\delta}(2A^{\beta}_{ki}F^{\gamma}_{j}F^{\delta k}+2A^{\beta}_{kj}F^{\gamma}_{i}F^{\delta k}+H^{\delta}F^{\beta}_{j}F^{\gamma}_{i}+A^{\gamma}_{j i}F^{\beta}_{k}F^{\delta k})\\
&-(\nabla_{i}R^{p}_{\ j}+\nabla_{j}R^{p}_{\ i}-\nabla^{p}R_{ij})F^{\alpha}_{p}\\
&+2R^{\,\,\, k\,\, \ell}_{i\,\,\, j}A^{\alpha}_{k\ell}-R^{p}_{\ i}A^{\alpha}_{pj}-R^{p}_{\ j}A^{\alpha}_{pi}\\
&+R^{\alpha}_{\ \gamma\delta\beta}H^{\delta}F^{\beta}_{i}F^{\gamma}_{j}
+\frac{\partial}{\partial t}\Gamma^{\alpha}_{\beta\gamma}F^{\beta}_{i}F^{\gamma}_{j}-\frac{\partial}{\partial t}\Gamma^{p}_{ij}F^{\alpha}_{p}. 
\end{align*}
By putting 
\begin{align*}
\mathcal{B}^{\alpha}_{ij}:=&(\nabla_{\epsilon}R^{\alpha}_{\ \beta\gamma\delta}+\nabla_{\gamma}R^{\alpha}_{\ \delta\beta\epsilon})
F^{\epsilon}_{k}F^{\beta}_{j}F^{\gamma}_{i}F^{\delta k}\\
&+R^{\alpha}_{\ \beta\gamma\delta}(2A^{\beta}_{ki}F^{\gamma}_{j}F^{\delta k}+2A^{\beta}_{kj}F^{\gamma}_{i}F^{\delta k}+H^{\delta}F^{\beta}_{j}F^{\gamma}_{i}+A^{\gamma}_{j i}F^{\beta}_{k}F^{\delta k})\\
&+2R^{\,\,\, k\,\, \ell}_{i\,\,\, j}A^{\alpha}_{k\ell}-R^{p}_{\ i}A^{\alpha}_{pj}-R^{p}_{\ j}A^{\alpha}_{pi}\\
&+R^{\alpha}_{\ \gamma\delta\beta}H^{\delta}F^{\beta}_{i}F^{\gamma}_{j}
+\frac{\partial}{\partial t}\Gamma^{\alpha}_{\beta\gamma}F^{\beta}_{i}F^{\gamma}_{j} \\
\mathcal{C}^{p}_{ij}:=&-(\nabla_{i}R^{p}_{\ j}+\nabla_{j}R^{p}_{\ i}-\nabla^{p}R_{ij})-\frac{\partial}{\partial t}\Gamma^{p}_{ij}, 
\end{align*}
We have
\[\nabla_{t}A^{\alpha}_{ij}=\Delta A^{\alpha}_{ij}+\mathcal{B}^{\alpha}_{ij}+\mathcal{C}^{p}_{ij}F^{\alpha}_{p}. \]
Furthermore, by using Lemma \ref{evofGamma}, \ref{evofindGamma} and Gauss equation (\ref{Gaussequation}), 
one can easily see that 
\[\mathcal{B}=(\mathcal{B}^{\alpha}_{ij})\in\mathcal{V}_{\frac{3}{2},0}\quad\mathrm{and}\quad \mathcal{C}=(\mathcal{C}^{p}_{ij})\in\mathcal{V}_{\frac{3}{2},1}. \]
Here we completed the proof. 
\end{proof}

\begin{proposition}\label{evofnormsqofA}
There exists a tensor $\mathcal{E}\in\mathcal{V}_{\frac{3}{2},0}$ such that 
\[\frac{\partial}{\partial t}|A|^2=\Delta |A|^2-2|\nabla A|^2+\mathcal{E}\ast A. \]
\end{proposition}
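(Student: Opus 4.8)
The plan is to compute the time derivative using the covariant time derivative $\nabla_{t}$ introduced in Appendix~\ref{EvoEq}, and then substitute the evolution equation for $A$ from Lemma~\ref{evofAij}. Since $|A|^2$ is a scalar function on $M$, the definition of $\nabla_{t}$ gives $\frac{\partial}{\partial t}|A|^2=\nabla_{t}|A|^2$, so it suffices to compute $\nabla_{t}|A|^2$. As $A=\nabla(DF)\in\mathcal{V}_{\frac{1}{2},0}$, Lemma~\ref{degord3} supplies a tensor $\mathcal{D}\in\mathcal{V}_{\frac{3}{2},0}$ with
\[\nabla_{t}|A|^2=2\langle\nabla_{t}A,A\rangle+\mathcal{D}\ast A.\]

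Next I would insert Lemma~\ref{evofAij}, $\nabla_{t}A^{\alpha}_{ij}=\Delta A^{\alpha}_{ij}+\mathcal{B}^{\alpha}_{ij}+\mathcal{C}^{p}_{ij}F^{\alpha}_{p}$, into the inner product, obtaining
\[\langle\nabla_{t}A,A\rangle=\langle\Delta A,A\rangle+\langle\mathcal{B},A\rangle+\mathcal{C}^{p\,ij}F^{\alpha}_{p}A_{\alpha ij}.\]
The crucial observation is that the last term vanishes identically: by Lemma~\ref{deglow}, $F^{\alpha}_{p}A_{\alpha}=0$ because $A$ is a normal-bundle valued tensor, so the tangential correction $\mathcal{C}^{p}_{ij}F^{\alpha}_{p}$—which belongs only to $\mathcal{V}_{\frac{3}{2},1}$ and not to $\mathcal{V}_{\frac{3}{2},0}$—does not contribute after contraction with $A$. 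For the remaining Laplacian term I would use the pointwise Bochner identity $\Delta|A|^2=2\langle\Delta A,A\rangle+2|\nabla A|^2$, i.e. $2\langle\Delta A,A\rangle=\Delta|A|^2-2|\nabla A|^2$, where $\Delta$ and the inner product are taken with respect to $F_{t}^{*}g_{t}$ at each time.

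Combining these yields
\[\frac{\partial}{\partial t}|A|^2=\Delta|A|^2-2|\nabla A|^2+2\langle\mathcal{B},A\rangle+\mathcal{D}\ast A.\]
Since $\mathcal{B}\in\mathcal{V}_{\frac{3}{2},0}$, the term $2\langle\mathcal{B},A\rangle$ is a $\ast$-contraction of $\mathcal{B}$ with $A$; together with $\mathcal{D}\ast A$, and using Proposition~\ref{propofast} together with the vector-space structure of $\mathcal{V}_{\frac{3}{2},0}$, all the remaining lower-order contributions can be packaged as $\mathcal{E}\ast A$ for a single $\mathcal{E}\in\mathcal{V}_{\frac{3}{2},0}$, which is exactly the claimed identity.

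The only genuinely delicate point is the bookkeeping that keeps the error tensor in $\mathcal{V}_{\frac{3}{2},0}$ rather than $\mathcal{V}_{\frac{3}{2},1}$: this hinges entirely on the cancellation $F^{\alpha}_{p}A_{\alpha}=0$ from Lemma~\ref{deglow}, which annihilates the tangential term $\mathcal{C}^{p}_{ij}F^{\alpha}_{p}$ upon contraction with $A$. The Bochner identity and the degree arithmetic via $\ast$-products are routine.
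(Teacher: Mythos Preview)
Your proposal is correct and follows essentially the same argument as the paper: apply Lemma~\ref{degord3} to $A\in\mathcal{V}_{\frac{1}{2},0}$, insert Lemma~\ref{evofAij}, kill the tangential term via $F^{\alpha}_{p}A_{\alpha}=0$, and absorb the remaining $\mathcal{B}$ and $\mathcal{D}$ contributions into a single $\mathcal{E}\in\mathcal{V}_{\frac{3}{2},0}$. The paper simply sets $\mathcal{E}:=\mathcal{D}+\mathcal{B}$ without further comment, so your explicit invocation of the Bochner identity and the vector-space structure is slightly more detailed but not a different route.
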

\begin{proof}
By Lemma~\ref{degord3}, there exists a tensor $\mathcal{D}=\mathcal{D}(A)\in\mathcal{V}_{\frac{3}{2},0}$ such that 
\[\frac{\partial}{\partial t}|A|^2=\nabla_{t}|A|^2=2\langle \nabla_{t}A,A\rangle+\mathcal{D}\ast A. \]
By Lemma~\ref{evofAij}, we have 
\begin{align*}
2\langle \nabla_{t}A,A\rangle=&2(\Delta A^{\alpha}_{ij}+\mathcal{B}^{\alpha}_{ij}+\mathcal{C}^{p}_{ij}F^{\alpha}_{p})A_{\alpha}^{ij}\\
=&\Delta |A|^2-2|\nabla A|^2+\mathcal{B}\ast A. 
\end{align*}
Here we used $F^{\alpha}_{p}A_{\alpha}^{ij}=0$. 
Hence, by putting $\mathcal{E}:=\mathcal{D}+\mathcal{B}$, we have
\[\frac{\partial}{\partial t}|A|^2=\Delta |A|^2-2|\nabla A|^2+\mathcal{E}\ast A, \]
and, we have that 
\[\mathcal{E}\in\mathcal{V}_{\frac{3}{2},0}. \]
Here we completed the proof. 
\end{proof}

\begin{proposition}
We have
\[\frac{\partial}{\partial t}|A|^2\leq \Delta |A|^2-2|\nabla A|^2+C_{1}|A|^4+C_{2}|\mathrm{Rm}||A|^2+C_{3}|\nabla \mathrm{Rm}||A|, \]
for positive constants $C_{1}$, $C_{2}$, $C_{3}$ which depend only on the dimension of $M$ and $N$, 
where $\mathrm{Rm}$ is the Riemannian curvature tensor of $(N,g_{t})$. 
\end{proposition}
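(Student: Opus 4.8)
The plan is to read this estimate straight off Proposition~\ref{evofnormsqofA}, which already provides the equality
\[\frac{\partial}{\partial t}|A|^2=\Delta|A|^2-2|\nabla A|^2+\mathcal{E}\ast A\]
with $\mathcal{E}\in\mathcal{V}_{\frac{3}{2},0}$ (and where $\frac{\partial}{\partial t}|A|^2=\nabla_{t}|A|^2$ since $|A|^2$ is a scalar function on $M$, as already used in the proof of Lemma~\ref{evofindgRM}). The only remaining work is to unwind what membership in $\mathcal{V}_{\frac{3}{2},0}$ entails and then apply the $\ast$-product inequality $|S\ast T|\le C|S||T|$ to convert the schematic term $\mathcal{E}\ast A$ into the three explicit terms on the right-hand side.

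First I would recall that, by definition, $\mathcal{E}$ is a finite $\mathbb{R}$-linear combination of tensors of the form $(\nabla^{k_{1}}\mathrm{Rm}\ast\cdots\ast\nabla^{k_{I}}\mathrm{Rm})\ast(\nabla^{\ell_{1}}A\ast\cdots\ast\nabla^{\ell_{J}}A)\ast(\mathop{\ast}^{p}DF)$ subject to $\sum_{i=1}^{I}(1+\tfrac12 k_{i})+\sum_{j=1}^{J}(\tfrac12+\tfrac12\ell_{j})=\tfrac32$ and $\sum_{j=1}^{J}\ell_{j}\le 0$. The second constraint forces every $\ell_{j}=0$, so no derivatives land on the copies of $A$, and since $|DF|=\sqrt{m}$ the factor $\mathop{\ast}^{p}DF$ only contributes a dimensional constant. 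Solving the first constraint over the remaining nonnegative integers leaves exactly three possibilities: $I=0$, $J=3$, giving a term of type $A\ast A\ast A$; $I=1$ with $k_{1}=0$ and $J=1$, giving $\mathrm{Rm}\ast A$; and $I=1$ with $k_{1}=1$ and $J=0$, giving $\nabla\mathrm{Rm}$ (each possibly contracted further with some $\mathop{\ast}^{p}DF$).

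Then I would multiply through by the extra factor of $A$ in $\mathcal{E}\ast A$: the three types become $A\ast A\ast A\ast A$, $\mathrm{Rm}\ast A\ast A$, and $\nabla\mathrm{Rm}\ast A$, up to constant contractions with $DF$. Applying $|S\ast T|\le C|S||T|$ repeatedly bounds their norms by $C|A|^4$, $C|\mathrm{Rm}||A|^2$, and $C|\nabla\mathrm{Rm}||A|$ respectively, with each $C$ depending only on the algebraic shape of the contraction and on $m$ and $n$ (the $|DF|=\sqrt m$ factors are absorbed here). Summing over the finitely many terms comprising $\mathcal{E}$ and collecting constants produces $C_{1},C_{2},C_{3}>0$ depending only on $m$ and $n$ with $|\mathcal{E}\ast A|\le C_{1}|A|^4+C_{2}|\mathrm{Rm}||A|^2+C_{3}|\nabla\mathrm{Rm}||A|$; substituting this into the equality of Proposition~\ref{evofnormsqofA} gives the claimed inequality.

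There is essentially no serious obstacle here; the only mildly delicate point is the bookkeeping in the enumeration of $\mathcal{V}_{\frac{3}{2},0}$ — one must verify that the degree-$\tfrac32$ / derivative-order-$0$ constraints genuinely admit only the three term types above, and in particular that no term with a bare $\mathrm{Rm}$ together with two copies of $A$ (degree $1+\tfrac12+\tfrac12=2\neq\tfrac32$) or with curvature and no second fundamental form at all slips through, which is immediate from the arithmetic of the constraints.
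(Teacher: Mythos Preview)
Your proposal is correct and follows essentially the same approach as the paper: start from the equality in Proposition~\ref{evofnormsqofA}, enumerate the three possible monomial types in $\mathcal{V}_{\frac{3}{2},0}$ (namely $A\ast A\ast A$, $\mathrm{Rm}\ast A$, and $\nabla\mathrm{Rm}$, each up to factors of $DF$), and then bound $|\mathcal{E}\ast A|$ using $|S\ast T|\le C|S||T|$ and $|DF|=\sqrt{m}$. Your constraint analysis is slightly more explicit than the paper's, but the argument is the same.
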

\begin{proof}
By Proposition~\ref{evofnormsqofA}, we know that 
\[\frac{\partial}{\partial t}|A|^2=\Delta |A|^2-2|\nabla A|^2+\mathcal{E}\ast A. \]
Since $\mathcal{E}\in\mathcal{V}_{\frac{3}{2},0}$, 
the tensor $\mathcal{E}$ can be constructed by 
\[A\ast A\ast A\ast(\ast^{i} DF),\ \mathrm{Rm}\ast A\ast(\ast^{j} DF),\ \nabla \mathrm{Rm}\ast(\ast^{k} DF). \]
Note that we can not decide $i$, $j$, $k$ from the information that $\mathcal{E}\in\mathcal{V}_{\frac{3}{2},0}$. 
However this is not a matter when we consider the norm of tensors since the norm of $DF$ is a constant $\sqrt{m}$. 
Hence we see that there exist positive constants $C_{1}$, $C_{2}$, $C_{3}$ which depend only on the dimensions of $M$ and $N$ such that 
\[|\mathcal{E}\ast A|\leq C_{1}|A|^4+C_{2}|\mathrm{Rm}||A|^2+C_{3}|\nabla \mathrm{Rm}||A|. \]
Here we completed the proof. 
\end{proof}

\begin{proposition}\label{lapandt}
For all $k\geq 0$ there exist tensors 
$\mathcal{B}[k]\in\mathcal{V}_{\frac{3}{2}+\frac{1}{2}k,k}$ and $\mathcal{C}[k]\in\mathcal{V}_{\frac{3}{2}+\frac{1}{2}k,k+1}$ such that 
\[\nabla_{t}\nabla_{\ell_{1}}\dots\nabla_{\ell_{k}}A^{\alpha}_{ij}=\Delta \nabla_{\ell_{1}}\dots\nabla_{\ell_{k}}A^{\alpha}_{ij}+\mathcal{B}[k]+\mathcal{C}[k]^{p}F^{\alpha}_{p}. \]
\end{proposition}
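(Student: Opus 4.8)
The plan is to prove the identity by induction on $k$, with the base case $k=0$ being exactly Lemma~\ref{evofAij}, which gives $\mathcal{B}[0]\in\mathcal{V}_{\frac{3}{2},0}$ and $\mathcal{C}[0]\in\mathcal{V}_{\frac{3}{2},1}$. For the inductive step, assume the identity holds at level $k$ with $\mathcal{B}[k]\in\mathcal{V}_{\frac{3}{2}+\frac{1}{2}k,k}$ and $\mathcal{C}[k]\in\mathcal{V}_{\frac{3}{2}+\frac{1}{2}k,k+1}$, and apply one spatial covariant derivative $\nabla_{\ell_{k+1}}$ to both sides. Writing $T:=\nabla_{\ell_{1}}\dots\nabla_{\ell_{k}}A^{\alpha}_{ij}$, iterating the derivative clause of Proposition~\ref{propofast} from $A\in\mathcal{V}_{\frac{1}{2},0}$ shows $T\in\mathcal{V}_{\frac{1}{2}+\frac{1}{2}k,k}$.

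On the left I would commute $\nabla_{\ell_{k+1}}$ with $\nabla_{t}$ using Lemma~\ref{degord1}, which puts $\nabla_{\ell_{k+1}}\nabla_{t}T-\nabla_{t}\nabla_{\ell_{k+1}}T$ in $\mathcal{V}_{(\frac{1}{2}+\frac{1}{2}k)+\frac{3}{2},\,k+1}=\mathcal{V}_{\frac{3}{2}+\frac{1}{2}(k+1),\,k+1}$; on the leading right-hand term I would commute $\nabla_{\ell_{k+1}}$ with $\Delta$ using Lemma~\ref{degord2}, whose error lies in the same space. The leading terms that survive are $\nabla_{t}\nabla_{\ell_{k+1}}\nabla_{\ell_{1}}\dots\nabla_{\ell_{k}}A^{\alpha}_{ij}$ and $\Delta\nabla_{\ell_{k+1}}\nabla_{\ell_{1}}\dots\nabla_{\ell_{k}}A^{\alpha}_{ij}$; since both carry the new derivative in the same outermost slot, relabelling the free indices $(\ell_{k+1},\ell_{1},\dots,\ell_{k})$ as $(\ell_{1},\dots,\ell_{k+1})$ turns them into the two leading terms of the identity at level $k+1$. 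For the lower-order terms, $\nabla_{\ell_{k+1}}\mathcal{B}[k]\in\mathcal{V}_{\frac{3}{2}+\frac{1}{2}(k+1),\,k+1}$ by Proposition~\ref{propofast} and is folded into $\mathcal{B}[k+1]$, while the Leibniz rule together with $\nabla DF=A$, i.e.\ $\nabla_{\ell_{k+1}}F^{\alpha}_{p}=A^{\alpha}_{\ell_{k+1}p}$, gives $\nabla_{\ell_{k+1}}(\mathcal{C}[k]^{p}F^{\alpha}_{p})=(\nabla_{\ell_{k+1}}\mathcal{C}[k]^{p})F^{\alpha}_{p}+\mathcal{C}[k]^{p}A^{\alpha}_{\ell_{k+1}p}$; here $\nabla_{\ell_{k+1}}\mathcal{C}[k]\in\mathcal{V}_{\frac{3}{2}+\frac{1}{2}(k+1),\,k+2}$ becomes the new $\mathcal{C}[k+1]$-part, and $\mathcal{C}[k]^{p}A^{\alpha}_{\ell_{k+1}p}=\mathcal{C}[k]\ast A\in\mathcal{V}_{\frac{3}{2}+\frac{1}{2}(k+1),\,k+1}$ is again folded into $\mathcal{B}[k+1]$. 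Collecting all contributions yields $\mathcal{B}[k+1]$ and $\mathcal{C}[k+1]$ of the asserted degrees, completing the induction.

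The argument is essentially bookkeeping once Lemma~\ref{evofAij}, Lemmas~\ref{degord1} and~\ref{degord2}, and Proposition~\ref{propofast} are in hand, so I do not anticipate a genuine obstacle; the one point that needs attention is keeping the structural split intact at every stage, so that each error term ends up either as a pure tensor in $\mathcal{V}_{\frac{3}{2}+\frac{1}{2}(k+1),\,k+1}$ or in the special shape $(\,\cdot\,)^{p}F^{\alpha}_{p}$ with coefficient in $\mathcal{V}_{\frac{3}{2}+\frac{1}{2}(k+1),\,k+2}$, and in particular that differentiating the explicit factor $F^{\alpha}_{p}$ never produces anything worse than a contraction with $A$ --- which is exactly the content of $\nabla DF=A$ (cf.\ Lemma~\ref{deglow}). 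Checking that the degree count $\sum_{i}(1+\frac{1}{2}k_{i})+\sum_{j}(\frac{1}{2}+\frac{1}{2}\ell_{j})$ and the bound $\sum_{j}\ell_{j}\leq b$ transform correctly under each operation involved --- the $\ast$-product, one covariant derivative, and the commutator identities of Lemmas~\ref{degord1} and~\ref{degord2} --- is routine and follows the pattern of the proof of Lemma~\ref{evofAij}.
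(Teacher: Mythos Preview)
Your proposal is correct and is essentially the same argument as the paper's: induction on $k$ with base case Lemma~\ref{evofAij}, commuting the new covariant derivative past $\nabla_{t}$ and $\Delta$ via Lemmas~\ref{degord1} and~\ref{degord2}, and tracking degrees with Proposition~\ref{propofast} to obtain $\mathcal{B}[k+1]=\nabla\mathcal{B}[k]+\mathcal{C}[k]\ast A+(\text{commutator errors})$ and $\mathcal{C}[k+1]=\nabla\mathcal{C}[k]$. The only cosmetic difference is that the paper places the new derivative in the leftmost slot (going from $k-1$ to $k$) rather than the rightmost slot followed by relabelling.
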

\begin{proof}
We work by induction on $k\in\mathbb{N}$. 
For the case $k=0$, the statement is true by Lemma~\ref{evofAij}. 
Assume that for $k-1$ the statement is true. 
Since 
\[\nabla_{\ell_{2}}\dots\nabla_{\ell_{k}}A^{\alpha}_{ij}\in\mathcal{V}_{\frac{3}{2}+\frac{1}{2}(k-1),k-1}, \]
by defining $\mathcal{D}$ as 
\[\nabla_{t}\nabla_{\ell_{1}}\nabla_{\ell_{2}}\dots\nabla_{\ell_{k}}A^{\alpha}_{ij}=\nabla_{\ell_{1}}\nabla_{t}\nabla_{\ell_{2}}\dots\nabla_{\ell_{k}}A^{\alpha}_{ij}+\mathcal{D}, \]
we have, by Lemma~\ref{degord1}, that 
\begin{align*}
\mathcal{D}\in \mathcal{V}_{\frac{3}{2}+\frac{1}{2}k,k}. 
\end{align*}
By the assumption of the induction for $k-1$, we have
\begin{align*}
&\nabla_{\ell_{1}}\nabla_{t}\nabla_{\ell_{2}}\dots\nabla_{\ell_{k}}A^{\alpha}_{ij}\\
=&\nabla_{\ell_{1}}(\Delta \nabla_{\ell_{2}}\dots\nabla_{\ell_{k}}A^{\alpha}_{ij}+\mathcal{B}[k-1]+\mathcal{C}[k-1]^{p}F^{\alpha}_{p})\\
=&\nabla_{\ell_{1}}\Delta \nabla_{\ell_{2}}\dots\nabla_{\ell_{k}}A^{\alpha}_{ij}+\nabla\mathcal{B}[k-1]+\nabla\mathcal{C}[k-1]^{p}F^{\alpha}_{p}+\mathcal{C}[k-1]\ast A. 
\end{align*}
Furthermore by Lemma~\ref{degord2}, 
by defining $\mathcal{D'}$ as 
\[\nabla_{\ell_{1}}\Delta \nabla_{\ell_{2}}\dots\nabla_{\ell_{k}}A^{\alpha}_{ij}=\Delta \nabla_{\ell_{1}}\nabla_{\ell_{2}}\dots\nabla_{\ell_{k}}A^{\alpha}_{ij}+\mathcal{D'}, \]
we have that 
\begin{align*}
\mathcal{D'}\in\mathcal{V}_{\frac{3}{2}+\frac{1}{2}k,k}. 
\end{align*}
Hence, by putting 
\begin{align*}
\mathcal{B}[k]:=&\nabla\mathcal{B}[k-1]+\mathcal{C}[k-1]\ast A+\mathcal{D}+\mathcal{D'}. \\
\mathcal{C}[k]:=&\nabla\mathcal{C}[k-1], 
\end{align*}
we have
\[\nabla_{t}\nabla_{\ell_{1}}\dots\nabla_{\ell_{k}}A^{\alpha}_{ij}=\Delta \nabla_{\ell_{1}}\dots\nabla_{\ell_{k}}A^{\alpha}_{ij}+\mathcal{B}[k]+\mathcal{C}[k]^{p}F^{\alpha}_{p}. \]
and 
\[\mathcal{B}[k]\in\mathcal{V}_{\frac{3}{2}+\frac{1}{2}k,k}\quad\mathrm{and}\quad\mathcal{C}[k]\in\mathcal{V}_{\frac{3}{2}+\frac{1}{2}k,k+1}. \]
Here we completed the proof. 
\end{proof}

\begin{proposition}\label{evofhiA}
For all $k\geq 0$ 
there exist tensors $\mathcal{E}[k]\in\mathcal{V}_{\frac{3}{2}+\frac{1}{2}k,k}$, $\mathcal{C}[k]\in\mathcal{V}_{\frac{3}{2}+\frac{1}{2}k,k+1}$ and 
$\mathcal{G}[k]\in\mathcal{V}_{\frac{1}{2}+\frac{1}{2}k,k-1}$ such that 
\[\frac{\partial}{\partial t}|\nabla^{k}A|^2=\Delta |\nabla^{k}A|^2-2|\nabla^{k+1}A|^2+\mathcal{E}[k]\ast \nabla^{k}A+\mathcal{C}[k]\ast \mathcal{G}[k]. \]
\end{proposition}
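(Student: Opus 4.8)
The plan is to differentiate $|\nabla^k A|^2$ in time using the covariant time derivative $\nabla_t$, exactly as in the proof of Proposition~\ref{evofnormsqofA}, and then feed in the heat-type equation for $\nabla^k A$ supplied by Proposition~\ref{lapandt}. First I would invoke Lemma~\ref{degord3} with $T = \nabla^k A \in \mathcal{V}_{\frac{1}{2}+\frac{1}{2}k,\,k}$: since $\frac{\partial}{\partial t}|\nabla^k A|^2 = \nabla_t|\nabla^k A|^2$, there is a tensor $\mathcal{D} \in \mathcal{V}_{\frac{3}{2}+\frac{1}{2}k,\,k}$ with
\[
\frac{\partial}{\partial t}|\nabla^k A|^2 = 2\langle \nabla_t \nabla^k A,\ \nabla^k A\rangle + \mathcal{D}\ast\nabla^k A.
\]
Next I would substitute Proposition~\ref{lapandt}, which gives $\mathcal{B}[k]\in\mathcal{V}_{\frac{3}{2}+\frac{1}{2}k,k}$ and $\mathcal{C}[k]\in\mathcal{V}_{\frac{3}{2}+\frac{1}{2}k,k+1}$ with $\nabla_t\nabla^k A = \Delta\nabla^k A + \mathcal{B}[k] + \mathcal{C}[k]^p F^\alpha_p$ (here $\nabla^k A$ is shorthand for $\nabla_{\ell_1}\cdots\nabla_{\ell_k}A^\alpha_{ij}$). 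Pairing against $\nabla^k A$ and using the standard Bochner-type identity $2\langle\Delta\nabla^k A,\nabla^k A\rangle = \Delta|\nabla^k A|^2 - 2|\nabla^{k+1}A|^2$ yields
\[
\frac{\partial}{\partial t}|\nabla^k A|^2 = \Delta|\nabla^k A|^2 - 2|\nabla^{k+1}A|^2 + \mathcal{B}[k]\ast\nabla^k A + \langle \mathcal{C}[k]^p F^\alpha_p,\ \nabla^k A\rangle + \mathcal{D}\ast\nabla^k A.
\]

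The point then is to handle the term $\langle \mathcal{C}[k]^p F^\alpha_p,\ \nabla^k A\rangle$, i.e. the contraction of $F^\alpha_p$ with $\nabla_{\ell_1}\cdots\nabla_{\ell_k}A^\alpha_{ij}$. For $k=0$ this is zero because $F^\alpha_p A_\alpha = 0$, and that is why the term does not appear in Proposition~\ref{evofnormsqofA}. For general $k$ it is no longer zero, but Lemma~\ref{deglow} tells us precisely that $F^\alpha_p\nabla_{\ell_1}\cdots\nabla_{\ell_k}A_\alpha \in \mathcal{V}_{\frac{1}{2}+\frac{1}{2}k,\,k-1}$, with the improved degree-lowering bound $k-1$ on the number of derivatives hitting $A$. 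So I would set $\mathcal{G}[k] := F^\alpha_p\nabla_{\ell_1}\cdots\nabla_{\ell_k}A_\alpha \in \mathcal{V}_{\frac{1}{2}+\frac{1}{2}k,\,k-1}$ and observe that the contraction $\langle \mathcal{C}[k]^p F^\alpha_p, \nabla^k A\rangle$ can be regrouped, using that $\ast$-product contractions only depend on which indices are paired, as $\mathcal{C}[k]\ast\mathcal{G}[k]$ — the contraction of $\mathcal{C}[k]$ (with its index $p$) against the already-contracted tensor $\mathcal{G}[k]$. Finally I would absorb $\mathcal{B}[k]\ast\nabla^k A$ and $\mathcal{D}\ast\nabla^k A$ into a single term $\mathcal{E}[k]\ast\nabla^k A$ by putting $\mathcal{E}[k] := \mathcal{B}[k] + \mathcal{D} \in \mathcal{V}_{\frac{3}{2}+\frac{1}{2}k,\,k}$ (Proposition~\ref{propofast} closes $\mathcal{V}_{a,b}$ under sums), giving the asserted identity.

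The only genuinely delicate point is the bookkeeping of degrees, and in particular justifying that the $F^\alpha_p$ appearing in $\mathcal{C}[k]^p F^\alpha_p$ really can be peeled off and moved into $\mathcal{G}[k]$ rather than staying as a factor of degree $\frac12$ multiplying a $\nabla^k A$ of degree $\frac12+\frac12 k$. The resolution is that a $\ast$-product is, by definition, insensitive to how the contractions are organized: contracting $\mathcal{C}[k]^p$ with $F^\alpha_p$ and then with $\nabla^k A$ (via $A_\alpha^{\ \cdots}$) is the same sum of scalars as contracting $\mathcal{C}[k]$ with the tensor $\mathcal{G}[k] = F^\alpha_p\nabla^k A$, and the latter lies in $\mathcal{V}_{\frac12+\frac12 k,\,k-1}$ by Lemma~\ref{deglow} — this is exactly the improvement (degree bound $k-1$ instead of $k$) that makes $\mathcal{G}[k]$ land in the claimed space. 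Everything else is a routine application of Lemmas~\ref{degord1}–\ref{degord3}, Proposition~\ref{lapandt}, and the closure properties in Proposition~\ref{propofast}. I expect the write-up to be short, with the induction already discharged inside Proposition~\ref{lapandt}, so no further induction is needed here.
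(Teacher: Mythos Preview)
Your proposal is correct and follows essentially the same approach as the paper: apply Lemma~\ref{degord3} to $T=\nabla^k A$, substitute the heat-type equation from Proposition~\ref{lapandt}, use $2\langle\Delta T,T\rangle=\Delta|T|^2-2|\nabla T|^2$, and identify $\mathcal{G}[k]:=F^\alpha_p\nabla^k A_\alpha$ via Lemma~\ref{deglow}, then set $\mathcal{E}[k]:=\mathcal{B}[k]+\mathcal{D}[k]$. The only cosmetic remark is that closure of $\mathcal{V}_{a,b}$ under sums is immediate from its definition as a vector space, not from Proposition~\ref{propofast}.
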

\begin{proof}
Put $T^{\alpha}_{\ell_{1}\dots \ell_{k}ij}:=\nabla_{\ell_{1}}\dots\nabla_{\ell_{k}}A^{\alpha}_{ij}$. 
Since $T\in\mathcal{V}_{\frac{1}{2}+\frac{1}{2}k,k}$, 
by Lemma~\ref{degord3} there exists a tensor $\mathcal{D}[k]=\mathcal{D}(T)\in\mathcal{V}_{\frac{3}{2}+\frac{1}{2}k,k}$ such that 
\[\frac{\partial}{\partial t}|T|^2=\nabla_{t}|T|^2=2\langle \nabla_{t}T,T\rangle+\mathcal{D}[k]\ast T. \]
By Proposition~\ref{lapandt}, 
there exist tensors $\mathcal{B}[k]\in\mathcal{V}_{\frac{3}{2}+\frac{1}{2}k,k}$ and $\mathcal{C}[k]\in\mathcal{V}_{\frac{3}{2}+\frac{1}{2}k,k+1}$ such that 
\[\nabla_{t}T=\Delta T+\mathcal{B}[k]+\mathcal{C}[k]^{p}F^{\alpha}_{p}. \]
Hence  we have 
\begin{align*}
2\langle \nabla_{t}T,T\rangle=&2\langle \Delta T,T\rangle+\mathcal{B}[k]\ast T+\mathcal{C}[k]^{p} F^{\alpha}_{p}T_{\alpha}\\
=&\Delta |T|^2-2|\nabla T|^2+\mathcal{B}[k]\ast T+\mathcal{C}[k]^{p} F^{\alpha}_{p}T_{\alpha}. 
\end{align*}
By Lemma~\ref{deglow}, we have 
\[\mathcal{G}[k]:=F^{\alpha}_{p}T_{\alpha}\in\mathcal{V}_{\frac{1}{2}+\frac{1}{2}k,k-1}. \]
Hence, by putting $\mathcal{E}[k]:=\mathcal{D}[k]+\mathcal{B}[k]\in\mathcal{V}_{\frac{3}{2}+\frac{1}{2}k,k}$, we have
\[\frac{\partial}{\partial t}|T|^2=\Delta |T|^2-2|\nabla T|^2+\mathcal{E}[k]\ast T+\mathcal{C}[k]\ast \mathcal{G}[k]. \]
Here we completed the proof. 
\end{proof}
\section{An estimate in the proof of Lemma~\ref{2ndderiofmono}}\label{EstiforLem}
In this appendix, we give a proof for the following estimate which is used in the proof of Lemma~\ref{2ndderiofmono}. 
It is just a straightforward long computation. 
\begin{lemma}\label{longcompu}
In the situation of Lemma~\ref{2ndderiofmono}, there exists a constant $C''''>0$ such that 
\begin{align}\label{derideri5'}
(T-t)^2 \Biggl| \biggl(\frac{\partial}{\partial t}-\Delta_{F_{t}^{*}g_{t}} \biggr)\Bigl|H(F_{t})+{\nabla f_{t}}^{\bot_{F_{t}}}\Bigr|_{g_{t}}^2 \Biggr|\leq C''''(1+\tilde{f}\circ\tilde{F}_{s}). 
\end{align}
\end{lemma}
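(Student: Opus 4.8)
The plan is to establish the estimate by expanding the quantity $|H(F_t)+\nabla f_t^{\bot_{F_t}}|_{g_t}^2$ and applying the heat operator $\partial_t - \Delta_{F_t^*g_t}$ term by term, keeping careful track of the scaling in $(T-t)$. First I would write $Q := |H(F_t)+\nabla f_t^{\bot_{F_t}}|_{g_t}^2 = |H|^2 + 2g_t(H,\nabla f_t^{\bot_{F_t}}) + |\nabla f_t^{\bot_{F_t}}|^2$, noting that $g_t(H,\nabla f_t^{\bot_{F_t}}) = g_t(H,\nabla f_t)$ since $H$ is normal, and similarly $|\nabla f_t^{\bot_{F_t}}|^2 = |\nabla f_t|^2 - |\nabla f_t^{\top_{F_t}}|^2$. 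I would then compute $(\partial_t - \Delta)$ of each piece. For $|H|^2$ one uses the evolution equations from Appendix~\ref{EvoEq} (Proposition~\ref{evofhiA} with $k=0$, or rather the corresponding statement for $H$, together with Lemma~\ref{evofindgRM} and Lemma~\ref{evofDF}); for the terms involving $\nabla f_t$ one uses that $f_t = \Phi_t^*\tilde f$ together with the soliton identities of Proposition~\ref{basicricciflow}, namely $\mathrm{Ric}(g)+\mathrm{Hess}\,f - \frac{g}{2(T-t)}=0$, $R(g)+|\nabla f|^2 - \frac{f}{T-t}=0$, and the derived relations $\partial_t f = |\nabla f|^2$, $\Delta_g f = -R(g) + \frac{n}{2(T-t)}$.

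The key structural point is a scaling/degree bookkeeping exactly parallel to the one in the proof of Proposition~\ref{bdofallderiA}: under $g_t = (T-t)\Phi_t^*\tilde g$ and $\tilde F_s = \Phi_t\circ F_t$, one has $|\tilde\nabla^k\tilde A|_{\tilde g} = (T-t)^{\frac12 + \frac12 k}|\nabla^k A|_{g_t}$, $|\tilde\nabla^k\widetilde{\mathrm{Rm}}|_{\tilde g} = (T-t)^{1+\frac12 k}|\nabla^k\mathrm{Rm}|_{g_t}$, and $\tilde f\circ\tilde F_s = f_t\circ F_t$, $|\nabla\tilde f|_{\tilde g}^2 = (T-t)|\nabla f_t|_{g_t}^2$, $|\mathrm{Hess}\,\tilde f|_{\tilde g} = |\mathrm{Hess}\,f_t|_{g_t}$ (the Hessian of $f$ with respect to $g_t$ rescales because $g_t$ does). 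So $Q$ itself has ``degree $1$'': $(T-t)\cdot Q = |H(\tilde F_s)+\nabla\tilde f^{\bot_{\tilde F_s}}|_{\tilde g}^2$. One checks that $(\partial_t - \Delta_{F_t^*g_t})Q$, after the long computation, is a sum of terms each of which is a $\ast$-product of $A$, $\mathrm{Rm}(g_t)$, $\nabla\mathrm{Rm}(g_t)$, $\nabla f_t$, $\mathrm{Hess}\,f_t$, $DF_t$ and various factors of $(T-t)^{-1}$ (coming from the soliton identities), and that the total degree of each such term is $2$ in the rescaled normalization — which is precisely why one multiplies by $(T-t)^2$ on the left of \eqref{derideri5'}. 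After rescaling, every geometric factor $|\tilde\nabla^j\tilde A|$, $|\tilde\nabla^j\widetilde{\mathrm{Rm}}|$ with $j$ small (here $j\le 1$) is bounded: the curvature terms by the bounded-geometry hypothesis on $(N,\tilde g)$ (Remark~\ref{immRiem2}), and $|\tilde A| = |A(\tilde F_s)|_{\tilde g} \le C_0$ by Proposition~\ref{bdofallderiA}; the only factors that are not a priori bounded are those built from $\tilde f$, namely $\tilde f$, $|\nabla\tilde f|_{\tilde g}^2 \le \tilde f$, and $R(\tilde g)\le\tilde f$ and $|\mathrm{Ric}(\tilde g)|_{\tilde g}$ (bounded), so one ends up with an estimate of the form $(T-t)^2|(\partial_t-\Delta)Q| \le C''''(1+\tilde f\circ\tilde F_s)$, using $0\le|\nabla\tilde f|^2\le\tilde f$ and $0\le R(\tilde g)\le\tilde f$ from Section~\ref{MCFGSRS}.

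The main obstacle is the sheer length and care of the term-by-term computation of $(\partial_t - \Delta_{F_t^*g_t})Q$: one must use Simons-type identities (as in Lemma~\ref{evofAij}) to commute the rough Laplacian past $H$, must differentiate $\nabla f_t^{\bot_{F_t}}$ — which mixes the ambient Hessian of $f_t$ with the second fundamental form $A$ through the normal projection — and must repeatedly invoke the soliton equations to eliminate $\mathrm{Ric}(g_t)$ in favor of $\mathrm{Hess}\,f_t$ and $\frac{g_t}{2(T-t)}$, being scrupulous about which metric is used to measure norms and to raise indices. The conceptual content is light — it is purely the degree/scaling argument above — but verifying that no term exceeds degree $2$ and that the only unbounded-in-$N$ contributions are genuinely controlled by the first power of $\tilde f$ (so that $e^{-\tilde f/2}(1+\tilde f)$, and more generally $e^{-\tilde f/2}(1+\tilde f^2)$ as needed downstream in Lemma~\ref{2ndderiofmono}, is bounded) is where all the work lies. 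I would organize the write-up by first recording the rescaling dictionary \eqref{orderchange1}--\eqref{orderchange2} and the soliton identities, then computing $(\partial_t-\Delta)|H|^2$, then $(\partial_t-\Delta)$ of the cross term and of $|\nabla f_t^{\bot_{F_t}}|^2$, and finally assembling and rescaling.
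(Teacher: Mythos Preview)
Your proposal is correct and follows essentially the same approach as the paper: the same decomposition $|H+\nabla f^{\bot}|^2=|H|^2+2g(H,\nabla f)+|\nabla f|^2-|\nabla f^{\top}|^2$, the same term-by-term application of $\partial_t-\Delta$ using the soliton identities, and the same degree/scaling argument to reduce everything to powers of $|\nabla\tilde f|^2\le\tilde f$. The only cosmetic difference is that the paper formalizes the bookkeeping by introducing auxiliary classes $\mathcal{W}_{c,d}$ (extending $\mathcal{V}_{a,b}$ to allow factors $(T-t)^{-q}$ and $(\ast^{r}\nabla f)$, with $d$ tracking the number $r$ of $\nabla f$ factors rather than derivatives of $A$), shows that $(\partial_t-\Delta)Q\in\mathcal{W}_{2,2}$, and then concludes exactly as you do; note in particular that the bound on $|\tilde\nabla^{j}\tilde A|$ is needed for all $j$ arising (not just $j\le 1$, since e.g.\ $\Delta H$ appears), which is covered by Proposition~\ref{bdofallderiA}.
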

\begin{proof}
First of all, we define $W_{c,d}$ and $\mathcal{W}_{c,d}$ as analogs of $V_{a,b}$ and $\mathcal{V}_{a,b}$. 
For $c,d\in\mathbb{N}$, we define a set  $W_{c,d}$ as the set of all (time-dependent) tensors $T$ on $M$ which can be expressed as 
\[T=\frac{1}{(T-t)^{q}}(\mathop{\ast}^{r}\nabla f)\ast(\nabla^{k_{1}}\mathrm{Rm}\ast\dots\ast\nabla^{k_{I}}\mathrm{Rm})\ast(\nabla^{\ell_{1}}A\ast\dots\ast\nabla^{\ell_{J}}A)\ast (\mathop{\ast}^{p}DF)\]
with $q,r,I,J,p,k_{1},\dots,k_{I},\ell_{1},\dots,\ell_{J}\in\mathbb{N}$ satisfying 
\begin{align*}
\begin{aligned}
q+\frac{1}{2}r+\sum_{i=1}^{I}\biggl(1+\frac{1}{2}k_{i}\biggr)+\sum_{j=1}^{J}\biggl(\frac{1}{2}+\frac{1}{2}\ell_{j}\biggr)=c\quad\mathrm{and}\quad r\leq d, 
\end{aligned}
\end{align*}
and we define a vector space $\mathcal{W}_{c,d}$ as the set of all tensors $T$ on $M$ which can be expressed as 
\[T=a_{1}T_{1}+\dots+a_{v}T_{v}\]
for some $v\in\mathbb{N}$, $a_{1}\dots a_{v}\in\mathbb{R}$ and $T_{1},\dots,T_{v}\in W_{c,d}$. 
By the definition, it is clear that $\mathcal{V}_{a,b}\subset \mathcal{W}_{a,0}$, and if $T_{1}\in \mathcal{W}_{c_{1},d_{1}}$ and $T_{2}\in \mathcal{W}_{c_{2},d_{2}}$ 
then $T_{1}\ast T_{2}\in \mathcal{W}_{c_{1}+c_{2},d_{1}+d_{2}}$. 

Note that we consider $\nabla^{\alpha} f$ as a tensor field over $M$ by pulling it back by $F_{t}$. However we sometimes omit the symbol $\circ F_{t}$. 
Then we have 
\begin{align}\label{nabnabf}
\nabla_{i}\nabla^{\alpha}f=F^{\beta}_{i}\nabla_{\beta}\nabla^{\alpha}f=-\mathrm{Ric}^{\ \alpha}_{\beta}F^{\beta}_{i}+\frac{1}{2(T-t)}F^{\alpha}_{i}\in\mathcal{W}_{1,0}, 
\end{align}
where we used $\mathrm{Ric}_{\alpha \beta}+\nabla_{\alpha}\nabla_{\beta}f=\frac{1}{2(T-t)}g_{\alpha\beta}$. 
Hence we can see that if $T\in \mathcal{W}_{c,d}$ then $\nabla T\in \mathcal{W}_{c+\frac{1}{2},d}$. 

To prove this lemma, we use the identity 
\[\Bigl|H(F_{t})+\nabla f_{t}^{\bot_{F_{t}}}\Bigr|_{g_{t}}^2=|H(F_{t})|_{g_{t}}^2+2g_{t}(H(F_{t}),\nabla f_{t})+|\nabla f_{t}|_{g_{t}}^2-|{\nabla f_{t}}^{\top_{F_{t}}}|_{g_{t}}^2. \]

By Lemma \ref{evofindgRM} and \ref{evofAij}, we have 
\begin{align*}
\nabla_{t}H^{\alpha}=&\nabla_{t}(g^{ij}A^{\alpha}_{ij})\\
=&2(\mathrm{Ric}_{\beta\gamma}F^{\beta i}F^{\gamma j}+H^{\beta}A_{\beta}^{ij})A^{\alpha}_{ij}+g^{ij}(\Delta A^{\alpha}_{ij}+\mathcal{B}^{\alpha}_{ij}+\mathcal{C}^{p}_{ij}F^{\alpha}_{p})\\
=&\Delta H^{\alpha}+\bar{\mathcal{B}}^{\alpha}+g^{ij}\mathcal{C}^{p}_{ij}F^{\alpha}_{p}, 
\end{align*}
where we put $\bar{\mathcal{B}}^{\alpha}:=2(\mathrm{Ric}_{\beta\gamma}F^{\beta i}F^{\gamma j}+H^{\beta}A_{\beta}^{ij})A^{\alpha}_{ij}+g^{ij}\mathcal{B}^{\alpha}_{ij}\in \mathcal{V}_{\frac{3}{2},0}$. 
Since $H\in \mathcal{V}_{\frac{1}{2},0}$, by Lemma~\ref{degord3} there exists a tensor $\mathcal{D}=\mathcal{D}(H)\in\mathcal{V}_{\frac{3}{2},0}$ such that 
\begin{align*}
\frac{\partial}{\partial t}|H|^2=&2\langle \nabla_{t}H,H\rangle +\mathcal{D}^{\alpha}H_{\alpha}\\
=&2\langle \Delta H, H \rangle + 2\bar{\mathcal{B}}^{\alpha}H_{\alpha}+\mathcal{D}^{\alpha}H_{\alpha}\\
=&\Delta |H|^2 -2 |\nabla H|^2 + 2\bar{\mathcal{B}}^{\alpha}H_{\alpha}+\mathcal{D}^{\alpha}H_{\alpha}, 
\end{align*}
where we used $F^{\alpha}_{p}H_{\alpha}=0$. 
Thus we have 
\[\biggl(\frac{\partial}{\partial t}-\Delta_{F^{*}g} \biggr)|H|^{2}=-2 |\nabla H|^2 +  2\bar{\mathcal{B}}^{\alpha}H_{\alpha}+\mathcal{D}^{\alpha}H_{\alpha}, \]
and it is clear that 
\begin{align}\label{AW20}
\biggl(\frac{\partial}{\partial t}-\Delta_{F^{*}g} \biggr)|H|^{2}\in \mathcal{W}_{2,0}. 
\end{align}

Next, we consider $\nabla_{t}\nabla^{\alpha}f$. 
Then, by the definition of $\nabla_{t}$, we have
\begin{align*}
\nabla_{t}\nabla^{\alpha}f
=&\frac{\partial}{\partial t}\nabla^{\alpha}f+H^{\beta}\nabla_{\beta}\nabla^{\alpha}f\\
=&\frac{\partial}{\partial t}\nabla^{\alpha}f-\mathrm{Ric}_{\ \beta}^{\alpha}H^{\beta}+\frac{1}{2(T-t)}H^{\alpha}, \\
\end{align*}
where we used $\mathrm{Ric}_{\alpha \beta}+\nabla_{\alpha}\nabla_{\beta}f=\frac{1}{2(T-t)}g_{\alpha\beta}$. 
Furthermore, by using $\frac{\partial}{\partial t}g^{\alpha\beta}=2\mathrm{Ric}^{\alpha\beta}$ and $\frac{\partial}{\partial t}f=|\nabla f|^2$, 
one can easily see that 
\[\frac{\partial}{\partial t}\nabla^{\alpha}f=\frac{1}{T-t}\nabla^{\alpha}f. \]
Thus  we have 
\[\nabla_{t}\nabla^{\alpha}f=\frac{1}{T-t}\nabla^{\alpha}f-\mathrm{Ric}_{\ \beta}^{\alpha}H^{\beta}+\frac{1}{2(T-t)}H^{\alpha} \in \mathcal{W}_{\frac{3}{2},1}. \]
Hence we can see that 
\[\nabla_{t}(g_{\alpha\beta}H^{\alpha}\nabla^{\beta}f)
=\nabla_{t}g_{\alpha\beta}H^{\alpha}\nabla^{\beta}f+g_{\alpha\beta}\nabla_{t}H^{\alpha}\nabla^{\beta}f+g_{\alpha\beta}H^{\alpha}\nabla_{t}\nabla^{\beta}f\in\mathcal{W}_{2,1}\]
and 
\[\Delta(g_{\alpha\beta}H^{\alpha}\nabla^{\beta}f) \in \mathcal{W}_{2,1}\]
Thus we have 
\begin{align}\label{AfW21}
\biggl(\frac{\partial}{\partial t}-\Delta_{F^{*}g} \biggr)g(H(F),\nabla f)\in\mathcal{W}_{2,1}. 
\end{align}

Next, one can easily see that 
\[\nabla_{t}|\nabla f|^2=\nabla_{t}g_{\alpha\beta}\nabla^{\alpha}f\nabla^{\beta}f+2\nabla_{t}\nabla^{\alpha}f\nabla_{\alpha}f\in\mathcal{W}_{2,2}\]
and 
\[\Delta|\nabla f|^2\in\mathcal{W}_{2,2}. \]
Hence we have 
\begin{align}\label{ffW22}
\biggl(\frac{\partial}{\partial t}-\Delta_{F^{*}g} \biggr)|\nabla f|^2 \in\mathcal{W}_{2,2}.
\end{align}

Finally, one can easily see that 
\[\nabla_{t}|{\nabla f_{t}}^{\top_{F_{t}}}|_{g_{t}}^2=\nabla_{t}((F^{*}g)^{k\ell}g_{\alpha\beta}g_{\gamma\delta}F^{\beta}_{k}F^{\delta}_{\ell}\nabla^{\alpha}f\nabla^{\gamma}f)\in\mathcal{W}_{2,2}\]
and 
\[\Delta |{\nabla f_{t}}^{\top_{F_{t}}}|_{g_{t}}^2 \in \mathcal{W}_{2,2}. \]
Hence we have
\begin{align}\label{ftftW22}
\biggl(\frac{\partial}{\partial t}-\Delta_{F^{*}g} \biggr) |{\nabla f_{t}}^{\top_{F_{t}}}|_{g_{t}}^2\in \mathcal{W}_{2,2}. 
\end{align}

Hence, by (\ref{AW20})--(\ref{ftftW22}),  we have 
\[\biggl(\frac{\partial}{\partial t}-\Delta_{F^{*}g} \biggr)\Bigl|H(F)+\nabla f^{\bot_{F}}\Bigr|_{g}^2\in \mathcal{W}_{2,2}. \]
By the definition of $\mathcal{W}_{2,2}$, 
there exist $v\in\mathbb{N}$, $a_{1}\dots a_{v}\in\mathbb{R}$ and $T_{1},\dots,T_{v}\in W_{2,2}$ such that 
\[\biggl(\frac{\partial}{\partial t}-\Delta_{F^{*}g} \biggr)\Bigl|H(F)+\nabla f^{\bot_{F}}\Bigr|_{g}^2=a_{1}T_{1}+\dots+a_{v}T_{v}. \]
Hence we have 
\[\left|\left(\frac{\partial}{\partial t}-\Delta_{F^{*}g} \right)\left|H(F)+\nabla f^{\bot_{F}}\right|_{g}^2\right|\leq |a_{1}||T_{1}|+\dots+|a_{v}||T_{v}|. \]
By the definition of $W_{2,2}$, each $T_{\bullet}$ can be expressed as 
\[T_{\bullet}=\frac{1}{(T-t)^{q}}(\mathop{\ast}^{r}\nabla f)\ast(\nabla^{k_{1}}\mathrm{Rm}\ast\dots\ast\nabla^{k_{I}}\mathrm{Rm})\ast(\nabla^{\ell_{1}}A\ast\dots\ast\nabla^{\ell_{J}}A)\ast (\mathop{\ast}^{p}DF)\]
with some $I,J,q,r,p,k_{1},\dots,k_{I},\ell_{1},\dots,\ell_{J}\in\mathbb{N}$ satisfying 
\begin{align*}
\begin{aligned}
q+\frac{1}{2}r+\sum_{i=1}^{I}\biggl(1+\frac{1}{2}k_{i}\biggr)+\sum_{j=1}^{J}\biggl(\frac{1}{2}+\frac{1}{2}\ell_{j}\biggr)=2\quad\mathrm{and}\quad r\leq 2, 
\end{aligned}
\end{align*}
Here note that by Proposition~\ref{bdofallderiA} and the equation (\ref{orderchange1}) it follows that $(T-t)^{\frac{1}{2}+\frac{1}{2}\ell}|\nabla^{\ell}A|$ is bounded for all $\ell\geq 0$. 
Furthermore by the equation (\ref{orderchange2}) it is clear that $(T-t)^{1+\frac{1}{2}k}|\nabla^{k}\mathrm{Rm}|$ is bounded for all $k\geq 0$. 
Hence, for $T_{\bullet}$ above, we have 
\[(T-t)^2|T_{\bullet}|\leq C(T-t)^{\frac{1}{2}r}|\nabla f|^r\]
for some $C>0$. 
Furthermore, we have
\[|\nabla f|_{g}=\frac{1}{\sqrt{T-t}}|\nabla \tilde{f}|_{\tilde{g}}\leq \frac{1}{\sqrt{T-t}}\sqrt{\tilde{f}}. \]
Thus we have 
\[(T-t)^2|T_{\bullet}|\leq C{\tilde{f}}^{\frac{1}{2}r}. \]
Now each $T_{\bullet}$ is in $\mathcal{W}_{2,2}$, so $r\leq 2$. For $r=0,1,2$, it is clear that ${\tilde{f}}^{\frac{1}{2}r}\leq 1+\tilde{f}$. 
Thus we have proved that there exists a constant $C''''>0$ such that 
\[(T-t)^2\left|\left(\frac{\partial}{\partial t}-\Delta_{F^{*}g} \right)\left|H(F)+\nabla f^{\bot_{F}}\right|_{g}^2\right| \leq C''''(1+\tilde{f}). \]
\end{proof}
\section{convergence of submanifolds}\label{convofsubsec}
In this appendix, we give a definition of the convergence of immersion maps into a Riemannian manifolds and prove some propositions. 

Let $(N,g)$ be an $n$-dimensional Riemannian manifold and $E$ be a real vector bundle over $N$ with a metric $h$. 
Take a compatible connection $\nabla$ over $E$, that is, for all smooth sections $e,f\in\Gamma(N,E)$ and a vector field $X\in\mathfrak{X}(N)$ we have
\[X(h(e,f))=h(\nabla_{X}e,f)+h(e,\nabla_{X}f). \]
\begin{definition}
Let $p\in\mathbb{N}$. 
Let $K\subset N$ be a compact set and $\Omega\subset N$ be an open set satisfying $K\subset\Omega$. 
Let $\{\xi_{k}\}_{k=1}^{\infty}$ be a sequence of sections of $E$ defined on $\Omega$ and $\xi_{\infty}$ be a section of $E$ defined on $\Omega$. 
We say that $\xi_{k}$ converges in $C^{p}$ to $\xi_{\infty}$ uniformly on $K$ if for every $\epsilon>0$ there exists $k_{0}=k_{0}(\epsilon)$ such that 
for $k\geq k_{0}$, 
\[\sup_{0\leq\alpha\leq p}\sup_{x\in K}|\nabla^{\alpha}(\xi_{k}-\xi_{\infty})|_{g^{\alpha}\otimes h}<\epsilon. \]
Furthermore, we say $\xi_{k}$ converges in $C^{\infty}$ to $\xi_{\infty}$ uniformly on $K$ if $\xi_{k}$ converges in $C^{p}$ to $\xi_{\infty}$ uniformly on $K$ for every $p\in\mathbb{N}$. 
\end{definition}

Let $\{U_{k}\}_{k=1}^{\infty}$ be a sequence of open sets in $N$. 
We call $\{U_{k}\}_{k=1}^{\infty}$ an exhaustion of $N$ if $\overline{U}_{k}$ is compact and $\overline{U}_{k}\subset U_{k+1}$ for all $k$, and $\cup_{k=1}^{\infty}U_{k}=N$. 

\begin{definition}
Let $\{U_{k}\}_{k=1}^{\infty}$ be an exhaustion of $N$. 
Let $\{\xi_{k}\}_{k=1}^{\infty}$ be a sequence of locally defined sections of $E$ such that each $\xi_{k}$ is defined on $U_{k}$. 
Let $\xi_{\infty}$ be a section of $E$ defined on $N$. 
We say that $\xi_{k}$ converges in $C^{\infty}$ to $\xi_{\infty}$ uniformly on compact sets in $N$ if for any compact set $K\subset N$ 
there exists $k_{0}=k_{0}(K)$ such that $K\subset U_{k}$ for all $k\geq k_{0}$ and the sequence $\{\xi_{k}|_{U_{k_{0}}}\}_{k=k_{0}}^{\infty}$ 
converges in $C^{\infty}$ to $\xi_{\infty}|_{U_{k_{0}}}$ uniformly on $K$. 
\end{definition}

\begin{definition}\label{CheeGro}
A sequence $\{(N_{k},g_{k},x_{k})\}_{k=1}^{\infty}$ of complete pointed Riemannian manifolds converges to a complete pointed Riemannian manifold $(N_{\infty},g_{\infty},x_{\infty})$ 
if there exists 
\begin{enumerate}
\item[(1)] an exhaustion $\{U_{k}\}_{k=1}^{\infty}$ of $N_{\infty}$ with $x_{\infty}\in U_{k}$ and 
\item[(2)] a sequence of diffeomorphisms $\Psi_{k}:U_{k}\to V_{k}\subset N_{k}$ with $\Psi_{k}(x_{\infty})=x_{k}$ 
\end{enumerate}
such that $\Psi_{k}^{*}g_{k}$ converges in $C^{\infty}$ to $g_{\infty}$ uniformly on compact sets in $N_{\infty}$. 
\end{definition}

This notion of convergence is often referred as (smooth) Cheeger--Gromov convergence, $C^{\infty}$-convergence or geometric convergence. 
A basic fact of Cheeger--Gromov convergence is the following. 
For the proof, see \cite{MorganTian}. 

\begin{theorem}\label{CG}
Let $\{(N_{k},g_{k},x_{k})\}_{k=1}^{\infty}$ be a sequence of $n$-dimensional complete pointed Riemannian manifolds. 
Suppose that 
\begin{enumerate}
\item[(1)] for each integer $p\geq 0$, there exists a constant $0<C_{p}<\infty$ such that 
\[|\nabla^{p}\mathrm{Rm}(g_{k})|_{g_{k}}\leq C_{p}\quad\mathrm{for~all~}k\geq1\] 
\item[(2)] there exists a constant $0<\eta<\infty$ such that 
\[\mathrm{inj}(x_{k},g_{k})\geq\eta\quad\mathrm{for~all~}k\geq1\]
\end{enumerate}
where $\mathrm{Rm}(g_{k})$ is the Riemannian curvature tensor of $(N_{k},g_{k})$ and $\mathrm{inj}(x_{k},g_{k})$ is the injectivity radius at $x_{k}$ with respect to $g_{k}$. 
Then, there exist a complete pointed Riemannian manifold $(N_{\infty},g_{\infty},x_{\infty})$ and a subsequence $\{k_{\ell}\}_{\ell=1}^{\infty}$ such that 
the subsequence $\{(N_{k_{\ell}},g_{k_{\ell}},x_{k_{\ell}})\}_{\ell=1}^{\infty}$ converges to $(N_{\infty},g_{\infty},x_{\infty})$. 
\end{theorem}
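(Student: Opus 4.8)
The plan is to reproduce the standard proof of the Cheeger--Gromov (geometric) compactness theorem (cf.\ \cite{MorganTian}), adapted to the stated hypotheses: note that we are only given an injectivity radius bound \emph{at the base point}, so the first task is to propagate geometric control over larger and larger metric balls, after which the argument is the usual Arzel\`a--Ascoli-plus-gluing machine.

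First I would establish a \emph{local bounded geometry} statement: for every $r>0$ there exist $\rho(r)>0$ and constants $C_q(r)$, depending only on $n$, $r$, $\eta$ and the $C_p$ from hypothesis~(1), such that for every $k$ and every $y\in B_{g_k}(x_k,r)$ one has $\mathrm{inj}(y,g_k)\ge \rho(r)$ and, in a suitable chart of radius $\rho(r)$ around $y$ (exponential coordinates, or better harmonic coordinates of a definite size), the metric satisfies $\tfrac12\delta\le g_k\le 2\delta$ together with $|\partial^q g_k|\le C_q(r)$ for all $q$. The injectivity radius propagation uses volume comparison: the curvature bound $|\mathrm{Rm}(g_k)|\le C_0$ and $\mathrm{inj}(x_k,g_k)\ge\eta$ force a lower bound on $\mathrm{Vol}\,B_{g_k}(x_k,\eta)$, hence by Bishop--Gromov a uniform lower volume bound for every unit ball centred in $B_{g_k}(x_k,r)$, and then a local version of Cheeger's injectivity radius estimate (curvature bound plus volume lower bound) yields $\mathrm{inj}(y,g_k)\ge\rho(r)$. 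The all-order derivative bounds on the metric come from the hypotheses $|\nabla^p\mathrm{Rm}(g_k)|\le C_p$: in harmonic coordinates $\Delta_{g_k}(g_k)_{ij}$ is controlled by the Ricci tensor modulo lower-order terms, so elliptic regularity bootstraps $C^{1,\alpha}$ control of the metric components all the way up to $C^\infty$ control.

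Next, for each fixed $r$, I would run the gluing argument on $B_{g_k}(x_k,r)$. Cover this ball by balls $B(y_{k,i},\rho(r)/2)$ with $\{y_{k,i}\}$ a maximal $\rho(r)/4$-separated net; volume comparison bounds the number of such balls by some $N(r)$ independent of $k$. Passing to a subsequence, one may assume the number of balls, all the mutual distances $d_{g_k}(y_{k,i},y_{k,j})$, and the transition maps between the associated charts converge; on each chart the metric components, uniformly bounded in every $C^q$, subconverge in $C^\infty_{\mathrm{loc}}$ by Arzel\`a--Ascoli together with a diagonal argument over the order of differentiation. Gluing the limiting charts along the limiting transition maps produces a pointed Riemannian manifold $(B_r,h_r,x_\infty^{(r)})$ and, for $k$ large, diffeomorphisms $\psi_k^{(r)}$ from $B_r$ onto a slightly shrunk copy of $B_{g_k}(x_k,r)$ with $(\psi_k^{(r)})^*g_k\to h_r$ in $C^\infty$; one checks that the limits obtained for different $r$ are isometric on overlaps, so they assemble into a single manifold $N_\infty$ with metric $g_\infty$.

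Finally I would diagonalize over $r=1,2,3,\dots$ to extract one subsequence $\{k_\ell\}$ and one exhaustion $\{U_\ell\}$ of $N_\infty$ with diffeomorphisms $\Psi_{k_\ell}:U_\ell\to V_{k_\ell}\subset N_{k_\ell}$, $\Psi_{k_\ell}(x_\infty)=x_{k_\ell}$, such that $\Psi_{k_\ell}^*g_{k_\ell}\to g_\infty$ in $C^\infty$ uniformly on compact sets, which is exactly the convergence of Definition~\ref{CheeGro}; completeness of $g_\infty$ follows because $U_\ell$ eventually contains any prescribed $g_\infty$-ball while $\Psi_{k_\ell}$ is a near-isometry onto a subset of the complete manifold $N_{k_\ell}$. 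The main obstacle is the first step: upgrading the curvature-derivative bounds and the \emph{single-point} injectivity radius bound to honest local bounded geometry --- charts of \emph{uniform} size carrying \emph{all-order} bounds on the metric --- over each $B_{g_k}(x_k,r)$; once that is in hand, the remainder is the lengthy but routine compactness-and-gluing argument.
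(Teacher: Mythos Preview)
The paper does not actually prove this theorem: immediately before the statement it says ``For the proof, see \cite{MorganTian}'' and then moves on. So there is nothing to compare against except the cited reference, and your sketch is precisely the standard Cheeger--Gromov compactness argument one finds there (injectivity-radius propagation via volume comparison and Cheeger's estimate, harmonic-coordinate charts with elliptic bootstrapping of the $|\nabla^p\mathrm{Rm}|$ bounds to $C^\infty$ control of the metric components, covering by a controlled number of charts, Arzel\`a--Ascoli plus gluing, and a diagonal over radii). Your identification of the only nontrivial step---upgrading the single-point injectivity bound to uniform local bounded geometry on each $B_{g_k}(x_k,r)$---is exactly right, and the outline you give for it is the correct one.
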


To prove the convergence of submanifolds in a Riemannian manifold, we need the following estimate for the injectivity radius of a submanifold. 
This estimate is proved by combining Klingenberg's lemma and Hessian comparison theorem of the square of the distance function (cf. Theorem 2.1 in \cite{ChenYin}).
 \begin{theorem}\label{injofsub2}
 Let $(N,g)$ be an $n$-dimensional complete Riemannian manifold with 
 \[|\mathrm{Rm}(g)|\leq C\quad\mathrm{and}\quad\mathrm{inj}(N,g)\geq\eta\]
for some constants $C, \eta>0$.  Let $M$ be a compact manifold and $F:M\to N$ be an immersion map with 
\[|A(F)|\leq D\]
with some constant $D>0$. 
Then there exists a constant $\delta=\delta(C,\eta,D,n)>0$ such that 
\[\mathrm{inj}(M,F^{*}g)\geq\delta. \]
 \end{theorem}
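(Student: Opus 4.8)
The plan is to combine the Gauss equation, which bounds the intrinsic curvature of $(M,F^{*}g)$, with Klingenberg's lemma, which reduces the injectivity radius estimate to a lower bound on the length of the shortest closed geodesic of $(M,F^{*}g)$; that lower bound will in turn follow from a convexity argument for the squared distance function of $(N,g)$, using the Hessian comparison cited as Theorem~2.1 in \cite{ChenYin}. First I would bound the intrinsic curvature: by the Gauss equation~(\ref{Gaussequation}), the curvature of $(M,F^{*}g)$ satisfies a pointwise estimate of the form $|\mathrm{Rm}(F^{*}g)|\le|\mathrm{Rm}(g)|+c_{n}|A(F)|^{2}\le C+c_{n}D^{2}=:K$, with $c_{n}$ depending only on $n$. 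Since $M$ is compact, Klingenberg's lemma then yields the dichotomy: either $\mathrm{inj}(M,F^{*}g)\ge\pi/\sqrt{K}$, in which case we are done, or there is a closed geodesic $\gamma$ of $(M,F^{*}g)$, parametrized by arclength, of length $L=2\,\mathrm{inj}(M,F^{*}g)$. So it suffices to establish $L\ge 2r_{0}$ for a suitable $r_{0}=r_{0}(C,\eta,D)>0$.

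Next I would fix the comparison radius. Using $|\mathrm{Rm}(g)|\le C$, $\mathrm{inj}(N,g)\ge\eta$, and the Hessian comparison theorem for the squared distance function, choose $r_{0}=r_{0}(C,\eta,D)\in(0,\eta)$ small enough that, for every $q_{0}\in N$, the function $\rho_{q_{0}}:=d_{g}(q_{0},\cdot)^{2}$ is smooth on the metric ball $B_{g}(q_{0},r_{0})$ and satisfies $\mathrm{Hess}_{g}\,\rho_{q_{0}}\ge g$ there, and moreover $2r_{0}D\le\tfrac{1}{2}$. The smoothness forces $r_{0}$ to lie below the ambient injectivity radius, while the Hessian lower bound forces $r_{0}$ small relative to $C$; both can be arranged uniformly in $q_{0}$, which is exactly where Theorem~2.1 of \cite{ChenYin} is used.

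Now the convexity argument. Suppose for contradiction that $L<2r_{0}$, set $q_{0}:=F(\gamma(0))$, and define $h(t):=\rho_{q_{0}}(F(\gamma(t)))$ for $t\in[0,L]$. Since $d_{g}(q_{0},F(\gamma(t)))$ is at most the length of the shorter of the two subarcs of $F\circ\gamma$ joining its coincident endpoints, we have $d_{g}(q_{0},F(\gamma(t)))\le L/2<r_{0}$, so $F\circ\gamma$ stays in $B_{g}(q_{0},r_{0})$ and $h$ is smooth on $[0,L]$ with $h\ge 0$ and $h(0)=h(L)=0$. Writing $\sigma:=F\circ\gamma$, we have $|\sigma'|_{g}=1$ since $F$ is isometric, and $\nabla^{N}_{\sigma'}\sigma'=A(F)(\dot\gamma,\dot\gamma)$ because $\gamma$ is a geodesic of $(M,F^{*}g)$, so $|\nabla^{N}_{\sigma'}\sigma'|_{g}\le|A(F)|\le D$; also $|\nabla_{g}\rho_{q_{0}}|_{g}=2\,d_{g}(q_{0},\cdot)\le 2r_{0}$ on $B_{g}(q_{0},r_{0})$. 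Hence
\[
h''(t)=\mathrm{Hess}_{g}\,\rho_{q_{0}}(\sigma',\sigma')+g\bigl(\nabla_{g}\rho_{q_{0}},\,\nabla^{N}_{\sigma'}\sigma'\bigr)\ge 1-2r_{0}D\ge\tfrac{1}{2}
\]
on $[0,L]$. Thus $h$ is convex with $h(0)=h(L)=0$, so $h\le 0$ on $[0,L]$; combined with $h\ge 0$ this gives $h\equiv 0$, hence $h''\equiv 0$, contradicting $h''\ge\tfrac{1}{2}$. Therefore $L\ge 2r_{0}$, and together with the first case we obtain $\mathrm{inj}(M,F^{*}g)\ge\min\{\pi/\sqrt{K},\,r_{0}\}=:\delta(C,\eta,D,n)$.

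The step I expect to be the main obstacle is the choice of $r_{0}$: one must simultaneously keep $\rho_{q_{0}}$ smooth (requiring $r_{0}$ below the ambient injectivity radius) and uniformly convex (requiring $r_{0}$ small relative to the curvature bound), uniformly over all base points $q_{0}\in N$. This is precisely the content imported from the Hessian comparison theorem for the squared distance function together with Klingenberg's lemma; once such an $r_{0}$ is fixed, the curvature bound of the first paragraph and the second-variation computation of the third are elementary.
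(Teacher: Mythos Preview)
Your argument is correct and follows precisely the approach the paper indicates: the paper does not give its own proof of this theorem but simply notes that it ``is proved by combining Klingenberg's lemma and Hessian comparison theorem of the square of the distance function (cf.\ Theorem~2.1 in \cite{ChenYin}),'' and your proposal carries out exactly this strategy in detail. The Gauss-equation curvature bound, the Klingenberg dichotomy, and the convexity contradiction via the squared-distance Hessian are all sound and match the cited outline.
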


The following remark partially overlaps with Remark~\ref{immRiem2}. 
\begin{remark}\label{immRiem}
In the remainder of this appendix, for a complete Riemannian manifold $(N,g)$, we assume that 
we have an isometrically embedding $\Theta:N\to (\mathbb{R}^{L},g_{\mathrm{st}})$ into some higher dimensional Euclidean space with 
\[|\nabla^{p}A(\Theta)|\leq \tilde{D}_{p}<\infty \]
for all $p\geq 0$. 
Under this assumption, one can see that $(N,g)$ must have the bounded geometry by Theorem~\ref{injofsub2} and Gauss equation (\ref{Gaussequation}) (and its iterated derivatives), 
and note that all compact Riemannian manifolds always satisfy this condition. 
For a map $F:U\to N$ from an open set $U$ in some Riemannian manifold $(M,h)$, by composing $\Theta$, 
we have a map $\Theta\circ F:U\to \mathbb{R}^{L}$, and furthermore we consider $\Theta\circ F$ as a section of the trivial $\mathbb{R}^{L}$-bundle over $U$ with a fiber metric $g_{\mathrm{st}}$. 
We write the standard flat connection of the trivial $\mathbb{R}^{L}$ bundle by $\bar{\nabla}$. 
Then $\bar{\nabla}(\Theta\circ F)$ is a section of $T^{*}M\otimes\mathbb{R}^{L}$ over $U$. 
The Levi--Civita connection on $TM$ and the connection $\bar{\nabla}$ on $\mathbb{R}^{L}$ induce the connection on $T^{*}M\otimes\mathbb{R}^{L}$, 
and we use the same symbol $\bar{\nabla}$ to denote this connection. 
\end{remark}

The following is the definition of the convergence of (pointed) immersions. 
It is the immersion map version of the Cheeger--Gromov convergence. 
\begin{definition}\label{convofimmmaps}
Let $(N,g)$ be a complete $n$-dimensional Riemannian manifold satisfying the assumption in Remark~\ref{immRiem} ($=$ Remark~\ref{immRiem2}). 
Assume that for each $k\geq 1$ we have an $m$-dimensional pointed manifold $(M_{k},x_{k})$ and an immersion map $F_{k}:M_{k}\to N$. 
Then we say that a sequence of immersion maps $\{F_{k}:M_{k}\to N\}_{k=1}^{\infty}$ converges to an immersion map $F_{\infty}:M_{\infty}\to N$ 
from an $m$-dimensional pointed manifold $(M_{\infty},x_{\infty})$ if there exist 
\begin{enumerate}
\item[(1)] an exhaustion $\{U_{k}\}_{k=1}^{\infty}$ of $M_{\infty}$ with $x_{\infty}\in U_{k}$ and 
\item[(2)] a sequence of diffeomorphisms $\Psi_{k}:U_{k}\to V_{k}\subset M_{k}$ with $\Psi_{k}(x_{\infty})=x_{k}$ such that 
the sequence of maps $F_{k}\circ \Psi_{k}:U_{k}\to N$ converges in $C^{\infty}$ to $F_{\infty}:M_{\infty}\to N$ uniformly on compact sets in $M_{\infty}$. 
\end{enumerate}
\end{definition}

It is clear that if $\{F_{k}:(M_{k},x_{k})\to N\}_{k=1}^{\infty}$ converges to $F_{\infty}:(M_{\infty},x_{\infty})\to N$ then 
$\{(M_{k},F_{k}^{*}g,x_{k})\}_{k=1}^{\infty}$ converges to $(M_{\infty},F_{\infty}^{*}g,x_{\infty})$ in the sense of Cheeger--Gromov convergence. 
To prove Theorem~\ref{convofimm}, we need the following Lemma. 
For the proof, see Corollary 4.6 in \cite{Chow}. 
\begin{lemma}\label{diffoftwoconn}
Let $M$ be a manifold, $K$ be a compact set in $M$ and $U$ be an open set in $M$ with $K\subset U$. 
Assume that we have Riemannian metrics $g$ and $\hat{g}$ on $U$, and these two satisfy 
\[|\nabla^{\ell}(g-\hat{g})|_{g}\leq \epsilon_{\ell}\quad\mathrm{on}\quad K\]
for some constants $\epsilon_{\ell}$ for all $\ell\geq 0$, where $\nabla$ is the Levi--Civita connection with respect to $g$. 
Let $E\to U$ be a vector bundle over $U$ with a fiber metric $h$ and a compatible connection $\bar\nabla$, and $T$ be a section of $E$ over $U$ 
which satisfies 
\[|\hat{\nabla}^{\ell}T|_{\hat{g}\otimes h}\leq \hat{C}_{\ell}\quad\mathrm{on}\quad K\]
for some constants $\hat{C}_{\ell}$ for all $\ell\geq 0$, 
where $\hat{\nabla}$ is the connection induced by the Levi--Civita connection with respect to $\hat{g}$ and the connection $\bar\nabla$. 
Then for each $\ell\geq 0$ there exists a constant $C_{\ell}$ which depends only on $\{\epsilon_{p}\}_{p=0}^{\ell}$ and $\{\hat{C}_{p}\}_{p=0}^{\ell}$ such that 
\[|\nabla^{\ell}T|_{g \otimes h}\leq C_{\ell}\quad\mathrm{on}\quad K, \]
where $\nabla$ is the connection induced by the Levi--Civita connection with respect to $g$ and the connection $\bar\nabla$. 
\end{lemma}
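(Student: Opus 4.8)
The plan is to reduce everything to uniform estimates on the \emph{difference tensor} $\Lambda:=\nabla-\hat\nabla$ of the two Levi--Civita connections, which is a symmetric $(1,2)$-tensor. First I would record the Koszul-type identity
\[
\hat g(\Lambda(X,Y),Z)=\tfrac12\bigl((\nabla_X e)(Y,Z)+(\nabla_Y e)(X,Z)-(\nabla_Z e)(X,Y)\bigr),\qquad e:=g-\hat g ,
\]
where I have used $\nabla\hat g=-\nabla e$, which holds because $\nabla g=0$. Writing the right-hand side with the $g$-connection (rather than $\hat\nabla$) is the key point: it exhibits $\Lambda$ as a purely algebraic expression in $\hat g^{-1}$ and $\nabla e$, with no circularity. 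Assuming $\epsilon_0$ is small enough (the only relevant case, since in the application of Theorem~\ref{convofimm} one has $\Psi_k^{*}g_k\to g_\infty$ in $C^\infty$), the bound $|e|_g\le\epsilon_0$ forces $g$ and $\hat g$ to be uniformly comparable on $K$, so $|\hat g^{-1}|_g\le C(\epsilon_0,\dim M)$ and hence $|\Lambda|_g\le C(\epsilon_0)\,\epsilon_1$ on $K$.

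Next I would bound every iterated covariant derivative $|\nabla^{a}\Lambda|_g$ on $K$ by induction on $a$. Differentiating the identity above with $\nabla$ and using the Leibniz rule for $\ast$-products (exactly in the bookkeeping style of Appendix~\ref{EvoEq}) writes $\nabla^{a}\Lambda$ as a universal sum of $\ast$-products of the tensors $\nabla^{b}(\hat g^{-1})$ with $b\le a$ and $\nabla^{c}e$ with $c\le a+1$. The second family is bounded by $\epsilon_{c}$ by hypothesis; the first is controlled because $\nabla(\hat g^{-1})=\hat g^{-1}(\nabla e)\hat g^{-1}$, so inductively $|\nabla^{b}(\hat g^{-1})|_g\le C_b(\epsilon_0,\dots,\epsilon_b,\dim M)$. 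Thus $|\nabla^{a}\Lambda|_g\le C_a(\epsilon_0,\dots,\epsilon_{a+1},\dim M)$, and since $\hat\nabla=\nabla-\Lambda$, the same type of bound follows for $|\hat\nabla^{a}\Lambda|_g$ after absorbing finitely many already-bounded $\Lambda$-factors.

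The third step compares $\nabla^{\ell}T$ with $\hat\nabla^{\ell}T$. Since $T$ is a section of $E$ and both connections agree with $\bar\nabla$ on the $E$-factor, for a section $S$ of $(\otimes^{j}T^{*}M)\otimes E$ one has $\nabla S-\hat\nabla S=\Lambda\ast S$ (up to sign), where $\Lambda$ is contracted into the $M$-cotangent slots of $S$ together with the differentiation slot, and \emph{no} derivatives of $\Lambda$ appear. Iterating and inducting on $\ell$ yields a universal identity
\[
\nabla^{\ell}T=\hat\nabla^{\ell}T+\sum(\hat\nabla^{b_1}\Lambda)\ast\cdots\ast(\hat\nabla^{b_p}\Lambda)\ast(\hat\nabla^{q}T),
\]
a finite sum with combinatorial coefficients over $p\ge 1$, $q\le\ell-1$ and $b_1+\cdots+b_p+q\le\ell-1$. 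Taking $g\otimes h$-norms, using $|a\ast b|\le C|a||b|$, the bounds of Step~2 for the $\Lambda$-factors, and the elementary conversion $|\hat\nabla^{q}T|_{g\otimes h}\le C(\epsilon_0)\,|\hat\nabla^{q}T|_{\hat g\otimes h}\le C(\epsilon_0)\hat C_q$ (the $\hat g$-to-$g$ norm change on the $q$ cotangent indices costs only powers of the ellipticity constant, itself controlled by $\epsilon_0$), one obtains $|\nabla^{\ell}T|_{g\otimes h}\le C_\ell$ with $C_\ell$ depending only on $\epsilon_0,\dots,\epsilon_\ell$, $\hat C_0,\dots,\hat C_\ell$ and $\dim M$, as claimed.

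I expect the only genuine (and purely bookkeeping) obstacle to be organizing the two nested inductions together with the norm conversions so that the list of dependencies closes — in particular ensuring the $\hat g$-versus-$g$ comparison is uniform over $K$, which is exactly where the smallness of $\epsilon_0$ is used; every individual manipulation is a routine Leibniz-rule computation for $\ast$-products.
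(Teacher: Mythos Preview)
The paper does not prove this lemma; it simply refers the reader to Corollary~4.6 in \cite{Chow}. Your argument is correct and is exactly the standard proof carried out in that reference: control the difference tensor $\Lambda=\nabla-\hat\nabla$ and all of its $\nabla$-derivatives through the Koszul-type identity expressing $\Lambda$ algebraically in $\hat g^{-1}$ and $\nabla e$, then expand $\nabla^{\ell}T-\hat\nabla^{\ell}T$ inductively as a universal polynomial in $\nabla^{b}\Lambda$ and $\hat\nabla^{q}T$ with $q<\ell$, converting $\hat g$-norms to $g$-norms at the end.

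Your caveat about needing $\epsilon_0$ small is also correct and worth noting. Positivity of $\hat g$ forces the eigenvalues of $e=g-\hat g$ relative to $g$ to lie strictly below $1$, but only the bound $|e|_g\le\epsilon_0$ with $\epsilon_0<1$ keeps them uniformly away from $1$; otherwise $|\hat g^{-1}|_g$ is not controlled by $\epsilon_0$ alone and the claimed dependence of $C_\ell$ on just the listed constants cannot hold. The paper's only use of the lemma is inside the proof of Theorem~\ref{convofimm}, where $\Psi_k^{*}(F_k^{*}g)\to h_\infty$ in $C^{\infty}$ on compact sets, so $\epsilon_0$ may be taken as small as one likes and your hypothesis is exactly what is needed.
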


\begin{theorem}\label{convofimm}
Let $(N,g)$ be a complete $n$-dimensional Riemannian manifold satisfying the assumption in Remark~{\rm\ref{immRiem}} {\rm(}$=$ Remark~{\rm\ref{immRiem2}}{\rm)}. 
Let $\{(M_{k},x_{k})\}_{k=1}^{\infty}$ be a sequence of compact pointed $m$-dimensional manifolds and 
$\{F_{k}:M_{k}\to N\}_{k=1}^{\infty}$ be a sequence of immersions with 
\[|\nabla^{p}A(F_{k})|\leq D_{p}<\infty\]
for all $p\geq 0$. 
In the case that $(N,g)$ is non-compact, we further assume that $\{F_{k}(x_{k})\}_{k=1}^{\infty}$ is a bounded sequence in $N$. 
Then, there exist a pointed manifold $(M_{\infty},x_{\infty})$, an immersion $F_{\infty}:M_{\infty}\to N$ and a subsequence $\{k_{\ell}\}_{\ell=1}^{\infty}$ such that 
$\{F_{k_{\ell}}:M_{k_{\ell}}\to N\}_{\ell=1}^{\infty}$ converges to $F_{\infty}:M_{\infty}\to N$ and $(M_{\infty},F_{\infty}^{*}g)$ is a complete Riemannian manifold. 
\end{theorem}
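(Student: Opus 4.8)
The plan is to run a Cheeger--Gromov--type compactness argument, first for the intrinsic geometry of the induced metrics $(M_k,F_k^*g)$ and then for the maps $F_k$ themselves, the latter realised through the isometric embedding $\Theta\colon N\to\mathbb{R}^L$ of Remark~\ref{immRiem}.

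\textbf{Step 1 (intrinsic bounds and Cheeger--Gromov).} Using the Gauss equation (\ref{Gaussequation}) and its iterated covariant derivatives, together with the uniform bounds $|\nabla^pA(F_k)|\le D_p$ and the bounded geometry of $(N,g)$ (which holds by Remark~\ref{immRiem} via Theorem~\ref{injofsub2}), I would first produce constants $C_p$ with $|\nabla^p\mathrm{Rm}(F_k^*g)|\le C_p$ for all $p\ge0$ and all $k$. Theorem~\ref{injofsub2}, applied to each $F_k$, then yields a uniform lower bound $\mathrm{inj}(M_k,F_k^*g)\ge\delta>0$. Hence Theorem~\ref{CG} applies to $(M_k,F_k^*g,x_k)$: after passing to a subsequence there exist a complete pointed Riemannian manifold $(M_\infty,g_\infty,x_\infty)$, an exhaustion $\{U_k\}$ of $M_\infty$ with $x_\infty\in U_k$, and diffeomorphisms $\Psi_k\colon U_k\to V_k\subset M_k$ with $\Psi_k(x_\infty)=x_k$ such that $\Psi_k^*(F_k^*g)\to g_\infty$ in $C^\infty$ uniformly on compact sets.

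\textbf{Step 2 (bounds for the maps).} Set $G_k:=\Theta\circ F_k\circ\Psi_k\colon U_k\to\mathbb{R}^L$, viewed as a section of the trivial bundle $\mathbb{R}^L$ over $U_k$ with flat connection $\bar\nabla$. Since $\Theta$ is isometric, $|\bar\nabla G_k|$ is controlled by $m$ alone; since $\mathbb{R}^L$ is flat, $\bar\nabla^2G_k$ is the second fundamental form of $G_k$, which by the composition formula for second fundamental forms is built from $A(F_k)$ and $A(\Theta)$, and inductively $\bar\nabla^pG_k$ is a universal expression in $\nabla^jA(F_k)$ and $\nabla^jA(\Theta)$ for $j\le p-2$ together with the metric $\Psi_k^*(F_k^*g)$ and its covariant derivatives. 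Invoking $|\nabla^jA(F_k)|\le D_j$, $|\nabla^jA(\Theta)|\le\tilde D_j$ and the $C^\infty$-control of $\Psi_k^*(F_k^*g)$, this gives bounds $|\hat\nabla^pG_k|\le\hat C_p$ on each compact set, where $\hat\nabla$ is the connection of $\Psi_k^*(F_k^*g)$. Lemma~\ref{diffoftwoconn} then converts these to bounds $|\nabla^pG_k|_{g_\infty}\le C_p'$ with respect to the fixed limit metric $g_\infty$, on every compact subset of $M_\infty$.

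\textbf{Step 3 (extract and identify the limit).} By Arzel\`a--Ascoli and a diagonal argument over the exhaustion, a further subsequence of $G_k$ converges in $C^\infty$ uniformly on compact sets to a smooth section $G_\infty$ of the trivial $\mathbb{R}^L$-bundle over $M_\infty$. Since each $G_k$ takes values in the closed embedded submanifold $\Theta(N)\subset\mathbb{R}^L$, so does $G_\infty$; hence $G_\infty=\Theta\circ F_\infty$ for a well-defined smooth map $F_\infty\colon M_\infty\to N$, and $F_k\circ\Psi_k\to F_\infty$ in $C^\infty$ on compact sets, i.e.\ in the sense of Definition~\ref{convofimmmaps}. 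Finally $F_\infty^*g=\lim_k\Psi_k^*(F_k^*g)=g_\infty$ is a genuine complete Riemannian metric, so $F_\infty$ is an immersion and $(M_\infty,F_\infty^*g)$ is complete. In the non-compact case the hypothesis that $\{F_k(x_k)\}$ is bounded keeps all images $G_k$ inside a fixed compact region of $\mathbb{R}^L$, which is exactly what lets the uniform bounds on $\Theta$ and its derivatives be used and Arzel\`a--Ascoli apply. \textbf{Main obstacle:} the crux is Step 2 --- organising the iterated-derivative estimates for the nested composition $\Theta\circ F_k\circ\Psi_k$ so that they are genuinely uniform in $k$, and carefully passing from the moving metrics $\Psi_k^*(F_k^*g)$ to the fixed background $g_\infty$ via Lemma~\ref{diffoftwoconn}; the bookkeeping of the two normal/tangential splittings and the two second fundamental forms is the only delicate part, the rest being routine Cheeger--Gromov packaging.
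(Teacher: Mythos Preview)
Your proposal is correct and follows essentially the same route as the paper: first apply Cheeger--Gromov compactness (Theorem~\ref{CG}) to the intrinsic metrics $(M_k,F_k^*g,x_k)$ using Gauss' equation and Theorem~\ref{injofsub2}, then obtain uniform $C^p$-bounds on $\Theta\circ F_k\circ\Psi_k$ via the composition formula for second fundamental forms, transfer these to the fixed background via Lemma~\ref{diffoftwoconn}, and extract the limit by Arzel\`a--Ascoli and a diagonal argument. One small clarification: the bounds $|\nabla^pA(\Theta)|\le\tilde D_p$ are global by hypothesis, so the boundedness of $\{F_k(x_k)\}$ in the non-compact case is needed only for the $C^0$-estimate (combined with the $C^1$-bound on each ball) that feeds into Arzel\`a--Ascoli, not for accessing the $\Theta$-bounds themselves.
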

\begin{proof}
First of all, we prove that the sequence $\{(M_{k},F_{k}^{*}g,x_{k})\}_{k=1}^{\infty}$ sub-converges to some complete pointed Riemannian manifold $(M_{\infty},h_{\infty},x_{\infty})$. 
By Remark~\ref{immRiem} ($=$ Remark~\ref{immRiem2}), $(N,g)$ has bounded geometry, that is, 
\[|\nabla^{p}\mathrm{Rm}(g)|\leq C_{p}<\infty\quad\mathrm{and}\quad \mathrm{inj}(N,g)\geq\eta>0\]
for some positive constants $C_{p}$ and $\eta$. 
Then, by Theorem~\ref{injofsub2}, there exists a constant $\delta=\delta(C_{0},\eta,D_{0},n)>0$ such that 
\[\mathrm{inj}(M_{k},F_{k}^{*}g)\geq\delta>0. \]
We denote the Riemannian curvature tensor of $(M_{k}, F^{*}_{k}g)$ by $\mathrm{Rm}(F^{*}_{k}g)$. 
Then, by Gauss equation (\ref{Gaussequation}) and its iterated derivatives, we can see that there exist constants $\tilde{C}_{p}>0$ such that 
\[|\nabla^{p}\mathrm{Rm}(F^{*}_{k}g)|\leq \tilde{C}_{p}<\infty,\]
for all $p\geq 0$, where each $\tilde{C}_{p}$ does not depend on $k$. 
Then, by Theorem~\ref{CG}, $\{(M_{k},F_{k}^{*}g,x_{k})\}_{k=1}^{\infty}$ sub-converges to 
some complete pointed Riemannian manifold $(M_{\infty},h_{\infty},x_{\infty})$. 
Note that, in the following in this proof, we continue to use the letter $k$ for indices of subsequences. 
Since $(M_{k},F_{k}^{*}g,x_{k})$ converge to $(M_{\infty},h_{\infty},x_{\infty})$, 
there exist an exhaustion $U_{k}$ of $M_{\infty}$ with $x_{\infty}\in U_{k}$ and 
a sequence of diffeomorphisms $\Psi_{k}:U_{k}\to \Psi_{k}(U_{k})\subset M_{k}$ with $\Psi(x_{\infty})=x_{k}$. 

Next, we prove that the sequence of smooth maps $F_{k}\circ \Psi_{k}:U_{k}\to N$ sub-converge to some smooth map $F_{\infty}:M_{\infty}\to N$ uniformly on compact sets in $M_{\infty}$. 
We denote $\Theta\circ F_{k}\circ \Psi_{k}:U_{k}\to\mathbb{R}^{L}$ by $\bar{F}_{k}$ for short. 
We will use the standard diagonal argument to construct a map $F_{\infty}:M_{\infty}\to N$. 
Take a sequence of radii $R_{1}<R_{2}<\cdots\to\infty$, and consider balls $B_{i}:=B_{h_{\infty}}(x_{\infty},R_{i})\subset M_{\infty}$. 

First of all, we work on $B_{1}$. Since $U_{k}$ is an exhaustion, there exists $k_{1}$ such that $\overline{B_{1}}\subset U_{k}$ for all $k\geq k_{1}$. 
Hence we have a sequence of $C^{\infty}$-maps $\bar{F}_{k}=\Theta\circ F_{k}\circ \Psi_{k}:(U_{k}\supset)B_{1}\to\mathbb{R}^{L}$ restricted on $B_{1}$ for all $k\geq k_{1}$. 

\vspace{3mm}
\noindent 
\underline{(0): $C^{0}$-estimate. } First, we derive a $C^{0}$-bound for $\bar{F}_{k}$. 
If $N$ is compact, then the image $\Theta(N)$ is a compact set in $\mathbb{R}^{L}$ and contained in some ball 
\[B_{g_{\mathrm{st}}}(0,\hat{C}_{0})=\{\,y\in\mathbb{R}^{L}\mid |y|_{g_{\mathrm{st}}}<\hat{C}_{0} \,\}\] 
with radius $\hat{C}_{0}$. 
Since each image $\bar{F}_{k}(B_{1})$ is contained in $\Theta(N)$, we have 
\[|\bar{F}_{k}|_{g_{\mathrm{st}}}\leq \hat{C}_{0}\quad\mathrm{on}\quad\overline{B_{1}}. \]
It is clear that the constant $\hat{C}_{0}$ does not depend on $k$. 
If $N$ is non-compact, we need some additional argument to get a $C^{0}$-bound. 
Since $|\bar{F}_{k}^{*}g_{\mathrm{st}}-h_{\infty}|_{h_{\infty}}=|\Psi_{k}^{*}(F_{k}^{*}g)-h_{\infty}|_{h_{\infty}}\to 0$ uniformly on $\overline{B_{1}}$, 
for a given $\epsilon>0$ there exists $k'_{1}(\geq k_{1})$ such that on $\overline{B_{1}}$ 
\[|\bar{F}_{k}^{*}g_{\mathrm{st}}-h_{\infty}|_{h_{\infty}}<\epsilon \quad\mathrm{for}\quad k\geq k'_{1}, \] 
and this implies that 
\[|\bar{F}_{k}(x_{\infty})-\bar{F}_{k}(x)|_{g_{\mathrm{st}}}\leq \sqrt{1+\epsilon}\, d_{h_{\infty}}(x_{\infty},x)\leq\sqrt{1+\epsilon}R_{1}\]
for all $x\in B_{1}$ and $k\geq k'_{1}$. 
Furthermore, by the assumption for the non-compact case, $\{F_{k}(x_{k})\}_{k=1}^{\infty}$ is a bounded sequence in $N$. 
Hence $\bar{F}_{k}(x_{\infty})=(\Theta\circ F_{k})(x_{k})$ is also a bounded sequence in $\mathbb{R}^{L}$, that is, there exists a constant $\hat{C}'_{0}$ such that 
$|\bar{F}_{k}(x_{\infty})|_{g_{\mathrm{st}}}\leq \hat{C}_{0}'$. 
Hence we have
\[|\bar{F}_{k}|_{g_{\mathrm{st}}}\leq \hat{C}'_{0}+\sqrt{1+\epsilon}R_{1}=:\hat{C}_{0}\]
for $k\geq k'_{1}$. It is clear that $\hat{C}_{0}$ does not depend on $k$. 
Hence we get a $C^{0}$-bound. 

\vspace{3mm}
\noindent 
\underline{(1): $C^{1}$-estimate. }
Next, we consider a $C^{1}$-bound for $\bar{F}_{k}$. 
One can easily see that $\nabla_{g_{\mathrm{st}}}\bar{F}_{k}=D\bar{F}_{k}$. 
Since $\bar{F}_{k}:(B_{1},\bar{F}_{k}^{*}g_{\mathrm{st}})\to (\mathbb{R}^{L},g_{\mathrm{st}})$ is an isometric immersion, we have a $C^{1}$-bound 
\[|\nabla_{g_{\mathrm{st}}}\bar{F}_{k}|_{\bar{F}_{k}^{*}g_{\mathrm{st}}\otimes g_{\mathrm{st}}}=|D\bar{F}_{k}|_{\bar{F}_{k}^{*}g_{\mathrm{st}}\otimes g_{\mathrm{st}}}=\sqrt{m}=:\hat{C}_{1}. \]

\vspace{3mm}
\noindent 
\underline{(2): $C^{2}$-estimate. }
Next, we derive a $C^{2}$-bound for $\bar{F}_{k}$. 
Let $\hat{\nabla}$ be the connection on $(\otimes^{p}T^{*}M)\otimes \mathbb{R}^{L}$ ($p\geq 0$) over $B_{1}$ induced by the metric $\bar{F}_{k}^{*}g_{\mathrm{st}}$ and $g_{\mathrm{st}}$. 
Note that $\hat{\nabla}=\nabla_{g_{\mathrm{st}}}$ for $p=0$. 
Since $\hat{\nabla}\bar{F}_{k}=D\bar{F}_{k}$, we have 
\[\hat{\nabla}^2\bar{F}_{k}=A(\bar{F}_{k}), \]
the second fundamental form of the isometric immersion $\bar{F}_{k}=\Theta\circ F_{k}\circ \Psi_{k}:(B_{1},\bar{F}_{k}^{*}g_{\mathrm{st}})\to(\mathbb{R}^{L},g_{\mathrm{st}})$. 
Hence, by using the composition rule for the second fundamental forms of immersions,  we have 
\begin{align*}
\hat{\nabla}^2\bar{F}_{k}(X,Y)=&A(\bar{F}_{k})(X,Y)\\
=&A(\Theta)((F_{k}\circ \Psi_{k})_{*}X,(F_{k}\circ \Psi_{k})_{*}Y)+\Theta_{*}(A(F_{k})(\Psi_{k*}X,\Psi_{k*}Y))
\end{align*}
for any tangent vectors $X$ and $Y$ on $M$. 
By using the notion of $\ast$-product, this identity is written as 
\begin{align}\label{AbarF}
\hat{\nabla}^2\bar{F}_{k}=A(\Theta)\ast(\mathop{\ast}^{2}D(F_{k}\circ \Psi_{k}))+A(F_{k})\ast D\Theta \ast (\mathop{\ast}^{2}D\Psi_{k}). 
\end{align}
Since $|D(F_{k}\circ \Psi_{k})|_{\bar{F}_{k}^{*}g_{\mathrm{st}}\otimes g}=|D\Psi_{k}|_{\bar{F}_{k}^{*}g_{\mathrm{st}}\otimes F_{k}^{*}g}=\sqrt{m}$ and $|D\Theta|_{g\otimes g_{\mathrm{st}}}=\sqrt{n}$, 
we have 
\[|\hat{\nabla}^2\bar{F}_{k}|_{\bar{F}_{k}^{*}g_{\mathrm{st}}\otimes g_{\mathrm{st}}}\leq \hat{C}'_{2}|A(\Theta)|_{g\otimes g_{\mathrm{st}}}+\hat{C}''_{2}|A(F_{k})|_{F_{k}^{*}g\otimes g}\]
for some constants $\hat{C}'_{2}$ and $\hat{C}''_{2}$ which do not depend on $k$. 
Furthermore, by the assumptions, we have $|A(\Theta)|_{g\otimes g_{\mathrm{st}}}\leq \tilde{D}_{0}$ and $|A(F_{k})|_{F_{k}^{*}g\otimes g}\leq D_{0}$. 
Hence we have a $C^{2}$-bound 
\[|\hat{\nabla}^2\bar{F}_{k}|_{\bar{F}_{k}^{*}g_{\mathrm{st}}\otimes g_{\mathrm{st}}}\leq \hat{C}'_{2}\tilde{D}_{0}+\hat{C}''_{2}D_{0}=:\hat{C}_{2}. \]
It is clear that $\hat{C}_{2}$ does not depend on $k$. 

\vspace{3mm}
\noindent 
\underline{($p$): $C^{p}$-estimate. }
By differentiating (\ref{AbarF}), we can get a $C^{p}$-bound. 
We only observe a $C^{3}$-bound. 
Note that for any tangent vectors $X$ and $Y$ on $M$ we have
\[(\nabla_{\bar{F}_{k}^{*}g_{\mathrm{st}}\otimes g}D(F_{k}\circ \Psi_{k}))(X,Y)=A(F_{k}\circ \Psi_{k})(X,Y)=A(F_{k})(\Psi_{k*}X,\Psi_{k*}Y). \]
By using the notion of $\ast$-product, this identity is written as 
\begin{align*}
\nabla_{\bar{F}_{k}^{*}g_{\mathrm{st}}\otimes g}D(F_{k}\circ \Psi_{k})=A(F_{k})\ast (\mathop{\ast}^{2}D\Psi_{k}). 
\end{align*}
Furthermore, note that $\nabla_{g\otimes g_{\mathrm{st}}}D\Theta=A(\Theta)$ and $\nabla_{\bar{F}_{k}^{*}g_{\mathrm{st}}\otimes F_{k}^{*}g}D\Psi_{k}=0$. 
Hence we have 
\begin{align*}
\hat{\nabla}^3\bar{F}_{k}=&\nabla_{g\otimes g_{\mathrm{st}}}A(\Theta)\ast(\mathop{\ast}^{2}D(F_{k}\circ \Psi_{k}))\\
&+2A(\Theta)\ast D(F_{k}\circ \Psi_{k})\ast A(F_{k})\ast (\mathop{\ast}^{2}D\Psi_{k})  \\
&+\nabla_{F_{k}^{*}g\otimes g}A(F_{k})\ast D\Theta \ast (\mathop{\ast}^{2}D\Psi_{k})\\
&+A(F_{k})\ast A(\Theta) \ast (\mathop{\ast}^{2}D\Psi_{k}). 
\end{align*}
By the assumptions, 
norms of all tensors appeared in the above inequality is bounded. 
Hence we have a $C^{3}$-bound 
\[|\hat{\nabla}^3\bar{F}_{k}|_{\bar{F}_{k}^{*}g_{\mathrm{st}}\otimes g_{\mathrm{st}}}\leq \hat{C}_{3}\]
for some constant $\hat{C}_{3}$ which does not depend on $k$. 
For higher derivatives, one can prove that there exists a constant $\hat{C}_{p}>0$ which does not depend on $k$ such that 
\[|\hat{\nabla}^p\bar{F}_{k}|_{\bar{F}_{k}^{*}g_{\mathrm{st}}\otimes g_{\mathrm{st}}}\leq \hat{C}_{p}, \]
by induction. 

On the above argument, we have proved that there exist constants $\hat{C}_{p}$ ($p\geq 0$) which do not depend on $k$ such that 
$|\hat{\nabla}^p\bar{F}_{k}|_{\bar{F}_{k}^{*}g_{\mathrm{st}}\otimes g_{\mathrm{st}}}\leq \hat{C}_{p}. $
Hence by Lemma~\ref{diffoftwoconn} we can prove that there exist constants $C_{p}$ ($p\geq 0$) which do not depend on $k$ such that 
\[|\nabla^p\bar{F}_{k}|_{h_{\infty}\otimes g_{\mathrm{st}}}\leq C_{p}. \]
Hence, by The Arzel\`a--Ascoli Theorem, there exists a smooth map $\bar{F}_{1,\infty}:\overline{B}_{1}\to\mathbb{R}^{L}$ 
and $\bar{F}_{k}$ sub-converges to $\bar{F}_{1,\infty}$ in $C^{\infty}$ on $\overline{B}_{1}$. 
Since all images $\bar{F}_{k}(\overline{B}_{1})$ are contained in $\Theta(N)$, the image $\bar{F}_{1,\infty}(\overline{B}_{1})$ is also contained in $\Theta(N)$. 
Furthermore $\bar{F}_{1,\infty}:\overline{B}_{1}\to\mathbb{R}^{L}$ has the property that 
\[\bar{F}_{1,\infty}^{*}g_{\mathrm{st}}=h_{\infty}, \]
since $|\bar{F}_{1,\infty}^{*}g_{\mathrm{st}}-h_{\infty}|_{h_{\infty}}\leq |\bar{F}_{1,\infty}^{*}g_{\mathrm{st}}-\bar{F}_{k}^{*}g_{\mathrm{st}}|_{h_{\infty}}
+|\bar{F}_{k}^{*}g_{\mathrm{st}}-h_{\infty}|_{h_{\infty}}$ and the right hand side converges to $0$ as $k\to \infty$ on $\overline{B}_{1}$. 
Thus, especially, $\bar{F}_{1,\infty}:\overline{B}_{1}\to\mathbb{R}^{L}$ is an immersion map. 

Next, for the subsequence of $\bar{F}_{k}$ which converges to $\bar{F}_{1,\infty}$, we work on $B_{2}$. 
Then all the above argument also work on $B_{2}$ and we can show that 
there exists a smooth immersion map $\bar{F}_{2,\infty}:\overline{B}_{2}\to\Theta(N)\subset\mathbb{R}^{L}$ with 
$\bar{F}_{2,\infty}^{*}g_{\mathrm{st}}=h_{\infty}$ and $\bar{F}_{2,\infty}=\bar{F}_{1,\infty}$ on $\overline{B}_{1}$ 
and $\bar{F}_{k}$ sub-converges to $\bar{F}_{2,\infty}$ in $C^{\infty}$ on $\overline{B}_{2}$. 
By iterating this construction and the diagonal argument, finally we get a smooth immersion map $\bar{F}_{\infty}:M_{\infty}\to\Theta(N)\subset\mathbb{R}^{L}$ 
with $\bar{F}_{\infty}^{*}g_{\mathrm{st}}=h_{\infty}$ and $\bar{F}_{k}$ sub-converges to $\bar{F}_{\infty}$ uniformly on compact sets in $M_{\infty}$ in $C^{\infty}$, 
and the map defined by $F_{\infty}:=\Theta^{-1}\circ \bar{F}_{\infty}:M_{\infty}\to N$ is the requiring one satisfying the properties in the statement. 
\end{proof}

\end{document}